\documentclass[11pt]{article}
\usepackage{amsfonts}
\usepackage{latexsym}
\usepackage{amsmath}
\usepackage{amssymb}
\usepackage{amsthm}
\usepackage{amsmath}
\usepackage{verbatim}
\usepackage{hyperref}
\usepackage{enumerate}
\usepackage{color}
\usepackage{bibspacing}
\usepackage{float}
\usepackage{pgfplots}
\pgfplotsset{compat=1.15}
\usepgfplotslibrary{fillbetween}
\usetikzlibrary{patterns}

\newtheorem{thm}{Theorem}[section]
\newtheorem{theorem}[thm]{Theorem}
\newtheorem{problem}[thm]{Problem}
\newtheorem*{thm*}{Theorem}
\newtheorem*{question*}{Question}
\newtheorem*{problem*}{Problem}
\newtheorem{cor}[thm]{Corollary}
\newtheorem{corollary}[thm]{Corollary}

\newtheorem{lem}[thm]{Lemma}
\newtheorem{lemma}[thm]{Lemma}

\newtheorem*{prop*}{Proposition}
\newtheorem{proposition}[thm]{Proposition}

\newtheorem*{conj*}{Conjecture}
\newtheorem{definition}[thm]{Definition}
\newtheorem*{dfn*}{Definition}
\theoremstyle{definition}
\newtheorem{rem}[thm]{\textbf{Remark}}
\newtheorem{remark}[thm]{\textbf{Remark}}
\newtheorem*{rmk*}{Remark}
\newtheorem*{fact*}{Fact}

\theoremstyle{proof}

\newcommand{\bary}{\mathrm{bar}}
\newcommand{\Sym}{\mathrm{Sym}}
\newcommand{\Cov}{\mathrm{Cov}}

\newcommand{\maxo}{\max\phantom{}_0}

\newcommand{\FR}{\mathrm{FR}}
\newcommand{\G}{\mathcal{G}}
\newcommand{\F}{\mathcal{F}}

\newcommand{\A}{\mathbf{A}}
\newcommand{\B}{\mathbf{B}}
\newcommand{\C}{\mathcal{C}}
\newcommand{\Q}{\mathcal{Q}}
\renewcommand{\c}{\mathbf{c}}
\renewcommand{\d}{\mathbf{d}}
\renewcommand{\L}{\mathbf{L}}

\newcommand{\1}{\textbf{1}}
\newcommand{\sgn}{\text{sgn}}

\newcommand{\norm}[1]{\left\Vert#1\right\Vert}
\newcommand{\snorm}[1]{\Vert#1\Vert}
\newcommand{\abs}[1]{\left\vert#1\right\vert}
\newcommand{\set}[1]{\left\{#1\right\}}
\newcommand{\brac}[1]{\left(#1\right)}
\newcommand{\scalar}[1]{\left \langle #1 \right \rangle}
\newcommand{\sscalar}[1]{\langle #1 \rangle}

\newcommand{\R}{\mathbb{R}}

\newcommand{\E}{\mathbb{E}}

\newcommand{\eps}{\varepsilon}

\newcommand{\Id}{\mathrm{Id}}
\newcommand{\tr}{\mathrm{tr}}

\DeclareMathOperator{\conv}{\textnormal{conv}}

\numberwithin{equation}{section}
\numberwithin{figure}{section}

\begin{document}

\renewcommand*{\thefootnote}{\fnsymbol{footnote}}

\author{Emanuel Milman\textsuperscript{$*$}, Shohei Nakamura\textsuperscript{$\dagger$} and Hiroshi Tsuji\textsuperscript{$\ddagger$}}
\footnotetext{$^*$Department of Mathematics, Technion-Israel Institute of Technology, Haifa 32000, Israel. Email: emilman@tx.technion.ac.il.}
\footnotetext{$^\dagger$School of Mathematics, The Watson Building, University of Birmingham, Edgbaston, Birmingham, B15 2TT, England. Email: s.nakamura@bham.ac.uk.}
\footnotetext{$^\ddagger$Department of Mathematics, Institute of Science Tokyo, 2-12-1 Ookayama, Meguro-ku, Tokyo 152-8551, Japan. Email: tsujihiroshi@math.sci.isct.ac.jp.}

\begingroup    \renewcommand{\thefootnote}{}    \footnotetext{2020 Mathematics Subject Classification: 52A40, 60E15, 60G15.}
    \footnotetext{Keywords: Gaussian correlation inequality, Rogers--Shephard inequality, Milman--Pajor inequality, Forward-Reverse Brascamp--Lieb inequality, centered log-concave functions.}
    \footnotetext{The research leading to these results is part of a project that has received funding from the European Research Council (ERC) under the European Union's Horizon 2020 research and innovation programme (grant agreement No 101001677), and is also supported by JSPS Kakenhi grant number 24KJ0030 (Tsuji).}
\endgroup

\title{The Gaussian Conjugate Rogers--Shephard Inequality}

\date{}

\maketitle

\begin{abstract}
We fuse between the Rogers--Shephard inequality for the Lebesgue measure and Royen's Gaussian Correlation Inequality, simultaneously extending both into a single sharp inequality for the Gaussian measure $\gamma$ on $\R^n$, stating that
\[
\gamma(K) \gamma(L) \leq \gamma(K\cap L) \gamma(K+L) 
\]
whenever $K$ and $L$ are origin-symmetric convex sets in $\R^n$. This confirms a conjecture of M.~Tehranchi \cite{Tehranchi-RefinedGaussianCorrelation}.
In fact, we show that the inequality remains valid whenever the Gaussian barycenters of $K$ and $L$ are at the origin, and characterize the equality cases.
After rescaling, this also yields the following new inequality for convex sets with (Lebesgue) barycenters at the origin:
\[
|K| |L| \leq |K \cap L| |K + L | ;
\]
this can be seen as a conjugate counterpart to Spingarn's extension of the Rogers--Shephard inequality (where $K+L$ is replaced by $K-L$ above). We also derive an additional conjugate version of a Gaussian inequality due to V.~Milman and Pajor, as well as several extensions. 
Our main tool is a new Gaussian Forward-Reverse Brascamp--Lieb inequality for centered log-concave functions, of independent interest, which is crucially applicable to degenerate Gaussian covariances.
\end{abstract}

\section{Introduction}

Let $K,L$ denote two convex sets on $\R^n$ with non-empty interior. Their Minkowski sum is denoted by $K+L = \{ x+y : x \in K , y \in L \}$. Given a measure $\mu$ on $\R^n$ with $\int_K \abs{x} d\mu(x) \in (0,\infty)$, the $\mu$-barycenter of $K$ is defined as $\frac{1}{\mu(K)} \int_K \vec x d\mu(x)$; when $\mu$ is omitted we simply mean the Lebesgue barycenter. The Lebesgue measure on $\R^n$ is denoted by $\abs{\cdot} = \abs{\cdot}_n$, and the standard Gaussian probability measure on $\R^n$ is denoted by  $\gamma = \gamma^n$.

\subsection{Rogers--Shephard--Spingarn inequality}

 It was shown by V.~Milman and Pajor \cite[Corollary 3]{MilmanPajor-NonSymmetric} that whenever the Gaussian barycenters of $K,L$ are at the origin, then for all $a^2 + b^2 = 1$:
\begin{equation} \label{eq:intro-MilmanPajor}
\gamma (K) \gamma (L) \leq \gamma\brac{\frac{1}{b} K \cap \frac{1}{a} L} \gamma \brac{a K - b L} ;
\end{equation}
when $K=-K$ and $L=-L$ are origin-symmetric and $a=b=\frac{1}{\sqrt{2}}$, this was previously observed by Schecthman, Schlumprecht and Zinn in \cite[Proposition 3]{SSZ-GaussianCorrelationConjecture}. 
Applying this to $\lambda K$ and $\lambda L$ and taking the limit as $\lambda \rightarrow 0$, Milman--Pajor observed the following:

\begin{thm*}[Rogers--Shephard--Spingarn Inequality (RSSI)] 
Let $K,L$ denote bounded convex sets with non-empty interior in $\R^n$ and barycenters at the origin. Then:
\begin{equation} \label{eq:intro-RSSI}
|K| |L| \leq |K \cap L| |K-L | .
\end{equation}
\end{thm*}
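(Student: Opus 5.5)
We derive the RSSI from the Milman--Pajor inequality \eqref{eq:intro-MilmanPajor} by a scaling limit, as indicated above. We take $a=b=\frac{1}{\sqrt 2}$ and apply \eqref{eq:intro-MilmanPajor} to the dilates $\lambda K$ and $\lambda L$ with $\lambda\to0^+$. The hypothesis there is that the \emph{Gaussian} barycenters vanish, whereas we only know the Lebesgue barycenters vanish. So the first step is to correct the barycenters: we find translations $K_\lambda=\lambda K+x_\lambda$ and $L_\lambda=\lambda L+y_\lambda$ whose Gaussian barycenters are at the origin, with $x_\lambda,y_\lambda=o(\lambda)$.

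\textbf{Existence of the translations.} For the translation we consider the map $x\mapsto \int_{\lambda K+x} z\,d\gamma(z)$. Equivalently, $x$ is a critical point of the strictly log-concave function $x\mapsto\gamma(\lambda K+x)$, whose gradient is $-\int_{\lambda K+x}z\,d\gamma$. This function tends to $0$ at infinity, so it has a unique maximizer, which we call $x_\lambda$.

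\textbf{Size of the translations.} Writing $z=\lambda w+x_\lambda$, the condition reads
\[
\int_K(\lambda w+x_\lambda)\,e^{-|\lambda w+x_\lambda|^2/2}\,dw=0 .
\]
Rescaling $x_\lambda=\lambda u_\lambda$ gives $\int_K (w+u_\lambda)e^{-\lambda^2|w+u_\lambda|^2/2}dw=0$. As $\lambda\to0$ the weight tends to $1$ uniformly on bounded sets, so any limit point $u$ of $u_\lambda$ satisfies $\int_K(w+u)\,dw=0$. Since $K$ has barycenter $0$, this forces $u=0$. I expect this step to be the main obstacle. We need an a priori bound on $u_\lambda$, obtained by noting that $u_\lambda$ maximizes the log-concave function $u\mapsto\int_{K+u}e^{-\lambda^2|v|^2/2}dv$. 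This function is comparable to $\int_{K+u}dv$-type behaviour, and it decays once $|u|$ exceeds a multiple of $\operatorname{diam}K+1/\lambda$. A cleaner route is a direct estimate: for $|u|$ large, the function $\int_K \langle w+u,u\rangle e^{-\lambda^2|w+u|^2/2}\,dw$ is positive, using $\int_K w=0$ and boundedness of $K$. Either way we get $u_\lambda\to0$.

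\textbf{The limit.} Now apply \eqref{eq:intro-MilmanPajor} with $a=b=2^{-1/2}$ to $K_\lambda$ and $L_\lambda$:
\[
\gamma(K_\lambda)\gamma(L_\lambda)\le \gamma\bigl(\sqrt2(K_\lambda\cap L_\lambda)\bigr)\,\gamma\bigl(2^{-1/2}(K_\lambda-L_\lambda)\bigr).
\]
Each set is $\lambda$ times a set converging in Hausdorff distance to, respectively, $K$, $L$, $\sqrt2(K\cap L)$ and $2^{-1/2}(K-L)$. Note that $(K+u)\cap(L+v)\to K\cap L$ as $u,v\to0$ because $0\in\operatorname{int}(K\cap L)$, since barycenters lie in the interior. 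Hence $\gamma(\lambda A_\lambda)=(2\pi)^{-n/2}\lambda^n(|A|+o(1))$ for each of these sets. Dividing by $(2\pi)^{-n}\lambda^{2n}$ and letting $\lambda\to0$ gives
\[
|K||L|\le |\sqrt2(K\cap L)|\,|2^{-1/2}(K-L)| = 2^{n/2}2^{-n/2}|K\cap L||K-L|,
\]
which is \eqref{eq:intro-RSSI}. The volume convergence uses continuity of the Lebesgue measure of convex bodies under Hausdorff convergence, together with the fact that $\lambda A_\lambda$ lies in a ball of radius $O(\lambda)$, where the Gaussian density is $(2\pi)^{-n/2}(1+O(\lambda^2))$.
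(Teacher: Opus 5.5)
Your proposal is correct and follows the route the paper indicates: a scaling limit $\lambda\to0$ of the Milman--Pajor inequality \eqref{eq:intro-MilmanPajor} with $a=b=2^{-1/2}$. The paper also mentions an alternative derivation from Spingarn's section/projection inequality applied to $K\times L$ and the diagonal. Your $o(\lambda)$ translations, which put the Gaussian barycenters of $\lambda K,\lambda L$ at the origin, supply a detail the paper leaves implicit; your direct estimate is sound and needs only boundedness of $K$, since $\langle w+u,u\rangle>0$ on $K$ once $|u|>\max_{w\in K}|w|$.
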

To explain our nomenclature, some historical remarks are in order. 
The product $|K \cap L| |K - L|$ was studied by Rogers and Shephard in the 1950's \cite{RogersShephard-ConvexBodiesAssociated}, who obtained the complementing sharp upper bound 
\begin{equation} \label{eq:intro-RS-upper}
|K \cap L| |K-L | \leq {2n \choose n} |K| |L| . 
\end{equation}
However, their method (based on the Brunn--Minkowski inequality) easily gives (\ref{eq:intro-RSSI}) when $K = -K$ and $L = -L$ are origin-symmetric, and so the origin-symmetric case of (\ref{eq:intro-RSSI}) is usually attributed to them (see e.g. \cite[Remark (1) following Corollary 3]{MilmanPajor-NonSymmetric}). 
More generally, given a convex set $C \subset \R^N$, Rogers and Shephard showed that for any $k$-dimensional linear subspace $F$, 
\begin{equation} \label{eq:intro-sec-proj-upper}
|C \cap F|_k |P_{F^{\perp}} C|_{N-k} \leq {N \choose k} |C|_N ,
\end{equation}
where $P_{F^{\perp}}$ denotes orthogonal projection onto the orthogonal subspace to $F$. When $C = -C$ is origin-symmetric, their method easily yields:
\begin{equation} \label{eq:intro-sec-proj-lower}
|C|_N \leq |C \cap F|_k |P_{F^{\perp}} C|_{N-k} .
\end{equation}
The symmetry assumption in (\ref{eq:intro-sec-proj-lower}) was relaxed by Spingarn \cite{Spingarn-RogersShephard} to the requirement that the barycenter of $C$ is at the origin. By using $C = K \times L \subset \R^{2n}$ when $K,L$ are origin-symmetric and $F = \{ (y,y) : y \in \R^n \}$  in (\ref{eq:intro-sec-proj-lower}) and (\ref{eq:intro-sec-proj-upper}), one easily obtains (\ref{eq:intro-RSSI}) and (\ref{eq:intro-RS-upper}) respectively (and a simple adaptation also yields (\ref{eq:intro-MilmanPajor})). Similarly, Spingarn's relaxation immediately yields (\ref{eq:intro-RSSI}) for convex $K,L$ having both barycenters at the origin. Consequently, we refer to (\ref{eq:intro-RSSI}) as the Rogers--Shephard--Spingarn inequality.

\subsection{Gaussian Correlation Inequality}

On the other hand, the Gaussian Correlation Inequality was only established in 2014 by Royen \cite{Royen-GaussianCorrelation} (cf. \cite{LatalaMatlak-GaussianCorrelation}), after being open for many decades, since the work of Pitt on the two-dimensional case \cite{Pitt-GaussianCorrelationInPlane}. It was originally conjectured and finally established by Royen for origin-symmetric convex sets $K=-K$ and $L=-L$  in $\R^n$ (see also \cite{EMilman-GCI} for an alternative argument), but was recently extended by Nakamura and Tsuji \cite{NakamuraTsuji-GCIForCentered} to the case when $K$ and $L$ have Gaussian barycenters at the origin, along with a characterization of the equality case (see \cite{NakamuraTsuji-GCIForCentered} for further extensions): 

\begin{thm*}[Gaussian Correlation Inequality (GCI) \cite{Royen-GaussianCorrelation,EMilman-GCI,NakamuraTsuji-GCIForCentered}] Let $K,L \subset \R^n$ denote convex sets with non-empty interior and Gaussian barycenters at the origin. Then:
\begin{equation} \label{eq:intro-GCI}
\gamma(K) \gamma(L) \leq \gamma(K \cap L) ,
\end{equation}
with equality if and only if 
\begin{equation} \label{eq:intro-equality}
K = K_0 \times E^{\perp},\quad L = E \times L_0 \quad\text{up to null-sets}, 
\end{equation}
for some linear subspace $E\subset \R^n$ and convex $K_0 \subset E$ and $L_0 \subset E^\perp$. 
\end{thm*}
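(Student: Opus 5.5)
The plan is to interpolate between the two sides of (\ref{eq:intro-GCI}) with a family of Gaussian measures on $\R^n\times\R^n$, and to prove that the resulting quantity is monotone. For $t\in[0,1]$ let $\gamma_{\Sigma_t}$ be the centered Gaussian on $\R^{2n}$ with covariance
\[
\Sigma_t=\begin{pmatrix}\Id & t\Id\\ t\Id & \Id\end{pmatrix},
\]
and set
\[
Q(t)=\int_{\R^{2n}} 1_K(x)\,1_L(y)\,d\gamma_{\Sigma_t}(x,y).
\]
Then $Q(0)=\gamma(K)\gamma(L)$. At $t=1$ the measure $\gamma_{\Sigma_1}$ is concentrated on the diagonal $\{(y,y)\}$, so $Q(1)=\gamma(K\cap L)$. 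It therefore suffices to show that $Q$ is non-decreasing.

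Since $\Sigma_1$ is degenerate, I will first work with smooth, strictly log-concave approximations $f,g$ of $1_K,1_L$ that remain centered with respect to $\gamma$, and with $t<1$. The limits $t\to1$ and $f,g\to 1_K,1_L$ are taken at the end, using monotone or dominated convergence.

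For the monotonicity I would differentiate in $t$. By the standard Gaussian integration-by-parts identity for the off-diagonal covariance entry,
\[
Q'(t)=\int \scalar{\nabla f(x),\nabla g(y)}\,d\gamma_{\Sigma_t}(x,y).
\]
The sign of this expression is not evident pointwise. In the origin-symmetric case Royen overcomes this through his Laplace-transform and Gamma-distribution argument. Here only centering is available, so I would instead use an Ornstein--Uhlenbeck heat-flow argument. Replace $f,g$ by $P_sf,P_sg$, where $P_s$ is the Ornstein--Uhlenbeck semigroup. Three facts make this useful: $P_s$ preserves log-concavity by Pr\'ekopa--Leindler; it preserves Gaussian centering, since $\int x\,P_sf\,d\gamma=e^{-s}\int x f\,d\gamma$; and $P_sf,P_sg$ become constant as $s\to\infty$, where the inequality $\int P_sf\otimes P_sg\,d\gamma_{\Sigma}\ge\int f\,d\gamma\int g\,d\gamma$ holds with equality. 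The goal is then to show that the ratio
\[
\frac{\int P_sf\otimes P_sg\,d\gamma_{\Sigma}}{\int f\,d\gamma\int g\,d\gamma}
\]
is non-increasing in $s$. Together with its limit $1$ at $s=\infty$, this gives the inequality at $s=0$. This is a forward-reverse Brascamp--Lieb type statement for centered log-concave functions.

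Computing the $s$-derivative of the ratio produces a quadratic form in $\nabla\log P_sf$ and $\nabla\log P_sg$, weighted by the tilted probability measure proportional to $P_sf\otimes P_sg\,\gamma_\Sigma$. Its sign reduces to a covariance comparison for this tilted measure, which is log-concave with respect to $\gamma_\Sigma$. I expect this to be the main obstacle. Without symmetry, the first-order terms do not vanish. Centering should kill them at the linear level, and the remaining second-order terms should then be controlled by a Brascamp--Lieb (or Cram\'er--Rao) variance inequality. That inequality bounds the covariance of $\nabla\log$ of a log-concave perturbation of a Gaussian by the Gaussian covariance. The delicate point is that the tilted measure is centered only in each marginal after the flow, not jointly, so the cross terms must be handled by combining the two marginal centering conditions with the specific block structure of $\Sigma_t$.

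For the equality cases, I would trace equality back through the Brascamp--Lieb variance inequality. Equality there forces $\nabla\log f$ and $\nabla\log g$ to be supported on complementary orthogonal subspaces, which means $f$ depends only on $E$-coordinates and $g$ only on $E^\perp$-coordinates. Passing to the limit of indicator functions then yields (\ref{eq:intro-equality}). Conversely, product sets of the form (\ref{eq:intro-equality}) clearly give equality, since $\gamma$ factorizes over $E\oplus E^\perp$.
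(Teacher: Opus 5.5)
There is a genuine gap. The step that carries the whole content of the theorem, namely the sign of $Q'(t)$, is never proved, and the Ornstein--Uhlenbeck reformulation does not reduce it. If $(X,Y)\sim\gamma_{\Sigma_t}$ and $Z,Z'$ are independent standard Gaussians, then $(e^{-s}X+\sqrt{1-e^{-2s}}Z,\,e^{-s}Y+\sqrt{1-e^{-2s}}Z')\sim\gamma_{\Sigma_{te^{-2s}}}$. Hence
\[
\int P_sf\otimes P_sg\,d\gamma_{\Sigma_t}=Q(te^{-2s}) .
\]
So ``the ratio is non-increasing in $s$'' is literally the statement ``$Q$ is non-decreasing''. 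Its $s$-derivative is $-2te^{-2s}Q'(te^{-2s})$, which brings you back to the sign of $\int\scalar{\nabla f(x),\nabla g(y)}\,d\gamma_{\Sigma_\tau}(x,y)=Q(\tau)\,\E_\mu\scalar{\nabla\log f(X),\nabla\log g(Y)}$, where $\mu\propto f(x)g(y)\,d\gamma_{\Sigma_\tau}$ is your tilted measure.

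The mechanism you sketch cannot produce this sign, for two reasons.
\begin{enumerate}
\item You need a \emph{lower} bound on a bilinear cross term: the trace of an off-diagonal covariance block under $\mu$, plus a product of means. The Brascamp--Lieb/Cram\'er--Rao inequality only gives \emph{upper} bounds on covariances and says nothing about the sign of an off-diagonal block.
\item Centering does not kill the linear terms. Under the tilted measure, $\E_\mu\nabla\log f(X)\propto\int\nabla f\cdot P_ug\,d\gamma$ with $e^{-u}=\tau$, and this vanishes only when $P_ug$ is constant.
\end{enumerate}
In chaos terms, $Q(t)=\sum_{k\ge0}t^k\scalar{\pi_kf,\pi_kg}_{L^2(\gamma)}$. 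Centering kills the $k=1$ term. Proposition \ref{prop:cov} makes the $k=2$ term non-negative: it equals $\frac12\tr(M_fM_g)$ with $M_f=\int(x\otimes x-\Id)f\,d\gamma\le0$. Nothing in your argument controls $k\ge3$.

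Even for symmetric sets, where odd $k$ vanish, monotonicity of $Q$ is exactly what Royen's argument establishes, and it uses symmetry in an essential way. For merely centered sets you give no reason to expect monotonicity on all of $[0,1]$, and it is not needed.

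The paper cites this statement rather than reproving it. Its own machinery nevertheless recovers the inequality without any intermediate $t$: take $h_1=f_1\otimes f_2$ and $h_2\equiv1$ in Theorem \ref{thm:GCRSI-FRBL}. Theorem \ref{thm:FRBL} then reduces everything to centered Gaussians. Its ingredients are:
\begin{itemize}
\item the self-convolution $f\mapsto f^{(1)}$, which preserves centering and the pointwise constraint;
\item the local CLT;
\item the reduction from centered to even inputs (Proposition \ref{p:reduce-to-even});
\item the approximation of the degenerate diagonal covariance.
\end{itemize}
For centered Gaussians, $\mathrm{diag}(A_1,A_2)\ge B$ gives $\bar B\le A_1+A_2$ on the diagonal, and Lemma \ref{lem:GaussianGCI} concludes. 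Only the endpoints $t=0,1$ are ever compared. Centering is used only because $f\mapsto f^{(1)}$ preserves it (Proposition \ref{p:BLTrick}).

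Your equality argument has an independent gap. Equality for $\1_K,\1_L$ does not propagate to the smooth approximants or to $t<1$, since strict inequalities can become equalities in the limit. So there is nothing to trace back, and you never established the inequality chain whose equality cases you invoke. The paper flags exactly this obstruction and takes the equality characterization from \cite[Theorem 5.5]{NakamuraTsuji-GCIForCentered}.
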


\subsection{Gaussian conjugate Rogers--Shephard inequality}

Our main result in this work is the following theorem, which fuses between (\ref{eq:intro-RSSI}) and (\ref{eq:intro-GCI}). 

\begin{thm}[Gaussian Conjugate Rogers--Shephard Inequality (GCRSI)] \label{thm:intro-GCRSI}
Let $K,L \subset \R^n$ denote convex sets with non-empty interior and Gaussian barycenters at the origin. Then:
\begin{equation} \label{eq:intro-GCRSI}
\gamma(K) \gamma(L) \leq \gamma(K \cap L) \gamma(K + L) ,
\end{equation}
with equality if and only if (\ref{eq:intro-equality}) holds. 
\end{thm}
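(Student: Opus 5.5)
The plan is to deduce the inequality from a Gaussian Forward-Reverse Brascamp--Lieb (FRBL) inequality for centered log-concave functions, applied on $\R^{2n}$ to $\1_K \otimes \1_L$. We write everything in terms of the Gaussian $\gamma^{2n}$ on $\R^{2n}$ and split $\R^{2n} = \R^n \times \R^n$ along the diagonal $F = \{(y,y)\}$ and antidiagonal $F^\perp = \{(z,-z)\}$. Note $\gamma(K)\gamma(L) = \gamma^{2n}(K\times L)$. Pushing $K \times L$ forward under $(x,y) \mapsto x+y$ gives $K+L$, while the diagonal section gives $K \cap L$.

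The inequality we want has the shape ``mass of $C=K\times L$ is at most section $\times$ projection''. This is a Gaussian analogue of Spingarn's (\ref{eq:intro-sec-proj-lower}), but with the projection taken along the \emph{diagonal direction onto the sum}. The scalings must be chosen so that the Gaussian factors match exactly. For $x\in K$, $y\in L$, write $u=(x+y)/\sqrt2$ and $v=(x-y)/\sqrt2$, so $\gamma^{2n}(dx\,dy)=\gamma(du)\gamma(dv)$. Then
\[
\gamma(K)\gamma(L)=\int \gamma\{v : (u,v)\in C'\}\, d\gamma(u),
\]
where $C'$ is the rotated image of $C$. The projection of $C'$ to the $u$-coordinate is $(K+L)/\sqrt2$, not $K+L$. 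So a naive Fubini bound gives $\gamma((K+L)/\sqrt2)\cdot\sup_u(\dots)$, which is the wrong scale. This is why a genuinely new FRBL inequality is needed rather than Fubini.

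The key tool I would establish is a Gaussian FRBL inequality of the form
\[
\int f_1\, d\gamma \int f_2\, d\gamma \le \int f_1 f_2\, d\gamma \cdot \int \sup_{x+y=z} f_1(x) f_2(y)\, d\gamma(z),
\]
for $f_1,f_2$ log-concave with Gaussian barycenter at the origin. The degenerate structure is the point: the ``forward'' datum is the diagonal map $x\mapsto(x,x)$, and the ``reverse'' datum is the sum map. The proof would go by Gaussian saturation / heat-flow monotonicity. We evolve $f_i$ by the Ornstein--Uhlenbeck semigroup, or use the Fokker--Planck flow on log-concave measures. We show that a suitable functional, the ratio of the two sides, is monotone along the flow. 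Centered (barycenter zero) log-concavity is preserved by OU and ensures the relevant Hessian/covariance inequality, namely a Brascamp--Lieb type variance bound $\mathrm{Cov}\le \Id$ for $f_i\,d\gamma$, centered to kill the linear terms. In the limit we reduce to centered Gaussians $f_i = e^{-\langle A_i x,x\rangle/2}$ with $A_i\ge0$, possibly degenerate. Indicators of slabs and subspaces arise as limits, hence the need to treat degenerate covariances.

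For centered Gaussians we verify the inequality by direct determinant computation. Writing $B_i=\Id+A_i$, the sup-convolution $\sup_{x+y=z}$ is Gaussian with precision $(A_1^{-1}+A_2^{-1})^{-1}$. The inequality becomes
\[
\det(B_1)^{-1/2}\det(B_2)^{-1/2}\le \det(\Id+A_1+A_2)^{-1/2}\det\bigl(\Id+(A_1^{-1}+A_2^{-1})^{-1}\bigr)^{-1/2},
\]
that is, $\det(\Id+A_1+A_2)\det(\Id+A_1\#A_2)\le\det(\Id+A_1)\det(\Id+A_2)$, where $\#$ denotes the parallel sum. I would prove this via a block-matrix/Schur-complement identity or by simultaneous diagonalization plus concavity of $\log\det$.

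Applying the FRBL inequality to $f_1=\1_K$, $f_2=\1_L$ gives (\ref{eq:intro-GCRSI}), since the sup-convolution of the indicators is $\1_{K+L}$. For equality, we trace equality through the monotone flow. Equality forces the flows to stay Gaussian-split, which forces $A_1,A_2$ to live on complementary orthogonal subspaces. Combined with the GCI equality case (\ref{eq:intro-equality}), this yields the product structure. Conversely, (\ref{eq:intro-equality}) gives $K\cap L=K_0\times L_0$ and $K+L=\R^n$, so equality holds.

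The main obstacle I expect is the monotonicity step, specifically closing the flow argument with degenerate and non-symmetric (only centered) data. I would first attempt it via the stochastic/Föllmer-drift representation used by Nakamura--Tsuji for the centered GCI, adapted to include the reverse sum term.
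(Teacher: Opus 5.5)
\paragraph{Your central tool is false.} The argument therefore cannot be completed as planned. The inequality
\[
\int f_1\,d\gamma\int f_2\,d\gamma\le\int f_1f_2\,d\gamma\cdot\int\sup_{x+y=z}f_1(x)f_2(y)\,d\gamma(z)
\]
is not even homogeneous. Replacing $f_1$ by $\lambda f_1$ multiplies the left side by $\lambda$ and the right side by $\lambda^2$, so $f_1\equiv\lambda\in(0,1)$, $f_2\equiv 1$ (log-concave and centered) already violate it.

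Normalizing $\max f_i=1$ does not save it. Your own Gaussian reduction,
\[
\det(\Id_n+A_1+A_2)\det\bigl(\Id_n+(A_1^{-1}+A_2^{-1})^{-1}\bigr)\le\det(\Id_n+A_1)\det(\Id_n+A_2),
\]
is false: for $n=1$ the left side equals $(1+a_1)(1+a_2)+\frac{a_1a_2}{a_1+a_2}$. No Schur-complement or $\log\det$-concavity argument can prove it, and an OU/F\"ollmer monotonicity scheme would be steering toward a false endpoint.

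The set inequality is consistent with your formula only for a special reason. For indicators, $S(z):=\sup_{x+y=z}f_1(x)f_2(y)$ equals $1$ at $z=x_1+x_2$ whenever $f_1(x_1)f_2(x_2)>0$, so $f_1(x_1)f_2(x_2)\le (f_1\otimes f_2)(x_1,x_2)\,S(x_1+x_2)$. For general $f_i$, Gaussians included, this pointwise domination fails, and the inequality fails with it.

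\paragraph{The missing idea is to decouple the two right-hand factors.} The paper proves a forward--reverse Brascamp--Lieb inequality in which $h_1:\R^{2n}\to\R_+$ and $h_2:\R^n\to\R_+$ are \emph{arbitrary} log-concave functions, with no centering. They are subject only to $f_1(x_1)f_2(x_2)\le h_1(x_1,x_2)h_2(x_1+x_2)$, and the conclusion is $\int f_1\,d\gamma\int f_2\,d\gamma\le\int h_1(y,y)\,d\gamma(y)\int h_2\,d\gamma$. The theorem follows with $h_1=\1_{K\times L}$ and $h_2=\1_{K+L}$.

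Here $h_1$ is integrated against a degenerate Gaussian on $\R^{2n}$ supported on the diagonal. This is the genuinely degenerate covariance the method must handle. Gaussian saturation (via self-convolutions, the Courtade--Wang reduction to even data, a local CLT, and approximation arguments) then reduces everything to a matrix statement. If $\mathrm{diag}(A_1,A_2)\ge B+\begin{pmatrix}C&C\\C&C\end{pmatrix}$, then $\det(\Id_n+A_1)\det(\Id_n+A_2)\ge\det(\Id_n+C)\det(\Id_n+\bar B)$, where $\bar B=B_1+B_2+B_2^*+B_4$. This follows from three facts:
\begin{itemize}
\item $A_i\ge C$;
\item the Gaussian GCI $\det(\mathcal C)\det(\mathcal C+\mathcal A+\mathcal B)\le\det(\mathcal C+\mathcal A)\det(\mathcal C+\mathcal B)$, with $\mathcal C=\Id_n+C$, $\mathcal A=A_1-C$, $\mathcal B=A_2-C$;
\item $A_1+A_2\ge\bar B+4C$, obtained by testing on $(y,y)$.
\end{itemize}
The admissible Gaussian data trade mass between $h_1$ and $h_2$. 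Your rigid choice $h_1=f_1\otimes f_2$, $h_2=S$ cannot do this.

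\paragraph{Equality.} The mechanism is likewise not ``flows staying split''. A partial Gaussian saturation shows that equality forces $C=0$, i.e.\ $h_2$ is constant, so $\gamma(K+L)=1$. One is then reduced to the equality case of the centered GCI.
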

When $K=-K$ and $L=-L$ are origin-symmetric, the inequality (\ref{eq:intro-GCRSI}) was conjectured by M.~Tehranchi \cite{Tehranchi-RefinedGaussianCorrelation} as a possible strengthening of the GCI (\ref{eq:intro-GCI}) (since $\gamma(K+L) \leq 1$). In particular, (\ref{eq:intro-GCRSI}) yields the following stability estimate:
\[
\gamma(K \cap L) \leq (1+\eps) \gamma(K) \gamma(L) \;\; \Rightarrow \;\; \gamma(K+L) \geq \frac{1}{1+\eps} ,
\]
providing a certain measure of how far $K+L$ is from the entire $\R^n$, as required by the equality case when $\eps = 0$. 

In addition, by scaling $K,L$ by a factor of $\lambda > 0$ and letting $\lambda \rightarrow 0$, (\ref{eq:intro-GCRSI}) immediately recovers the Rogers--Shephard--Spingarn inequality (\ref{eq:intro-RSSI}) when the (Lebesgue) barycenter of $K$ is at the origin and $L = -L$ is origin-symmetric. However, 
when $L$ is not origin-symmetric, we obtain the following seemingly new inequality:

\begin{corollary}[Conjugate Rogers--Shephard--Spingarn Inequality (CRSSI)] \label{cor:intro-CRSSI}
Let $K,L \subset \R^n$ denote bounded convex sets with non-empty interior and barycenters at the origin. Then:
\begin{equation} \label{eq:intro-CRSSI}
|K| |L| \leq |K \cap L| |K+L| .
\end{equation}
\end{corollary}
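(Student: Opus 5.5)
We derive Corollary~\ref{cor:intro-CRSSI} from Theorem~\ref{thm:intro-GCRSI} by shrinking the sets and passing to the limit, in the same way Milman--Pajor obtained (\ref{eq:intro-RSSI}) from (\ref{eq:intro-MilmanPajor}). The only subtlety is that the Gaussian barycenter of $\lambda K$ is generally not exactly the origin. We therefore translate slightly so that it is, and show that the translations tend to zero.

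\textbf{Step 1: centering $\lambda K$.} Fix bounded convex $K,L$ with non-empty interior and Lebesgue barycenters at the origin, and let $\lambda>0$ be small. We look for $x_\lambda\in\R^n$ such that $\lambda(K+x_\lambda)$ has Gaussian barycenter at the origin, i.e.
\[
\int_{K+x_\lambda} y\, e^{-\lambda^2|y|^2/2}\,dy = 0 .
\]
The map $x\mapsto$ (Gaussian barycenter of $K+x$ with respect to $\gamma$ rescaled by $\lambda$) is a continuous perturbation of $x\mapsto x$. Indeed, the weight $e^{-\lambda^2|y|^2/2}=1+O(\lambda^2)$ uniformly on bounded sets, so the Lebesgue barycenter of $K+x$ is $x$ and the weighted one is $x+O(\lambda^2)$ uniformly for $|x|\le 1$. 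Brouwer's fixed point theorem (or a degree argument on the ball $|x|\le 1$) then gives $x_\lambda$ with $|x_\lambda|=O(\lambda^2)$. We do the same for $L$ and obtain $y_\lambda\to 0$. Alternatively, existence and uniqueness of the shift follow because the Gaussian barycenter of the translates is the gradient of the strictly convex function $x\mapsto -\log\gamma(\lambda(K+x))$, up to a sign and scaling. Smallness of the shift still comes from the perturbation estimate above.

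\textbf{Step 2: apply GCRSI and rescale.} Set $K_\lambda=K+x_\lambda$ and $L_\lambda=L+y_\lambda$. Theorem~\ref{thm:intro-GCRSI} applied to $\lambda K_\lambda$ and $\lambda L_\lambda$ gives
\[
\gamma(\lambda K_\lambda)\gamma(\lambda L_\lambda)\le \gamma(\lambda(K_\lambda\cap L_\lambda))\,\gamma(\lambda(K_\lambda+L_\lambda)).
\]
For a bounded set $A$ we have $\gamma(\lambda A)=(2\pi)^{-n/2}\lambda^n|A|(1+O(\lambda^2))$, uniformly over sets contained in a fixed ball. All sets above lie in a fixed ball for $\lambda\le1$. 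Dividing by $(2\pi)^{-n}\lambda^{2n}$ and letting $\lambda\to0$ yields
\[
\limsup_{\lambda\to 0}\bigl(|K_\lambda||L_\lambda| - |K_\lambda\cap L_\lambda||K_\lambda+L_\lambda|\bigr)\le 0 .
\]

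\textbf{Step 3: continuity of volumes.} Translation preserves volume, so $|K_\lambda|=|K|$ and $|L_\lambda|=|L|$. Next, $K_\lambda+L_\lambda=K+L+(x_\lambda+y_\lambda)$, so $|K_\lambda+L_\lambda|=|K+L|$. Finally, $|(K+x)\cap(L+y)|$ is continuous in $(x,y)$, since the indicator functions converge in $L^1$ under small translations. Hence $|K_\lambda\cap L_\lambda|\to|K\cap L|$, and (\ref{eq:intro-CRSSI}) follows.

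The only step requiring care is Step~1: producing the centering translation and proving it vanishes as $\lambda\to0$. Once that is done, everything else is routine continuity.
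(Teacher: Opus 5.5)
Your argument is correct and follows the paper's route: apply the GCRSI (Theorem~\ref{thm:intro-GCRSI}) to the rescaled sets and let $\lambda\to 0$. Your Step~1 fills in a detail the paper leaves implicit. Since $\lambda K$ generally does not have Gaussian barycenter exactly at the origin, you translate by $x_\lambda,y_\lambda=O(\lambda^2)$, obtained via Brouwer on the unit ball, so that it does, and this step is carried out correctly.
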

\noindent In view of the sign difference between the $K+L$ term above and the $K-L$ term which appears in (\ref{eq:intro-RSSI}), we call (\ref{eq:intro-CRSSI}) the conjugate Rogers--Shephard--Spingarn inequality (CRSSI). 
By taking $L = \lambda K$ and letting $\lambda \rightarrow 0$ or $\lambda \rightarrow \infty$, it is clear that the constant $1$ in both (\ref{eq:intro-RSSI}) and (\ref{eq:intro-CRSSI}) cannot be improved, but equality does not seem to be attained in either inequalities. 

\subsection{Log-supermodularity is natural}

In a sense, 
the GCRSI resolves a troubling feature of the GCI --
  why does the measure appear twice on one side of (\ref{eq:intro-GCI}) and only once on the other side? Clearly this makes the GCI lose its utility as the sets $K,L$ become smaller, and in particular nothing survives in the scaling limit because of the different orders of magnitudes on either side of the inequality. 
 
In contrast, the GCRSI does not have any of these caveats, as the Gaussian measure appears twice on \emph{both} sides of the inequality  (\ref{eq:intro-GCRSI}), 
and so is perhaps more natural or fundamental. It may be seen as a type of log-supermodularity of the Gaussian measure with respect to the intersection and Minkowski summation operations on the semi-ring of origin-symmetric convex sets. 

\subsection{Prior Results}

In \cite{Tehranchi-RefinedGaussianCorrelation}, Tehranchi established the following inequality:
\[
\gamma(K) \gamma(L) \leq (1-s)^{-\frac{n}{2}} \gamma\brac{\sqrt{\frac{2(1-s)}{1+t}} (K \cap L)} \gamma\brac{\sqrt{\frac{1-s}{2 (1-t)}} (K + L) } ,
\]
for all $0 \leq s \leq t^2 < 1$ and origin-symmetric convex $K,L \subset \R^n$. In particular, setting $s=0$, he obtained:
\[
\gamma(K) \gamma(L) \leq \gamma\brac{\sqrt{\frac{2}{1+t}} (K \cap L)} \gamma\brac{\frac{1}{\sqrt{2 (1-t)}} (K +L) } ,
\]
which interpolates between (\ref{eq:intro-MilmanPajor}) when $t=0$ and the GCI (\ref{eq:intro-GCI}) when $t \nearrow 1$. In addition, using $s=\frac{1}{5}$ and $t = \frac{3}{5}$ yields:
\[
\gamma(K) \gamma(L) \leq \brac{\frac{5}{4}}^{\frac{n}{2}} \gamma(K \cap L) \gamma(K + L) ,
\]
confirming Tehranchi's conjecture  that the GCRSI (\ref{eq:intro-GCRSI}) holds for all origin-symmetric convex $K,L$ up to a constant exponential in the dimension $n$.

Tehranchi's conjecture was also recently investigated by Assouline, Chor and Sadovsky in \cite{ACS-RefinedKhatriSidak}. These authors verified the conjecture when $K,L$ are both unconditional convex sets, i.e.~invariant under reflection with respect to the coordinate hyperplanes $\{x_i = 0\}_{i=1,\ldots,n}$. They also considered a stronger variant of the conjecture, in which $K+L$ in (\ref{eq:intro-GCRSI}) is replaced by the smaller convex-hull $\conv(K \cup L)$, and showed \cite[Example 14]{ACS-RefinedKhatriSidak} that this version fails, even for (unconditional) rectangular sets in $\R^2$. However, this stronger version does hold when one of the sets is a symmetric slab $\{|x_1| \leq a\}$ and the other is a general origin-symmetric convex set \cite{ACS-RefinedKhatriSidak}; the usual GCI in this case is a classical result of  Khatri \cite{Khatri} and \v{S}id\'ak \cite{Sidak}.

\subsection{Extensions \`a la Milman--Pajor and open questions}

Our analysis can be generalized to give the following.
\begin{thm} \label{thm:intro-ab}
Let $K,L \subset \R^n$ denote convex sets with non-empty interior and Gaussian barycenters at the origin. Then for all $\abs{a},\abs{b} \geq 1$ so that $\abs{a+b} \geq 1$, 
\begin{equation} \label{eq:intro-ab}
\gamma(K) \gamma(L) \leq \gamma(K \cap L) \gamma(a K + bL) ,
\end{equation}
with equality when $\abs{a+b} > 1$ if and only if (\ref{eq:intro-equality}) holds. 
\end{thm}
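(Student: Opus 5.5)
The plan is to obtain (\ref{eq:intro-ab}) from the paper's main tool, the Gaussian Forward-Reverse Brascamp--Lieb (FRBL) inequality for centered log-concave functions, by the same route that yields Theorem \ref{thm:intro-GCRSI}; Theorem \ref{thm:intro-GCRSI} is the case $a=b=1$. The idea is to lift to $\R^{2n}$ in the spirit of Rogers--Shephard--Spingarn. Write
\[
\gamma(K)\gamma(L)=\int_{\R^{2n}} 1_{K\times L}(x,y)\, d\gamma^{2n}(x,y).
\]
Split $\R^{2n}$ into two $n$-dimensional linear ``directions'': a ``section'' map $z$ and a ``projection'' map $w(x,y)=ax+by$.
\begin{itemize}
\item The fibres of $w$ are $\{(x,y): ax+by=w\}$. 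The support of $1_{K\times L}$ under $w$ is exactly $aK+bL$, so $aK+bL$ appears as the projection of $K\times L$.
\item The section of $K\times L$ is intersected with a subspace chosen so that, after a linear change of variables, its trace is $1_K(z)1_L(z)=1_{K\cap L}(z)$.
\end{itemize}
Accordingly I would set $f_1=1_K$ and $f_2=1_L$ on the ``forward'' side, and $g_1=1_{K\cap L}$ and $g_2=1_{aK+bL}$ on the ``reverse'' side. The required FRBL hypothesis is the fibrewise compatibility: $f_1(x)f_2(y)$ must be dominated by the product of $g_1$ at the section coordinate and $g_2(ax+by)$, in the sup-over-fibres sense used in the FRBL. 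By convexity of $K\times L$ this compatibility holds automatically.

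The Gaussian data must be chosen with care. On the forward side both marginals are standard, $\gamma^n\otimes\gamma^n$. On the reverse side we need $\gamma^n$ for $g_1$ and $\gamma^n$ for $g_2$. The pushforward of $\gamma^{2n}$ under $w$ has covariance $(a^2+b^2)\Id$ and not $\Id$. I would therefore take a degenerate, or non-orthogonal, decomposition: pick the section coordinate $z$ as a linear combination of $x$ and $y$ so that the joint law of $(z,w)$ is dominated in the FRBL sense by $\gamma^n\otimes\gamma^n$. The FRBL is stated to apply to degenerate Gaussian covariances, and this is precisely where that is needed. The centering hypothesis on the Gaussian barycenters of $K$ and $L$ is exactly what lets us use the centered-log-concave version, instead of requiring symmetry, for the functions $1_K\,\gamma$ and $1_L\,\gamma$.

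I expect the main obstacle to be verifying that the resulting $2n\times 2n$ Brascamp--Lieb datum satisfies the positivity, or matrix-ordering, condition under which the Gaussian FRBL constant equals $1$. The computation should reduce to a $2\times2$ scalar matrix tensored with $\Id_n$ in the parameters $a,b$. I expect its positive-semidefiniteness to amount to $(a^2-1)(b^2-1)\ge0$ together with $(a+b)^2\ge 1$, which for $|a|,|b|\ge1$ are precisely the stated conditions. I would first check the scalar case $n=1$ with half-lines and slabs to confirm this reduction before handling general $n$.

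For the equality statement, I would invoke the equality characterization of the FRBL. When $|a+b|>1$ the datum should be non-degenerate in the relevant direction, forcing the extremal functions to split along a subspace $E$. This gives (\ref{eq:intro-equality}). For the converse, one checks directly that product sets $K_0\times E^\perp$ and $E\times L_0$ yield equality: in that case $aK+bL=\R^n$ when $a,b\neq0$, and $\gamma(K\cap L)=\gamma(K)\gamma(L)$.
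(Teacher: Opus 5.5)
\textbf{Gap 1: the FRBL datum.} The reverse-side datum you describe does not satisfy the hypothesis of the paper's FRBL (Theorem \ref{thm:intro-FRBL}). That hypothesis is a \emph{pointwise} inequality $f_1(x_1)f_2(x_2)\le\prod_j h_j(L_jx)$ for every $x\in\R^{2n}$, with fixed linear surjections $L_j$; there is no ``sup-over-fibres'' version. Suppose $g_1=\1_{K\cap L}$ is evaluated at a linear section coordinate $z=px_1+qx_2$. Then you need $pK+qL\subset K\cap L$ for all admissible $K,L$. Taking $K$ a centered ball of radius $r$ and $L$ one of radius $R\gg r$ forces $q=0$, and swapping the roles forces $p=0$, so no surjective $z$ works. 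Your intuition that $K\cap L$ is the diagonal section of $K\times L$ is correct, but the degeneracy must sit in the reference measure, not in the function. Take $h_1=\1_{K\times L}$ on $\R^{2n}$ with $L_1=\Id_{2n}$, and $h_2=\1_{aK+bL}$ with $L_2(x)=ax_1+bx_2$. Integrate $h_1$ against $\gamma_{\Gamma_1}$ with the degenerate $\Gamma_1=\begin{pmatrix}\Id_n&\Id_n\\ \Id_n&\Id_n\end{pmatrix}$, which is the image of $\gamma^n$ under $y\mapsto(y,y)$. Then $\int h_1\,d\gamma_{\Gamma_1}=\gamma(K\cap L)$, and the hypothesis $\1_K(x_1)\1_L(x_2)\le\1_{K\times L}(x_1,x_2)\1_{aK+bL}(ax_1+bx_2)$ is trivial; convexity is needed only for log-concavity. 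This is why degenerate covariances are needed. It has nothing to do with the covariance $(a^2+b^2)\Id_n$ of the pushforward under $w$: the reference Gaussians are free parameters, any mismatch is absorbed into $\FR^{(\G)}_{LC}$, and one simply takes $\Sigma_1=\Sigma_2=\Gamma_2=\Id_n$. It is also essential that the FRBL imposes no centering on $h_1,h_2$, since $K\cap L$ and $aK+bL$ need not be centered.

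\textbf{Gap 2: the Gaussian verification.} The core step is not a positive-semidefiniteness check of a fixed $2\times2$ matrix tensored with $\Id_n$. Showing $\FR^{(\G)}_{LC}\le 1$ means proving $\det(\Id_n+A_1)\det(\Id_n+A_2)\ge\det(\Id_n+\bar B)\det(\Id_n+C)$, where $\bar B=B_1+B_2+B_2^*+B_4$, for \emph{all} non-commuting $A_1,A_2,C\ge0$ and $B\ge0$ satisfying (\ref{eq:matrix-assumption}). The paper notes that naive approaches, such as adding $\Id_{2n}$ and taking determinants, fail. A criterion of the form you predict cannot be right: it would also certify $a=b=\tfrac12$, yet $\gamma(K)\gamma(L)\le\gamma(K\cap L)\gamma(\tfrac12(K+L))$ fails for $K$ the unit ball and $L=\eps K$. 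The actual argument has three steps.
\begin{enumerate}
\item The diagonal blocks give $A_1\ge a^2C\ge C$ and $A_2\ge b^2C\ge C$; this is where $|a|,|b|\ge1$ enters.
\item Fischer's inequality, in the form of Corollary \ref{cor:GaussianGCI} with $\mathcal{C}=\Id_n+C$, gives $\det(\Id_n+A_1)\det(\Id_n+A_2)\ge\det(\Id_n+C)\det(\Id_n+A_1+A_2-C)$.
\item Testing (\ref{eq:matrix-assumption}) on $(y,y)$ gives $A_1+A_2-C\ge\bar B+((a+b)^2-1)C\ge\bar B$. This is where $|a+b|\ge1$ enters, and it forces $C=0$ at equality when $|a+b|>1$.
\end{enumerate}

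\textbf{Gap 3: the equality case.} There is no equality characterization of the general FRBL to invoke; the approximation steps in its proof destroy it. The paper needs a separate argument:
\begin{enumerate}
\item partial Gaussian saturation via iterated self-convolutions and the local CLT (Proposition \ref{prop:partial-Gaussian-saturation});
\item a proof that the $\gamma$-barycenters of $h_2$ and of $y\mapsto h_1(y,y)$ must vanish (Lemma \ref{lem:equality-centered});
\item $C=0$ together with the covariance rigidity of Proposition \ref{prop:cov}, which makes $h_2$ constant, i.e.\ $aK+bL=\R^n$;
\item reduction to the equality case of the centered GCI of Nakamura--Tsuji, which yields (\ref{eq:intro-equality}).
\end{enumerate}
Your check of the converse direction, that product sets give equality, is correct.
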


Since $K,L$ are inclusion-wise monotone under scaling, and since the Gaussian measure is invariant under reflection about the origin, the only two interesting cases above (from which everything else follows) are:
\begin{enumerate}
\item $a=b=1$, yielding the GCRSI (\ref{eq:intro-GCRSI}). 
\item $a=2$ and $b=-1$, which in general (for non-origin-symmetric $L$) is incomparable to (\ref{eq:intro-GCRSI}); if $a > 2$, we also get the equality case. 
\end{enumerate}

It would be very interesting to obtain the case when $a=1$ and $b=-1$, since this would recover in the scaling limit the RSSI (\ref{eq:intro-RSSI}) instead of its conjugate version (\ref{eq:intro-CRSSI}). Unfortunately, our proof does not work in this case, because we reduce the problem to a certain Gaussian saturation sufficient condition, which turns out to simply be false. Consequently, we leave this case as an interesting open problem: under the assumptions of Theorem \ref{thm:intro-ab}, does it hold that
\begin{equation} \label{eq:intro-GRSI}
\gamma(K) \gamma(L) \leq \gamma(K \cap L) \gamma(K - L) \; ?
\end{equation}

\medskip

We can also obtain the following conjugate version of (\ref{eq:intro-MilmanPajor}). 
\begin{thm}[Conjugate Milman--Pajor Inequality (CMPI)] \label{thm:intro-CMPI}
Let $K,L \subset \R^n$ denote convex sets with non-empty interior and Gaussian barycenters at the origin. Then for all $a,b \in \R$ such that $a^2 + b^2 = 1$,
\begin{equation} \label{eq:intro-CMPI}
\gamma(K) \gamma(L) \leq \gamma\brac{\frac{1}{b} K \cap \frac{1}{a} L} \gamma(a K + bL) ,
\end{equation}
with equality when $a,b \neq 0$ if and only if $K = L = \R^n$. 
\end{thm}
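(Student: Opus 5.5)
The plan is to encode both sides as Gaussian integrals over $\R^{2n}$ and then apply the paper's Gaussian Forward--Reverse Brascamp--Lieb inequality for centered log-concave functions. Put $f = 1_K$ and $g = 1_L$. Both are log-concave and, by assumption, centered with respect to $\gamma$.

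\textbf{Step 1: the lifted picture.} Consider $1_K(x)1_L(y)$ on $\R^{2n}$ under the product measure $\gamma^{2n}$. This function integrates to $\gamma(K)\gamma(L)$. For $a^2+b^2=1$ the map
\[
(x,y)\mapsto (u,v)=(ax+by,\,-bx+ay)
\]
is a rotation, so it preserves $\gamma^{2n}$. The forward part of the inequality bounds the integral from above by a product of two marginals: a sup-marginal in the $u$-direction and a section-type term in the $v$-direction. These are exactly what produce the two factors $\gamma(aK+bL)$ and $\gamma(\frac1b K\cap \frac1a L)$.

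\textbf{Step 2: the two factors.} For fixed $u$, the function $\sup_{ax+by=u}1_K(x)1_L(y)$ equals $1_{aK+bL}(u)$. For the other factor, since $x=au-bv$ and $y=bu+av$, the section at $u=0$ is the indicator of $\{v: -bv\in K,\ av\in L\}$. By the symmetry of $\gamma$ in $v$, this set has the same measure as $\frac1b K\cap\frac1a L$ up to a sign; the sign choice for $b$ is absorbed because $\gamma$ is invariant under $v\mapsto -v$.

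\textbf{Step 3: checking the hypotheses.} One must verify that the Brascamp--Lieb data are those of the Gaussian case, with covariances that may degenerate because they arise from the two orthogonal projections. The data must also satisfy the paper's saturation condition, that is, equality for centered Gaussians. I would run this check on the functions $e^{-\langle Ax,x\rangle/2}$ with $A\ge 0$, noting that $B=0$ is allowed by degeneracy. It reduces to a determinant inequality of the form
\[
\det(I+A)\det(I+B)\ \ge\ \det(I+a^2 \text{-mix})\det(\cdots).
\]

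\textbf{Step 4: centering and the main obstacle.} The forward--reverse inequality takes a sup in one direction and an integral or evaluation at the centered point in the other. I expect the main obstacle to be justifying that the section at $u=0$, rather than an averaged or maximal section, is the one that controls the integral. This is where the Gaussian barycenter condition enters: centered log-concave marginals attain their ``Gaussian-weighted value'' controls at the origin, in the same way as in the Nakamura--Tsuji GCI for centered sets.

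\textbf{Step 5: equality.} If $a,b\neq0$, the saturating Gaussians for a strict rotation force $A=B=0$. Tracing this back through the equality cases of the forward--reverse inequality gives $K=L=\R^n$.
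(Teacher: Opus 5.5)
There is a genuine gap, and it sits exactly where the content of the theorem lies. In Step~2 the sign cannot be absorbed. Under your rotation the section at $u=0$ is
\[
\{v:-bv\in K,\ av\in L\}=(-\tfrac1b K)\cap\tfrac1a L .
\]
The reflection $v\mapsto -v$ flips \emph{both} sets at once, so it sends this to $\frac1b K\cap(-\frac1a L)$, and never to $\frac1bK\cap\frac1aL$.

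For centered non-symmetric sets the two measures really differ. Take $n=2$, $a=b=1/\sqrt2$, and $K=L$ a triangle with Gaussian barycenter at the origin. Then $\gamma(\sqrt2(K\cap -K))<\gamma(\sqrt2 K)$, and the right-hand side is the intersection term in (\ref{eq:intro-CMPI}).

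So your orthogonal section/projection scheme can at best give $\gamma(K)\gamma(L)\le\gamma((-\frac1bK)\cap\frac1aL)\,\gamma(aK+bL)$. This is just the classical Milman--Pajor inequality (\ref{eq:intro-MilmanPajor}) applied to the pair $(K,-L)$. Since (\ref{eq:intro-CMPI}) and (\ref{eq:intro-MilmanPajor}) coincide whenever $K$ or $L$ is origin-symmetric, everything new in the statement lives in the sign you discarded.

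Structurally, $\frac1bK\cap\frac1aL$ is the section of $K\times L$ along the direction $(b,a)$. By contrast, $aK+bL$ is the image of $K\times L$ under $(x_1,x_2)\mapsto ax_1+bx_2$, so it comes from the direction $(a,b)$. These directions have inner product $2ab\neq 0$. Hence no rotation of $\R^{2n}$ makes them an orthogonal section/projection pair, and every rotation lands on Milman--Pajor with some sign.

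The missing idea is to work directly with this non-orthogonal pair. The paper uses the datum (\ref{eq:our-BL-datum}) with:
\begin{itemize}
\item $L_1(x)=(x_1/b,\,x_2/a)$ into $\R^{2n}$, integrated against the degenerate covariance $\Gamma_1$ supported on the diagonal, which encodes the section along $(b,a)$;
\item $L_2(x)=ax_1+bx_2$.
\end{itemize}
Theorem \ref{thm:FRBL} imposes no centering on the $h_j$, so your Step~4 concern does not arise.

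The whole difficulty is then to show $\FR^{(\G)}_{LC}\le 1$, i.e.\ that (\ref{eq:GCMPI-matrix-assumption}) implies (\ref{eq:GCMPI-det-conclusion}). Your Step~3 leaves this as an unspecified placeholder. It is in fact a genuinely non-trivial determinant inequality, proved in Proposition \ref{prop:GaussianGCMPI} via Lemmas \ref{lem:GCMPI1} and \ref{lem:GCMPI2}. The paper establishes it in the range $3\min(a^2,b^2)+\max(a^2,b^2)\ge 1$, which contains the circle $a^2+b^2=1$.

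Step~5 also does not work as stated. There is no equality characterization of the Forward--Reverse inequality to trace back, because the approximation steps in its proof destroy it. The paper instead proceeds as follows:
\begin{enumerate}
\item It runs a partial Gaussian saturation on the extremal data (Proposition \ref{prop:partial-Gaussian-saturation}). This includes Lemma \ref{lem:equality-centered}, showing that the optimal $h_j$ must be centered.
\item It applies the equality case of Corollary \ref{cor:GenGaussianGCMPI}. For $\max(a^2,b^2)<1$ this forces $A_1=A_2=\bar B=C=0$.
\item It uses Proposition \ref{prop:cov} to conclude that $\1_K$ and $\1_L$ are constant, i.e.\ $K=L=\R^n$.
\end{enumerate}
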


More generally, we pose the following problem:
\begin{problem*}
Characterize those $(a,b)$ for which (\ref{eq:intro-CMPI}) holds for all $K,L$ as above. 
\end{problem*}
\noindent 

It is not hard to show (see Section \ref{sec:unify}) that any $(a,b)$ satisfying (\ref{eq:intro-CMPI}) must satisfy:
\[
\abs{a} \leq 1 ~,~ \abs{b} \leq 1 ~,~ \abs{a} + \abs{b} \geq 1 . 
\]
We are able to show the following extension of Theorem \ref{thm:intro-CMPI} (which when specialized to the case $a=b=1$, contains the GCRSI (\ref{eq:intro-GCRSI})).
\begin{thm}[Generalized Conjugate Milman--Pajor Inequality (GCMPI)] \label{thm:intro-GCMPI}
The inequality (\ref{eq:intro-CMPI}) holds for all $K,L$ as above for any $(a,b)$ in the following range:
\[
\abs{a} \leq 1 ~,~ \abs{b} \leq 1 ~,~ 3 \min(a^2,b^2) + \max(a^2,b^2) \geq 1 .
\]
In addition, given $(a,b)$ in this range:
\begin{enumerate}
\item
If $0<\abs{a}<\abs{b}=1$, then equality in (\ref{eq:intro-CMPI}) occurs if and only if $L = \R^n$. 
\item
If $0<\abs{b}<\abs{a}=1$, then equality in (\ref{eq:intro-CMPI}) occurs if and only if $K = \R^n$.
\item
If $\max(\abs{a},\abs{b}) < 1$ then equality in (\ref{eq:intro-CMPI}) occurs if and only if $K=L = \R^n$.
\end{enumerate} 
\end{thm}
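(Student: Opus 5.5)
We plan to derive Theorem \ref{thm:intro-GCMPI} from the Gaussian Forward-Reverse Brascamp--Lieb inequality for centered log-concave functions announced in the abstract. The starting point is the following Gaussian representation. Let $X,Y$ be independent standard Gaussian vectors in $\R^n$, so that $1_K(X)1_L(Y)$ has expectation $\gamma(K)\gamma(L)$. We introduce the linear change of variables
\[
U = bX + aY,\qquad V = aX+ bY \quad (\text{or with a sign change, depending on the signs of } a,b),
\]
so that $X\in K$ and $Y\in L$ place constraints on $(U,V)$. In the rotation case $a^2+b^2=1$ (Theorem \ref{thm:intro-CMPI}), $(U',V')=(aX+bY,\,-bX+aY)$ is again a standard Gaussian pair. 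The idea is that $aX+bY \in aK+bL$ always holds, while the complementary combination is used to produce an intersection-type constraint. Concretely, we would write $\gamma(K)\gamma(L)=\int 1_K\otimes 1_L \, d\gamma^{2n}$ as a Forward-Reverse Brascamp--Lieb quantity on $\R^{2n}$ with two "forward" maps (projections onto the $X$ and $Y$ coordinates) and two "reverse" maps, namely $P_1(x,y)=ax+by$ and $P_2$, the map onto the diagonal-type subspace responsible for $\frac1bK\cap\frac1aL$. With $f_1=1_K$, $f_2=1_L$, $g_1=1_{\frac1bK\cap\frac1aL}$ and $g_2=1_{aK+bL}$, the desired inequality is
\[
\int f_1 d\gamma\int f_2 d\gamma\le \int g_1 d\gamma\int g_2d\gamma
\]
under the pointwise constraint $f_1(x)f_2(y)\le g_1(\cdot)g_2(ax+by)$ along the appropriate fibers. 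This is exactly the shape of an FRBL inequality in which the forward data are degenerate Gaussians, which is why the degenerate-covariance version is needed.

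The key steps, in order, are as follows. First, we formulate the FRBL inequality for centered log-concave inputs with general, possibly degenerate, Gaussian reference covariances, and reduce to checking Gaussian saturation: the inequality holds with constant $1$ as soon as it holds for all centered Gaussian inputs $f_i=e^{-\langle A_i x,x\rangle/2}$ in the relevant class. Second, we approximate $1_K,1_L$ by centered log-concave functions. The Gaussian barycenter condition is precisely the statement that $1_K\,d\gamma$ is centered, so we work with the densities $1_K e^{-|x|^2/2}$. Third, we verify the Gaussian saturation condition. This reduces to a determinant or matrix inequality in positive semidefinite matrices $A,B\ge 0$ (playing the roles of the extra Gaussian weights restricting $K$ and $L$), of the form
\[
\det(I+A)\det(I+B)\ \ge\ \det(I+ \text{[intersection part]})\cdot\det(I+\text{[sum part]})^{-1}\text{-type expression},
\]
where the sum part involves the inf-convolution $\big(a^2(I+A)^{-1}+b^2(I+B)^{-1}\big)^{-1}$ and the intersection part involves $I+b^2A+a^2B$. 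By simultaneous diagonalization when possible, and by a monotonicity or concavity argument in general, this becomes a scalar inequality whose validity domain we expect to be exactly $|a|,|b|\le1$ and $3\min(a^2,b^2)+\max(a^2,b^2)\ge1$.

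The main obstacle is this third step. The matrices $A$ and $B$ do not commute, so we cannot simply diagonalize. We would first try to prove the matrix inequality via operator concavity of $\log\det$ together with Lieb-type concavity, reducing it to the commuting case. In the commuting case, the condition $3\min+\max\ge1$ should emerge from checking the second-order expansion at $A=B=0$ and showing convexity in each variable. Finally, the equality cases follow from the equality characterization in the FRBL inequality. There, equality forces the extremizers to be Gaussian, which in the indicator limit means the relevant set must be $\R^n$. The strict contraction $|a|<1$ or $|b|<1$ is used to rule out the nontrivial product structure (\ref{eq:intro-equality}).
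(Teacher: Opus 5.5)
Your overall strategy (apply the Gaussian FRBL for centered log-concave functions, then reduce to Gaussian saturation) matches the paper's. However, the decisive step is missing. With the datum (\ref{eq:our-BL-datum}), $\alpha=1/b$, $\beta=1/a$, the whole content of the inequality is the matrix statement that (\ref{eq:GCMPI-matrix-assumption}) implies (\ref{eq:GCMPI-det-conclusion}) for \emph{all} $A_1,A_2,C\in\Sym_{\geq0}(n)$ and $B\in\Sym_{\geq0}(2n)$. This is a joint supremum over $(B,C)$. It is not a single expression built from one inf-convolution and one ``intersection'' matrix, as you write. The off-diagonal block $B_2$ can cancel the cross term $abC$, and this is exactly what drives the counterexamples in Subsection~\ref{subsec:counterexamples}.

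You flag this step as the main obstacle but offer only heuristics: diagonalization, ``Lieb-type concavity'', and reduction to commuting matrices. These are not an argument. Your expectation that $3\min(a^2,b^2)+\max(a^2,b^2)\geq1$ emerges from an expansion at $A=B=0$ is also mistaken. A local expansion only yields necessary conditions. Already for $n=1$, the leading-order (trace) condition $A_1+A_2\geq\bar B+C$ on the feasible cone works out to $a^2+b^2+8a^2b^2\geq1$, which is strictly weaker. The hypothesis is simply what makes the paper's chain of determinant inequalities close. That chain runs as follows:
\begin{itemize}
\item Assume $a^2\leq b^2$ and conjugate by $\mathrm{diag}(1/a,1/b)$ to get $\mathrm{diag}(A_1/a^2,A_2/b^2)\geq\frac{1}{a^2b^2}B+\begin{pmatrix} C & C\\ C & C\end{pmatrix}$.
\item Hence $A_1/a^2,A_2/b^2\geq C$, and on the diagonal, $Z:=A_1/a^2+A_2/b^2-4C\geq\bar B/(a^2b^2)$.
\item Apply Lemma~\ref{lem:GCMPI1} with $t_1=a^2\leq t_2=b^2$. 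This lemma combines the Gaussian GCI (via Fischer's inequality) with monotonicity of $t\mapsto\det(\Id_n+tB)/\det(\Id_n+tC)$ for $B\geq C$.
\item Use $3a^2\geq1-b^2$ exactly once, to pass from $a^2(3C+Z)$ to $(1-b^2)C+a^2Z$.
\item Finish with Lemma~\ref{lem:GCMPI2}, which rests on $\det(\Id_n+DZD)\geq\det(\Id_n+Z)$ for $D\geq\Id_n$.
\end{itemize}
Non-commutativity is handled by these determinant lemmas, not by diagonalization.

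The set-up and the equality case also have gaps. Take $g_1=\1_{\frac{1}{b}K\cap\frac{1}{a}L}$ on $\R^n$ composed with a fixed linear $P_2:\R^{2n}\to\R^n$. This cannot satisfy the pointwise constraint for general $K,L$: taking $K$ a small and $L$ a huge centered ball, or vice versa, forces $P_2=0$. The paper instead takes $h_1=\1_{\frac{1}{b}K\times\frac{1}{a}L}$ on $\R^{2n}$ with $L_1(x_1,x_2)=(x_1/b,x_2/a)$, so the constraint is trivial. It then obtains $\gamma(\frac{1}{b}K\cap\frac{1}{a}L)$ by integrating $h_1$ against the degenerate $\gamma_{\Gamma_1}$ supported on the diagonal. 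The degeneracy thus sits on the $h$-side, where no centering is imposed; this is essential, since neither $h_1$ nor $h_2$ is centered in general.

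For equality, there is no general equality characterization of the Gaussian FRBL to appeal to, because the approximations in its proof destroy it. Also, the extremizers here are indicators, not Gaussians. The paper instead proves a partial Gaussian saturation (Proposition~\ref{prop:partial-Gaussian-saturation}):
\begin{itemize}
\item Iterating self-convolutions, it first shows via Lemma~\ref{lem:equality-centered} that equality forces $y\mapsto h_1(y,y)\gamma(y)$ and $h_2\gamma$ to be centered.
\item A local CLT limit then gives Gaussian data $A_i=\Cov(f_i\gamma)^{-1}-\Id_n$, $C=\Cov(h_2\gamma)^{-1}-\Id_n$, with an $H$ that is Gaussian only on the diagonal; this is why Proposition~\ref{prop:GenGaussianGCMPI} is needed.
\item The equality analysis of the matrix inequality gives, e.g., $A_2=C=0$ when $a^2<b^2=1$, i.e.\ $\Cov(\1_L\gamma)=\Id_n$.
\item The rigidity part of Proposition~\ref{prop:cov} then yields $L=\R^n$.
\end{itemize}
Your remark that strict contraction ``rules out the product structure'' does not substitute for this.
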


In particular, we give a complete answer to the characterization problem in the case that $\abs{a}=\abs{b}$:
\begin{cor} \label{cor:intro-lambda}
The inequality (\ref{eq:intro-CMPI}) holds for all $K,L$ as above for $\abs{a}=\abs{b}=\lambda$ if and only if $\lambda \in [1/2,1]$. Note that the case $\lambda =1$ corresponds to the GCRSI (\ref{eq:intro-GCRSI}) and the case $\lambda = 1/\sqrt{2}$ to the CMPI (\ref{eq:intro-CMPI}). In particular, we have when $\lambda = 1/2$,
\[
\gamma(K) \gamma(L) \leq \gamma\brac{2 (K \cap L)} \gamma\brac{\frac{1}{2}(K + L)} ,
\]
with equality if and only if $K = L = \R^n$. 
\end{cor}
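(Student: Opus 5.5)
The corollary follows from Theorem \ref{thm:intro-GCMPI} together with a set of counterexamples for the necessity direction.

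\emph{Sufficiency.} Fix $\abs{a}=\abs{b}=\lambda$. The range condition in Theorem \ref{thm:intro-GCMPI} becomes $\lambda\le 1$ and $3\lambda^2+\lambda^2=4\lambda^2\ge 1$, that is, $\lambda\in[1/2,1]$. So the inequality (\ref{eq:intro-CMPI}) holds for every such $\lambda$ and every choice of signs. The identifications are immediate:
\begin{itemize}
\item $\lambda=1$ with $a=b=1$ gives the GCRSI (\ref{eq:intro-GCRSI});
\item $\lambda=1/\sqrt2$ satisfies $a^2+b^2=1$, so it is the CMPI;
\item $\lambda=1/2$ with $a=b=1/2$ gives $\frac1b K\cap\frac1a L=2K\cap 2L=2(K\cap L)$ and $aK+bL=\frac12(K+L)$.
\end{itemize}
For the equality case at $\lambda=1/2$, we have $\max(\abs a,\abs b)=1/2<1$, so item 3 of Theorem \ref{thm:intro-GCMPI} gives equality if and only if $K=L=\R^n$.

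\emph{Necessity.} We need the necessary conditions $\abs a\le 1$, $\abs b\le1$, $\abs a+\abs b\ge 1$, which the paper says are derived in Section \ref{sec:unify}. With $\abs a=\abs b=\lambda$ these read $\lambda\le 1$ and $2\lambda\ge1$, which is exactly $\lambda\in[1/2,1]$. Since we cannot cite that section yet, we sketch the two counterexamples.

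\emph{Case $\lambda>1$.} Take $K=L=B$ a small centered ball of radius $r$. Then $\frac1b K\cap\frac1a L=\frac1\lambda B$, and $aB+bB\subseteq 2\lambda B$. As $r\to0$, $\gamma(tB)\approx c\,t^n r^n$. The left side is then $c^2r^{2n}$, while the right side is at most $c^2 r^{2n}(\lambda^{-1}\cdot 2\lambda)^n$, so this choice does not violate the inequality and is not the right test. Instead take $L=\R^n$ and $K$ a small ball $B$ of radius $r$. The inequality becomes $\gamma(B)\le \gamma(\frac1\lambda B)\gamma(\R^n)=\gamma(\frac1\lambda B)$. This fails for $\lambda>1$ by strict monotonicity of $t\mapsto\gamma(tB)$.

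\emph{Case $\lambda<1/2$.} Take $K=L=rB$ with $r$ large. The left side tends to $1$. On the right, $\gamma(\frac1\lambda rB)\le 1$, and $\gamma(aK+bL)\le\gamma(2\lambda rB)$. When $2\lambda<1$ the radius $2\lambda r$ is smaller than $r$, so the right side is asymptotically smaller than the left. To make this quantitative, compare tails: write $1-\gamma(rB)=:\delta(r)$. Then the inequality reads
\[
(1-\delta(r))^2\le 1-\delta(2\lambda r).
\]
This requires $\delta(2\lambda r)\le 2\delta(r)$. But $\delta(2\lambda r)/\delta(r)\to\infty$ as $r\to\infty$ because of the Gaussian tail decay, so the inequality fails.

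This is the only delicate step, and it is not really delicate: a more robust alternative is the one-dimensional slab $K=L=[-r,r]\times\R^{n-1}$ with the same tail comparison.
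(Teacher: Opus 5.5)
Your argument is correct and is essentially the paper's. Sufficiency and the $\lambda=1/2$ equality case follow from Theorem \ref{thm:intro-GCMPI}, since $4\lambda^2\ge 1$. Necessity comes from two tests: $L=\R^n$ forces $\lambda\le 1$, and large symmetric sets with a Gaussian tail comparison force $2\lambda\ge 1$. The second test is the paper's Lemma \ref{lem:a+b}, which uses exactly the interval/slab you offer as an alternative to the ball, so in a write-up you should also drop the abandoned small-ball attempt in the case $\lambda>1$.
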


In Section \ref{sec:unify}, we combine Theorems \ref{thm:intro-ab} and \ref{thm:intro-GCMPI} into a single unified formulation (see Figure \ref{fig:diagram}), and explain why the Gaussian saturation sufficient condition is bound to fail when $a K + bL$ in (\ref{eq:intro-CMPI}) is replaced by $a K - b L$ as in (\ref{eq:intro-GRSI}), with the only exception being precisely when $a^2 + b^2=1$, as in the Milman--Pajor inequality (\ref{eq:intro-MilmanPajor}). 

\begin{remark} \label{rem:intro-scaling}
By scaling $K,L$ by a factor of $\lambda > 0$ and letting $\lambda \rightarrow 0$, analogues of (\ref{eq:intro-ab}) and (\ref{eq:intro-CMPI}) are obtained for the Lebesgue measure instead of $\gamma$, for all bounded convex sets $K,L \subset \R^n$ with non-empty interior having Lebesgue barycenters at the origin. However, by homogeneity of the Lebesgue measure and monotonicity, the only two interesting cases remain (\ref{eq:intro-RSSI}) and (\ref{eq:intro-CRSSI}). Note that the equality cases are lost in this limiting procedure. 
\end{remark}

\medskip

Another direction is to try and obtain a Gaussian analogue of the Rogers--Shephard upper bound (\ref{eq:intro-RS-upper}), which would serve as a reverse counterpart to the GCRSI (\ref{eq:intro-GCRSI}). 
However, any inequality of the form:
\[
\gamma(K \cap L) \gamma(C_1 (K + L)) \leq C_2 \gamma(C_3 K) \gamma(C_3 L) ,
\]
is necessarily false already in $\R^2$, for all $C_1> 0$ and $C_2,C_3 \geq 1$. To see this, simply take $K = \{\abs{x_1} \leq a\}$ and $L = U_\theta(K)$, where $U_\theta$ is a rotation in $\R^2$ by $\theta$ degrees. For all $\theta > 0$, $K + U_\theta(K) = \R^2$, whereas $\gamma(K \cap U_\theta(K)) \rightarrow \gamma(K)$ as $\theta \rightarrow 0$. Therefore, by selecting $a > 0$ small enough so that $\gamma(C_3 K) < \frac{1}{C_2 C_3}$, since $\gamma(C_3 K) / C_3 \leq \gamma(K)$, the reverse form above is bound to fail for small enough $\theta > 0$. 

We thus leave open the question of whether there is a sensible reverse form of the GCRSI, which yields a meaningful inequality in the scaling limit.

\subsection{Functional formulation}

While Royen's original proof of the GCI \cite{Royen-GaussianCorrelation} operated on the level of sets, the recent proofs in \cite{EMilman-GCI,NakamuraTsuji-GCIForCentered} are based on a functional formulation of the GCI (to be described later on). Recall that a function $f : \R^n \rightarrow \R_+$ is called quasi-concave if all of its super level sets $\{ f \geq t \}$ are convex, and log-concave if $\log f: \R^n \rightarrow \R \cup \{-\infty\}$ is concave.  
 It is easy to see (just by integration on level sets and using the tail formula) that an equivalent functional formulation of the GCI is that:
\begin{equation} \label{eq:intro-functional-GCI}
 \int_{\R^n} f_1 d\gamma \int_{\R^n} f_2 d\gamma \leq \int_{\R^n} f_1 f_2 d\gamma 
\end{equation}
for all quasi-concave functions $f_i : \R^n \rightarrow \R_+$, all of whose upper level sets have Gaussian barycenter at the origin. 

The first challenge in extending the GCI to the GCRSI is to realize what is the right functional formulation of the latter, as there are many potential operations on functions which would turn $f_1 = \1_K$ and $f_2 = \1_L$ into $\1_{K\cap L}$ and $\1_{K+L}$; at the same time, the homogeneity in $f_1$ and $f_2$ should be the same on both sides of the inequality. For example, one might guess that perhaps we should expect to have:
\[
 \int_{\R^n} f_1 d\gamma \int_{\R^n} f_2 d\gamma \leq \int_{\R^n} \sqrt{f_1 f_2} d\gamma \int_{\R^n} \sup_{z = x+y} \sqrt{f_1(x) f_2(y)} d\gamma(z) \;  ? 
\]
It turns out that the correct functional formulation is the following somewhat surprising one:

\begin{thm}[Functional Formulation of GCRSI] \label{thm:intro-functional}
For all quasi-concave Borel functions $f_1,f_2 : \R^n \rightarrow \R_+$ whose super level sets have Gaussian barycenters at the origin, we have:
\[
\int_{\R^n} f_1 d\gamma \int_{\R^n} f_2 d\gamma \leq \int_{\R^n} \maxo(f_1,f_2) d\gamma \int_{\R^n} f_1 \square f_2  d\gamma ,
\]
where
\[
\maxo(p,q)= \begin{cases} \max(p,q) & p,q > 0 \\ 0 & pq = 0 \end{cases} ,
\]
and $f_1 \square f_2$ denotes the following variant of sup-convolution:
\[
f_1 \square f_2 (z) = \sup_{z = x+y} \min(f_1(x),f_2(y)) . 
\]
\end{thm}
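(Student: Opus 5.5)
The plan is to deduce Theorem \ref{thm:intro-functional} from the set version, Theorem \ref{thm:intro-GCRSI}, by a layer-cake argument. The combination step in the middle may not close, and in that case I would prove the functional inequality directly.

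\textbf{Layer-cake and the set inequality.} Write $K_s=\{f_1\ge s\}$ and $L_t=\{f_2\ge t\}$ for $s,t>0$. By assumption these are convex with Gaussian barycenter at the origin. Level sets that are null or have empty interior contribute nothing, and one must check that $\gamma$-barycenter zero survives the reduction to $\mathrm{span}$ or to the interior; I treat this as routine. The tail formula and Fubini give
\[
\int f_1\,d\gamma\int f_2\,d\gamma=\int_0^\infty\!\!\int_0^\infty \gamma(K_s)\gamma(L_t)\,ds\,dt ,
\]
and Theorem \ref{thm:intro-GCRSI} bounds the integrand by $\gamma(K_s\cap L_t)\,\gamma(K_s+L_t)$.

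\textbf{Comparison with the level sets of the output functions.} Set $A(u)=\gamma(\{\maxo(f_1,f_2)\ge u\})$ and $B(u)=\gamma(\{f_1\square f_2\ge u\})$. Directly from the definitions:
\begin{itemize}
\item $K_s\cap L_t\subseteq\{\maxo(f_1,f_2)\ge \max(s,t)\}$, since both values are positive and one of them is at least $\max(s,t)$;
\item $K_s+L_t\subseteq\{f_1\square f_2\ge \min(s,t)\}$.
\end{itemize}
The right-hand side of the theorem is $\int_0^\infty A\int_0^\infty B=\int\!\!\int A(s)B(t)\,ds\,dt$.

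\textbf{The main obstacle.} Inserting the two inclusions pointwise gives $\int\!\!\int A(\max(s,t))\,B(\min(s,t))\,ds\,dt$. This is too large: testing against a single pair of level values $a\ge b$ gives $2ab-b^2\ge ab$. So the pointwise substitution is lossy, and the argument must exploit more structure.

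The first thing I would try is to group the pairs $(s,t)$ and $(t,s)$ with $s>t$. I would use the union bound
\[
A(s)\ \ge\ \gamma\big((K_s\cap L_t)\cup(K_t\cap L_s)\big),
\]
together with $B(t)\ge\max\big(\gamma(K_s+L_t),\gamma(K_t+L_s)\big)$ and $B(s)\ge\gamma(K_s+L_s)$. The aim is to prove
\[
\gamma(K_s\cap L_t)\gamma(K_s+L_t)+\gamma(K_t\cap L_s)\gamma(K_t+L_s)\ \le\ A(s)B(t)+A(t)B(s)
\]
using inclusion--exclusion and the monotonicity of the level sets. A naive check leaves a term of the form $\gamma(K_s\cap L_s)\gamma(K_t+L_s)$ against $\gamma(K_t\cap L_t)\gamma(K_s+L_s)$, which is not obviously ordered. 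This is exactly where I expect the difficulty to lie.

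\textbf{Fallback if the combination step fails.} If this does not close, the $\min$ in $\square$ and the $\maxo$ suggest the statement is really proved at the functional level, with the set version as the special case $f_i=\1$. I would then prove it through the paper's Gaussian Forward--Reverse Brascamp--Lieb inequality for centered log-concave functions. The steps would be:
\begin{enumerate}
\item Approximate $\1_K$, $\1_L$ by centered log-concave functions.
\item Encode the intersection and the Minkowski sum through the degenerate linear maps $x\mapsto(x,x)$ and $(x,y)\mapsto x+y$ on $\R^{2n}$.
\item Verify the Gaussian saturation condition for the relevant degenerate covariances.
\item Pass to quasi-concave functions by approximation, using the layer-cake structure only at the very end.
\end{enumerate}
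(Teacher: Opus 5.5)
There is a genuine gap, and it is exactly at the step you flagged. Your set-up (layer-cake, Theorem~\ref{thm:intro-GCRSI} on each pair of level sets, and the two inclusions) matches the paper. With $a(s):=\gamma(K_s)$ and $b(t):=\gamma(L_t)$, it gives
\[
a(s)\,b(t)\le A(\max(s,t))\,B(\min(s,t))\qquad\forall s,t>0 .
\]
(No reduction to a span is needed: if $\gamma(K_s)\gamma(L_t)=0$ there is nothing to prove, and a convex set of positive Gaussian measure has non-empty interior.)

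What is missing is a way to integrate this family of inequalities into $\int a\int b\le\int A\int B$.

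\begin{itemize}
\item The paired inequality you aim for, with $\gamma(K_s\cap L_t)\gamma(K_s+L_t)+\gamma(K_t\cap L_s)\gamma(K_t+L_s)$ on the left, is stronger than what is needed.
\item Your inclusion--exclusion argument for it uses only the off-diagonal pairs $s\neq t$. It discards the diagonal instances $s=t$ of the set inequality. That is why you are left comparing $\gamma(K_s\cap L_s)\gamma(K_t+L_s)$ with $\gamma(K_t\cap L_t)\gamma(K_s+L_s)$, which are not ordered.
\item Your fallback does not close the gap. Theorem~\ref{thm:intro-FRBL} applies only to log-concave functions, and quasi-concave $f_i$ are not of this type. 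The natural candidate $h_1(x_1,x_2)=\maxo(f_1(x_1),f_2(x_2))$ is not even quasi-concave, since its super level sets are unions of two products. And ``using the layer-cake structure at the very end'' is the same missing step.
\end{itemize}

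The missing ingredient is the four functions theorem of Ahlswede--Daykin, in the continuous form of Batty--Bollmann. The paper uses it in Lemma~\ref{lem:four-funcs} and Proposition~\ref{prop:functional}. Your displayed inequality is precisely its hypothesis on the chain $(0,\infty)$ with $c=A$, $d=B$. Its conclusion is $\int a\int b\le\int A\int B$, which gives the theorem by the tail formula.

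On a chain this is elementary and fits your pairing idea. The change is to pair the products $a(s)b(t)$ themselves rather than the set-level right-hand sides, and to use the diagonal cases multiplicatively. For $s>t$ set $x=a(s)b(t)$, $y=a(t)b(s)$, $z=A(s)B(t)$, $w=A(t)B(s)$. The off-diagonal cases give $x,y\le z$. The two diagonal cases give
\[
xy=a(s)b(s)\cdot a(t)b(t)\le A(s)B(s)\cdot A(t)B(t)=zw .
\]
If $z>0$, then
\[
z+w-x-y\ge z+\frac{xy}{z}-x-y=\frac{(z-x)(z-y)}{z}\ge0 ;
\]
if $z=0$, then $x=y=0$. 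Hence $a(s)b(t)+a(t)b(s)\le A(s)B(t)+A(t)B(s)$ for all $s>t$. Integrating over $\{s>t\}$ (the diagonal is null) gives $\int a\int b\le\int A\int B$.
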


Note that while $\max(\1_K,\1_L) = \1_{K \cup L}$, we have $\maxo(\1_K , \1_L) = \1_{K \cap L}$, and clearly $\1_K \square \1_L = \1_{K+L}$, so the functional formulation recovers its geometric counterpart. Using the four functions theorem, we will show in Section \ref{sec:functional} that the geometric formulation implies back the functional one. We find this interesting, since $\maxo$ and $\square$ are not the usual functional analogues of the geometric $\cap$ and $+$ operations when extending from convex sets to log-concave functions, and the intuitive product $f_1 f_2$ in the functional formulation (\ref{eq:intro-functional-GCI}) of the GCI is no longer present. This difference in the functional formulations of the GCI and GCRSI suggests that the latter is not a simple modification of the former. Analogous functional versions hold for the RSSI (\ref{eq:intro-RSSI}) and CRSSI (\ref{eq:intro-CRSSI}) -- see Corollary \ref{cor:functional-Lebesgue}.

\subsection{Gaussian Forward-Reverse Brascamp--Lieb inequality}

The functional formulation of the GCRSI provides us with a clue for the type of functional inequality we need to prove, after noting that:
\begin{align*}
f_1(x_1) f_2(x_2) &= \maxo(f_1(x_1),f_2(x_2)) \cdot \min(f_1(x_1),f_2(x_2)) \\
& \leq \maxo(f_1(x_1),f_2(x_2)) \cdot f_1 \square f_2 (x_1 + x_2) . 
\end{align*}
We show the following:
\begin{thm} \label{thm:intro-our-FRBL}
Let $f_1,f_2,h_2 : \R^n \rightarrow \R_+$ and $h_1 : \R^{2n} \rightarrow \R_+$ denote four log-concave functions so that $\int \vec x f_i(x) d\gamma(x) = \vec 0$, $i=1,2$ (and no centering assumption on $h_j$). If
\begin{equation} \label{eq:intro-our-FRBL-assumption}
f_1(x_1) f_2(x_2) \leq h_1(x_1,x_2) h_2(x_1 + x_2) \;\;\; \forall x_1,x_2 \in \R^n ,
\end{equation}
then
\begin{equation} \label{eq:intro-our-FRBL-conclusion}
\int_{\R^n} f_1 d\gamma^n \int_{\R^n} f_2 d\gamma^n \leq \int_{\R^n} h_1(y,y) d\gamma^n(y) \int_{\R^n} h_2 d\gamma^n . 
\end{equation}
If equality occurs in (\ref{eq:intro-our-FRBL-conclusion}) then necessarily $h_2 \equiv c$ is a constant function, $c \cdot h_1(y,y) = f_1(y) f_2(y)$ for almost-every $y \in \R^n$, and
$f_1(x) = \bar f_1(P_E x)$ and $f_2(x) = \bar f_2(P_{E^{\perp}} x)$ for some linear subspace $E \subset \R^n$ and almost every $x \in \R^n$. 
\end{thm}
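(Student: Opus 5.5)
Theorem \ref{thm:intro-our-FRBL} is an instance of a Forward-Reverse Brascamp--Lieb (FRBL) inequality on $\R^{2n}$ with Gaussian reference measures, and I would prove it via a Gaussian saturation principle adapted to centered log-concave inputs. Write $x=(x_1,x_2)\in\R^{2n}$ and set $B_1 x = x_1$, $B_2 x = x_2$ (forward maps) and $C_1 x = x$, $C_2 x = x_1+x_2$ (reverse maps). The hypothesis (\ref{eq:intro-our-FRBL-assumption}) reads $\prod_i f_i(B_i x)\le \prod_j h_j(C_j x)$. The right-hand side of (\ref{eq:intro-our-FRBL-conclusion}) integrates $h_1$ only along the diagonal $\{(y,y)\}$. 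The measure $\gamma^n$ on the diagonal is the image of $\gamma^{2n}$ under $y\mapsto(y,y)$, which is a \emph{degenerate} Gaussian on $\R^{2n}$ with covariance $\begin{pmatrix}I&I\\ I&I\end{pmatrix}$. This is exactly why a version allowing degenerate covariances is needed.

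\textbf{Step 1: the centered FRBL inequality.} I would first establish a general theorem. Given Gaussian covariances, $f_i$ log-concave and centered, and $h_j$ arbitrary satisfying the pointwise domination, one has $\prod \int f_i\,d\gamma_{A_i} \le D\prod\int h_j\,d\gamma_{\Sigma_j}$. Here $D$ is computed by testing on centered Gaussians $f_i = e^{-\langle x,G_i x\rangle/2}$ with $G_i\ge 0$, and with $h_j$ the optimal Gaussian majorants. The proof would follow the Nakamura--Tsuji approach to the centered GCI: run the heat/Ornstein--Uhlenbeck flow on $f_i$ (which preserves log-concavity and centering), and define the optimal $h_j$ at each time via a sup/inf-convolution. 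I would then show monotonicity of the ratio using a Brascamp--Lieb-type second-order computation in which centering kills the linear terms. The degenerate covariance would be handled by approximating it with $\begin{pmatrix}I&(1-\eps)I\\ (1-\eps)I& I\end{pmatrix}$, or equivalently $h_1$ integrated against $\gamma$ of a thin tube around the diagonal, and then letting $\eps\to0$ with monotone convergence, using that $h_1$ is log-concave and hence upper semicontinuous where needed.

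\textbf{Step 2: verifying that the Gaussian constant is $1$.} I would take $f_i(x)=e^{-\langle x,G_ix\rangle/2}$ with $G_i\ge0$. Then $\int f_i\,d\gamma=\det(I+G_i)^{-1/2}$. I would need to show that whenever $G_1\oplus G_2 \ge H_1 + C_2^{T}H_2C_2$ with $H_1$ on $\R^{2n}$ and $H_2$ on $\R^n$ (possibly not $\ge0$ but admissible for integrability), we have $\det(I+G_1)\det(I+G_2)\ge \det(I+J^TH_1J)\det(I+H_2)$, where $Jy=(y,y)$. The natural optimal choice is $H_2 = 0$ and $H_1=G_1\oplus G_2$, giving $\det(I+G_1+G_2)$ on the right. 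The task is thus to show that this choice is extremal: any $H_2\neq0$ makes things worse. Since $\det(I+G_1+G_2)\le\det(I+G_1)\det(I+G_2)$ when $G_i\ge0$, with equality iff $G_1G_2=0$, this gives both the constant $1$ and, when traced back, the equality structure: $h_2$ is constant, $h_1=f_1f_2$ on the diagonal, and the $f_i$ depend on orthogonal subspaces $E,E^\perp$. The step ruling out $H_2\neq0$ is a matrix inequality I would prove by a concavity argument for $\log\det$ in $H_2$, exploiting the positive-definiteness that the domination forces.

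\textbf{Step 3: equality cases.} These would come from tracking equality through the flow monotonicity in Step 1, which forces the extremizers to reduce to the Gaussian case at all times, combined with the matrix equality condition $G_1G_2=0$. Log-concavity then transfers the splitting $f_1=\bar f_1\circ P_E$, $f_2=\bar f_2\circ P_{E^\perp}$.

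The main obstacle I expect is Step 1. The reverse side involves a supremum-type optimization in $h_j$, and centering must be preserved under the flow while the reverse majorant is nonlinear. The degenerate limit also requires care, since the Gaussian constant must stay finite uniformly in $\eps$.
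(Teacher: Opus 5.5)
\textbf{Step 2, which has to produce the sharp constant $1$, rests on two false claims.}

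First, you allow $H_2\not\ge0$, and in Step 1 you let the $h_j$ be arbitrary. Then the constant is not $1$. Take $G_1=G_2=0$, $H_2=-c\,\Id_n$ and $H_1=c\begin{pmatrix}\Id_n&\Id_n\\ \Id_n&\Id_n\end{pmatrix}$. The constraint holds with equality, yet $\det(\Id_n+J^TH_1J)\det(\Id_n+H_2)=((1+4c)(1-c))^n>1$ for $0<c<3/4$. Functionally this is $f_i\equiv1$, $h_2=e^{c|z|^2/2}$, $h_1(x)=e^{-c|x_1+x_2|^2/2}$. So log-concavity of the $h_j$, i.e.\ $H_j\ge0$, is indispensable.

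Second, even with $H_j\ge0$, the pair $H_2=0$, $H_1=G_1\oplus G_2$ is not extremal. For $n=1$, take $G_1=G_2=10$, $H_2=2$ and $H_1=\begin{pmatrix}8&-2\\-2&8\end{pmatrix}\ge0$. This is admissible and gives $\det(1+J^TH_1J)\det(1+H_2)=13\cdot 3=39>21=\det(1+G_1+G_2)$. So no concavity argument can show that $H_2\neq0$ ``makes things worse''; the paper's subsection on failed attempts makes essentially this point.

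The missing idea is as follows:
\begin{itemize}
\item Positivity of the diagonal blocks of $H_1$ forces $G_i\ge H_2$.
\item Apply the Gaussian GCI with the shifted base $\mathcal{C}=\Id_n+H_2$, $\mathcal{A}=G_1-H_2$, $\mathcal{B}=G_2-H_2$ (Corollary \ref{cor:GaussianGCI}). This gives $\det(\Id_n+G_1)\det(\Id_n+G_2)\ge\det(\Id_n+H_2)\det(\Id_n+G_1+G_2-H_2)$.
\item Evaluate the constraint on the diagonal: $G_1+G_2\ge J^TH_1J+4H_2$. Hence $G_1+G_2-H_2\ge J^TH_1J+3H_2\ge J^TH_1J$, and equality forces $H_2=0$.
\end{itemize}

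\textbf{Steps 1 and 3 cannot be carried out as described either.}

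In Step 1, with two interacting reverse functions there is no canonical ``optimal $h_j$'' given by a sup-convolution, since mass can be traded between $h_1$ and $h_2$. So the flow you describe is not well defined. Also, because the $h_j$ are not centered, centering of the $f_i$ does not kill the linear terms they produce. For the degenerate limit you need dominated convergence via $h\le e^{\langle a,x\rangle+b}$ plus convergence of the Gaussian constants, not monotone convergence; log-concave functions need not be upper semicontinuous. The paper instead proves Theorem \ref{thm:FRBL} in stages:
\begin{itemize}
\item self-convolution $f\mapsto\int f(\frac{x+y}{\sqrt2})f(\frac{x-y}{\sqrt2})\,d\gamma(y)$ plus the local CLT, applied to regularized even extremizers;
\item the Courtade--Wang doubling argument to drop evenness, which needs Lemma \ref{l:Finiteness};
\item the $\Gamma_j+\eps\Id$ approximation for degenerate covariances;
\item Ornstein--Uhlenbeck, Jensen and H\"older to remove the regularization.
\end{itemize}

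In Step 3, equality cannot be tracked through such approximations. Moreover, extremizers are not Gaussian: e.g.\ $f_1=\1_{K_0\times E^\perp}$, $f_2=\1_{E\times L_0}$, $h_2\equiv1$, $h_1(x_1,x_2)=f_1(x_1)f_2(x_2)$. So ``extremizers reduce to the Gaussian case'' is false, and nothing in your sketch turns a matrix condition like $G_1G_2=0$ into the splitting of $f_1,f_2$. The paper's route is:
\begin{itemize}
\item Self-convolve the extremizers themselves, which preserves equality.
\item Show via Lemma \ref{lem:equality-centered} that the barycenters of $h_2\gamma$ and $h_1(y,y)\gamma(y)$ vanish; otherwise they would grow like $2^{N/2}$ at fixed covariance.
\item Pass to the CLT limit to get Gaussian data with the same covariances.
\item Conclude $\Cov(h_2\gamma)=\Id_n$, so $h_2$ is constant by the equality case of Proposition \ref{prop:cov}.
\item Reduce to equality in the centered GCI, whose known characterization yields the subspace $E$.
\end{itemize}
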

Applying this to $f_1 = \1_K$, $f_2 = \1_L$, $h_1 = \1_{K \times L}$ and $h_2 = \1_{K+L}$ immediately yields Theorem \ref{thm:intro-GCRSI}. More general versions, which yield Theorems \ref{thm:intro-ab} and \ref{thm:intro-GCMPI}, are formulated in Theorems \ref{thm:GCRSI-FRBL} and \ref{thm:GCMPI-FRBL} (with analysis of equality deferred to Theorems \ref{thm:GCRSI-equality} and \ref{thm:GCMPI-equality}, respectively).

\smallskip

The inequality statement of Theorem \ref{thm:intro-our-FRBL} is a particular case of the following Gaussian Forward-Reverse Brascamp--Lieb (FRBL) inequality for centered log-concave functions, which is the main new tool we develop in this work, and of independent interest. Given positive semi-definite (possibly degenerate) $n \times n$ matrices $\Gamma,A \geq 0$, we denote by $\gamma_\Gamma$ the Gaussian probability measure on $\R^n$ with covariance $\Gamma$, and by $g_A(x) = \exp(-\frac{1}{2} \scalar{Ax,x})$ a centered Gaussian function on $\R^n$. 

\begin{thm}[Gaussian Forward-Reverse Brascamp--Lieb inequality] \label{thm:intro-FRBL}
    Let $E = \oplus_{i=1}^I \R^{n_i}$, let $L_j : E \rightarrow \R^{m_j}$ denote linear surjective maps, $j = 1,\ldots,J$, let $\{c_i\}_{i=1,\ldots,I}$ and $\{d_j\}_{j =1,\ldots,J}$ denote positive scalars, and let $\Q$ denote a symmetric operator (of arbitrary signature) on $E$. Let $\Sigma_i > 0$ denote positive-definite covariances on $\R^{n_i}$ ($i=1,\ldots,I$), and $\Gamma_j \geq 0$ denote positive semi-definite covariances (\textbf{possibly degenerate}) on $\R^{m_j}$ ($j=1,\ldots,J$).  
    Then for all log-concave $f_i : \R^{n_i} \rightarrow \R_+$ so that $\int \vec x f_i(x) d\gamma_{\Sigma_i}(x) = \vec 0$ ($i=1,\ldots,I$),  and for all log-concave $h_j : \R^{m_j} \rightarrow \R_+$ (\textbf{with no centering assumption}, $j = 1,\ldots,J$), we have
        \begin{align*}
        & \prod_{i=1}^I f_i(x_i)^{c_i} \le e^{-\frac12 \scalar{\Q x, x}}\prod_{j=1}^J h_j(L_jx)^{d_j} \;\;\; \forall x \in E \\
        & \Rightarrow
        \prod_{i=1}^I \big( \int_{\R^{n_i}} f_i\, d\gamma_{\Sigma_i} \big)^{c_i}
        \le {\rm FR}^{(\mathcal{G})}_{LC}
        \prod_{j=1}^J \big( \int_{\R^{m_j}} h_j\,d\gamma_{\Gamma_j} \big)^{d_j},
     \end{align*}
        where $\FR^{(\G)}_{LC}$ denotes the best constant when testing the above implication with centered Gaussian functions $f_i = g_{A_i}$ and $h_j = g_{B_j}$ ($A_i, B_j \geq 0$), namely:
    \[
    \FR^{(\G)}_{LC} := 
    \sup\bigg\{ \frac{ \prod_{j=1}^J {\rm det}\, \big( {\rm Id}_{m_j} + \Gamma_j B_j \big)^{d_j/2} }{ \prod_{i=1}^I {\rm det}\, \big( {\rm Id}_{n_i} + \Sigma_i A_i \big)^{c_i/2} }:\; {\rm diag}\, (c_1A_1,\ldots, c_I A_I) \ge \Q+ \sum_{j=1}^J d_j L_j^* B_j L_j \bigg\}. 
    \]
\end{thm}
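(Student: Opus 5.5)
Our plan is to reduce the statement to the Gaussian saturation principle for Forward-Reverse Brascamp--Lieb inequalities. The natural tool is a heat-flow or Ornstein--Uhlenbeck monotonicity argument, combined with the centering-preservation mechanism of Nakamura--Tsuji.

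\textbf{Step 1: normalize the reference measures.} We replace each $f_i$ by $f_i\,g_{\Sigma_i^{-1}}$ and each $h_j$ by $h_j$ times the (possibly degenerate) Gaussian density of $\gamma_{\Gamma_j}$. The hypothesis is then absorbed into a modified quadratic form $\Q$. Because $\Gamma_j$ may be degenerate, we do not invert it. Instead we approximate: replace $\Gamma_j$ by $\Gamma_j+\eps\Id$. We then check that both sides, including the Gaussian constant $\FR^{(\G)}_{LC}$, behave upper-semicontinuously as $\eps\to0$. Monotone convergence handles $\int h_j\,d\gamma_{\Gamma_j+\eps}$, and the constant is a supremum of expressions continuous in $\Gamma_j$.

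This reduces the problem to the nondegenerate case, where an equivalent Lebesgue-measure formulation holds with log-concave $f_i$ centered with respect to its own Gaussian weight.

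\textbf{Step 2: a dual (inf-convolution) form of the reverse side.} The right-hand side involves a supremum over all $h_j$ satisfying the pointwise constraint. By duality (a Legendre-type/Prékopa argument), we take for each $x$ the optimal choice of $h_j$ given $f_i$; such an optimal choice is log-concave by Prékopa--Leindler. We therefore view the statement as a forward Brascamp--Lieb inequality for the functional
\[
\Phi(f_1,\ldots,f_I)=\prod_i\Big(\int f_i\Big)^{c_i}\Big/\inf_{h}\prod_j\Big(\int h_j\Big)^{d_j}.
\]

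\textbf{Step 3: run a flow.} We evolve each $f_i$ under the Ornstein--Uhlenbeck semigroup adapted to $\Sigma_i$, and the $h_j$ correspondingly under the push-forward flows under $L_j$. The point is that the pointwise constraint is preserved: the product of Gaussian convolutions of the $f_i$ is dominated by the convolution of the right-hand side, via Prékopa--Leindler/Brascamp--Lieb-type convexity in the backward direction. The ratio of integrals then remains monotone along the flow.

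The centering $\int x f_i\,d\gamma_{\Sigma_i}=0$ is preserved by the OU flow. As $t\to\infty$ the rescaled $f_i$ converge to centered Gaussians $g_{A_i}$, with the $h_j$ tending to Gaussians $g_{B_j}$ still satisfying the constraint. Passing to the limit bounds the original ratio by the Gaussian ratio, which is at most $\FR^{(\G)}_{LC}$.

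\textbf{Main obstacle.} The key difficulty is the monotonicity claim in Step 3. For forward-reverse inequalities one cannot simply convolve $h_j$, and the non-centered $h_j$ together with the arbitrary signature of $\Q$ break naive symmetry. Here we would follow the Nakamura--Tsuji approach: centered log-concave inputs have a covariance inequality (a Brascamp--Lieb/Cramér–Rao-type bound $\mathrm{Cov}\le$ Hessian-inverse) that makes the derivative along the flow nonnegative. The reverse side can be handled by an inf-convolution regularity argument in which log-concavity of $h_j$ guarantees the required semi-convexity.

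A second route, to use if the flow fails, is to prove the inequality first for $\Q$-constrained Gaussian-times-log-concave inputs via a tensorization/doubling (Lieb's rotation trick) argument, which proves that Gaussian extremizers exist. Centering then enters by restricting the rotation to a subspace where first moments cancel.
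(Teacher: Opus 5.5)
There is a genuine gap. Your argument rests entirely on the monotonicity claimed in Step 3, which you assert but never establish, and the mechanism you sketch for it does not work.

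Smoothing the hypothesis $\prod_i f_i(x_i)^{c_i}\le e^{-\frac12\langle\Q x,x\rangle}\prod_j h_j(L_jx)^{d_j}$ by a Gaussian kernel on $E$ does not give back a right-hand side of the form $e^{-\frac12\langle\Q' x,x\rangle}\prod_j \tilde h_j(L_j' x)^{d_j}$. To separate the factors you must use Jensen and H\"older, which is exactly what the paper does in its last step. That changes the exponents of the $h_j$, the maps (to $L_je^{-t\Sigma^{-1}}$), the quadratic form, and the masses $\int\tilde h_j\,d\gamma_{\Gamma_j}$. Moreover, the separated Gaussian factor $\int e^{-\frac{J+1}{2c_{\min}}\langle\Q\hat x,\hat x\rangle}p^\Sigma_t(x,\hat x)\,d\hat x$ is finite only while $\frac{J+1}{c_{\min}}\Q+\Sigma_t^{-1}>0$. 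For indefinite $\Q$ this can fail once $t$ is large, since $\Sigma_t\to\Sigma$.

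So what you get is approximate preservation of the constraint as $t\to0$, not a quantity that is monotone along the whole flow. The limit $t\to\infty$ is also useless: there the Ornstein--Uhlenbeck flow sends $f_i$ to the constant $\int f_i\,d\gamma_{\Sigma_i}$, not to the Gaussians $g_{A_i}$ that realize $\FR^{(\G)}_{LC}$.

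Step 2 does not rescue this. For $J\ge2$ there is no pointwise optimal $h_j$ given the $f_i$, because the factors $h_j(L_jx)$ interact on $E$. Even for $J=1$ the optimizer is a sup-marginal of $e^{\frac12\langle\Q x,x\rangle}\prod_i f_i^{c_i}$, which need not be log-concave when $\Q$ is indefinite. Invoking the variance Brascamp--Lieb inequality is no substitute for actually computing the derivative you need to be nonnegative.

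The missing idea is the one you only gesture at in your ``second route''. The paper works first on Lebesgue measure with regularized classes $0<A_i^\flat\le A_i^\sharp<\infty$, $0<B_j^\flat\le B_j^\sharp<\infty$. There compactness gives even extremizers. The doubling map $f\mapsto\int f(\frac{\bar x+x}{\sqrt2})f(\frac{\bar x-x}{\sqrt2})\,dx$ preserves the constraint up to the factor $\FR^{(e)}$. This works because, for each fixed $\bar x$, the fibre functions $x\mapsto f_i(\frac{\bar x+x}{\sqrt2})f_i(\frac{\bar x-x}{\sqrt2})$ are even, so the even inequality can be applied fibrewise (Proposition \ref{p:BLTrick}). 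Iterating the doubling and applying the local CLT then yields Gaussian extremizers.

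Centering enters through a single doubling step, not through a ``restricted rotation''. That step keeps the barycenter of $f_i\gamma_{\Sigma_i}$ at $0$ (it gets multiplied by $\sqrt2$) and gives $\FR^{(o)}\le\sqrt{\FR^{(o)}\FR^{(e)}}$. This closes only because of the finiteness Lemma \ref{l:Finiteness}, whose pointwise bounds use $\bary(f_i\gamma_{\Sigma_i})=0$ while tolerating uncentered $h_j$.

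The order of the approximations also matters, and your Step 1 has the semicontinuity backwards. A supremum of expressions continuous in $\Gamma_j$ is lower semicontinuous. What you need is $\limsup_{\eps\to0}\FR^{(\G)}(\eps)\le\FR^{(\G)}$; the reverse bound is trivial, since $\det(\Id+\Gamma_jB_j+\eps B_j)$ is non-decreasing in $\eps$. The paper obtains the needed bound from compactness of the admissible $B_j$, which is available only when $B_j^\sharp<\infty$. That is why degenerate $\Gamma_j$ are handled before the regularization is removed by the short-time OU argument. Also, $\int h_j\,d\gamma_{\Gamma_j+\eps\Id}\to\int h_j\,d\gamma_{\Gamma_j}$ follows by dominated convergence via Lemma \ref{lem:log-concave}, not by monotone convergence.
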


When $\Q=0$ and all integration is performed with respect to the Lebesgue measure, there is no need to assume that the functions are log-concave or have barycenters at the origin -- that case is the original Forward-Reverse Brascamp--Lieb inequality, introduced and established by Liu--Courtade--Cuff--Verd\'u in \cite{LCCV-BrascampLieb} (see also \cite{CourtadeLiu-BrascampLieb}) using an information-theoretic approach involving entropies. The ``Forward-Reverse" nomenclature captures the fact that when $I=1$ the inequality reduces to the usual (forward) Brascamp--Lieb inequality \cite{BrascampLieb-YoungInq, Lieb-MultiDimBL,CarlenLiebLoss-EntropyOnSn,BCCT-BrascampLieb}, whereas when $J=1$ the inequality becomes the reverse Brascamp--Lieb inequality \cite{Barthe-ReverseBL-CRAS,Barthe-ReverseBL,BartheCordero-InverseBLviaSemiGroup,Valdimarsson-GenCaffarelli,BartheHuet} (see \cite{LCCV-BrascampLieb} for more details). 

 In contrast, when integration above is with respect to general Gaussian measures, or equivalently (after modifying $f_i$ and $h_j$ by Gaussian factors), when integration is with respect to Lebesgue measure but $\Q$ has a general non-trivial signature, Theorem \ref{thm:intro-FRBL} would be false without some conditions on $f_i$ and $h_j$ (beyond integrability). Indeed, if Theorem \ref{thm:intro-FRBL} would remain true in that generality, 
the GCI (\ref{eq:intro-GCI}) or GCRSI (\ref{eq:intro-GCRSI}) would apply to arbitrary (non-convex) sets $K,L$, which is easily seen to be false (even if the sets are origin-symmetric). This phenomenon was first observed in the context of the Inverse Brascamp--Lieb inequality, where the effect of the signature of $\Q$ on the validity of this inequality was studied in detail by Barthe and Wolff \cite{BartheWolff-InverseBrascampLieb}. The importance of the signature of $\Q$ also appears in the work of Courtade--Liu in the context of the Forward-Reverse Brascamp--Lieb inequality for certain Gaussian measures \cite[Theorem 4.8]{CourtadeLiu-BrascampLieb}. However, it was subsequently realized by Nakamura and Tsuji \cite{NakamuraTsuji-InverseBrascampLieb,NakamuraTsuji-GCIForCentered} that for (even, and later centered) log-concave data, the signature of $\Q$ is actually irrelevant, allowing for multiple new applications \cite{NakamuraTsuji-InverseBrascampLieb,EMilman-GCI, NakamuraTsuji-GCIForCentered}. A similar phenomenon occurs with the Forward-Reverse Brascamp--Lieb inequality. This is not entirely surprising, since an observation of Wolff \cite[Remark 4.5]{CourtadeLiu-BrascampLieb} shows that a version of the Inverse Brascamp--Lieb inequality (crucially allowing negative exponents) is in fact equivalent to the Forward-Reverse Brascamp--Lieb inequality (at least, when all the functions are even and all Gaussian measures are nondegenerate). We will not use this equivalence here, but rather derive Theorem \ref{thm:intro-FRBL} from scratch using a similar approach to the one in \cite{NakamuraTsuji-InverseBrascampLieb} involving self-convolutions and the Central Limit Theorem. To treat our desired level of generality (essential for establishing all of our applications in this work), several new challenges arise: 
\begin{itemize}
\item It is crucial for us to allow for degenerate covariances $\Gamma_j$ (which may not be positive-definite), because for our application in Theorem \ref{thm:intro-our-FRBL} we need to integrate $h_1$ on the diagonal subspace $F = \{ (y,y) : y \in \R^n \} \subset \R^{2n}$. This requires a delicate approximation argument (and taking limits in the correct order). 
\item Another non-standard approximation step is due to the fact that we have a product of functions $\Pi_{j=1}^J h_j(L_j x)$ on the right-hand side of the assumption, which are mutually interacting on $E$, and we need to regularize all of them simultaneously while preserving the inequality. 
\item We cannot afford to assume that the $h_j$'s have Gaussian barycenter at the origin (since in our application, this may not hold for $h_2 = \1_{K + L}$ nor for $h_1(y,y) = \1_{K \cap L}(y)$). 
\item The analysis of the equality case in Theorem \ref{thm:intro-our-FRBL} requires a series of new arguments, since the approximation argument from the non-degenerate case destroys all hope of tracking the equality along the approximation. In particular, we need to show that the Gaussian barycenters of optimal $h_1$ and $h_2$ must be at the origin, and analyze the equality case for a \emph{partial} Gaussian saturation (when $f_1,f_2,h_2$ are centered Gaussians but $h_1$ is a general log-concave function). 
\end{itemize}

Thus, Theorem \ref{thm:intro-FRBL} reduces the task of finding the optimal constant on the right-hand-side of Theorem \ref{thm:intro-our-FRBL} (and the various other versions we consider in this work) to a semi-definite algebraic inequality for symmetric matrices. This turns out to be a surprisingly non-trivial task, which is carried out in Sections \ref{sec:GCRSI} and \ref{sec:GCMPI}, where we also show that a naive approach can either fail or lead to the ``wrong" inequality. 
\medskip

The rest of this work is organized as follows. In Section \ref{sec:notation} we introduce some convenient definitions and notation, and state Theorem \ref{thm:FRBL}, a generalized version of Theorem \ref{thm:intro-FRBL}. In Section \ref{sec:preparatory} we record and derive some useful preparatory lemmas. The proof of Theorem \ref{thm:FRBL} is carried out in Section \ref{sec:FRBL-proof}. In Sections \ref{sec:GCRSI} and \ref{sec:GCMPI} we formulate appropriate functional versions of Theorems \ref{thm:intro-ab} and \ref{thm:intro-GCMPI} for log-concave functions, and find the sharp constants $\FR_{LC}^{(\G)}$ in the corresponding Gaussian saturation reductions. In Section \ref{sec:equality} we analyze the equality conditions in our inequalities. In Section \ref{sec:unify} we put forward a general problem of characterizing for which $\alpha,\beta,a ,b \in \R \setminus \{0\}$ the inequality $\gamma(K) \gamma(L) \leq \gamma(\alpha K \cap \beta L) \gamma(a L + b L)$ holds; we explain why the Gaussian saturation is bound to fail for some cases, and present a partial characterization. In Section \ref{sec:functional} we derive an equivalent functional formulation of our geometric results for quasi-concave functions.

\section{Notation} \label{sec:notation}

Given a Euclidean space $E$ endowed with a scalar product $\scalar{\cdot,\cdot}$ we denote the Euclidean norm by $\abs{x} := \sqrt{\scalar{x,x}}$. 
The family of all symmetric operators on $E$ is denoted by $\Sym(E)$. When $E = \R^n$, we simply write $\Sym(n)$, and naturally identify it with the set of all $n \times n$ symmetric matrices over $\R$. The subsets of positive semi-definite and positive-definite operators are denoted by $\Sym_{\geq 0}(E)$ and $\Sym_{>0}(E)$, respectively. For $A,B \in \Sym(E)$, we write $A \leq B$ if $B - A \in \Sym_{\geq 0}(E)$. 
                                  
    The Lebesgue measure on $E$ is denoted by $dx$. Given $A \in \Sym_{\geq 0}(E)$, we denote
    \[
    g_A(x) := e^{-\frac12 \langle Ax, x\rangle} ,
    \]
    referring to $g_A$ as a Gaussian (even though $A$ may be degenerate). 
    For $\Sigma \in \Sym_{\geq 0}(E)$,  we denote the Gaussian probability measure on $E$ with covariance $\Sigma$ by $\gamma_{\Sigma}$. When $\Sigma \in \Sym_{>0}(E)$ is non-degenerate, this means
    \[
    \gamma_\Sigma:= \frac{1}{\sqrt{\det(2\pi \Sigma)}} e^{-\frac12 \scalar{\Sigma^{-1}x, x}}\, dx ,
    \]
    whereas in general it is useful to interpret $\gamma_\Sigma$ as the push-forward of $\gamma_{\Id_E}$ via $\Sigma^{1/2}$. When $\Sigma = \Id_E$, we simply write $\gamma := \gamma_{\Id_E}$; when $E = \R^n$, we will sometimes use $\gamma^n$. 
                        
    We will also formally consider the case when $\Sigma = \infty$, in which case we interpret $\gamma_\Sigma$ as the Lebesgue measure $dx$ on $E$ (the only exceptional case when $\gamma_\Sigma$ is not a probability measure); in that case we interpret $\Sigma^{-1}$ as $0$. 
    By abuse of notation, it will be convenient to treat $\gamma_\Sigma$ both as a measure and as its corresponding density $\frac{d\gamma_\Sigma(x)}{dx}$, depending on the context.   
      
    \begin{definition}[More log-concave / log-convex]
    A function $f : \R^n \rightarrow \R_+$ is called log-concave (respectively, log-convex) if $\log f : \R^n \rightarrow \R \cup \{-\infty\}$ is concave (respectively, convex). Given $A \in \Sym_{\geq 0}(n)$, we say that $f$ is more log-concave (respectively, more log-convex) than $g_{A}$ if $f = g_{A} h$ for some log-concave (respectively, log-convex) function $h$.
    \end{definition}
    
    When $A = 0$, note that $f$ is more log-concave than $g_A$ if and only if $f$ is log-concave. We also formally consider the case that $A = \infty$, in which case every function is more log-convex than $g_A$, and only the zero function is more log-concave than $g_A$. When $f = \exp(-V)$ with $C^2$-smooth $V$ and given $0\le A^\flat\le A^\sharp < \infty$, it is easy to see that $f$ is more log-concave than $g_{A^\flat}$ and more log-convex than $g_{A^\sharp}$ if and only if
    \[
    A^\flat \leq \nabla^2 V(x) \leq A^\sharp  \;\;\; \forall x \in \R^n . 
    \]
    
    \begin{definition}[Classes of log-concave functions $\F$]
    For matrices $A^\flat \in \Sym_{\geq 0}(n)$ and $A^\sharp \in \Sym_{\geq 0}(n) \cup \{\infty\}$ with $0\le A^\flat\le A^\sharp \leq \infty$ (referred to as ``regularization parameters"), and a measure $\mu$ on $\R^n$, we denote by $\F_{ A^\flat, A^\sharp} (\mu) $ the set of non-negative functions on $\R^n$ which are $\mu$-integrable, more log-concave than $g_{A^\flat}$ and more log-convex than $g_{A^{\sharp}}$. 
            The subclass of $\mu$-centered functions, namely $f\in \mathcal{F}_{A^\flat,A^\sharp}(\mu)$ such that $\int_{\R^n} \vec x f(x) \, d\mu(x) = \vec 0$, is denoted by $\F^{(o)}_{A^\flat,A^\sharp}(\mu)$. The subclass of even functions is denoted by $\F^{(e)}_{A^\flat,A^\sharp}(\mu)$.     
    We abbreviate $ \mathcal{F}_{0,\infty}(\mu)$ by $\mathcal{F}_{LC}(\mu)$, the class of $\mu$-integrable log-concave functions.
    \end{definition}
    
\begin{definition}[Brascamp--Lieb datum]
Fix $I,J \in \mathbb{N}$. 
    Let $\mathbf{n} = (n_1,\ldots, n_I) \in \mathbb{N}^I$, $\mathbf{m} = (m_1,\ldots, m_J) \in \mathbb{N}^J$, $\mathbf{c}= (c_1,\ldots,c_I) \in (0,\infty)^I$, and $\mathbf{d} = (d_1,\ldots, d_J)\in (0,\infty)^J$.
    We denote $E := \bigoplus_{i=1}^I \R^{n_i}$. Let $\L = (L_j)_{j=1,\ldots,J}$ where $L_j : E \to \R^{m_j}$ are linear surjective maps, and let $\Q \in \Sym(E)$.  
    For $i=1,\ldots,I$ and $j=1,\ldots,J$, let $\Sigma_i \in \Sym_{>0}(n_i) \cup \{\infty\}$ and $\Gamma_j\in \Sym_{\geq 0}(m_j) \cup \{\infty\}$, and let $A_i^\flat \in \Sym_{\geq 0}(n_i)$, $A_i^\sharp \in \Sym_{\geq 0}(n_i) \cup \{\infty\}$, $B_j^\flat \in \Sym_{\geq 0}(m_j)$, $B_j^\sharp \in \Sym_{\geq 0}(m_j) \cup \{\infty\}$ with 
    $0\le A_i^\flat\le A_i^\sharp\le \infty$ and $0\le B_j^\flat \le B_j^\sharp \le \infty$, which we collectively denote by $\boldsymbol{\Sigma}$, $\boldsymbol{\Gamma}$, $\A$ and $\B$, respectively.  
    We call $(\c,\d,\L,\Q,\boldsymbol{\Sigma},\boldsymbol{\Gamma},\A,\B)$ a Brascamp--Lieb datum (with regularization parameters $\A,\B$). \\
    When $A_i^\flat = B_j^\flat = 0$ and $A_i^\sharp = B_j^\sharp = \infty$ for all $i=1,\ldots,I$ and $j=1,\ldots,J$, we say there is no regularization and simply write $(\c,\d,\L,\Q,\boldsymbol{\Sigma},\boldsymbol{\Gamma})$.
\end{definition}

\begin{definition}[Forward-Reverse constant]
    Let $(\c,\d,\L,\Q,\boldsymbol{\Sigma},\boldsymbol{\Gamma},\A,\B)$  be a Brascamp--Lieb datum. For $* \in \{o,e,\G\}$, define  $\FR^{(*)}_{\A,\B}= \FR^{(*)}(\mathbf{c},\mathbf{d},\mathbf{L},\mathcal{Q} ,\boldsymbol{\Sigma},\boldsymbol{\Gamma},\A,\B) \in (0,\infty]$ to be the smallest constant in the following implication:
        \begin{align}
        \nonumber         & \forall f_i \in \C_{f,i}^{(*)} \;\;  \forall h_j \in \C_{h,j}^{(*)} \; \text{ such that } \;
         \prod_{i=1}^I f_i(x_i)^{c_i} \le e^{-\frac12 \langle \mathcal{Q} x,x\rangle} \prod_{j=1}^J h_j(L_jx)^{d_j}  \;\;\; \forall x \in E \\
        \label{eq:FR-conclusion}
        & \Rightarrow
        \prod_{i=1}^I \big( \int_{\R^{n_i}} f_i\, d\gamma_{\Sigma_i} \big)^{c_i}
        \le \FR^{(*)}_{\A,\B} 
        \prod_{j=1}^J \big( \int_{\R^{m_j}} h_j\,d\gamma_{\Gamma_j} \big)^{d_j} ,
        \end{align}
        where the families $\C_{f,i}^{(*)}$ and $\C_{h,j}^{(*)}$ are defined as follows:
        \begin{itemize}
	\item  If $*=o$ then $\C_{f,i}^{(o)} = \F_{A_i^\flat, A_i^\sharp}^{(o)}(\gamma_{\Sigma_i})$ and $\C_{h,j}^{(o)} = \F_{B_j^\flat, B_j^\sharp}(\gamma_{\Gamma_j})$. Note that there is \textbf{no requirement for the functions $h_j$ to have barycenter at the origin}. We will also denote $\FR_{\A,\B} = \FR^{(o)}_{\A,\B}$.
	\item If $*=e$ then $\C_{f,i}^{(e)} = \F_{A_i^\flat, A_i^\sharp}^{(e)}(\gamma_{\Sigma_i})$ and $\C_{h,j}^{(e)} =\F^{(e)}_{B_j^\flat, B_j^\sharp}(\gamma_{\Gamma_j})$.
	\item If $* = \G$ then $\C_{f,i}^{(\G)} = \{ g_{A_i} \in  \mathcal{F}_{A_i^\flat, A_i^\sharp}^{(e)}(\gamma_{\Sigma_i})\}$ and $\C_{h,j}^{(\G)} = \{ g_{B_j} \in  \mathcal{F}^{(e)}_{B_j^\flat, B_j^\sharp}(\gamma_{\Gamma_j})\}$. Explicitly, in the two most important cases, 
        \begin{equation} \label{eq:Gaussian-constant}
        \FR_{\A,\B}^{(\G)}
        :=  \begin{cases} 
        \sup \frac{ \prod_{j=1}^J {\rm det}\, \big( {\rm Id}_{m_j} + \Gamma_j B_j \big)^{d_j/2} }{ \prod_{i=1}^I {\rm det}\, \big( {\rm Id}_{n_i} + \Sigma_i A_i \big)^{c_i/2} } &  \Sigma_i, \Gamma_j < \infty \\
        \\
         \sup \frac{ \prod_{j=1}^J (\det (B_j / 2 \pi))^{d_j/2} }{ \prod_{i=1}^I (\det (A_i / 2\pi))^{c_i/2} }  &  \Sigma_i = \Gamma_j = \infty 
         \end{cases}
         ,
        \end{equation} 
        where the supremum is taken over all $ A_i^\flat\le  A_i \le A_i^\sharp$ and $B_j^\flat\le B_j \le B_j^\sharp $ ($A_i, B_j < \infty$, $A_i > 0$ if $\Sigma_i = \infty$ and $B_j > 0$ if $\Gamma_j = \infty$) such that 
        \[
        {\rm diag}\, (c_1A_1,\ldots, c_IA_I) \ge  \mathcal{Q} + \sum_{j=1}^J d_j L_j^* B_j L_j. 
        \]
       \end{itemize}
     When $A_i^\flat = B_j^\flat = 0$ and $A_i^\sharp = B_j^\sharp = \infty$ for all $i=1,\ldots,I$ and $j=1,\ldots,J$, we simply denote the above constants by $\FR_{LC} = \FR^{(o)}_{LC}$, $\FR^{(e)}_{LC}$ and $\FR^{(\G)}_{LC}$, respectively. 
    \end{definition}
    
    \begin{remark}
    Here and below, we freely use that $\det(\Id_n + C D) = \det(\Id_m + D C)$ for any $n \times m$ matrix $C$ and $m \times n$ matrix $D$. Consequently, $\det(\Id_{n_i} + \Sigma_i A_i) = \det(\Id_{n_i} + \Sigma_i^{1/2} A_i \Sigma_i^{1/2}) = \det(\Id_{n_i} + A_i \Sigma_i)$ for example. 
    \end{remark}

With these notations, we formulate a generalized version of Theorem \ref{thm:intro-FRBL}. 

\begin{theorem}\label{thm:FRBL}
    Let $(\mathbf{c},\mathbf{d},\mathbf{L},\mathcal{Q} ,\boldsymbol{\Sigma},\boldsymbol{\Gamma},\A,\B)$ be a Brascamp--Lieb datum. Assume that $\Sigma_i \in \Sym_{> 0}(n_i)$ and $\Gamma_j \in \Sym_{\geq 0}(m_j)$, and that either all $A_i^\sharp = B_j^\sharp = \infty$ or $A_i^\sharp, B_j^\sharp < \infty$ (and as usual, $A^\flat_i, B^\flat_j \geq 0$). Then $\FR^{(o)}_{\A,\B} =  \FR^{(e)}_{\A,\B} =  \FR^{(\G)}_{\A,\B}$. \\
    In particular, $\FR^{(o)}_{LC}= \FR^{(e)}_{LC} =  \FR^{(\G)}_{LC}$, namely, for all $f_i\in \mathcal{F}_{LC}^{(o)}(\gamma_{\Sigma_i})$ and  $h_j\in \mathcal{F}_{LC}(\gamma_{\Gamma_j})$ such that
    \begin{equation} \label{eq:FRBL-assumption}
    \prod_{i=1}^I f_i(x_i)^{c_i} \le e^{-\frac12 \langle \mathcal{Q} x, x\rangle}\prod_{j=1}^J h_j(L_jx)^{d_j} \;\;\; \forall x \in E  ,
    \end{equation}
    it holds that
    \begin{equation} \label{eq:FRBL-conclusion}
    \prod_{i=1}^I \brac{ \int_{\R^{n_i}} f_i\, d\gamma_{\Sigma_i} }^{c_i}
        \le {\rm FR}^{(\mathcal{G})}_{LC}
        \prod_{j=1}^J \brac{ \int_{\R^{m_j}} h_j\,d\gamma_{\Gamma_j} }^{d_j} . 
     \end{equation}
\end{theorem}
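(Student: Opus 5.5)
Since centered Gaussians are even and even functions are centered, we have the trivial chain $\FR^{(\G)}_{\A,\B} \le \FR^{(e)}_{\A,\B} \le \FR^{(o)}_{\A,\B}$. The content of Theorem \ref{thm:FRBL} is therefore the single inequality $\FR^{(o)}_{\A,\B} \le \FR^{(\G)}_{\A,\B}$. We plan to prove it by the self-convolution / Central Limit Theorem scheme of Nakamura--Tsuji, first under strong regularization and non-degenerate covariances, and then remove both restrictions by approximation.

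\textbf{Step 1 (regularized, non-degenerate case).} Assume $0<A_i^\flat\le A_i^\sharp<\infty$, $0<B_j^\flat\le B_j^\sharp<\infty$ and $\Gamma_j>0$. Absorbing the Gaussian densities into the functions, we pass to the Lebesgue formulation with a modified $\Q$. In this setting the class of admissible data is compact modulo translations and normalization, so extremizers $(f_i,h_j)$ exist for the constant $\FR^{(o)}$. Given an extremal centered tuple, we form the rescaled self-convolutions
\[
f_i^{(2)}(x)=\int f_i\bigl(\tfrac{x+y}{\sqrt2}\bigr)f_i\bigl(\tfrac{x-y}{\sqrt2}\bigr)\,dy ,
\]
and similarly for $h_j$. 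The assumption (\ref{eq:FRBL-assumption}) is preserved under this operation. To check this, apply the hypothesis at the points $(x\pm y)/\sqrt2$: the quadratic form $\Q$ splits orthogonally, $\langle \Q\tfrac{x+y}{\sqrt 2},\tfrac{x+y}{\sqrt 2}\rangle+\langle \Q\tfrac{x-y}{\sqrt 2},\tfrac{x-y}{\sqrt 2}\rangle=\langle \Q x,x\rangle+\langle \Q y,y\rangle$. Then use the reverse (Prékopa--Leindler-type) direction in the $y$-variable for the $f_i$, and the forward Brascamp--Lieb direction for the $h_j$. The centering of the $f_i$ is exactly what lets us handle the possibly indefinite $\Q$: for centered log-concave $f_i$ the cross term in $y$ integrates favorably, as in Nakamura--Tsuji. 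The regularization classes are also preserved by this operation. Iterating, the normalized $k$-fold self-convolutions converge to centered Gaussians $g_{A_i}$, $g_{B_j}$ by the CLT. The ratio of the two sides is monotone along the iteration, so the ratio for the original tuple is bounded by the Gaussian ratio. This yields $\FR^{(o)}\le \FR^{(\G)}$.

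\textbf{Step 2 (removing regularization).} Take $A_i^\flat,B_j^\flat$ general and $A_i^\sharp=B_j^\sharp=\infty$. We regularize each $f_i$ and $h_j$ by multiplying by $g_{\eps\Id}$ and convolving with a small Gaussian, recentering the $f_i$ by a small translation. The key difficulty is to preserve the coupled hypothesis $\prod f_i^{c_i}\le e^{-\frac12\langle\Q x,x\rangle}\prod h_j(L_jx)^{d_j}$. We plan to handle this by convolving all functions simultaneously through a Gaussian on $E$ pushed forward by the $L_j$, enlarging the $h_j$ by slightly dilated Gaussians to absorb the errors. We then pass to the limit using monotone convergence and the continuity of the Gaussian constant in $(\A,\B)$ as the parameters tend to their limits.

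\textbf{Step 3 (degenerate $\Gamma_j$)} is the step we expect to be the main obstacle. We replace $\Gamma_j$ by $\Gamma_j+\delta\Id$ and need two facts. First, $\int h_j\,d\gamma_{\Gamma_j+\delta}\to\int h_j\,d\gamma_{\Gamma_j}$; this requires upper semicontinuity of the log-concave $h_j$, which we can arrange by taking the closure, since this changes nothing on the relevant integrals. Second, $\FR^{(\G)}(\Gamma+\delta)\to\FR^{(\G)}(\Gamma)$. Because $\det(\Id+\Gamma B)$ is increasing in $\Gamma$, we have $\FR^{(\G)}(\Gamma)\le\FR^{(\G)}(\Gamma+\delta)$. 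The reverse limit inequality is harder: near-optimizers $B_j$ may blow up in the directions of $\ker\Gamma_j$. We plan to first keep $B_j^\sharp<\infty$, which makes the supremum over a compact set so that the limit commutes, then let $\delta\to0$, and only afterwards let $B_j^\sharp\to\infty$. This is the delicate ordering of limits.
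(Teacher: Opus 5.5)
Your overall architecture matches the paper's: Gaussian saturation via self-convolution and the CLT in a regularized, non-degenerate setting, followed by approximation. Your ordering of limits also matches, with the degenerate $\Gamma_j$ handled while $B_j^\sharp<\infty$ and the regularization removed only afterwards. However, the heart of your Step 1 does not work as described.

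\textbf{How the hypothesis propagates.} It is not carried forward by a Pr\'ekopa--Leindler-type bound for the $f_i$ combined with a forward Brascamp--Lieb bound for the $h_j$. Splitting the forward--reverse inequality into separate reverse and forward halves would in general lose the sharp constant. Centering also plays no role at this point: there is no cross term, by (\ref{eq:parallel}). The missing idea is Proposition \ref{p:BLTrick}(\ref{it:BLTrick4}). Fix $\bar x$ and consider the fibre functions $x_i\mapsto f_i(\frac{\bar x_i+x_i}{\sqrt2})f_i(\frac{\bar x_i-x_i}{\sqrt2})$ and $y_j\mapsto h_j(\frac{\bar y_j+y_j}{\sqrt2})h_j(\frac{\bar y_j-y_j}{\sqrt2})$. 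They satisfy the same pointwise hypothesis with the same $\Q$, and they are \emph{even} for arbitrary $f_i,h_j$. One therefore applies the forward--reverse inequality itself to them, with the best even constant. A hypothesis with constant $D$ thus becomes one with constant $D^2\FR^{(e)}$, which is a fixed point exactly when $D=1/\FR^{(e)}$. This is why the paper runs the iteration on even extremizers and proves $\FR^{(e)}=\FR^{(\G)}$ first.

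\textbf{Uncentered $h_j$.} The $h_j$ are not centered, so your claim that the iterates converge to centered Gaussians $g_{B_j}$ fails. By Proposition \ref{p:BLTrick}(\ref{it:BLTrick3}), each step multiplies the barycenter of $h_j\gamma_{\Gamma_j}$ by $\sqrt2$, so the CLT limit drifts to infinity. You need a separate passage from the class $(o)$ to the even class. The paper does this in Proposition \ref{p:reduce-to-even}: one self-convolution step keeps the $f_i$ centered and yields $\FR^{(o)}\le\sqrt{\FR^{(o)}\FR^{(e)}}$, and one concludes using $\FR^{(o)}<\infty$ from Lemma \ref{l:Finiteness}. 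That lemma is where centering is genuinely used, in two ways:
\begin{enumerate}
\item the pointwise bound $f_i\gamma_{\Sigma_i}(x_i)\le e^{2n_i}f_i(0)\gamma_{\Sigma_i}(0)e^{-\frac{\eps}{2}|x_i|^2}$, valid for centered $f_i$;
\item the factor $e^{-\frac{\eps}{2}\sum_j d_j|b_j|^2}$, which controls the barycenters $b_j$ of the uncentered $h_j$.
\end{enumerate}
Your appeal to $\FR^{(o)}$-extremizers also tacitly requires this finiteness.

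\textbf{Removing regularization.} In your Step 2, Euclidean convolution followed by a recentering translation changes $\int f_i\,d\gamma_{\Sigma_i}$. It also does not obviously restore $\gamma_{\Sigma_i}$-centering, and it creates linear terms from $\Q$. The paper instead uses the Ornstein--Uhlenbeck semigroup $P^{\Sigma_i}_t$, which preserves the $\gamma_{\Sigma_i}$-integral and barycenter exactly. Its Mehler kernel on $E$ lets Jensen and H\"older produce the regularized $v_{j,t}$ with $L_{j,t}=L_je^{-t\Sigma^{-1}}$ and $\Q_t\to\Q$. Multiplying by $g_{\eps\Id}$ is unnecessary, since $\Sigma_i^{-1}$ already supplies strict log-concavity.
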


\begin{remark}
Note that contrary to $\Gamma_j \geq 0$ which are allowed to be degenerate, we require $\Sigma_i > 0$ to be non-degenerate. The reason is that this does not lose any generality: for degenerate $\Sigma_i \geq 0$, we can always redefine $\R^{n_i}$ to be the subspace perpendicular to the null-space of $\Sigma_i$, which would only weaken the assumption (\ref{eq:FRBL-assumption}), while preserving the conclusion (\ref{eq:FRBL-conclusion}). We also remark that we can handle the case when $\Sigma_i = \Gamma_j = \infty$, or more generally, when the Gaussian measures $\gamma_{\Sigma_i}$ and $\gamma_{\Gamma_j}$ degenerate to be products of the Lebesgue measure on a subspace $F$ and a Gaussian measure on $F^{\perp}$, but this would make the notation and analysis even heavier. Since we can get this case by a scaling limit argument, we restrict to Gaussian probability measures in our formulation. 
\end{remark}

\section{Preparatory Lemmas} \label{sec:preparatory}

\subsection{Equivalent Definitions}

\begin{lemma} \label{lem:equivalence}
Let $(\mathbf{c},\mathbf{d},\mathbf{L},\mathcal{Q} ,\boldsymbol{\Sigma},\boldsymbol{\Gamma},\A,\B) $  be a Brascamp--Lieb datum.
For all $* \in \{o,e,\G\}$, $\FR_{\A,\B}^{(*)}$ is 
the supremum over all constants $C > 0$ in the following statement:
\begin{align*}
& \exists \{f_i\}_{i=1,\ldots,I} \;\; \exists \{h_j\}_{j=1,\ldots,J} \;\;  \int f_i d\gamma_{\Sigma_i} = \int h_j d\gamma_{\Gamma_j} = 1 \\
& \text{such that } \;\; \prod_{i=1}^I f_i(x_i)^{c_i} \leq  \frac{1}{C}  e^{-\frac12 \langle \Q x, x\rangle}\prod_{j=1}^J h_j(L_jx)^{d_j}  \;\;\; \forall x \in E   ,
\end{align*}
where $f_i$ and $h_j$ above are taken from the following families:
\begin{itemize}
\item If $*=o$ then $f_i\in \F_{A_i^\flat, A_i^\sharp}^{(o)}(\gamma_{\Sigma_i}),\;  h_j\in \F_{B_j^\flat, B_j^\sharp}(\gamma_{\Gamma_j})$.
\item If $*=e$ then $f_i\in \F_{A_i^\flat, A_i^\sharp}^{(e)}(\gamma_{\Sigma_i}),\;  h_j\in \F^{(e)}_{B_j^\flat, B_j^\sharp}(\gamma_{\Gamma_j})$. 
\item If $* = \G$ then $f_i = p_i g_{A_i} \in \mathcal{F}_{A_i^\flat, A_i^\sharp}^{(e)}(\gamma_{\Sigma_i}) , \; h_j = q_j g_{B_j} \in  \mathcal{F}^{(e)}_{B_j^\flat, B_j^\sharp}(\gamma_{\Gamma_j})$ (for some constants $p_i,q_j > 0$). 
\end{itemize}
\end{lemma}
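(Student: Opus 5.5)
The statement is a normalization/homogeneity argument, and I would prove it by a two-sided comparison. Fix $* \in \{o,e,\G\}$ and write $S$ for the supremum of admissible constants $C$ in the lemma. The main observation is that each family in question is invariant under multiplication by positive constants. Indeed, $f \in \F^{(o)}_{A^\flat,A^\sharp}(\gamma_\Sigma)$ implies $pf$ lies in the same class for $p>0$: the log-concavity and log-convexity relative to $g_{A^\flat},g_{A^\sharp}$, integrability, centering and evenness are all unaffected. For $*=\G$ this invariance is built into the lemma by the free constants $p_i,q_j$, whereas in the definition of $\FR^{(\G)}$ it is absorbed by homogeneity of the implication (\ref{eq:FR-conclusion}). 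I will also use that the inequality in (\ref{eq:FR-conclusion}) is invariant under $f_i \mapsto p_i f_i$, $h_j \mapsto q_j h_j$ exactly when the hypothesis is. Both sides of the hypothesis and of the conclusion scale by $\prod p_i^{c_i}$ and $\prod q_j^{d_j}$ respectively.

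\emph{Step 1: $S \le \FR^{(*)}_{\A,\B}$.} Suppose $C$ is admissible with witnesses $f_i,h_j$ normalized to have integral $1$. Put $\tilde h_1 := C^{-1/d_1} h_1$; it lies in the same family and satisfies the hypothesis of (\ref{eq:FR-conclusion}). The definition of $\FR^{(*)}$ then gives $1 \le \FR^{(*)} C^{-1}$, i.e.\ $C \le \FR^{(*)}$. For $*=\G$, $h_1$ has the form $q_1 g_{B_1}$, and one checks that the definition of $\FR^{(\G)}$ (through the class $\C^{(\G)}$, or via (\ref{eq:Gaussian-constant})) is insensitive to such constants. This is the homogeneity remark above.

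\emph{Step 2: $\FR^{(*)}_{\A,\B} \le S$.} Take any $f_i,h_j$ in the families satisfying the hypothesis. If some $\int f_i =0$ the conclusion is trivial. If some $\int h_j = 0$, then $h_j = 0$ a.e.\ with respect to $\gamma_{\Gamma_j}$. I have to rule out the hypothesis forcing this unless some $f_i \equiv 0$; this is the only delicate point. When $\Gamma_j$ is degenerate, $\int h_j\,d\gamma_{\Gamma_j}=0$ only means $h_j$ vanishes on the support subspace. I would handle this by noting that if the right-hand side is $0$ while the left is positive, then the ratio is infinite. I would then argue that $S=\infty$ as well, by approximating with $h_j+\epsilon$-type perturbations, or more simply by accepting $\FR=\infty$ consistently on both sides, with the convention of $(0,\infty]$-valued constants. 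Otherwise normalize: $\hat f_i = f_i/\int f_i$ and $\hat h_j = h_j/\int h_j$. These remain in the families and satisfy the hypothesis with constant
\[
C = \frac{\prod_i (\int f_i)^{c_i}}{\prod_j (\int h_j)^{d_j}} ,
\]
so this ratio is at most $S$. Taking the supremum over all admissible tuples gives that the best constant in (\ref{eq:FR-conclusion}) is at most $S$.

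I expect the main obstacle to be purely bookkeeping of degenerate cases: zero integrals, infinite $\FR$, and the Gaussian case where constants must be carried explicitly. The core is just the scaling invariance.
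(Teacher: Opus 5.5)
Your outline (normalize, then compare the two suprema in both directions) matches the paper's and is fine for $*\in\{o,e\}$. However, the case $*=\G$, which is the only part of the lemma with real content, is not actually proved.

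The class $\C^{(\G)}_{f,i}$ used to define $\FR^{(\G)}_{\A,\B}$ consists of the unscaled $g_{A_i}$, all equal to $1$ at the origin, so it is \emph{not} closed under positive multiples. Hence in your Step 1 you cannot feed the witnesses $p_i g_{A_i}$, $q_j g_{B_j}$ (or $\tilde h_1=C^{-1/d_1}h_1$) into the definition of $\FR^{(\G)}_{\A,\B}$. Appealing to ``homogeneity of the implication'' does not fix this, because dividing out the constants leaves
\[
\prod_{i=1}^I g_{A_i}(x_i)^{c_i}\le D\, e^{-\frac12\scalar{\Q x,x}}\prod_{j=1}^J g_{B_j}(L_jx)^{d_j} ~,~ D:=\frac{\prod_j q_j^{d_j}}{C\prod_i p_i^{c_i}} .
\]
For a general family not closed under scaling, this is strictly weaker than the same inequality with $D=1$.

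The missing idea is the quadratic structure. The display says $\scalar{Mx,x}\ge -2\log D$ for all $x\in E$, where $M:=\mathrm{diag}(c_1A_1,\ldots,c_IA_I)-\Q-\sum_{j}d_jL_j^*B_jL_j$. Evaluating at $x=0$ gives $D\ge 1$. Replacing $x$ by $\lambda x$ with $\lambda\to\infty$ gives $M\ge 0$, so the unscaled $g_{A_i},g_{B_j}$ are admissible for $\FR^{(\G)}_{\A,\B}$. Since $\int g_{A_i}\,d\gamma_{\Sigma_i}=1/p_i$ and $\int g_{B_j}\,d\gamma_{\Gamma_j}=1/q_j$, the definition yields $\prod_j q_j^{d_j}/\prod_i p_i^{c_i}\le\FR^{(\G)}_{\A,\B}$. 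Therefore $C=\prod_j q_j^{d_j}/(D\prod_i p_i^{c_i})\le\FR^{(\G)}_{\A,\B}$. This is exactly the paper's remark that, for $D\ge 1$, $\scalar{Qx,x}+\log D\ge 0$ for all $x$ iff $\scalar{Qx,x}\ge 0$ for all $x$. Your phrase ``one checks that the definition is insensitive to such constants'' asserts this without supplying the reason.

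A secondary point concerns the case $\int h_j\,d\gamma_{\Gamma_j}=0$ while all $\int f_i\,d\gamma_{\Sigma_i}>0$. Your handling of it does not work as written:
\begin{itemize}
\item $h_j+\eps$ is in general neither log-concave nor in $\F_{B_j^\flat,B_j^\sharp}(\gamma_{\Gamma_j})$;
\item ``accepting $\FR=\infty$ on both sides'' presupposes $S=\infty$.
\end{itemize}
But this case cannot occur, which is why the paper may simply restrict to positive integrals. A centered or even log-concave $f_i$ with positive integral is positive on a neighborhood of $0$, since the barycenter of an absolutely continuous measure on a convex set lies in its interior. The hypothesis therefore forces $h_j>0$ on $L_j(U)$ for some neighborhood $U$ of $0\in E$. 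By surjectivity of $L_j$, this is a neighborhood of $0\in\R^{m_j}$, which has positive $\gamma_{\Gamma_j}$-measure even when $\Gamma_j$ is degenerate. Hence $\int h_j\,d\gamma_{\Gamma_j}>0$ automatically.
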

\begin{proof}
Recalling the original definition of $\FR_{\A,\B}^{(*)}$, write:
\[
\FR_{\A,\B}^{(*)} = \sup_{f_i,h_j} \set{  \frac{\prod_{i=1}^I \brac{ \int_{\R^{n_i}} f_i\, d\gamma_{\Sigma_i} }^{c_i}}{ \prod_{j=1}^J \brac{ \int_{\R^{m_j}} h_j\,d\gamma_{\Gamma_j} }^{d_j}   }  : \sup_{x \in E} \frac{\prod_{i=1}^I f_i(x_i)^{c_i}}{ e^{-\frac12 \langle \Q x, x\rangle}\prod_{j=1}^J h_j(L_jx)^{d_j}} \leq 1 } ,
\]
where the supremum is taken over $f_i \in \C_{f,i}^{(*)}$ and $h_j \in \C_{h,j}^{(*)}$ such that $\int_{\R^{n_i}} f_i\, d\gamma_{\Sigma_i} > 0$ and $\int_{\R^{m_j}} h_j\,d\gamma_{\Gamma_j} > 0$, and we use the convention that $0 / 0 = 0$. Whenever the families $\C_{f,i}^{(*)}$ and $\C_{h,j}^{(*)}$ are closed under multiplication by a positive constant, we can proceed by normalizing $f_i,h_j$ and using homogeneity as follows:
\begin{align*}
& = \sup_{f_i,h_j} \set{  \frac{\prod_{i=1}^I \brac{ \int_{\R^{n_i}} f_i\, d\gamma_{\Sigma_i} }^{c_i}}{ \prod_{j=1}^J \brac{ \int_{\R^{m_j}} h_j\,d\gamma_{\Gamma_j} }^{d_j}   } / \sup_{x \in E} \frac{\prod_{i=1}^I f_i(x_i)^{c_i}}{ e^{-\frac12 \langle \Q x, x\rangle}\prod_{j=1}^J h_j(L_jx)^{d_j}} } \\
& = \sup_{f_i,h_j} \set{ 1 /  \sup_{x \in E} \frac{\prod_{i=1}^I f_i(x_i)^{c_i}}{ e^{-\frac12 \langle \Q x, x\rangle}\prod_{j=1}^J h_j(L_jx)^{d_j}}  : \int f_i \, d\gamma_{\Sigma_i} = \int h_j \, d\gamma_{\Gamma_j} = 1 } \\
& = 1 / \inf_{f_i,h_j}  \set{ \sup_{x \in E} \frac{\prod_{i=1}^I f_i(x_i)^{c_i}}{ e^{-\frac12 \langle \Q x, x\rangle}\prod_{j=1}^J h_j(L_jx)^{d_j}}  : \int f_i \, d\gamma_{\Sigma_i} = \int h_j \, d\gamma_{\Gamma_j} = 1 } .
\end{align*}
This establishes the claim for $* \in \{ o , e\}$. 

For $* = \G$, we cannot simply normalize $g_{A_i} \in \C^{(\G)}_{f,i}$ or $g_{B_j} \in \C^{(\G)}_{h,j}$ as we did above, because we always have $g_{A_i}(0) = g_{B_j}(0) = 1$. 
 However, we observe that the value of $\FR^{(\G)}_{\A,\B}$ does not change if instead of taking supremum over $f_i = g_{A_i} \in \C^{(e)}_{f,i}$ and $h_j = g_{B_j} \in \C^{(e)}_{h,j}$, we test $f_i = p_i g_{A_i} \in \C^{(e)}_{f,i}$ and $h_j = q_j g_{B_j} \in \C^{(e)}_{h,j}$ for all scalars $p_i, q_j > 0$. Indeed, this is a simple consequence of the homogeneity of the integrals and the fact that for any quadratic form $Q \in \Sym(E)$ and $D \geq 1$,
\[
\scalar{Q x, x} + \log D \geq 0 \;\;\; \forall x \in E \;\;\; \Leftrightarrow \;\;\; \scalar{Q x, x} \geq 0 \;\;\; \forall x \in E .
\]
Hence, by closing $\C^{(\G)}_{f,i}$ and $\C^{(\G)}_{h,j}$ under positive scalar multiples, the previous argument applies, establishing the claim also for $* = \G$. 
\end{proof}

\subsection{Log-concave bounds}

\begin{lemma} \label{lem:log-concave}
For any log-concave function $h : \R^n \rightarrow \R_+$, there exists $a \in \R^n$ and $b \in \R$ such that:
\[
h(x) \leq e^{\scalar{a,x} + b} \;\;\; \forall x \in \R^n . 
\]
\end{lemma}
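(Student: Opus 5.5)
The plan is to separate the trivial case and then combine concavity with a supporting affine function, using a compactly supported interior point. If $h \equiv 0$, any $a,b$ work (the bound $e^{\scalar{a,x}+b} > 0$ is trivial). Otherwise set $V = -\log h : \R^n \rightarrow \R \cup \{+\infty\}$, a convex function that is not identically $+\infty$. The claim is then equivalent to $V$ admitting an affine minorant $V(x) \geq -\scalar{a,x} - b$ for all $x$: exponentiating $-V \leq \scalar{a,x}+b$ gives $h \leq e^{\scalar{a,x}+b}$.

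\textbf{Step 1: a supporting hyperplane at an interior point.} Let $D = \{V < \infty\} = \{h > 0\}$, a nonempty convex set. Take its affine hull $F$ and a point $x_0$ in the relative interior of $D$. Since $V$ is convex and finite on the relative interior of $D$, it is continuous there. By the standard subdifferentiability of finite convex functions on relative interiors, there exists $a' \in F - x_0$ (the linear part of $F$) such that
\[
V(x) \geq V(x_0) + \scalar{a', x - x_0} \quad \text{for all } x \in D .
\]
Concretely, one obtains this by applying Hahn--Banach or the separation theorem in $F \times \R$ to the epigraph of $V|_F$ and the point $(x_0, V(x_0) - 1)$. The separating hyperplane is non-vertical precisely because $x_0$ lies in the relative interior.

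\textbf{Step 2: extend to all of $\R^n$.} For $x \notin D$ we have $V(x) = +\infty$, so the inequality holds trivially there. Hence
\[
V(x) \geq \scalar{a',x} + V(x_0) - \scalar{a',x_0}
\]
for every $x \in \R^n$. Setting $a = -a'$ and $b = \scalar{a',x_0} - V(x_0)$ then yields the desired bound.

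\textbf{Main obstacle.} The only delicate point is that the separating hyperplane must be non-vertical. This is exactly why $x_0$ has to be chosen in the relative interior, rather than allowing $D$ to be lower-dimensional with $x_0$ on its boundary; that choice is handled by working within $F$. No result from earlier in the paper is needed beyond elementary convex analysis.
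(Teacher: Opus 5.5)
Your proof is correct and uses the same idea as the paper's: bound $-\log h$ from below by an affine function obtained from a subgradient at a point where $h>0$. The one difference is in your favor. The paper only translates so that $h(0)>0$ and then takes a subgradient of $-\log h$ at $0$, which need not exist if $0$ lies on the relative boundary of $\{h>0\}$ (e.g.\ $h(x)=e^{\sqrt{x}}\mathbf{1}_{[0,\infty)}(x)$ on $\R$ at $x=0$). Choosing $x_0$ in the relative interior, as you do, is what makes that step rigorous.
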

\begin{proof}
If $h$ is identically zero, the claim is obvious. Otherwise, by translating $h$ if necessary, we may assume that $h(0) > 0$. 
Now simply use $b = \log h(0)$ and any subgradient $-a \in \partial (-\log h)(0)$ in the subdifferential of the convex function $-\log h$ at the origin. 
\end{proof}

\begin{corollary} \label{cor:log-concave-integrable}
For any log-concave function $h : \R^n \rightarrow \R_+$ and $\Sigma \in \Sym_{\geq 0}(n)$ (also degenerate), $h$ is $\gamma_\Sigma$-integrable. \end{corollary}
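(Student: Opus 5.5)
We combine Lemma \ref{lem:log-concave} with the explicit structure of a (possibly degenerate) Gaussian measure. By Lemma \ref{lem:log-concave}, there exist $a \in \R^n$ and $b \in \R$ with $0 \leq h(x) \leq e^{\scalar{a,x}+b}$ for all $x \in \R^n$. It therefore suffices to show that the exponential function $x \mapsto e^{\scalar{a,x}}$ is $\gamma_\Sigma$-integrable. (Measurability of $h$ is not an issue: a log-concave function is Borel, since its super-level sets are convex, and convex sets are Lebesgue measurable and, up to their boundary, Borel. In any case, the claim concerns the integral of a non-negative function dominated by an integrable one.)

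To handle the possibly degenerate $\Sigma$, we use the interpretation fixed in Section \ref{sec:notation}: $\gamma_\Sigma$ is the push-forward of $\gamma = \gamma_{\Id}$ under $\Sigma^{1/2}$. Hence
\[
\int_{\R^n} e^{\scalar{a,x}} \, d\gamma_\Sigma(x) = \int_{\R^n} e^{\scalar{a,\Sigma^{1/2} y}} \, d\gamma(y) = \int_{\R^n} e^{\scalar{\Sigma^{1/2} a, y}} \, d\gamma(y) = e^{\frac12 \abs{\Sigma^{1/2} a}^2} = e^{\frac12 \scalar{\Sigma a, a}} < \infty ,
\]
by completing the square in the standard Gaussian integral. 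Consequently
\[
\int h \, d\gamma_\Sigma \leq e^{b + \frac12 \scalar{\Sigma a, a}} < \infty .
\]

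There is no real obstacle here. The only point needing care is to avoid writing a density for $\gamma_\Sigma$ when $\Sigma$ is singular, and the push-forward description takes care of this uniformly. One could equally restrict to the range of $\Sigma$, where $\gamma_\Sigma$ has a genuine Gaussian density, and apply the same bound there.
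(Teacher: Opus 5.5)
Your argument is correct and is exactly what the paper intends (the corollary is stated without proof right after Lemma \ref{lem:log-concave}): you dominate $h$ by $e^{\scalar{a,x}+b}$ and use the Gaussian exponential moment $\int e^{\scalar{a,x}}\,d\gamma_\Sigma = e^{\frac12\scalar{\Sigma a,a}}$, which via $\gamma_\Sigma = (\Sigma^{1/2})_{\#}\gamma$ also covers degenerate $\Sigma$. One small correction to your measurability aside: log-concave functions need not be Borel (take the indicator of an open disk together with a non-Borel subset of its boundary circle), but this is harmless, since your push-forward formula only needs measurability of $y \mapsto h(\Sigma^{1/2}y)$, which is itself log-concave and hence Lebesgue measurable.
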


\begin{lemma} \label{lem:log-concave-cov}
For any log-concave density $h : \R^n \rightarrow \R_+$ with $\int h(x) dx = 1$, 
\[
\norm{h}_{L^\infty} \leq \frac{C_n}{\det^{\frac{1}{2}} \Cov(h)} ,
\]
where $C_n > 1$ is some constant depending solely on $n$. 
\end{lemma}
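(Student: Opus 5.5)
We reduce to the isotropic position, where the bound becomes an upper bound on the sup of an isotropic log-concave density. Let $\mu = \int x h(x)\,dx$ and $\Sigma = \Cov(h)$. Since $h$ is an integrable log-concave density, it has finite moments of all orders, so $\Sigma$ is finite. If $\Sigma$ were degenerate, $h$ would be supported in a hyperplane, which is impossible for a density with $\int h = 1$. Hence $\Sigma > 0$.

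Set $\tilde h(y) = \det^{1/2}(\Sigma)\, h(\Sigma^{1/2} y + \mu)$. Then $\tilde h$ is a log-concave probability density with barycenter $0$ and identity covariance, and $\norm{h}_{L^\infty} = \norm{\tilde h}_{L^\infty}/\det^{1/2}\Sigma$. It therefore suffices to show that isotropic log-concave densities on $\R^n$ are bounded by a constant $C_n$ depending only on $n$; enlarging $C_n$ if needed gives $C_n > 1$.

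For the isotropic bound we use two standard facts. First, Fradelizi's inequality: for a log-concave density $f$ with barycenter at the origin, $\norm{f}_{L^\infty} \leq e^n f(0)$. Second, $f(0)$ is bounded above for isotropic $f$. To see the second, let $M = f(0)$. By log-concavity, $f \geq M^{1/2}$ on the set $\{f \geq f(0)^{1/2}\}$. Alternatively, and more directly, we bound $\norm{f}_\infty$ itself. Let $M = \max f$, attained at $x_0$. By log-concavity, $f(x) \ge M e^{-\dots}$ is not available without further information, so we argue through the level set $K = \{f \geq M/e\}$, which is convex. Writing $f = M e^{-V}$ with $V$ convex, minimal at $x_0$, and $V(x_0) = 0$, convexity gives $f(x) \le M e^{1-\|x-x_0\|_K}$ outside $K$, where $\|\cdot\|_K$ is the gauge of $K$ centered at $x_0$. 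Combining this with $\int f = 1$ and $f \geq M/e$ on $K$ yields $M|K| \leq e$, and also $\int f \leq C'_n M |K|$. Comparing second moments, the covariance of $f$ is at most $C''_n$ times the second-moment matrix of the uniform measure on $K$ (about its own barycenter), up to translation error controlled by the same tail bound. That second-moment matrix has determinant $\ge c_n |K|^2$, since among convex sets of fixed volume the determinant of the covariance is minimized up to constants, or more simply because $K$ contains and is contained in ellipsoids of comparable volume by John's theorem. Therefore $1 = \det \Cov(f) \leq C_n |K|^2$, so $|K| \geq c_n$ and $M \leq e/|K| \leq C_n$.

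The main obstacle is this last comparison: relating $\det\Cov(f)$ to $|K|^2$ in the direction we need. Only a lower bound on $|K|$ is required, so we need $\Cov(f) \le C_n \cdot(\text{size of } K)^2$. We would get this from the exponential tail bound $f(x)\le Me^{1-\|x-x_0\|_K}$ together with John's theorem: taking $x_0 + \mathcal{E} \subset K \subset x_0 + n\mathcal{E}$ for an ellipsoid $\mathcal{E}$, integration in polar coordinates with respect to $\mathcal{E}$ gives $\Cov(f) \le \int (x-x_0)(x-x_0)^T f \le C_n\, \mathcal{E}\text{-shape}^2$. Hence $1 \le C_n |\mathcal{E}|^2 \le C_n|K|^2$.
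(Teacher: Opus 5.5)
Your argument is correct in substance, but it is not the paper's route. The paper does not prove the estimate at all. It recognises $\|h\|_{L^\infty}^{1/n}\det^{1/(2n)}\Cov(h)$ as the isotropic constant $\mathcal{L}_h$ and quotes the classical trivial bound $\mathcal{L}_h\le C\sqrt{n}$ for log-concave $h$.

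You instead give a self-contained argument: the convex level set $K=\{f\ge M/e\}$, the gauge tail bound $f(x)\le Me^{1-\|x-x_0\|_K}$, the mass bound $M|K|\le e$, and John's theorem to control $\Cov(f)$ by the size of $K$. This essentially re-derives a crude version of the classical bound. You gain an elementary proof; you lose the constant, which along your chain is of order $n^{O(n^2)}$ rather than $(C\sqrt n)^n$. That loss is harmless, since, as the paper's remark notes, any $C_n$ depending only on $n$ suffices. The Fradelizi detour is never used and can be deleted. Even the isotropic reduction is optional: the same chain gives $\det\Cov(h)\le C_n e^n|K|^2$ and $\|h\|_{L^\infty}\le e/|K|$ directly.

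Two steps must be repaired as written.

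First, in your third paragraph you assert that the second-moment matrix of $K$ has determinant $\ge c_n|K|^2$, justified by minimality of covariance determinants. That is the wrong direction and cannot yield $1=\det\Cov(f)\le C_n|K|^2$. What is needed is the upper bound, which your last paragraph correctly identifies, so discard that justification.

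Second, John's theorem does not give an ellipsoid centred at the maximiser $x_0$, and $x_0$ can lie on $\partial K$. For $f(x)=e^{-x}\1_{[0,\infty)}(x)$ on $\R$ one has $x_0=0$ and $K=[0,1]$, so no sandwich $x_0+\mathcal{E}\subset K\subset x_0+n\mathcal{E}$ exists. Only the outer inclusion is actually used. If $z+\mathcal{E}\subset K\subset z+n\mathcal{E}$ is the John sandwich with $\mathcal{E}$ origin-symmetric, then $x_0-z\in n\mathcal{E}$, so $K-x_0\subset 2n\mathcal{E}$ and $|\mathcal{E}|\le|K|$. Hence $f(x)\le Me^{1-\|x-x_0\|_{2n\mathcal{E}}}$, and integrating in polar coordinates gives $\det\Cov(f)\le C_nM^n|\mathcal{E}|^{n+2}$. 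You then need $M|\mathcal{E}|\le M|K|\le e$ to reach $1\le C_n|\mathcal{E}|^2\le C_n|K|^2$. State this use of $M|K|\le e$ explicitly rather than hiding it in ``$\mathcal{E}$-shape$^2$''.

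A minor point: $\sup f$ need not be attained. Choosing $x_0$ with $f(x_0)\ge M/2$ and $K=\{f\ge f(x_0)/e\}$ works the same way.
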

\begin{remark}
Thanks to the recent remarkable resolution of the Slicing Problem by Klartag and Lehec \cite{KlartagLehec-SlicingSolved}, it is actually known that $C_n \leq C^n$ for some dimension-independent constant $C > 1$, but we do not require such a powerful result here -- any constant $C_n$ independent of $h$ is good enough for our application. 
\end{remark}
\begin{proof}
The isotropic constant $\mathcal{L}_h$ associated to $h : \R^n \rightarrow \R_+$ is defined as $\mathcal{L}_h := \norm{h}^{\frac{1}{n}}_{L^\infty} \det^{\frac{1}{2n}} \Cov(h)$, and a well-known trivial estimate is $\mathcal{L}_h \leq C \sqrt{n}$ for all log-concave $h$ (see e.g.~\cite{GreekBook}). \end{proof}

\subsection{Gaussian approximation of identity}

\begin{definition}[Gaussian approximation of identity]
A family $\{\gamma_{\Sigma_t}\}_{t > 0}$ is called a Gaussian approximation of identity in $\R^n$ if $\Sym_{>0}(n) \ni \Sigma_t \rightarrow 0$ as $t \rightarrow 0$. \end{definition}
\begin{remark}
In the finite dimensional case all norms on $\Sym(n) \subset \R^{n^2}$ are equivalent, so throughout this work we do not specify a particular one. However, we will sometimes invoke the operator norm $\norm{A}_{op}$, utilizing that $\scalar{A x,x} \leq \norm{A}_{op} \abs{x}^2$ and  $-\norm{A}_{op} \Id_n \leq A \leq \norm{A}_{op} \Id_n$. 
\end{remark}

\begin{lemma} \label{lem:approximation-of-identity}
Let $\{ \gamma_{\Sigma_t} \}_{t > 0}$ be a Gaussian approximation of identity in $\R^n$. Then for any log-concave function $h : \R^n \rightarrow \R_+$:
\begin{enumerate}
\item $\lim_{t \to 0} (h \ast \gamma_{\Sigma_t})(x) = h(x)$ for any continuity point $x$ of $h$ (and hence, for almost-every $x \in \R^n$). 
\item There exist $C > 0$ and $a \in \R^n$ such that $(h \ast \gamma_{\Sigma_t})(x) \leq  C e^{\scalar{a,x}}$ for all $x \in \R^n$ and small enough $t > 0$. 
\end{enumerate}
\end{lemma}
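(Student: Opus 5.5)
I would begin with part 2, since it also supplies the domination needed for part 1. By Lemma \ref{lem:log-concave} there are $a \in \R^n$ and $b \in \R$ with $h(x) \le e^{\scalar{a,x}+b}$ for all $x$. Convolving this exponential bound with the Gaussian gives the explicit formula
\[
(h \ast \gamma_{\Sigma_t})(x) \le e^{b} \int e^{\scalar{a,x-y}} d\gamma_{\Sigma_t}(y) = e^{b} e^{\scalar{a,x}} e^{\frac12 \scalar{\Sigma_t a, a}} ,
\]
using the Laplace transform of $\gamma_{\Sigma_t}$ (with $-a$, which does not matter by symmetry). Since $\Sigma_t \to 0$, we have $\scalar{\Sigma_t a,a} \le \norm{\Sigma_t}_{op}\abs{a}^2 \le 1$ for small $t$. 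Hence part 2 holds with $C = e^{b+1/2}$ and the same $a$.

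For part 1, fix a continuity point $x$ and write
\[
(h\ast\gamma_{\Sigma_t})(x) - h(x) = \int \big(h(x-y) - h(x)\big)\, d\gamma_{\Sigma_t}(y) .
\]
I split the integral into $\abs{y} < \delta$ and $\abs{y} \ge \delta$. On the ball, continuity at $x$ makes the integrand smaller than $\eps$ once $\delta$ is small. On the complement, the integrand is bounded by $e^{b}e^{\scalar{a,x}} e^{\abs{a}\abs{y}} + h(x)$. Under $\gamma_{\Sigma_t}$, $y$ has the law of $\Sigma_t^{1/2} Z$ with $Z$ standard, so $\abs{y} \le \norm{\Sigma_t}_{op}^{1/2}\abs{Z}$. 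The tail integral is then at most $\E\big[(C' e^{\abs{a}\abs{Z}} + h(x))\, \1_{\abs{Z} \ge \delta / \norm{\Sigma_t}^{1/2}_{op}}\big]$. This tends to $0$ by dominated convergence, because $e^{\abs{a}\abs{Z}}$ is integrable and the indicator goes to zero.

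The remaining point is the ``almost every'' clause: a log-concave function is continuous at every point except those on the boundary of its support. Indeed, on the interior of the convex set $\{h>0\}$ the function $\log h$ is a finite concave function, hence continuous. Outside the closure of the support $h$ vanishes identically in a neighborhood, so it is continuous there too. The boundary of a convex set has Lebesgue measure zero. I expect no real obstacle; the only mild care needed is justifying the uniform exponential domination used in the tail estimate, which Lemma \ref{lem:log-concave} already provides.
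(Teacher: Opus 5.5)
Your proof is correct and follows essentially the same route as the paper. In part 1, continuity handles the small ball, and the exponential bound from Lemma \ref{lem:log-concave} controls the tail $\{\abs{y}\ge\delta\}$; part 2 is the same Laplace-transform computation. The only differences are minor: you make the tail vanish by dominated convergence via $y=\Sigma_t^{1/2}Z$, where the paper completes the square explicitly, and you also justify the almost-everywhere clause, which the paper only asserts.
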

\begin{proof}
To show the first assertion, let $x$ be a point of continuity of $h$. By a standard argument, it is enough to show that for any $\eps > 0$, 
\[
\lim_{t \rightarrow 0} \int_{\R^n \setminus B(0,\eps)} h(x+y) d\gamma_{\Sigma_t}(y) = 0 . 
\]
By Lemma \ref{lem:log-concave} applied to $h(x+\cdot)$, there exist $a \in \R^n$ and $b \in \R$ such that $h(x+y) \leq e^{\scalar{a,y}+b}$ for all $y \in \R^n$, so it is enough to show that
\begin{equation} \label{eq:approximation-goal}
\lim_{t \rightarrow 0} \int_{\R^n \setminus B(0,\eps)} e^{\scalar{a,y}} d\gamma_{\Sigma_t}(y) = 0 .
\end{equation}
A computation reveals that
\[ \int_{\R^n \setminus B(0,\eps)} e^{\scalar{a,y}} d\gamma_{\Sigma_t}(y) = e^{\frac{1}{2} \scalar{\Sigma_t a,a}} \int_{\R^n \setminus B(-\Sigma_t a,\eps)}  \gamma_{\Sigma_t}(z) dz = e^{\frac{1}{2} \scalar{\Sigma_t a,a}} \gamma(\R^n \setminus \Sigma_t^{-1/2} B(-\Sigma_t a,\eps)) .
\] Denoting $R_t^2 := \norm{\Sigma_t}_{op}$, it follows that for small-enough $R_t > 0$:
\[
\int_{\R^n \setminus B(0,\eps)} e^{\scalar{a,y}} d\gamma_{\Sigma_t}(y)  \leq e^{\frac{R_t^2}{2} |a|^2} \gamma\brac{\R^n \setminus B\brac{0, \frac{\eps}{R_t} - R_t |a|}} ,
\] 
and since $R_t \rightarrow 0$ by assumption, we confirm (\ref{eq:approximation-goal}). 

For the second assertion, if $a \in \R^n$ and $b \in \R$ are such that $h(y) \leq e^{\scalar{a,y}+b}$, then
\[
\int h(x+y) d\gamma_{\Sigma_t}(y) dy \leq \int e^{\scalar{a,x+y}+b} d\gamma_{\Sigma_t}(y) = e^{\scalar{a,x}+b} \int e^{\scalar{a,\Sigma_t^{1/2} y}} d\gamma(y) = e^{\scalar{a,x}+b + \frac{1}{2} \scalar{\Sigma_t a,a}} . 
\]
Denoting $C = e^{b + \frac{1}{2} |a|^2}$, the second assertion is established for small-enough $t > 0$. 
\end{proof}

\subsection{Determinants}

\begin{lemma} \label{lem:det-stability}
For all $A,B \in \Sym_{\geq 0}(n)$ we have:
\[
\frac{1}{(1+\norm{A-B}_{op})^n} \leq \frac{\det(\Id_n + A)}{\det(\Id_n + B)} \leq (1+\norm{A-B}_{op})^n . 
\]
\end{lemma}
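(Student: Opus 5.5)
By symmetry (swapping $A$ and $B$), it suffices to prove the upper bound
\[
\det(\Id_n + A) \leq (1+\delta)^n \det(\Id_n+B), \qquad \delta := \norm{A-B}_{op}.
\]
The lower bound then follows, since $\norm{B-A}_{op}=\delta$ as well.

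The plan is to compare $\Id_n + A$ and $\Id_n + B$ in the Loewner order. From the operator-norm bound $A - B \leq \delta \Id_n$ we get
\[
\Id_n + A \leq \Id_n + B + \delta \Id_n .
\]
Since $B \geq 0$, we have $\Id_n \leq \Id_n + B$, hence $\delta \Id_n \leq \delta(\Id_n + B)$. Combining the two,
\[
\Id_n + A \leq (1+\delta)(\Id_n + B).
\]
Both sides are positive-definite. The determinant is monotone on positive-definite matrices with respect to the Loewner order. One way to see this: if $0 < X \leq Y$, then $Y^{-1/2} X Y^{-1/2} \leq \Id_n$, so all eigenvalues of $Y^{-1/2} X Y^{-1/2}$ lie in $(0,1]$, and therefore $\det X / \det Y \leq 1$. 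Applying this with $X = \Id_n+A$ and $Y = (1+\delta)(\Id_n+B)$ gives
\[
\det(\Id_n+A) \leq (1+\delta)^n \det(\Id_n+B),
\]
as required.

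There is no real obstacle here. The only point needing care is the step $\delta\Id_n \leq \delta(\Id_n+B)$, which is exactly where the hypothesis $B \geq 0$ enters. Without it one would only get an additive perturbation, which does not yield a multiplicative bound of this form.
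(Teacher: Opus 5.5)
Your proof is correct and follows essentially the same approach as the paper: reduce to the upper bound by swapping $A$ and $B$, then use $\Id_n + A \leq \Id_n + B + \norm{A-B}_{op}\Id_n \leq (1+\norm{A-B}_{op})(\Id_n+B)$ together with monotonicity of the determinant. You additionally spell out why the determinant is monotone and where $B \geq 0$ is used, which the paper leaves implicit.
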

\begin{proof}
By exchanging the roles of $A,B$, it is enough to establish the second inequality. Indeed, since $A \leq B + \norm{A-B}_{op} \Id_n$ we have
\[
\det(\Id_n + A) \leq \det(\Id_n + B + \norm{A-B}_{op} \Id_n) \leq \det((1+\norm{A-B}_{op}) (\Id_n + B)) . 
\]
\end{proof}

\begin{lemma} \label{lem:preparatory-det}
If $A,D \in \Sym_{\geq 0}(n)$ then
\[
D \geq \Id_n \;\; \Rightarrow \;\; \det(\Id_n + D A D) \geq \det(\Id_n + A) . 
\]
\end{lemma}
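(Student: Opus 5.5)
The plan is to rewrite $\det(\Id_n + DAD)$ via the determinant identity from the Remark above, and then reduce to the monotonicity of the determinant on positive semi-definite operators. Since $A \geq 0$, the square root $A^{1/2}$ exists. Applying $\det(\Id + CB) = \det(\Id + BC)$ with $C = DA^{1/2}$ and $B = A^{1/2}D$ gives
\[
\det(\Id_n + DAD) = \det(\Id_n + A^{1/2} D^2 A^{1/2}) .
\]

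Next, from $D \geq \Id_n$ with $D$ symmetric I will deduce $D^2 \geq \Id_n$. This follows by diagonalizing $D$ in an orthonormal basis: all of its eigenvalues satisfy $\lambda \geq 1$, hence $\lambda^2 \geq 1$. Conjugating this inequality by the symmetric matrix $A^{1/2}$ preserves the Loewner order, so
\[
A^{1/2} D^2 A^{1/2} \geq A^{1/2} A^{1/2} = A ,
\]
and therefore $\Id_n + A^{1/2} D^2 A^{1/2} \geq \Id_n + A > 0$.

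Finally, I will invoke monotonicity of the determinant on positive-definite matrices: if $0 < X \leq Y$, then $\det X \leq \det Y$. To see this, write $\det Y / \det X = \det(\Id_n + X^{-1/2}(Y-X)X^{-1/2})$. The matrix $X^{-1/2}(Y-X)X^{-1/2}$ is positive semi-definite, so every eigenvalue of $\Id_n + X^{-1/2}(Y-X)X^{-1/2}$ is at least $1$, and the ratio is at least $1$. This monotonicity was already used implicitly in the proof of Lemma \ref{lem:det-stability}. Applying it with $X = \Id_n + A$ and $Y = \Id_n + A^{1/2} D^2 A^{1/2}$ yields the claim.

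There is no real obstacle here. The only care needed is to avoid comparing $DAD$ with $A$ directly, since $DAD \geq A$ can fail when $D$ and $A$ do not commute. Passing through the similarity-invariant form $A^{1/2}D^2A^{1/2}$ is what makes the Loewner comparison valid.
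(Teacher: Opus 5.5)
Your proof is correct, but it takes a different route from the paper. The paper diagonalizes $D=\mathrm{diag}(d_1,\ldots,d_n)$ and uses the principal-minor expansion $\det(\Id_n+A)=\sum_{S\subset\{1,\ldots,n\}}\det(A_S)$. It then compares term by term via $\det((DAD)_S)=\prod_{i\in S}d_i^2\det(A_S)\geq\det(A_S)$.

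You instead use $\det(\Id_n+CB)=\det(\Id_n+BC)$ to pass to $A^{1/2}D^2A^{1/2}$. There the Loewner comparison with $A$ is legitimate, and you are right that $DAD\geq A$ itself can fail when $D$ and $A$ do not commute. You then finish with monotonicity of the determinant.

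What your version buys:
\begin{itemize}
\item It is coordinate-free.
\item It only uses tools already invoked in the paper: the Remark on $\det(\Id_n+CD)$, and the monotonicity underlying Lemma \ref{lem:det-stability}.
\item The same three lines prove the statement for any square $D$ with $D^*D\geq\Id_n$, with $DAD^*$ in place of $DAD$.
\item Strict monotonicity on $\Sym_{>0}(n)$ identifies the equality case as $A^{1/2}(D^2-\Id_n)A^{1/2}=0$.
\end{itemize}

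The paper's expansion instead writes the difference explicitly as $\sum_S\big(\prod_{i\in S}d_i^2-1\big)\det(A_S)$, a sum of nonnegative terms. This is equally short once the minor expansion is taken as known.
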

\begin{proof}
By choosing an appropriate orthonormal basis we may assume that $D$ is diagonal. 
It is well-known that  $\det(\Id_n + A) = \sum_{S \subset \{1,\ldots,n\}} \det(A_S)$, where $A_S$ denotes the principal minor of $A$ indexed by $S$. 
Using the fact that $\det( (D A D)_S) = \det(D_S A_S D_S) = \Pi_{i \in S} d_i^2 \det(A_S) \geq \det(A_S)$ since $D = \text{diag}(d_1,\ldots,d_n)$ is diagonal with $d_i \geq 1$ and $A_S \geq 0$, the assertion readily follows. 
\end{proof}

\subsection{Quadratic forms}

We will constantly use the following obvious parallelogram identity:
\begin{lemma}
Let $E$ be a Euclidean space, and let $Q \in \Sym(E)$. Then for all $x,y \in E$:
\begin{equation} \label{eq:parallel} 
\scalar{Q x , x} + \scalar{Q y, y} = \scalar{Q \,\frac{x+y}{\sqrt{2}} , \frac{x+y}{\sqrt{2}}} + \scalar{Q \,\frac{x-y}{\sqrt{2}} , \frac{x-y}{\sqrt{2}}} . 
\end{equation}
\end{lemma}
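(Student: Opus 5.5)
The identity follows by expanding both sides using bilinearity and the symmetry of $Q$. We expand the first term on the right-hand side:
\[
\scalar{Q \tfrac{x+y}{\sqrt{2}}, \tfrac{x+y}{\sqrt{2}}} = \tfrac12\brac{\scalar{Qx,x} + \scalar{Qx,y} + \scalar{Qy,x} + \scalar{Qy,y}} .
\]
The second term expands the same way, except that both cross terms carry a minus sign:
\[
\scalar{Q \tfrac{x-y}{\sqrt{2}}, \tfrac{x-y}{\sqrt{2}}} = \tfrac12\brac{\scalar{Qx,x} - \scalar{Qx,y} - \scalar{Qy,x} + \scalar{Qy,y}} .
\]

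Adding the two expansions, the cross terms $\pm\tfrac12(\scalar{Qx,y} + \scalar{Qy,x})$ cancel. The diagonal terms combine to $\scalar{Qx,x} + \scalar{Qy,y}$, which is the left-hand side of (\ref{eq:parallel}).

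The cancellation uses only bilinearity, so symmetry of $Q$ is not needed (with symmetry, $\scalar{Qx,y} = \scalar{Qy,x}$ and the cross terms are equal). The only thing to check is the sign bookkeeping, and there is no real obstacle. An equivalent way to see it is that the map $(x,y) \mapsto (\tfrac{x+y}{\sqrt2}, \tfrac{x-y}{\sqrt2})$ is an orthogonal transformation of $E \oplus E$ which commutes with $Q \oplus Q$, since it acts as a $2\times 2$ rotation tensored with $\Id_E$. Hence it preserves the quadratic form $\scalar{Qx,x} + \scalar{Qy,y}$ of $Q \oplus Q$.
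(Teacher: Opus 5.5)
Your expansion is correct and is exactly the routine verification the paper intends when it calls this identity ``obvious'' (the paper gives no written proof), and you are right that symmetry of $Q$ is not needed. One small slip in your alternative viewpoint: $\frac{1}{\sqrt2}\begin{pmatrix} 1 & 1 \\ 1 & -1 \end{pmatrix}$ has determinant $-1$, so it is a reflection rather than a rotation; only its orthogonality is used, so the argument is unaffected.
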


\begin{definition}[Barycenter and covariance]
Given $f \in L^1(\R^n, \R_+)$ with finite second moments which is not identically zero, 
define its barycenter and covariance by
\[
\bary(f) := \E(X) \in \R^n ~,~ \Cov(f) := \E(X \otimes X) - \E(X) \otimes \E(X) \in \Sym_{\geq 0}(n) ,
\]
where $X$ is a random vector in $\R^n$ having density $\frac{1}{\int f(y) dy} f$. 
\end{definition}

\begin{proposition} \label{prop:cov}
Let $f : \R^n \rightarrow \R_+$ be log-concave, and let $\Sigma \in \Sym_{>0}(n)$. 
\begin{itemize}
\item $\Cov (f \gamma_{\Sigma}) \leq \Sigma$. 
\item $\Cov(f \gamma_{\Sigma}) = \Sigma$ if and only if $f$ is (non-zero) constant. 
\end{itemize}
\end{proposition}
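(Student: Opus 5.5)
This is the Brascamp--Lieb variance inequality, i.e.\ the fact that multiplying a Gaussian by a log-concave weight can only shrink the covariance. I would prove it via the Brascamp--Lieb (or Poincaré-type) inequality for measures that are more log-concave than $\gamma_\Sigma$. First I would reduce to $\Sigma = \Id_n$: writing $f\gamma_\Sigma$ as the push-forward of $\tilde f \gamma$ under $\Sigma^{1/2}$, with $\tilde f(y) = f(\Sigma^{1/2} y)$ still log-concave, gives $\Cov(f\gamma_\Sigma) = \Sigma^{1/2}\Cov(\tilde f\gamma)\Sigma^{1/2}$. Both claims then transfer. The measure $\mu = f\gamma/\int f d\gamma$ has density $e^{-V}$ with $V = \frac12|x|^2 - \log f + \text{const}$, so $V$ is $1$-uniformly convex on the convex support of $f$. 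Note that $f$ is $\gamma$-integrable by Corollary~\ref{cor:log-concave-integrable}, and second moments are finite because $f$ has the exponential bound of Lemma~\ref{lem:log-concave}.

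For the inequality, I fix $\theta \in \R^n$ and apply the Brascamp--Lieb inequality $\mathrm{Var}_\mu(g) \le \int \scalar{(\nabla^2 V)^{-1}\nabla g,\nabla g} d\mu \le \int |\nabla g|^2 d\mu$ to the linear function $g(x) = \scalar{\theta,x}$. This yields $\scalar{\Cov(\mu)\theta,\theta} \le |\theta|^2$. To avoid regularity issues (non-smooth $f$, support a proper convex set), I would first approximate. One option is to replace $-\log f$ by a smooth convex approximation, e.g.\ via a Moreau envelope or by convolving $f$ with small Gaussians (which preserves log-concavity by Prékopa), and then pass to the limit in the covariance by dominated convergence using the bounds of Lemma~\ref{lem:approximation-of-identity}. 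An alternative that avoids smoothness altogether is Caffarelli's contraction theorem, or the $L^2$ Hörmander argument on the convex domain. I will take the Brascamp--Lieb route with approximation.

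For the equality case, constant $f$ clearly gives $\Cov = \Id$. Conversely, suppose $\Cov(f\gamma) = \Id$. I would use the sharper form of the Hörmander/Brascamp--Lieb argument. If $u$ solves $L u = g - \int g\, d\mu$ with $L = \Delta - \nabla V\cdot\nabla$, then
\[ \mathrm{Var}_\mu(g) = \int |\nabla g|^2 d\mu - \int |\nabla g - \nabla u|^2 d\mu - \int \big(\|\nabla^2 u\|_{HS}^2 + \scalar{(\nabla^2 V - \Id)\nabla u,\nabla u}\big) d\mu \]
(up to boundary terms of correct sign on convex domains). Equality for every linear $g$ forces $\nabla u = \theta$, so $u$ is affine, and the identity $Lu = g - \int g\, d\mu$ then forces $\scalar{\nabla \log f, \theta} = \text{const}$ for all $\theta$. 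This means $f$ is of the form $e^{\scalar{a,x}+b}$ on its support, and the support must be all of $\R^n$: a boundary would give a strict contribution, or equivalently a compactly/half-space supported component strictly reduces variance. Now $e^{\scalar{a,x}}\gamma$ is a translated standard Gaussian with covariance exactly $\Id$, so I also need to exclude $a\neq 0$. This is the delicate point.

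Indeed, exponential tilts preserve the covariance, so the second bullet as stated is false unless one additionally assumes centering or counts $e^{\scalar{a,x}}$-type functions. I would therefore prove: equality iff $f = e^{\scalar{a,x}+b}$ a.e. Presumably the paper applies this only to Gaussian-centered $f$, where the barycenter condition forces $a=0$ and hence constant. The main obstacle is making the equality analysis rigorous without smoothness. For that I would instead argue one-dimensionally: project onto each direction $\theta$, where the marginal density is $\varphi_\theta\cdot\gamma^1$ with $\varphi_\theta$ log-concave by Prékopa--Leindler, and use a direct 1D argument (e.g.\ a covariance/Chebyshev inequality after writing $\varphi=e^{-w}$) to show that variance $1$ forces an exponential tilt.
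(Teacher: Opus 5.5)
You are right that the second bullet is false as stated, and your counterexample is correct. For $f(x)=e^{\scalar{a,x}}$ one has $f\gamma_\Sigma\propto\gamma_\Sigma(\cdot-\Sigma a)$, so $\Cov(f\gamma_\Sigma)=\Sigma$ even though $f$ is not constant when $a\neq 0$. The correct equality case is the one you propose: $f=e^{\scalar{a,x}+b}$ a.e., i.e.\ $f\gamma_\Sigma$ is a possibly non-centered Gaussian with covariance $\Sigma$. The paper's proof consists only of citations (the Brascamp--Lieb variance inequality with approximation, and the cited results of Hillion--Johnson--Saumard and Chen--Lou), and its statement overlooks exponential tilts. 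Its later uses survive, exactly as you guessed. In the proofs of Theorems~\ref{thm:GCRSI-equality} and~\ref{thm:GCMPI-equality}, the second bullet is applied only to $f_1,f_2$ (centered by hypothesis) and to $y\mapsto h_1(y,y)$ and $h_2$ (whose $\gamma$-barycenters vanish by Lemma~\ref{lem:equality-centered}). Since $\int x\,e^{\scalar{a,x}}\,d\gamma(x)=e^{|a|^2/2}a$, centering forces $a=0$.

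For the first bullet, your argument is the paper's: Brascamp--Lieb applied to linear functions, with Gaussian smoothing and dominated convergence via Lemma~\ref{lem:approximation-of-identity}. For the equality case you go your own way, since the paper only cites.

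Your H\"ormander $L^2$ route is not a proof as written. It needs a sufficiently regular solution of the Poisson equation for a merely log-concave $f$ whose support may be a proper convex set, and the boundary contribution is asserted rather than computed. There is also a sign slip: with $Lu=g-\int g\,d\mu$ the identity involves $\nabla g+\nabla u$, not $\nabla g-\nabla u$.

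Your one-dimensional fallback, by contrast, is sound and elementary, and it even yields the first bullet without smoothing. Two steps should be written out.

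First, the 1D identity. Let $\rho=e^{-s^2/2-w(s)}/Z$ be the normalized marginal on its support $(p,q)$, with $w$ convex, mean $m\in(p,q)$, and $w'(m)$ any subgradient at $m$. Integrating $\frac{d}{ds}\big[(s-m)\rho(s)\big]$ over $(p,q)$ gives
\[
1=\mathrm{Var}(\rho)+(q-m)\rho(q^-)+(m-p)\rho(p^+)+\int_p^q (s-m)\big(w'(s)-w'(m)\big)\rho(s)\,ds .
\]
Boundary terms at infinite endpoints are zero. The finite boundary terms are non-negative because $p<m<q$, and the integral is non-negative because $w'$ is non-decreasing. Equality $\mathrm{Var}(\rho)=1$ therefore forces $w'\equiv w'(m)$ a.e., so $w$ is affine. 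Then $\rho$ has positive one-sided limits at any finite endpoint, which forces $(p,q)=\R$.

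Second, the passage from marginals back to $f$. If $\Cov(f\gamma)=\Id$, every one-dimensional marginal of $\mu=f\gamma/\int f\,d\gamma$ is $N(\scalar{m,\theta},1)$ with $m=\bary(f\gamma)$. Hence $\widehat\mu(\xi)=e^{i\scalar{m,\xi}-|\xi|^2/2}$ by Cram\'er--Wold, so $\mu=N(m,\Id)$ and $f=e^{\scalar{m,x}+b}$ a.e.

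Your route gives a self-contained proof of the corrected statement. The paper's citation is shorter but states the equality case incorrectly.
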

\begin{proof} 
Note that $f \gamma_{\Sigma}$ is integrable by Corollary \ref{cor:log-concave-integrable}. 
The first assertion is a well-known consequence of the classical variance Brascamp--Lieb inequality \cite[Theorem 4.1]{BrascampLiebPLandLambda1} (at least for smooth $f$, the general case easily follows by an approximation argument, cf. \cite[Theorem 4]{HJS-ExtremalPropertyOfGaussians}). The second assertion is already more delicate, and was noted in \cite[Theorem 4]{HJS-ExtremalPropertyOfGaussians} based on \cite[Corollary 2.1]{ChenLou-CharacterizationOfGaussianViaPoincare}. 
\end{proof}

\subsection{Convolution}

\begin{lemma} \label{lem:silly}
Let $f_i \in L^1(\R^n,\R_+)$, $i=1,2$. If $f_i$ is more log-concave (respectively, more log-convex) than $g_{A_i}$,  $A_i \in \Sym_{\geq 0}(n) \cup \{\infty\}$ and $i=1,2$, then:
\begin{enumerate}[(i)]
\item \label{it:part1} For all $x \in \R^n$, $\R^n \ni y \mapsto f_1(\frac{x+y}{\sqrt{2}}) f_2(\frac{x-y}{\sqrt{2}})$ is more log-concave (respectively, more log-convex) than $g_{\frac{A_1+A_2}{2}}$.
\item \label{it:part1.5} For all invertible linear maps $L : \R^n \rightarrow \R^n$, $L_{\#} f_i := f_i \circ L^{-1}$ is more log-concave (respectively, more log-convex) than $g_{(L^{-1})^* A_i L^{-1}}$. 
\item \label{it:part2} If $A_1 + A_2 > 0$ then $f_1 \ast f_2$ is more log-concave (respectively, more log-convex) than $g_{C}$ for $C^{-1} = A_1^{-1} + A_2^{-1}$ (interpreted as $C = A_1 - A_1 (A_1 + A_2)^{-1} A_1 \geq 0$ when either $A_i$ is degenerate). 
In particular (when $A_2 = \infty$), if  $f_1$ is more log-convex than $g_{A_1}$, then without any further assumptions on $f_2$, $f_1 \ast f_2$ is also more log-convex than $g_{A_1}$. 
\end{enumerate}
\end{lemma}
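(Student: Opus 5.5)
Write $f_i = g_{A_i} u_i$ with $u_i$ log-concave (respectively, log-convex). Each part then reduces to one of two things: an algebraic identity for the Gaussian factors, or stability of log-concavity (log-convexity) under the relevant operation. Infinite $A_i$ need a separate but trivial remark. In the log-concave case $A_i=\infty$ forces $f_i \equiv 0$, so all statements are vacuous. In the log-convex case $A_i=\infty$ imposes no condition, and we only need to check that the resulting matrix is $\infty$ in (i), or reduces as stated in (iii).

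\textbf{Part (i).} By the parallelogram identity (\ref{eq:parallel}) applied with $x,y$ replaced by $\frac{x+y}{\sqrt2},\frac{x-y}{\sqrt2}$, we have
\[
\scalar{A_1 \tfrac{x+y}{\sqrt2},\tfrac{x+y}{\sqrt2}}+\scalar{A_2 \tfrac{x-y}{\sqrt2},\tfrac{x-y}{\sqrt2}} = \scalar{\tfrac{A_1+A_2}{2}y,y} + (\text{terms affine in } y \text{ for fixed } x).
\]
Explicitly, expanding gives $\frac12\scalar{(A_1+A_2)x,x}+\scalar{(A_1-A_2)x,y}+\frac12\scalar{(A_1+A_2)y,y}$. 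An affine-exponential factor is both log-concave and log-convex, and $y\mapsto u_1(\frac{x+y}{\sqrt2})u_2(\frac{x-y}{\sqrt2})$ is a product of compositions of log-concave (log-convex) functions with affine maps, hence log-concave (log-convex). Part (i) follows.

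\textbf{Part (ii).} This is immediate: $f_i\circ L^{-1} = g_{A_i}\circ L^{-1}\cdot u_i\circ L^{-1}$, we have $g_{A_i}(L^{-1}x)=g_{(L^{-1})^*A_iL^{-1}}(x)$, and $u_i\circ L^{-1}$ keeps its concavity type.

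\textbf{Part (iii).} Write
\[
f_1*f_2(z)=\int f_1(z-y)f_2(y)\,dy=\int g_{A_1}(z-y)g_{A_2}(y)\,u_1(z-y)u_2(y)\,dy.
\]
Complete the square in $y$:
\[
\scalar{A_1(z-y),z-y}+\scalar{A_2y,y}=\scalar{Cz,z}+\scalar{(A_1+A_2)(y-w(z)),y-w(z)},
\]
where $w(z)=(A_1+A_2)^{-1}A_1z$ and $C=A_1-A_1(A_1+A_2)^{-1}A_1$. When both $A_i$ are invertible, $C^{-1}=A_1^{-1}+A_2^{-1}$. Hence $f_1*f_2 = g_C\cdot H$, where
\[
H(z)=\int u_1(z-y)u_2(y)\,g_{A_1+A_2}(y-w(z))\,dy.
\]
The integrand is a function of $(z,y)$.

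In the log-concave case, this integrand is jointly log-concave in $(z,y)$: it is a product of log-concave functions of linear images of $(z,y)$. Prékopa's theorem then gives that $H$ is log-concave. Integrability follows from that of $f_1,f_2$.

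The log-convex case is the main obstacle, since marginals of log-convex functions need not be log-convex. I would first shift variables $y\mapsto y+w(z)$, so the Gaussian weight no longer depends on $z$:
\[
H(z)=\int u_1(z-w(z)-y)\,u_2(w(z)+y)\,g_{A_1+A_2}(y)\,dy.
\]
For each fixed $y$, the integrand is a product of log-convex functions of affine maps of $z$, hence log-convex in $z$. A mixture (integral) of log-convex functions is log-convex, by Hölder's inequality (the sum of log-convex functions is log-convex). Hence $H$ is log-convex.

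Degenerate $A_i$ with $A_1+A_2>0$ is handled by the same formulas, since only $(A_1+A_2)^{-1}$ is used. For the final claim, take $A_2=\infty$ formally. Then $C=A_1$, and the conclusion is that convolving $f_1=g_{A_1}u_1$ with an arbitrary nonnegative $f_2$ is more log-convex than $g_{A_1}$. I would verify this directly:
\[
f_1*f_2(z)=g_{A_1}(z)\int u_1(z-y)\,e^{\scalar{A_1z,y}}\,g_{A_1}(y)\,f_2(y)\,dy,
\]
and for each $y$ the integrand is log-convex in $z$ (log-convex times exponential-affine), so the mixture argument again applies.
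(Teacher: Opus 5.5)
Your proof is correct, and it differs from the paper mainly in being self-contained. The paper does not reprove the lemma. It cites \cite[Lemma 2.1]{EMilman-GCI} for nondegenerate $A_i$, and the first arXiv version of that paper for the degenerate formula. Its only new content is the algebraic remark $C = A_1 - A_1(A_1+A_2)^{-1}A_1 = A_1(A_1+A_2)^{-1}A_2$. This yields $C^{-1}=A_1^{-1}+A_2^{-1}$ when both $A_i$ are invertible, and hence $C\ge 0$ in general by approximation.

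Your argument supplies the proof that the citation stands in for:
\begin{itemize}
\item Completing the square uses only $(A_1+A_2)^{-1}$, so degenerate $A_i$ are handled uniformly rather than by appeal to a separate source.
\item Pr\'ekopa's theorem handles the log-concave case.
\item In the log-convex case, marginals of log-convex functions are unusable. Your $z$-dependent shift $y\mapsto y+w(z)$ makes the Gaussian weight independent of $z$, so $H$ becomes a mixture of log-convex functions, which is the right device. Your separate direct treatment of $A_2=\infty$ is also correct.
\end{itemize}

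The one item you leave implicit is $C\ge 0$ in the degenerate case, which is part of the statement. It follows from your own identity by evaluating at $y=w(z)$: $\scalar{Cz,z}=\scalar{A_1(z-w(z)),z-w(z)}+\scalar{A_2w(z),w(z)}\ge 0$. This is more direct than the paper's passage through the nondegenerate case, and you should state it explicitly.
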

\begin{proof}
When $A_i \in \Sym_{>0}(n) \cup \{\infty\}$ this is well-known, see e.g.~\cite[Lemma 2.1]{EMilman-GCI}. The exact same proof of parts (\ref{it:part1}) and (\ref{it:part1.5}) applies when $A_i \in \Sym_{\geq 0}(n)$ is degenerate. For part (\ref{it:part2}) with the interpretation $C = A_1 - A_1 (A_1 + A_2)^{-1} A_1$, this was verified in the first arXiv version of \cite[Lemma 2.1]{EMilman-GCI}. 
Note that $A_1 =  A_1 (A_1+A_2)^{-1}(A_1 + A_2)$ and hence $C = A_1 (A_1+A_2)^{-1} A_2$, so when both $A_i$ are nondegenerate we have $C^{-1} = A_2^{-1} (A_1 + A_2) A_1^{-1} = A_2^{-1} + A_1^{-1}$, and therefore $C \geq 0$ also for general $A_1 + A_2 > 0$. 
\end{proof}

\begin{proposition}\label{p:BLTrick}
    Let $(\mathbf{c},\mathbf{d},\mathbf{L},\mathcal{Q} ,\boldsymbol{\Sigma},\boldsymbol{\Gamma},\A,\B) $  be a Brascamp--Lieb datum.
         Given $f_i \in \F_{A_i^\flat,A_i^\sharp}(\gamma_{\Sigma_i})$ and $h_j \in \F_{B_j^\flat,B_j^\sharp}(\gamma_{\Gamma_j} )$, define 
    \begin{align*}
    &f_i^{(1)}(\overline{x}_i):= \int_{\R^{n_i}} f_i\brac{ \frac{\overline{x}_i+x_i}{\sqrt2} } f_i \brac{ \frac{\overline{x}_i-x_i}{\sqrt2} }\, d\gamma_{\Sigma_i}(x_i) ~,~ \overline{x}_i \in \R^{n_i} ,
    \\
    &h_j^{(1)}(\overline{y}_j) := 
    \int_{\R^{m_j}} h_j \brac{\frac{\overline{y}_j+y_j}{\sqrt{2}}} h_j \brac{\frac{\overline{y}_j-y_j}{\sqrt{2}}}\, d\gamma_{\Gamma_j}(y_j) ~,~ \overline{y}_j \in \R^{m_j} . 
    \end{align*}
    Then:
    \begin{enumerate}
    \item \label{it:BLTrick1}
    $f_i^{(1)} \in \F_{ A_i^\flat,A_i^\sharp }(\gamma_{\Sigma_i})$ and $h_j^{(1)} \in \F_{B_j^\flat,B_j^\sharp }(\gamma_{\Gamma_j})$. 
    
    \item \label{it:BLTrick2} 
    $\int_{\R^{n_i}} f_i^{(1)}\, d\gamma_{\Sigma_i} = \big( \int_{\R^{n_i}} f_i\, d\gamma_{\Sigma_i} \big)^2$ and $\int_{\R^{m_j}} h_j^{(1)}\, d\gamma_{\Gamma_j} = \big( \int_{\R^{m_j}} h_j\, d\gamma_{\Gamma_j} \big)^{2}$. 
    
    \item \label{it:BLTrick3} 
    We have
    \[
    \bary( f_i^{(1)} \gamma_{\Sigma_i} ) = \sqrt{2} \, \bary( f_i \gamma_{\Sigma_i} ) ~,~ \Cov(f_i^{(1)} \gamma_{\Sigma_i}) = \Cov(f_i \gamma_{\Sigma_i})  ,
    \] 
    and similarly for $h_j$. 
   
    \item \label{it:BLTrick4} 
    For any $D > 0$, if for all $x \in E$
    \begin{equation} \label{eq:ConvClosureAssumption}
    \prod_{i=1}^I f_i(x_i)^{c_i} \le D \cdot e^{-\frac12 \langle \Q x, x\rangle}\prod_{j=1}^J h_j(L_jx)^{d_j} 
    \end{equation}
    then for all $\bar x \in E$
    \begin{equation}\label{eq:ConvClosure}
    \prod_{i=1}^I f_i^{(1)}(\overline{x}_i)^{c_i}\le D^2 \cdot \FR_{\A,\B}^{(e)} e^{-\frac12\langle \Q \overline{x},\overline{x}\rangle } \prod_{j=1}^J h_j^{(1)}(L_j\overline{x})^{d_j} .
    \end{equation}
    \end{enumerate}
\end{proposition}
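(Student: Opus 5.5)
We will treat the four items in order; the first three are direct computations and the fourth is the self-convolution trick of \cite{NakamuraTsuji-InverseBrascampLieb}.

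For item \ref{it:BLTrick1}, write $f_i^{(1)}(\bar x_i)$ as an integral in $x_i$ of $F(\bar x_i,x_i) := f_i(\frac{\bar x_i+x_i}{\sqrt2}) f_i(\frac{\bar x_i-x_i}{\sqrt2})\gamma_{\Sigma_i}(x_i)$. By Lemma \ref{lem:silly}(\ref{it:part1}), and using the parallelogram identity (\ref{eq:parallel}) jointly in $(\bar x_i,x_i)$, the map $(\bar x_i,x_i)\mapsto f_i(\frac{\bar x_i+x_i}{\sqrt2}) f_i(\frac{\bar x_i-x_i}{\sqrt2})$ is more log-concave (resp.\ more log-convex) than $g_{A^\flat_i\oplus A^\flat_i}$ (resp.\ $g_{A^\sharp_i\oplus A^\sharp_i}$) on $\R^{2n_i}$. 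Multiplying by $\gamma_{\Sigma_i}(x_i)$ and integrating out $x_i$, Prékopa's theorem shows that $f_i^{(1)}$ is more log-concave than $g_{A_i^\flat}$. For the log-convex direction I would use the log-convex analogue of marginalisation, namely that integrals of log-convex functions are log-convex (by Hölder). Integrability is immediate from item \ref{it:BLTrick2}. The same argument applies to $h_j$, including degenerate $\Gamma_j$, if $\gamma_{\Gamma_j}$ is read as a pushforward.

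For items \ref{it:BLTrick2} and \ref{it:BLTrick3}, integrate $f_i^{(1)}$ against $\gamma_{\Sigma_i}(\bar x_i)$. Since $\gamma_{\Sigma_i}(\bar x_i)\gamma_{\Sigma_i}(x_i)=\gamma_{\Sigma_i}(u)\gamma_{\Sigma_i}(v)$ under the rotation $u=\frac{\bar x+x}{\sqrt2}$, $v=\frac{\bar x-x}{\sqrt2}$, the double integral factorises into $(\int f_i\,d\gamma_{\Sigma_i})^2$. In other words, if $U,V$ are i.i.d.\ with law proportional to $f_i\gamma_{\Sigma_i}$, then $\bar X=\frac{U+V}{\sqrt2}$ has law proportional to $f_i^{(1)}\gamma_{\Sigma_i}$. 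This gives $\bary=\sqrt2\,\bary(f_i\gamma_{\Sigma_i})$ and $\Cov=\frac12(\Cov+\Cov)=\Cov(f_i\gamma_{\Sigma_i})$.

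Item \ref{it:BLTrick4} is the main step. Fix $\bar x\in E$ and apply the hypothesis (\ref{eq:ConvClosureAssumption}) at the points $\frac{\bar x\pm x}{\sqrt2}$. Multiplying the two inequalities and using (\ref{eq:parallel}) for $\Q$ and linearity of $L_j$ gives
\[
\prod_i \Big(f_i(\tfrac{\bar x_i+x_i}{\sqrt2})f_i(\tfrac{\bar x_i-x_i}{\sqrt2})\Big)^{c_i} \le D^2 e^{-\frac12\scalar{\Q\bar x,\bar x}} e^{-\frac12\scalar{\Q x,x}}\prod_j \Big(h_j(\tfrac{L_j\bar x+L_jx}{\sqrt2})h_j(\tfrac{L_j\bar x-L_jx}{\sqrt2})\Big)^{d_j}.
\]
For fixed $\bar x$, set $F_i(x_i):=f_i(\frac{\bar x_i+x_i}{\sqrt2})f_i(\frac{\bar x_i-x_i}{\sqrt2})$ and $H_j(y_j):=h_j(\frac{L_j\bar x+y_j}{\sqrt2})h_j(\frac{L_j\bar x-y_j}{\sqrt2})$, each as a function of the variable $x_i$, resp.\ $y_j$. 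These are \emph{even}, and by item \ref{it:BLTrick1}'s computation (Lemma \ref{lem:silly}(\ref{it:part1}) with $A_1=A_2$) they belong to $\F^{(e)}_{A_i^\flat,A_i^\sharp}$ and $\F^{(e)}_{B_j^\flat,B_j^\sharp}$. They satisfy the Brascamp--Lieb hypothesis with constant $D^2e^{-\frac12\scalar{\Q\bar x,\bar x}}$. Applying the definition of $\FR^{(e)}_{\A,\B}$ (with homogeneity in the constant) to $F_i$ and $H_j$ yields $\prod_i (\int F_i\,d\gamma_{\Sigma_i})^{c_i}\le D^2e^{-\frac12\scalar{\Q\bar x,\bar x}}\FR^{(e)}_{\A,\B}\prod_j(\int H_j\,d\gamma_{\Gamma_j})^{d_j}$, which is exactly (\ref{eq:ConvClosure}).

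The only care needed is in checking the class membership. First, when $f_i$ has a degenerate regularisation we need $F_i$ to be $\gamma_{\Sigma_i}$-integrable, which holds by Corollary \ref{cor:log-concave-integrable}. Second, evenness is automatic. Third, if $\FR^{(e)}=\infty$ or some integral vanishes, the inequality is trivial.
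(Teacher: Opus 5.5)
Your proposal is correct. Items \ref{it:BLTrick2}--\ref{it:BLTrick4} match the paper's proof. Your description of $f_i^{(1)}\gamma_{\Sigma_i}$ as the law of $(U+V)/\sqrt2$ is the same change of variables. If you phrase it via rotation invariance of $\gamma_{\Gamma_j}\otimes\gamma_{\Gamma_j}$ rather than via densities, it also covers degenerate $\Gamma_j$.

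The real difference is in item \ref{it:BLTrick1}.

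The paper writes $h_j^{(1)}\gamma_{\Gamma_j}(\bar y_j)=2^{m_j/2}(h_j\gamma_{\Gamma_j}\ast h_j\gamma_{\Gamma_j})(\sqrt2\,\bar y_j)$. It then applies the convolution statement of Lemma \ref{lem:silly} to $h_j\gamma_{\Gamma_j}$ with parameters $B_j^{\flat}+\Gamma_j^{-1}$ and $B_j^{\sharp}+\Gamma_j^{-1}$. This needs $\gamma_{\Gamma_j}$ to have a density, so degenerate $\Gamma_j$ require three further steps:
\begin{enumerate}
\item approximate by $\Gamma_j^{(t)}>0$;
\item prove $h_j^{(t)}\to h_j^{(1)}$ pointwise by a Portmanteau argument, using that the integrand is even and log-concave in $y_j$, so its discontinuity set is $\gamma_{\Gamma_j}$-null;
\item use that the classes are closed under pointwise limits.
\end{enumerate}

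You instead exploit that both factors carry the \emph{same} parameter. The parallelogram identity then gives the exact splitting $g_A(\frac{\bar x+x}{\sqrt2})\,g_A(\frac{\bar x-x}{\sqrt2})=g_A(\bar x)\,g_A(x)$. After pulling $g_{A^\flat}(\bar x)$ (resp.\ $g_{A^\sharp}(\bar x)$) out of the integral, the remaining integrand is jointly log-concave in $(\bar x,x)$ (resp.\ log-convex in $\bar x$ for each fixed $x$). Pr\'ekopa's theorem (resp.\ H\"older) then finishes.

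This works verbatim against $\gamma_{\Gamma_j}$ written as the push-forward of $\gamma$ under $\Gamma_j^{1/2}$. So your route removes the approximation and Portmanteau step entirely, and it needs no positivity condition on the parameters. The paper's route gains reuse of an existing lemma valid for unequal parameters, at the cost of that extra limiting argument.

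One minor point: Corollary \ref{cor:log-concave-integrable} does not cover the Lebesgue case $\Sigma_i=\infty$, which the datum allows and which is used in Step \ref{it:step0}. There, integrability of $F_i$ follows instead from the exponential decay of Lebesgue-integrable log-concave functions; the case where $f_i$ vanishes a.e.\ is trivial.
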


\begin{proof}
It is enough to prove the first assertion for $h_j$. 
Assume first that $\Gamma_j \in \Sym_{>0}(m_j) \cup \{\infty\}$. Applying (\ref{eq:parallel}), write:
\begin{align*}
h_j^{(1)}(\overline{y}_j) \gamma_{\Gamma_j}(\overline{y}_j) &  = 
        \int_{\R^{m_j}} h_j \brac{\frac{\overline{y}_j+y_j}{\sqrt{2}}} h_j \brac{\frac{\overline{y}_j-y_j}{\sqrt{2}}}\,  \gamma_{\Gamma_j}(\overline{y}_j) \gamma_{\Gamma_j}(y_j) d y_j \\     & = 
            \int_{\R^{m_j}} (h_j \gamma_{\Gamma_j})\brac{\frac{\overline{y}_j+y_j}{\sqrt{2}}} (h_j \gamma_{\Gamma_j}) \brac{\frac{\overline{y}_j-y_j}{\sqrt{2}}}\,  d y_j \\
        & = 2^{m_j/2}  (h_j \gamma_{\Gamma_j} \ast h_j \gamma_{\Gamma_j}) (\sqrt{2} \overline{y}_j ) .
\end{align*}
Since $h_j \gamma_{\Gamma_j} \in \F_{B_j^{\flat} + \Gamma_j^{-1}, B_j^{\sharp} + \Gamma_j^{-1}}(dy)$, Lemma \ref{lem:silly} implies that $h_j^{(1)} \gamma_{\Gamma_j} \in \F_{B_j^{\flat} + \Gamma_j^{-1}, B_j^{\sharp} + \Gamma_j^{-1}}(dy)$, and hence $h_j^{(1)} \in \F_{B_j^{\flat} , B_j^{\sharp}}(\gamma_{\Gamma_j})$. Note that this argument applies equally well to the case when $\Gamma_j = \infty$, in which case, recall, we interpret $\gamma_{\Gamma_j} = 1$ and $\Gamma_j^{-1} = 0$. 

The general case for degenerate $\Gamma_j \geq 0$ follows by approximating $\Gamma_j$ by $\Gamma_j^{(t)} \in \Sym_{>0}(m_j)$ (say, as $t \rightarrow 1^-$). Denoting:
\begin{equation} \label{eq:BLTrick}
h_j^{(t)}(\overline{y}_j) := 
    \int_{\R^{m_j}} h_j \brac{\frac{\overline{y}_j+y_j}{\sqrt{2}}} h_j \brac{\frac{\overline{y}_j-y_j}{\sqrt{2}}}\, d\gamma_{\Gamma^{(t)}_j}(y_j) ,
\end{equation}
we have $h_j^{(t)} \in \F_{B_j^{\flat} , B_j^{\sharp}}(\gamma_{\Gamma^{(t)}_j})$ for all $t < 1$. It is well-known that $\gamma_{\Gamma^{(t)}_j}$ weakly converges to $\gamma_{\Gamma_j}$ (e.g. by pointwise converge of the corresponding characteristic functions), and so by a version of the Portmanteau theorem \cite[Theorem 2.7]{BillingsleyConvergenceBook-2ndEd}, 
we have $\int \Psi_j d\gamma_{\Gamma^{(t)}_j} \rightarrow \int \Psi_j d\gamma_{\Gamma_j}$ for any bounded function $\Psi_j : \R^{m_j} \rightarrow \R$ so that the set of its discontinuities has zero $\gamma_{\Gamma_j}$ measure. The latter measure is supported on some linear subspace $F_j$, and since the integrand of (\ref{eq:BLTrick}) is even and log-concave in $y_j$, it is bounded by its value at the origin and continuous on the (non-empty) interior of its convex origin-symmetric support $K_{\overline{y}_j} \ \subset \R^{m_j}$. Therefore, its set of discontinuities is precisely $\partial K_{\overline{y}_j}$, and since $\partial K_{\overline{y}_j} \cap F_j = \partial_{F_j} (K_{\overline{y}_j} \cap F_j)$ by origin-symmetry (where $\partial_{F_j}$ denotes the boundary operator in $F_j$), and the latter set is a null-set in $F_j$ since $K_{\overline{y}_j} \cap F_j$ is convex, the preceding convergence applies. It follows that $h_j^{(t)} \rightarrow h_j^{(1)}$ pointwise as $t \rightarrow 1^-$, and since being more log-concave / log-convex is a property about three points and hence closed under pointwise limits, it follows that $h_j^{(1)} \in \F_{B_j^{\flat} , B_j^{\sharp}}(\gamma_{\Gamma_j})$.

To see the second assertion, we apply a change of variables $x_i^\pm = \frac{\overline{x}_i \pm x_i}{\sqrt2}$ and write
\begin{align*}
    \int_{\R^{n_i}} f_i^{(1)}(\overline{x}_i)\, d\gamma_{\Sigma_i}
    &= 
    \int_{\R^{n_i}\times \R^{n_i}} 
    f_i\brac{ \frac{\overline{x}_i +x_i}{\sqrt2} } f_i \brac{\frac{\overline{x}_i - x_i}{\sqrt2} }\, d\gamma_{\Sigma_i}(x_i) d\gamma_{\Sigma_i}(\overline{x}_i)\\
    &= 
    \int_{\R^{n_i}\times \R^{n_i}} 
    f_i( x_i^+ ) f_i( x_i^-  )\, d\gamma_{\Sigma_i}\brac{\frac{x_i^++x_i^-}{\sqrt{2}}} d\gamma_{\Sigma_i}\brac{\frac{x_i^+-x_i^-}{\sqrt{2}}} \\
    &= 
    \int_{\R^{n_i} \times \R^{n_i}} f_i( x_i^+ ) f_i( x_i^-  )\, d\gamma_{\Sigma_i}(x_i^+)d\gamma_{\Sigma_i}(x_i^-) = \brac{\int_{\R^{n_i}} f_i d\gamma_{\Sigma_i}}^2 ,
\end{align*}
using \eqref{eq:parallel} in the penultimate transition. The argument for $h_j^{(1)}$ is identical (even when $\Gamma_j$ is degenerate). 

For the third assertion, compute:
    \begin{align*}
        \int_{\R^{n_i}} \overline{x}_i f_i^{(1)}(\overline{x}_i)\, d\gamma_{\Sigma_i}(\overline{x}_i)
        &= 
        \int_{\R^{n_i}\times \R^{n_i}} \overline{x}_i 
        f_i\brac{\frac{\overline{x}_i+x_i}{\sqrt2} }f_i\brac{\frac{\overline{x}_i-x_i}{\sqrt2} }\, d\gamma_{\Sigma_i}(x_i)d\gamma_{\Sigma_i}(\overline{x}_i) \\
        &= 
        \int_{\R^{n_i} \times \R^{n_i}} \frac{x_i^++x_i^-}{\sqrt2} f_i(x_i^+)f_i(x_i^-)\, d\gamma_{\Sigma_i}(x_i^+)d\gamma_{\Sigma_i}(x_i^-)  \\
        & = \sqrt{2} \int_{\R^{n_i}} x_i f_i(x_i) d\gamma_{\Sigma_i}(x_i) \int_{\R^{n_i}} f_i d\gamma_{\Sigma_i} . 
    \end{align*}
An identical argument verifies that
\begin{align*}
    \int_{\R^{n_i}} \overline{x}_i \otimes \overline{x}_i  f_i^{(1)}(\overline{x}_i)\, d\gamma_{\Sigma_i}(\overline{x}_i) & = \int_{\R^{n_i}} x_i \otimes x_i  f_i(x_i) d\gamma_{\Sigma_i}(x_i) \int f_i(x_i) d\gamma_{\Sigma_i}(x_i)  \\
    & + \int_{\R^{n_i}} x_i f_i(x_i) d\gamma_{\Sigma_i}(x_i) \otimes \int_{\R^{n_i}} x_i f_i(x_i) d\gamma_{\Sigma_i}(x_i) . 
\end{align*}
Together with the second assertion and the first part of the third one, this concludes the proof of part (\ref{it:BLTrick3}). 

For the last assertion, denote 
\[
f_i^{(\overline{x}_i)}(x_i):= f_i\brac{\frac{\overline{x}_i+x_i}{\sqrt2}} f_i \brac{\frac{\overline{x}_i-x_i}{\sqrt2}},
\quad 
h_j^{(\overline{y}_j)}(y_j)
:= 
h_j\brac{ \frac{\overline{y}_j+y_j}{\sqrt{2}}} h_j \brac{ \frac{\overline{y}_j - y_j}{\sqrt{2}} }
\]
for each fixed $\overline{x}\in \bigoplus_{i=1}^I \R^{n_i}$ and $\overline{y} \in \bigoplus_{j=1}^J \R^{m_j}$.  By (\ref{eq:ConvClosureAssumption}), we have 
\begin{align*}
    \prod_{i=1}^I f_i^{(\overline{x}_i)}(x_i)^{c_i}
    &= 
    \prod_{i=1}^I f_i\brac{ \frac{\overline{x}_i +x_i}{\sqrt2} }^{c_i} f_i\brac{\frac{\overline{x}_i - x_i}{\sqrt2}}^{c_i}\\
    &\le 
    D^2 \cdot e^{-\frac12 \langle \Q \frac{\overline{x} + x }{\sqrt2}, \frac{ \overline{x} + x }{\sqrt2}\rangle }
    e^{-\frac12 \langle \Q  \frac{ \overline{x} - x }{\sqrt2}, \frac{ \overline{x} - x }{\sqrt2}\rangle }
    \prod_{j=1}^J 
    h_j\brac{ L_j \brac{ \frac{\overline{x}+x}{\sqrt2} } }^{d_j} h_j \brac{L_j \brac{\frac{\overline{x}-x}{\sqrt2} } }^{d_j} \\
    &= 
    D^2 \cdot e^{-\frac12 \langle \Q \overline{x},\overline{x}\rangle}
    e^{-\frac12 \langle \Q x, x \rangle}
    \prod_{j=1}^J h_j^{(L_j\overline{x})}(L_jx)^{d_j} .
\end{align*}
Since we fixed $\overline{x}$, we regard $e^{-\frac12 \langle \Q \overline{x},\overline{x}\rangle}$ as a constant and write 
$$
\prod_{i=1}^I f_i^{(\overline{x}_i)}(x_i)^{c_i}
\le 
    e^{-\frac12 \langle \Q  x,x \rangle}
    \prod_{j=1}^J \widetilde{h}_j^{(L_j\overline{x})}(L_jx)^{d_j},
$$
where $\widetilde{h}_j^{(L_j\overline{x})}(y_j):= \big(D^2  e^{-\frac12\langle \Q \overline{x},\overline{x}\rangle}\big)^{1/(d_jJ)}h_j^{(L_j\overline{x})}(y_j)$. 
By Lemma \ref{lem:silly}, $f_i^{(\overline{x}_i)} \in \F^{(e)}_{A_i^\flat,A_i^\sharp}(\gamma_{\Sigma_i})$ and $\widetilde{h}_j^{(L_j\overline{x})} \in \F^{(e)}_{B_j^\flat,B_j^\sharp}(\gamma_{\Gamma_j})$. Therefore, 
\begin{align*}
\prod_{i=1}^I \big(\int_{\R^{n_i}} f_i^{(\overline{x}_i)}\, d\gamma_{\Sigma_i}(x_i)\big)^{c_i}
&\le 
{\rm FR}_{\A,\B}^{(e)} 
\prod_{j=1}^J \big(
\int_{\R^{m_j}} \widetilde{h}_j^{(L_j\overline{x})}\, d\gamma_{\Gamma_j}(y_j)
\big)^{d_j}\\
&= 
D^2 \cdot {\rm FR}_{\A,\B} ^{(e)} e^{-\frac12 \langle \Q \overline{x},\overline{x}\rangle}
\prod_{j=1}^J \big(
\int_{\R^{m_j}} {h}_j^{(L_j\overline{x})}\, d\gamma_{\Gamma_j}(y_j)
\big)^{d_j}. 
\end{align*}
Recalling the definitions of $f_i^{(1)}$ and $h_j^{(1)}$, this verifies (\ref{eq:ConvClosure}). 
\end{proof}

\subsection{Finiteness of $\FR^{(o)}_{\A,\B}$}

\begin{lemma}\label{l:Finiteness}
Let $(\mathbf{c},\mathbf{d},\mathbf{L},\mathcal{Q} ,\boldsymbol{\Sigma},\boldsymbol{\Gamma},\A,\B) $  be a Brascamp--Lieb datum.
Assume that $\Sigma_i \in \Sym_{>0}(n_i) \cup \{\infty\}$, $\Gamma_j \in \Sym_{>0}(m_j) \cup \{\infty\}$, $A^\flat_i + \Sigma_i^{-1} > 0$, $B^\flat_j + \Gamma_j^{-1} > 0$ and that $A_i^\sharp, B_j^\sharp < \infty$, for all $i=1,\ldots,I$ and $j=1,\ldots,J$. 
Then $\FR^{(o)}_{\A,\B} < \infty$.  
\end{lemma}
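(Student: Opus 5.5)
We will use the equivalent formulation of Lemma \ref{lem:equivalence}. It suffices to show there is a finite $C_0$ such that, for any centered $f_i\in\F^{(o)}_{A_i^\flat,A_i^\sharp}(\gamma_{\Sigma_i})$ and any $h_j\in\F_{B_j^\flat,B_j^\sharp}(\gamma_{\Gamma_j})$ normalized by $\int f_i\,d\gamma_{\Sigma_i}=\int h_j\,d\gamma_{\Gamma_j}=1$,
\[
\sup_{x\in E}\frac{\prod_i f_i(x_i)^{c_i}}{e^{-\frac12\langle \Q x,x\rangle}\prod_j h_j(L_jx)^{d_j}}\ \ge\ \frac{1}{C_0}.
\]
It is convenient to pass to Lebesgue densities. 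Set $F_i:=f_i\gamma_{\Sigma_i}$ and $H_j:=h_j\gamma_{\Gamma_j}$, or $F_i=f_i$ when $\Sigma_i=\infty$. These are probability densities. $F_i$ is more log-concave than $g_{a_i}$ with $a_i:=A_i^\flat+\Sigma_i^{-1}>0$ and more log-convex than $g_{a_i'}$ with $a_i':=A_i^\sharp+\Sigma_i^{-1}<\infty$. The same holds for $H_j$ with $b_j:=B_j^\flat+\Gamma_j^{-1}>0$ and $b_j'<\infty$. Finally, $F_i$ has barycenter at the origin. The ratio above then equals the corresponding ratio for $F_i,H_j$, multiplied by an explicit Gaussian factor $e^{\frac12\langle \Q' x,x\rangle}$ times normalizing constants, for some fixed $\Q'\in\Sym(E)$. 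It therefore suffices to find one point $x$, in a bounded region, at which all the quantities are controlled.

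The first key step is two-sided control of each density near its barycenter, using the assumed Hessian bounds. For a density $F$ with $a\le\nabla^2(-\log F)\le a'$, where $0<a\le a'<\infty$, Proposition \ref{prop:cov} (Brascamp--Lieb) gives $\Cov(F)\le a^{-1}$. The reverse bound $\Cov(F)\ge (a')^{-1}$ follows from the Cram\'er--Rao inequality, or from the log-convexity comparison. Lemma \ref{lem:log-concave-cov} then gives $\|F\|_\infty\le C$. The upper Hessian bound gives a quadratic lower bound around the maximizer $z$:
\[
F(x)\ge \|F\|_\infty\, e^{-\frac12\langle a'(x-z),x-z\rangle}.
\]
For the centered $F_i$, the maximizer lies at a bounded distance from the barycenter $0$, because the distance between mode and mean is controlled by the covariance for strongly log-concave densities. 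Moreover $\|F_i\|_\infty\ge c\,\det(a_i)^{1/2}$ by the standard log-concave mode/covariance comparison. Hence $F_i(x_i)\ge c$ for all $|x_i|\le R$, with constants depending only on the datum.

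The second key step handles the $H_j$, which are \emph{not} centered. Here only an upper bound is needed: $H_j\le \|H_j\|_\infty\le C_j$, again by Lemma \ref{lem:log-concave-cov} combined with the lower covariance bound coming from $b_j'<\infty$. So the $h_j$ enter the denominator only through the uniform bound $h_j(L_jx)\le C_j\gamma_{\Gamma_j}(L_jx)^{-1}$. On a fixed ball $|x|\le R$ this is bounded by a constant, since $\Gamma_j>0$ or $\Gamma_j=\infty$. Evaluating the ratio at $x=0$, or anywhere in this ball, gives a lower bound $1/C_0$ that depends only on the datum. This proves $\FR^{(o)}_{\A,\B}\le C_0<\infty$.

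The main obstacle is the first step: obtaining a uniform lower bound on the centered $f_i$ near the origin. This is exactly where the centering hypothesis is used. Without it, $F_i$ could concentrate far from the origin, where the factor $e^{-\frac12\langle \Q x,x\rangle}$ is uncontrolled. To make this step rigorous I would first try the standard estimate that, for a log-concave density $F$ with barycenter $0$, one has $F(0)\ge e^{-n}\|F\|_\infty$ (Fradelizi's inequality). This avoids locating the mode and immediately gives $F_i(0)\ge e^{-n_i}c\,\det(a_i)^{1/2}$.
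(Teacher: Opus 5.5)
Your argument is correct and is essentially the paper's proof. Both evaluate the pointwise hypothesis at $x=0$. Both bound $f_i(0)\gamma_{\Sigma_i}(0)$ from below by a constant multiple of $\int f_i\,d\gamma_{\Sigma_i}$, using centering and strong log-concavity; your Fradelizi step plays the role of the quoted envelope $f_i\gamma_{\Sigma_i}\le e^{2n_i}f_i(0)\gamma_{\Sigma_i}(0)e^{-\frac{\varepsilon}{2}|x_i|^2}$. Both bound $h_j(0)\gamma_{\Gamma_j}(0)$ from above by a constant multiple of $\int h_j\,d\gamma_{\Gamma_j}$, using $B_j^\sharp<\infty$.

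The only difference is that you go through $\|h_j\gamma_{\Gamma_j}\|_{L^\infty}$ rather than the value at the barycenter $b_j$ of $h_j\gamma_{\Gamma_j}$, which is slightly cleaner. Commit to the Fradelizi route and drop the mode-location sketch, which is unnecessary.
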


\begin{proof}
Let $f_i\in \F_{A_i^\flat,A_i^\sharp}^{(o)}(\gamma_{\Sigma_i})$ and $h_j\in \F_{B_j^\flat,B_j^\sharp}(\gamma_{\Gamma_j})$ satisfy 
\begin{equation} \label{eq:finite1}
\prod_{i=1}^I f_i(x_i)^{c_i} \le e^{-\frac12\langle \Q x, x\rangle} \prod_{j=1}^J h_j(L_jx)^{d_j} . 
\end{equation}
Note that our assumptions ensure that $f_i\gamma_{\Sigma_i} \in \F^{(o)}_{2 \varepsilon \Id_{n_i},\frac{R}{2} \Id_{n_i}}(dx_i)$ and $h_j \gamma_{\Gamma_j} \in \mathcal{F}_{2 \varepsilon \Id_{m_j},\frac{R}{2} \Id_{m_j}}(dy_j)$ for some $0< \varepsilon\le R <\infty$.     This yields the following pointwise control (see e.g.~\cite[Lemma 3.2]{NakamuraTsuji-GCIForCentered}), 
    \begin{align*}
        e^{-n_i} f_i(0)\gamma_{\Sigma_i}(0) e^{-\frac{R}2|x_i|^2}
        & \le f_i(x_i)\gamma_{\Sigma_i}(x_i) \le e^{2n_i} f_i(0)\gamma_{\Sigma_i}(0) e^{-\frac{\varepsilon}2|x_i|^2},\\
        e^{-m_j} h_j(b_j)\gamma_{\Gamma_j}(b_j) e^{-\frac{R}2|y_j-b_j|^2}
        & \le h_j(y_j) \gamma_{\Gamma_j}(y_j) \le e^{2 m_j} h_j(b_j) \gamma_{\Gamma_j}(b_j) e^{-\frac{\varepsilon}2|y_j-b_j|^2},
    \end{align*}
    where $\vec b_j:= \int_{\R^{m_j}} \vec y_j h_j\, d\gamma_{\Gamma_j}/\int_{\R^{m_j}} h_j\, d\gamma_{\Gamma_j}$ denotes the barycenter of $h_j \gamma_{\Gamma_j}$ (recall that the barycenter of $f_i \gamma_{\Sigma_i}$ is at the origin). 
    From this, 
    \begin{align*}
    \frac{ \prod_{i=1}^I \big( \int f_i\, d\gamma_{\Sigma_i} \big)^{c_i} }{ \prod_{j=1}^J \big( \int h_j\, d\gamma_j \big)^{d_j} }
    &\le 
    \frac{ \prod_{i=1}^I \big( e^{2n_i} f_i(0) \gamma_{\Sigma_i}(0) \int e^{-\frac{\varepsilon}2|x_i|^2}\, dx_i \big)^{c_i} }{ \prod_{j=1}^J \big( e^{-m_j} h_j(b_j)\gamma_{\Gamma_j}(b_j) \int e^{-\frac{R}2|y_j-b_j|^2}\, dy_j \big)^{d_j} } \\
    &= 
    C(\mathbf{c},\mathbf{d},\mathbf{n},\mathbf{m},\varepsilon,R)\frac{\prod_{i=1}^I f_i(0)^{c_i} \gamma_{\Sigma_i}(0)^{c_i} }{ \prod_{j=1}^J h_j(b_j)^{d_j} \gamma_{\Gamma_j}(b_j)^{d_j} }. 
    \end{align*}
    On the other hand, combining the pointwise estimates with (\ref{eq:finite1}), 
    \begin{align*}
        &\prod_{i=1}^I\bigg( e^{-n_i} f_i(0)\gamma_{\Sigma_i}(0) e^{-\frac{R}2|x_i|^2} \bigg)^{c_i} \le \prod_{i=1}^I f_i(x_i)^{c_i} \gamma_{\Sigma_i}(x_i)^{c_i}\\
        &\le 
        e^{-\frac12 \langle \Q x,x\rangle}
        \prod_{j=1}^J h_j(L_jx)^{d_j} \gamma_{\Gamma_j}(L_jx)^{d_j} \frac{\prod_{i=1}^I \gamma_{\Sigma_i}(x_i)^{c_i}}{\prod_{j=1}^J\gamma_{\Gamma_j}(L_jx)^{d_j}} \\
        &\le 
        e^{-\frac12 \langle \Q x,x\rangle}
        \prod_{j=1}^J \bigg( e^{2 m_j} h_j(b_j) \gamma_{\Gamma_j}(b_j) e^{- \frac{\varepsilon}2|L_jx-b_j|^2} \bigg)^{d_j}\frac{\prod_{i=1}^I \gamma_{\Sigma_i}(x_i)^{c_i}}{\prod_{j=1}^J\gamma_{\Gamma_j}(L_jx)^{d_j}}. 
    \end{align*}
    In particular, at $x =0$ this yields  
    \begin{align*}
        &\prod_{i=1}^I\bigg( e^{-n_i} f_i(0)\gamma_{\Sigma_i}(0)  \bigg)^{c_i}
        \le 
        \prod_{j=1}^J \bigg( e^{2 m_j} h_j(b_j) \gamma_{\Gamma_j}(b_j) e^{- \frac{\varepsilon}2|b_j|^2} \bigg)^{d_j}\frac{\prod_{i=1}^I \gamma_{\Sigma_i}(0)^{c_i}}{\prod_{j=1}^J\gamma_{\Gamma_j}(0)^{d_j}}. 
    \end{align*}
    Rearranging, we deduce 
    $$
    \frac{\prod_{i=1}^I f_i(0)^{c_i} \gamma_{\Sigma_i}(0)^{c_i} }{\prod_{j=1}^J h_j(b_j)^{d_j} \gamma_{\Gamma_j}(b_j)^{d_j}}
    \le 
    C'(\mathbf{c},\mathbf{d},\mathbf{n},\mathbf{m},\boldsymbol{\Sigma}, \boldsymbol{\Gamma}) e^{-\frac{\varepsilon}2\sum_j d_j|b_j|^2}.
    $$
    Combining everything, we conclude that 
    \begin{align*}
        \frac{ \prod_{i=1}^I \big( \int f_i\, d\gamma_{\Sigma_i} \big)^{c_i} }{ \prod_{j=1}^J \big( \int h_j\, d\gamma_j \big)^{d_j} }
        &\le 
        C C' e^{-\frac{\varepsilon}2\sum_j d_j|b_j|^2}\le C C' < \infty, 
    \end{align*}
   concluding the proof. 
\end{proof}

\subsection{Reduction to the even case}

By using the last lemma and an argument of Courtade--Wang \cite{CourtadeWang-BlaschkeSantalo}, we may reduce to the case that $f_i$ and $h_j$ are even. 
\begin{proposition}\label{p:reduce-to-even}
Let $(\mathbf{c},\mathbf{d},\mathbf{L},\mathcal{Q} ,\boldsymbol{\Sigma},\boldsymbol{\Gamma},\A,\B) $  be a Brascamp--Lieb datum.
Assume that both $\Sigma_i > 0$ and $\Gamma_j > 0$ are nondegenerate and that $A_i^\sharp, B_j^\sharp < \infty$, for all $i=1,\ldots,I$ and $j=1,\ldots,J$. 
Then $\FR^{(o)}_{\A,\B}  = \FR_{\A,\B}^{(e)}$.  
\end{proposition}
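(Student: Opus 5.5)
The inequality $\FR^{(e)}_{\A,\B} \leq \FR^{(o)}_{\A,\B}$ is immediate, since even functions are centered and even $h_j$ are admissible in the class $\C^{(o)}_{h,j}$. For the reverse inequality we will use the Courtade--Wang self-convolution argument, based on Proposition \ref{p:BLTrick}. Our assumptions imply that $\FR^{(o)}_{\A,\B} < \infty$ by Lemma \ref{l:Finiteness}: indeed $\Sigma_i,\Gamma_j > 0$ give $A_i^\flat + \Sigma_i^{-1} > 0$ and $B_j^\flat + \Gamma_j^{-1} > 0$, and $A_i^\sharp,B_j^\sharp<\infty$ by hypothesis. We may also assume $\FR^{(e)}_{\A,\B}<\infty$, as otherwise there is nothing to prove.

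By Lemma \ref{lem:equivalence}, fix $\eps>0$ and pick $f_i \in \F^{(o)}_{A_i^\flat,A_i^\sharp}(\gamma_{\Sigma_i})$ and $h_j \in \F_{B_j^\flat,B_j^\sharp}(\gamma_{\Gamma_j})$, all with unit integrals, so that (\ref{eq:ConvClosureAssumption}) holds with $D = 1/C$, where $C \geq (1-\eps)\FR^{(o)}_{\A,\B}$. Apply Proposition \ref{p:BLTrick}. By parts (\ref{it:BLTrick1}) and (\ref{it:BLTrick3}), $f_i^{(1)} \in \F_{A_i^\flat,A_i^\sharp}(\gamma_{\Sigma_i})$ with barycenter $\sqrt{2}\cdot 0 = 0$, so $f_i^{(1)}$ is again centered. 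Also $h_j^{(1)}$ lies in the correct class, and by part (\ref{it:BLTrick2}) all integrals remain equal to $1$. Part (\ref{it:BLTrick4}) then gives
\[
\prod_{i} f_i^{(1)}(\bar x_i)^{c_i} \leq \frac{\FR^{(e)}_{\A,\B}}{C^2}\, e^{-\frac12\scalar{\Q\bar x,\bar x}} \prod_j h_j^{(1)}(L_j\bar x)^{d_j}.
\]
Hence the normalized family $(f_i^{(1)},h_j^{(1)})$ is admissible for $\FR^{(o)}_{\A,\B}$ in the sense of Lemma \ref{lem:equivalence}, with constant $C^2/\FR^{(e)}_{\A,\B}$. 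We conclude that
\[
\frac{(1-\eps)^2 (\FR^{(o)}_{\A,\B})^2}{\FR^{(e)}_{\A,\B}} \leq \FR^{(o)}_{\A,\B}.
\]
Because $0<\FR^{(o)}_{\A,\B}<\infty$, we may divide by it to get $(1-\eps)^2\FR^{(o)}_{\A,\B} \leq \FR^{(e)}_{\A,\B}$. Letting $\eps\to 0$ finishes the proof. Positivity of $\FR^{(o)}_{\A,\B}$ is clear, for example by testing with suitable Gaussians.

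The main delicate point is that this iteration needs $\FR^{(o)}_{\A,\B}$ to be finite: otherwise the inequality $x^2/\FR^{(e)} \leq x$ carries no information. This is exactly why we invoke Lemma \ref{l:Finiteness}, and why its nondegeneracy and regularization hypotheses appear in the statement. A secondary point is that Proposition \ref{p:BLTrick} does not require the $h_j$ to be centered, so no centering constraint on the $h_j$ is lost along the way. Part (\ref{it:BLTrick4}) uses only $\FR^{(e)}_{\A,\B}$, because the fiber functions $f_i^{(\bar x_i)}$ and $h_j^{(L_j\bar x)}$ are even.
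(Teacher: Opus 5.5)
Your proof is correct and is essentially the paper's argument. The paper takes an arbitrary admissible tuple, passes it through Proposition \ref{p:BLTrick}, applies the definition of $\FR^{(o)}_{\A,\B}$ to the self-convolved (still centered) tuple to get $\FR^{(o)}_{\A,\B}\le\sqrt{\FR^{(o)}_{\A,\B}\,\FR^{(e)}_{\A,\B}}$, and concludes via Lemma \ref{l:Finiteness}; you run the identical step on near-extremizers in the normalized form of Lemma \ref{lem:equivalence}, which is only a repackaging.

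One small aside: your claim that $\FR^{(o)}_{\A,\B}>0$ follows by testing Gaussians can fail for degenerate data (e.g.\ when $\Q$ is too large compared to the $A_i^\sharp$, no admissible tuple with positive integrals exists). This is harmless, since $\FR^{(o)}_{\A,\B}\le\FR^{(e)}_{\A,\B}$ is trivial when $\FR^{(o)}_{\A,\B}=0$, and the paper in any case takes $\FR\in(0,\infty]$ by definition.
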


\begin{proof}
     Assume that 
    \begin{equation}
    \label{eq:reduce-to-even-assumption}
        f_i\in \F_{A_i^\flat,A_i^\sharp}^{(o)}(\gamma_{\Sigma_i}),\; h_j\in \F_{B_j^\flat,B_j^\sharp}(\gamma_{\Gamma_j}) ,\;  \prod_{i=1}^I f_i(x_i)^{c_i} \le e^{-\frac12 \langle \Q x, x\rangle} \prod_{j=1}^J h_j(L_jx)^{d_j} \;\; \forall x \in E .
      \end{equation}
    Recall the definitions of $f_i^{(1)}$ and $h_j^{(1)}$ in Proposition \ref{p:BLTrick}, which we apply below throughout. By part (\ref{it:BLTrick4}), we have the pointwise bound 
    $$
    \prod_{i=1}^I f_i^{(1)}(\overline{x}_i) 
    \le 
    e^{-\frac12 \langle \Q \overline{x},\overline{x}\rangle} 
    \prod_{j=1}^J \widetilde{h}_j^{(1)}(L_j\overline{x})^{d_j},\quad {\rm where} \quad 
    \widetilde{h}_j^{(1)}:= (\FR_{\A,\B}^{(e)})^{ 1/(d_j J)}h_j^{(1)}.
    $$
    By part (\ref{it:BLTrick3}), since $\int x_i f_i(x_i) \, d\gamma_{\Sigma_i}(x_i) =0$ we have $\int_{\R^{n_i}} \overline{x}_i f_i^{(1)}(\overline{x}_i)\, d\gamma_{\Sigma_i}(\overline{x}_i) = 0$. By part (\ref{it:BLTrick1}), we deduce that $f_i^{(1)} \in \F^{(o)}_{A_i^\flat, A_i^\sharp}(\gamma_{\Sigma_i})$ and $\widetilde{h}_j^{(1)} \in \F_{B_j^\flat,B_j^\sharp}(\gamma_{\Gamma_j})$. 
    
    It follows by the definition of $\FR^{(o)}_{\A,\B}$ that
    \begin{align*}
    \prod_{i=1}^I \big( \int_{\R^{n_i}} f_i^{(1)}\, d\gamma_{\Sigma_i} \big)^{c_i}
    &\le 
    \FR^{(o)}_{\A,\B} \prod_{j=1}^J \big(\int_{\R^{m_j}} \widetilde{h}_j^{(1)}\, d\gamma_{\Gamma_j} \big)^{d_j}\\
    &=
    \FR^{(o)}_{\A,\B} \FR^{(e)}_{\A,\B} \prod_{j=1}^J \big(\int_{\R^{m_j}} {h}_j^{(1)}\, d\gamma_{\Gamma_j} \big)^{d_j}. 
    \end{align*}
    It follows by part (\ref{it:BLTrick2}) that:
    $$
    \prod_{i=1}^I \big( \int_{\R^{n_i}} f_i\, d\gamma_{\Sigma_i} \big)^{c_i}
    \le 
    \sqrt{\FR^{(o)}_{\A,\B} \FR^{(e)}_{\A,\B}}
    \prod_{j=1}^J \big(\int_{\R^{m_j}} {h}_j\, d\gamma_{\Gamma_j} \big)^{d_j} .
    $$
    Since this holds for all $f_i$ and $h_j$ satisfying (\ref{eq:reduce-to-even-assumption}), the definition of $\FR^{(o)}_{\A,\B}$ implies that $\FR^{(o)}_{\A,\B} \le \sqrt{\FR^{(o)}_{\A,\B} \FR^{(e)}_{\A,\B}}$.
    Since $\FR^{(o)}_{\A,\B}<\infty$ thanks to Lemma \ref{l:Finiteness}, we deduce that $\FR^{(o)}_{\A,\B} \le\FR^{(e)}_{\A,\B}$. The other direction is trivial, thereby concluding the proof. 
\end{proof}

\section{Proof of the Gaussian Forward-Reverse Brascamp--Lieb Theorem} \label{sec:FRBL-proof}

Let $(\mathbf{c},\mathbf{d},\mathbf{L},\mathcal{Q} ,\boldsymbol{\Sigma},\boldsymbol{\Gamma},\A,\B) $  be a Brascamp--Lieb datum.
The proof of Theorem \ref{thm:intro-FRBL}, and more generally, Theorem \ref{thm:FRBL}, is divided into 5 steps: 
\begin{enumerate}[Step 1.]
	\item \label{it:step0} (Gaussian saturation for even regularized inputs and Lebesgue measure) 
	For the case of $\gamma_{\Sigma_i} = dx_i$ and $\gamma_{\Gamma_j} = dy_j$,  $0< A_i^\flat \le A_i^\sharp < \infty$ and $0< B_j^\flat \le B_j^\sharp <\infty$, show that $\FR^{(e)}_{\A,\B} = \FR^{(\G)}_{\A,\B}$.
	\item \label{it:step1} (Replace Lebesgue measure by nondegenerate Gaussian measures) 
	For the case of $\Sigma_i\in \Sym_{>0}(n_i)$, $\Gamma_j \in \Sym_{>0}(m_j)$, $0\le A_i^\flat \le A_i^\sharp < \infty$ and $0\le B_j^\flat \le B_j^\sharp <\infty$, show that $\FR^{(e)}_{\A,\B} = \FR^{(\G)}_{\A,\B}$.
	\item \label{it:step2} (Remove evenness assumption)
	For the case of $\Sigma_i\in \Sym_{>0}(n_i)$, $\Gamma_j \in \Sym_{>0}(m_j)$, $0\le A_i^\flat \le A_i^\sharp < \infty$ and $0\le B_j^\flat \le B_j^\sharp <\infty$, show that $\FR^{(o)}_{\A,\B} = \FR^{(\G)}_{\A,\B}$.
	\item \label{it:step3} (Approximate degenerate $\gamma_{\Gamma_j}$'s by nondegenerate ones) 
	For the case of $\Sigma_i\in \Sym_{>0}(n_i)$, $\Gamma_j \in \Sym_{\geq 0}(m_j)$, $0\le A_i^\flat \le A_i^\sharp < \infty$ and $0\le B_j^\flat \le B_j^\sharp <\infty$, show that $\FR^{(o)}_{\A,\B} = \FR^{(\G)}_{\A,\B}$. 
	\item \label{it:step4} (Remove the regularization)
	For the case of $\Sigma_i\in \Sym_{>0}(n_i)$, $\Gamma_j \in \Sym_{\geq 0}(m_j)$ with $0\le A_i^\flat < A_i^\sharp = \infty$ and $0\le B_j^\flat < B_j^\sharp = \infty$ (and in particular, no regularization), show that $\FR^{(o)}_{\A,\B} = \FR^{(\G)}_{\A,\B}$ (and in particular, $\FR^{(o)}_{LC} = \FR^{(\G)}_{LC}$).  
\end{enumerate}

\subsection{Step \ref{it:step0}}

\begin{proof}[Proof of Step \ref{it:step0}]
	As a first step, we consider the case of $d\gamma_{\Sigma_i} = dx_i$ and $d\gamma_{\Gamma_j} = dy_j$ (so $\Sigma_i = \Gamma_j = \infty$) as well as $0< A_i^\flat\le A_i^\sharp< \infty$ and $0< B_j^\flat\le B_j^\sharp <\infty$. 
	
	By Lemma \ref{l:Finiteness} $\FR^{(e)}_{\A,\B} \leq \FR^{(o)}_{\A,\B} < \infty$ is finite, and by a compactness argument utilizing the Arzel\`a--Ascoli theorem as in the proof of \cite[Theorem 2.1]{NakamuraTsuji-InverseBrascampLieb}, there exist maximizing functions $\mathfrak{f}_i \in \F^{(e)}_{A_i^\flat, A_i^\sharp }(dx_i)$ and $\mathfrak{h}_j \in \mathcal{F}^{(e)}_{B_j^\flat, B_j^\sharp}(dy_j)$ for which equality occurs in (\ref{eq:FR-conclusion}). 
	Normalizing these so that $\int \mathfrak{f}_i(x_i) dx_i = \int \mathfrak{h}_j(y_j) dy_j = 1$, we have by Lemma \ref{lem:equivalence}:	
	\[
	\prod_{i=1}^I \mathfrak{f}_i(x_i)^{c_i} \le \frac{1}{\FR^{(e)}_{\A,\B}} e^{-\frac12 \langle \Q x, x\rangle} \prod_{j=1}^J \mathfrak{h}_j(L_jx)^{d_j} \;\;\; \forall x \in E . 
	\]
	
	Now define inductively:
		\begin{align*}
	&\mathfrak{f}_i^{(N+1)}(\overline{x}_i):= \int_{\R^{n_i}} \mathfrak{f}_i^{(N)}\brac{\frac{\overline{x}_i + x_i}{\sqrt2}} \mathfrak{f}_i^{(N)}\brac{\frac{\overline{x}_i - x_i}{\sqrt2}} \, dx_i = 2^{n_i/2}  (\mathfrak{f}^{(N)}_i \ast \mathfrak{f}^{(N)}_i) (\sqrt{2} \overline{x}_i ) , \\
	&\mathfrak{h}_j^{(N+1)}(\overline{y}_j) := \int_{\R^{m_j}} \mathfrak{h}_j^{(N)}\brac{\frac{\overline{y}_j + y_j}{\sqrt2}}  \mathfrak{h}_j^{(N)}\brac{\frac{\overline{y}_j - y_j}{\sqrt2}}\, dy_j = 2^{m_j/2}  (\mathfrak{h}^{(N)}_j \ast \mathfrak{h}^{(N)}_j) (\sqrt{2} \overline{y}_j ), 
	\end{align*}
	where $\mathfrak{f}_i^{(0)}:= \mathfrak{f}_i$ and $\mathfrak{h}_j^{(0)}:= \mathfrak{h}_j$, and apply Proposition \ref{p:BLTrick} iteratively. By part (\ref{it:BLTrick1}) we have $\mathfrak{f}_i^{(N)} \in \F^{(e)}_{A_i^\flat, A_i^\sharp}(dx_i)$ and $\mathfrak{h}_j\in \F^{(e)}_{B_j^\flat,B_j^\sharp}(dy_j)$ for all $N \geq 1$. By part (\ref{it:BLTrick2}) of course 
	$\int \mathfrak{f}_i^{(N)}(x_i) dx_i = \int \mathfrak{h}_j^{(N)}(y_j) dy_j = 1$ for all $N \geq 1$. 
	By part (\ref{it:BLTrick4}) applied with $D = \frac{1}{\FR^{(e)}_{\A,\B}}$, we see that for all $N \geq 1$, 
							\begin{equation} \label{eq:step0-N}
		\prod_{i=1}^I \mathfrak{f}_i^{(N)}(\overline{x}_i)^{c_i}
		\le \frac{1}{\FR^{(e)}_{\A,\B}} e^{-\frac12 \langle \Q \overline{x},\overline{x}\rangle} \prod_{j=1}^J \mathfrak{h}_j^{(N)}(L_j\overline{x})^{d_j} \;\;\; \forall \overline{x} \in E . 
	\end{equation}

	Since $f_i$ and $h_j$ are even and log-concave, they attain their maximal value at the origin, and hence are bounded. 
	By a Local version of the Central Limit Theorem (see e.g. \cite[Theorem 19.1]{BhattacharyaRao-Book2010} or \cite{Bobkov-LocalLimitTheoremsInOrliczSpace}), it follows that we have the following uniform (and hence pointwise) convergence:
	\[
	\lim_{N\to \infty}\mathfrak{f}_i^{(N)} = \gamma_{\Cov(\mathfrak{f}_i)} ,\quad 
	\lim_{N\to\infty} \mathfrak{h}_j^{(N)} = \gamma_{\Cov(\mathfrak{h}_j)} .
	\]	
	Since  being more log-concave / log-convex is a property about three points and hence closed under pointwise limits, it follows that $\gamma_{\Cov(\mathfrak{f}_i)} \in \F^{(e)}_{A_i^\flat, A_i^\sharp}(dx_i)$ and $\gamma_{\Cov(\mathfrak{h}_j)} \in \F^{(e)}_{B_j^\flat,B_j^\sharp}(dy_j)$. In addition, the inequality (\ref{eq:step0-N}) is preserved in the limit:
	$$
	\prod_{i=1}^I \gamma_{\Cov(\mathfrak{f}_i)}(\overline{x}_i)^{c_i}
		\le  \frac{1}{\FR^{(e)}_{\A,\B}} e^{-\frac12 \langle \Q \overline{x},\overline{x}\rangle} \prod_{j=1}^J \gamma_{\Cov(\mathfrak{h}_j)}(L_j\overline{x})^{d_j} \;\;\; \forall \bar x \in E. 
	$$
	And of course $\int \gamma_{\Cov(\mathfrak{f}_i)} dx_i = \int  \gamma_{\Cov(\mathfrak{h}_j)} dy_j = 1$.

	Applying Lemma \ref{lem:equivalence} again, it follows that $\FR^{(\G)}_{\A,\B} \geq \FR^{(e)}_{\A,\B}$. As the converse inequality is trivial, we deduce that $\FR^{(e)}_{\A,\B} = \FR^{(\G)}_{\A,\B}$.
\end{proof}

\subsection{Step \ref{it:step1}}

\begin{proof}[Proof of Step \ref{it:step1}] 
Assume now that 
\begin{equation}\label{e:SettingStep1}
	\Sigma_i \in \Sym_{>0}(n_i),\; \Gamma_j \in \Sym_{>0}(m_j), \; 0\le A_i^\flat\le A_i^\sharp <\infty, \; 0\le B_j^\flat\le B_j^\sharp < \infty. 
\end{equation}
Our goal is to prove that $\FR^{(e)}_{\A,\B} = \FR^{(\G)}_{\A,\B}$. 

Let $f_i \in \mathcal{F}^{(e)}_{A_i^\flat,A_i^\sharp}(\gamma_{\Sigma_i})$ and $h_j \in \mathcal{F}^{(e)}_{B_j^\flat, B_j^\sharp} (\gamma_{\Gamma_j})$ satisfy
\begin{equation} \label{eq:step1-inq}
\prod_{i=1}^I f_i(x_i)^{c_i} \le e^{-\frac12 \langle \Q x,x\rangle} \prod_{j=1}^J h_j(L_jx)^{d_j}.
\end{equation}
Since $\Sigma_i$ and $\Gamma_j$ are nondegenerate, we may define:
$$
\mathfrak{f}_i:= f_i g_{\Sigma_i^{-1}},\; \mathfrak{h}_j := h_j g_{\Gamma_j^{-1}},\; e^{-\frac12 \langle \mathfrak{Q} x, x\rangle} := \frac{\prod_{i=1}^I g_{\Sigma_i^{-1}}(x_i)^{c_i}}{\prod_{j=1}^J g_{\Gamma_j^{-1}}(L_jx)^{d_j} }e^{-\frac12\langle \mathcal{Q} x, x\rangle}. 
$$
We then have 
$$
\prod_{i=1}^I \mathfrak{f}_i(x_i)^{c_i} \le e^{-\frac12 \langle \mathfrak{Q} x, x\rangle} \prod_{j=1}^J \mathfrak{h}_j(L_jx)^{d_j},
$$
and 
$$
\int_{\R^{n_i}} \mathfrak{f}_i\, dx_i = {\rm det}\, (2\pi \Sigma_i)^{1/2}\int _{\R^{n_i}} f_i\, d\gamma_{\Sigma_i},\quad 
\int_{\R^{m_j}} \mathfrak{h}_j\, dy_j= {\rm det}\, (2\pi \Gamma_j)^{1/2} \int_{\R^{m_j}} h_j\, d\gamma_{\Gamma_j}. 
$$
Moreover,  $\mathfrak{f}_i \in \mathcal{F}^{(e)}_{ \mathfrak{A}_i^\flat, \mathfrak{A}_i^\sharp }(dx_i)$ and $\mathfrak{h}_j \in \mathcal{F}^{(e)}_{\mathfrak{B}_j^\flat, \mathfrak{B}_j^\sharp}(dy_j)$, where 
$$
\mathfrak{A}_i^{*}:= A_i^{*} + \Sigma_i^{-1}\in {\rm Sym}_{>0}(n_i),\quad 
\mathfrak{B}_j^{*}:= B_j^{*} + \Gamma_j^{-1} \in {\rm Sym}_{>0}(m_j), \;\; * \in \{ \flat,\sharp \} . 
$$
Denoting the Gaussian constant corresponding to the new Brascamp--Lieb datum by 
\[
\mathfrak{FR}^{(\G)}_{\mathfrak{A},\mathfrak{B}} := \FR^{(\G)}( \mathbf{c},\mathbf{d},\mathbf{L}, \infty , \infty,\mathfrak{Q},\mathfrak{A},\mathfrak{B}), 
\] 
we see from Step \ref{it:step0} that 
\begin{align*}
\prod_{i=1}^I \big(\int_{\R^{n_i}} f_i\, d\gamma_{\Sigma_i}\big)^{c_i}
&= 
\prod_{i=1}^I {\rm det}\, (2\pi \Sigma_i)^{-c_i/2}
\prod_{i=1}^I \big(\int_{\R^{n_i}} \mathfrak{f}_i\, dx_i \big)^{c_i} \\
&\le 
\prod_{i=1}^I {\rm det}\, (2\pi \Sigma_i)^{-c_i/2} 
\mathfrak{FR}^{(\mathcal{G})}_{\mathfrak{A},\mathfrak{B}} \prod_{j=1}^J \big( \int_{\R^{m_j}} \mathfrak{h}_j\, dy_j \big)^{d_j} \\
&= 
\frac{\prod_{i=1}^I {\rm det}\, (2\pi \Sigma_i)^{-c_i/2}}{\prod_{j=1}^J {\rm det}\, (2\pi \Gamma_j)^{-d_j/2} }
\mathfrak{FR}^{(\mathcal{G})}_{\mathfrak{A},\mathfrak{B}} \prod_{j=1}^J \big( \int_{\R^{m_j}} {h}_j\, d\gamma_{\Gamma_j} \big)^{d_j}.  
\end{align*}
Recalling (\ref{eq:Gaussian-constant}), it is straightforward to check that 
$$
\frac{\prod_{i=1}^I {\rm det}\, (2\pi \Sigma_i)^{-c_i/2}}{\prod_{j=1}^J {\rm det}\, (2\pi \Gamma_j)^{-d_j/2} }
\mathfrak{FR}^{(\G)}_{\mathfrak{A},\mathfrak{B}} = {\rm FR}^{(\G)}(\mathbf{c},\mathbf{d},\mathbf{L},\boldsymbol{\Sigma},\boldsymbol{\Gamma},\mathcal{Q},\A,\B) = \FR^{(\G)}_{\A,\B},
$$
and hence $\FR^{(e)}_{\A,\B} \leq \FR^{(\G)}_{\A,\B}$. Since the converse inequality is trivial, the proof is complete. 
\end{proof}
 
 \subsection{Step \ref{it:step2}}
 
\begin{proof}[Proof of Step \ref{it:step2}]
	In this step, we weaken the evenness assumption and confirm that $\FR^{(o)}_{\A,\B} =\FR^{(\G)}_{\A,\B}$ in the case of \eqref{e:SettingStep1}.  
	However, we have already seen in Proposition \ref{p:reduce-to-even} that ${\rm FR}^{(o)}_{\A,\B} = {\rm FR}^{(e)}_{\A,\B}$ assuming \eqref{e:SettingStep1}. 
	Since we proved ${\rm FR}^{(e)}_{\A,\B}  = {\rm FR}^{(\mathcal{G})}_{\A,\B} $ in Step \ref{it:step1}, the proof is complete.   
\end{proof}

\subsection{Step \ref{it:step3}}

\begin{proof}[Proof of Step \ref{it:step3}]
	We now weaken the assumption that $\Gamma_j \in \Sym_{> 0}(m_j)$  in \eqref{e:SettingStep1} to the assumption that $\Gamma_j \in \Sym_{\geq 0}(m_j)$. Our goal is to prove that $\FR_{\A,\B}^{(o)} = \FR_{\A,\B}^{(\G)}$ under these assumptions. 
	
	Let $f_i \in \F^{(o)}_{A_i^\flat, A_i^\sharp}(\gamma_{\Sigma_i})$ and $h_j \in \F_{B_j^\flat, B_j^\sharp}(\gamma_{\Gamma_j})$ satisfy (\ref{eq:step1-inq}).  	Given $\varepsilon>0$, set
	$$ 
	\Gamma_{j,\varepsilon}:= \Gamma_j + \varepsilon \Id_{m_j}  \in \Sym_{>0}(m_j). 
	$$
	Since $h_j$ is log-concave, we have by Lemma \ref{lem:log-concave} that $h_j(y_j) \le e^{\langle a_j, y_j\rangle + b_j}$ for some $a_j \in \R^{m_j}$ and $b_j\ge 0$. Therefore $h_j \in L^1(\gamma_{\Gamma_{j,\varepsilon}})$ and so $h_j \in \F_{B_j^\flat, B_j^\sharp}(\gamma_{\Gamma_{j,\varepsilon}})$. 
	 	 								Denote $$\FR^{(*)}_{\A,\B}(\varepsilon):= \FR^{(*)}_{\mathbf{A},\mathbf{B}}(\mathbf{c},\mathbf{d},\mathbf{L},\boldsymbol{\Sigma},\boldsymbol{\Gamma}_{\varepsilon}, \mathcal{Q}).$$
	 Since $\Gamma_{j,\varepsilon}$ are nondegenerate, we have by Step \ref{it:step2} that $\FR^{(o)}_{\A,\B}(\eps) = \FR^{(\G)}_{\A,\B}(\eps)$, and therefore
	$$
	\prod_{i=1}^I \big( \int_{\R^{n_i}} f_i\, d\gamma_{\Sigma_i} \big)^{c_i}
	\le 
	\FR_{\A,\B}^{(\mathcal{G})}(\varepsilon) \prod_{j=1}^J \big( \int_{\R^{m_j}} h_j\, d\gamma_{\Gamma_{j,\varepsilon}}\big)^{d_j}.
	$$
	By taking the limit as $\varepsilon\to 0$, we formally obtain the desired conclusion that $\FR^{(o)}_{\A,\B} \leq \FR^{(\G)}_{\A,\B}$. 
		For a rigorous justification, we will verify that for some subsequence $\varepsilon_k \rightarrow 0$,
	\begin{align}
	& \lim_{\varepsilon_k \to0} 	\FR_{\A,\B}^{(\G)}(\varepsilon_k)= \FR_{\A,\B}^{(\G)}, \label{e:Justification1} \\
	& \lim_{\varepsilon \to 0} \int_{\R^{m_j}}
 h_j\, d\gamma_{\Gamma_{j,\varepsilon}}= \int_{\R^{m_j}} h_j\, d\gamma_{\Gamma_j}. \label{e:Justification2}
 \end{align}
 
We commence with \eqref{e:Justification2}. 
Note that 
$$
\int_{\R^{m_j}} h_j\, d\gamma_{\Gamma_{j,\varepsilon}}
= 
\int_{\R^{m_j}} h_j(\Gamma_{j,\varepsilon}^{1/2}y_j)\, d\gamma(y_j),\quad 
\lim_{\varepsilon\to0} h_j(\Gamma_{j,\varepsilon}^{1/2}y_j)= h_j(\Gamma_{j}^{1/2}y_j). 
$$
Hence, it suffices to find a dominating function. Recalling that $h_j(y_j) \le e^{\langle a_j, y_j\rangle + b_j}$, we use
\begin{align*}
h_j(\Gamma_{j,\varepsilon}^{1/2}y_j)
&\le 
e^{|\Gamma_{j,\varepsilon}^{1/2}a_j||y_j| + b_j} 
\le 
e^{|\Gamma_{j,1}^{1/2}a_j||y_j| + b_j} \in L^1(d\gamma).
\end{align*}
Therefore, Lebesgue's dominated convergence theorem justifies \eqref{e:Justification2}. 

We next confirm \eqref{e:Justification1}. 
Recall that 
\begin{align}
\nonumber \FR^{(\G)}_{\A,\B}(\varepsilon)
&= 
\sup \frac{ \prod_{j=1}^J \det \, (\Id_{m_j} + \Gamma_{j,\varepsilon} B_j)^{d_j/2} }{ \prod_{i=1}^I {\det\, ( \Id_{n_i} +  \Sigma_i A_i)^{c_i/2} }}\\
\label{eq:supremum-attained} &=
\sup \frac{ \prod_{j=1}^J \det \, ( \Id_{m_j} + \Gamma_{j} B_j + \varepsilon B_j)^{d_j/2} }{ \prod_{i=1}^I {\det\, ( \Id_{n_i} +  \Sigma_i A_i)^{c_i/2} }},
\end{align}
where the supremum is taken over all $A_i,B_j$ satisfying
\begin{equation}\label{e:Assump13Dec}
A_i^\flat \le A_i \le A_i^\sharp~,~B_j^\flat \le B_j \le B_j^\sharp ~,~ 
{\rm diag}\, (c_1A_1,\ldots, c_I A_I) \ge \mathcal{Q} + \sum_{j=1}^J d_j L_j^* B_j L_j. 
\end{equation}
In particular, it is clear from (\ref{eq:supremum-attained}) that $\FR^{(\G)}_{\A,\B}\le \FR^{(\G)}_{\A,\B} (\varepsilon) $ for all $\varepsilon>0$. To show the reverse inequality in some subsequential limit  $\varepsilon_k\to0$, recall that $A_i^\sharp, B_j^\sharp <\infty$. Hence, by compactness, we may find extremizers $A_{i,\varepsilon}$ and $B_{j,\varepsilon}$ for which the supremum in (\ref{eq:supremum-attained}) is attained.  
Since $A_i^\flat \le A_{i,\varepsilon}\le A_i^\sharp$ and $B_j^\flat \le B_{j,\varepsilon}\le B_j^\sharp$ 
 for all $\varepsilon>0$, by passing to a subsequence if necessary a finite number of times, 
 \[
\exists A_{i,0}:= \lim_{k\rightarrow \infty} A_{i,\varepsilon_k} ~,~ \exists B_{j,0} = \lim_{k \rightarrow \infty} B_{j,\varepsilon_k} ~,~ \lim_{k \rightarrow \infty} \varepsilon_k B_{j,\varepsilon_k} = 0 ,
 \]
for all $i,j$. Since $A_{i,\varepsilon}, B_{j,\varepsilon}$ satisfy \eqref{e:Assump13Dec} for all $\varepsilon>0$, so do $A_{i,0}, B_{j,0}$. 
Thus, 
\begin{align*}
	\lim_{k\rightarrow \infty}\FR^{(\G)}_{\A,\B}(\varepsilon_k)
	&= \lim_{k \rightarrow \infty} 
	\frac{ \prod_{j=1}^J {\rm det}\, ( {\rm Id}_{m_j} + \Gamma_{j} B_{j,\varepsilon_k} + \varepsilon_k B_{j,\varepsilon_k})^{d_j/2} }{ \prod_{i=1}^I {\rm det}\, ( {\rm Id}_{n_i} +  \Sigma_i A_{i,\varepsilon_k})^{c_i/2} }\\
	&= 
	\frac{ \prod_{j=1}^J {\rm det}\, ( {\rm Id}_{m_j} + \Gamma_{j} B_{j,0})^{d_j/2} }{ \prod_{i=1}^I {\rm det}\, ( {\rm Id}_{n_i} +  \Sigma_i A_{i,0})^{c_i/2} }
	\le \FR^{(\G)}_{\A,\B}. 
\end{align*}
This concludes the proof. 
\end{proof}

\subsection{Step \ref{it:step4}}

\begin{proof}[Proof of Step \ref{it:step4}]
	In this final step, we allow using  $A_i^\sharp,B_j^\sharp = \infty$, establishing that for any Brascamp--Lieb datum $(\mathbf{c},\mathbf{d},\mathbf{L},\boldsymbol{\Sigma},\boldsymbol{\Gamma},\mathcal{Q},\A,\B)$ with $\Sigma_i \in \Sym_{>0}(n_i)$, $\Gamma_j \in \Sym_{\geq 0}(m_j)$ and $A_i^\sharp,B_j^\sharp = \infty$ (no restriction on $A_i^\flat, B_j^\flat \geq 0$), we have $\FR^{(o)}_{\A,\B} = \FR^{(\G)}_{\A,\B}$. In particular, $\FR^{(o)}_{LC} = \FR^{(\G)}_{LC}$.

	Let $f_i \in \F^{(o)}_{\A,\B}(\gamma_{\Sigma_i})$ and $h_j \in \F_{\A,\B}(\gamma_{\Gamma_j})$ satisfy (\ref{eq:step1-inq}). 
	The idea is to regularize $f_i$ by applying the Ornstein-Uhlenbeck flow $P_t^{\Sigma_i}$ with invariant measure $\gamma_{\Sigma_i}$ for time $t \geq 0$, defined for any $\varphi_i \in L^2(\gamma_{\Sigma_i})$ (by density, see e.g. \cite{BGL-Book}) via
	\[
	\partial_t P_t^{\Sigma_i} \varphi_i = \mathcal{L}^{\Sigma_i} P_t^{\Sigma_i} \varphi_i ~,~ \mathcal{L}^{\Sigma_i} \psi_i := \Delta_{\R^{n_i}} \psi_i - \langle \Sigma_i^{-1}x_i, \nabla_{\R^{n_i}} \psi_i \rangle, \quad P_0^{\Sigma_i} = \Id . 
	\]
					We denote the corresponding integral kernel (with respect to Lebesgue measure) by $p_t^{\Sigma_i}(x_i,\hat{x}_i)$, so that 
	\begin{equation}\label{e:OUHeat}
	P^{\Sigma_i}_t \varphi_i (x_i) = \int_{\R^{n_i}} \varphi_i(\hat{x}_i) p_t^{\Sigma_i} (x_i,\hat{x}_i)\, d\hat{x}_i.
	\end{equation}
	Explicitly, it is given by the Mehler formula:
	\begin{equation}\label{e:OUKernel}
	p_t^{\Sigma_i}(x_i,\hat{x}_i)
	= 
	\gamma_{\Sigma_{i,t}}(\hat{x}_i  -  e^{-t \Sigma_i^{-1}}x_i ),\quad \Sigma_{i,t}:= \Sigma_i( {\rm Id}_{n_i} - e^{- 2t \Sigma_i^{-1}} ).
	\quad 	\end{equation}
			By construction, the semi-group $P_t^{\Sigma_i}$ is self-adjoint on $L^2(\gamma_{\Sigma_i})$:
	\begin{equation}\label{e:IntParts}
	\int_{\R^{n_i}} \varphi_i (P_t^{\Sigma_i} \psi_i) \, d\gamma_{\Sigma_i}
	= 
	\int_{\R^{n_i}} (P_t^{\Sigma_i}\varphi_i) \psi_i\, d\gamma_{\Sigma_i} \;\;\; \forall \varphi_i, \psi_i \in L^2(\gamma_{\Sigma_i}) . 
	\end{equation}
	Denoting
	\begin{align*}
	&\Sigma:= {\rm diag}\, (\Sigma_1,\ldots, \Sigma_I) \in \Sym_{>0}(E) \;\;,\;\;
	p^{\Sigma}_t(x,\hat{x}):= \prod_{i=1}^I p_t^{\Sigma_i}(x_i,\hat{x}_i), 
			\end{align*}
	we have
	\begin{equation}\label{e:KernelBigspace}
	p^\Sigma_t(x,\hat{x}) = \gamma_{\Sigma_t}(\hat{x} - e^{-t\Sigma^{-1}} x ),\; {\rm where}\; \Sigma_t:= {\rm diag}\, (\Sigma_{1,t},\ldots, \Sigma_{I,t}) 
	= 
	\Sigma ( {\rm Id}_E - e^{-2t \Sigma^{-1}} ) . 
	\end{equation}

	Let $c_{\min}:= \min(c_1,\ldots,c_I)$, and define for $t \geq 0$
	\begin{equation} \label{eq:u-and-v}
		u_{i,t} := P_t^{\Sigma_i} f_i ~,~ v_{j,t}(y_j)
		:= 
		\bigg( \mathfrak{c}_t^{\frac{1}{J}}
		\int_{\R^{m_j}}h_j (\hat{y}_j)^{ \frac{d_j(J+1)}{c_{\min}} } \gamma_{ L_j \Sigma_t L_j^* }(y_j - \hat{y}_j )\, d\hat{y}_j
		\bigg)^{ \frac{c_{\min}}{d_j(J+1)}}
	\end{equation}
	for some constant $\mathfrak{c}_t > 0$ to be determined below. 
	We now claim that for all $t > 0$ small enough:
	\begin{enumerate}
		\item \label{it:step4-1} $u_{i,t} \in \mathcal{F}^{(o)}_{A^\flat_{i,t},A^\sharp_{i,t}}(\gamma_{\Sigma_i})$ for some $A^*_{i,t} \in \Sym_{\geq 0}(n_i)$, $0 \leq A^\flat_{i,t} < A^\sharp_{i,t} < \infty$, such that $\lim_{t \to 0} A^\flat_{i,t} = A^\flat_i$, and $\int u_{i,t} d\gamma_{\Sigma_i} = \int f_i d\gamma_{\Sigma_i}$. 
		\item \label{it:step4-2}
		$v_{j,t}\in \mathcal{F}_{B^\flat_{j,t}, B^\sharp_{j,t}}(\gamma_{\Gamma_j})$ for some $B^*_{j,t} \in \Sym_{\geq 0}(m_j)$, $0 \leq B^\flat_{j,t} < B^\sharp_{j,t} < \infty$, such that $\lim_{t \to 0} B^\flat_{j,t} = B^\flat_j$. 
		\item \label{it:step4-3} It holds that
		\begin{equation}\label{e:PWAssump15Dec}
			\prod_{i=1}^I u_{i,t}(x_i)^{c_i}\le e^{-\frac12 \langle\Q_t x,  x\rangle} \prod_{j=1}^J v_{j,t}(L_{j,t}x)^{d_j} \;\;\; \forall x \in E ,
		\end{equation}
		for some  $\Q_t \in \Sym(E)$ such that $\lim_{t\to 0} \Q_t = \Q$ and with  
		\begin{equation}\label{e:DefL(j,t)}
		L_{j,t}:= L_j e^{-t\Sigma^{-1}}:E \to \R^{m_j} ,
		\end{equation} 
		which are obviously linear surjective maps satisfying $\lim_{t\to0} L_{j,t} = L_j$. 		Moreover, $\lim_{t \to 0} \mathfrak{c}_t = 1$. 
	\end{enumerate}
	
	To see claim (\ref{it:step4-1}), note that (coordinate-wise) $x_i \in L^2(\gamma_{\Sigma_i})$ and $P_t^{\Sigma_i} x_i = e^{-t \Sigma^{-1}} x_i$  since $\mathcal{L}^{\Sigma_i} x_i = - \Sigma_i^{-1} x_i$, and so it follows from \eqref{e:IntParts} that the $\gamma_{\Sigma_i}$-barycenter of $u_{i,t}$ remains at the origin for all $t \geq 0$:
	\[
	 \int_{\R^{n_i}} x_i u_{i,t} d\gamma_{\Sigma_i} = \int_{\R^{n_i}} x_i P_t^{\Sigma_i} f_i d\gamma_{\Sigma_i} = \int_{\R^{n_i}} P_t^{\Sigma_i}(x_i) f_i d\gamma_{\Sigma_i} = e^{-t \Sigma^{-1}} \int x_i f_i d\gamma_{\Sigma_i} = 0 .
	\]
	Similarly,
	\[
	\int_{\R^{n_i}} u_{i,t} d\gamma_{\Sigma_i} = \int_{\R^{n_i}} P_t f_i d\gamma_{\Sigma_i} = \int_{\R^{n_i}} f_i d\gamma_{\Sigma_i}  . 
	\]
	In addition, since $u_{i,t} = (e^{t \Sigma^{-1}})_{\#} (f_i \ast \gamma_{\Sigma_{i,t}})$ by (\ref{e:OUHeat}) and (\ref{e:OUKernel}), it follows by Lemma \ref{lem:silly} that $u_{i,t} \in \F^{(o)}_{A_{i,t}^\flat,A_{i,t}^\sharp}$, where 
	\[
	A_{i,t}^{*} = e^{-t \Sigma_i^{-1}} ((A_i^*)^{-1} + \Sigma_{i,t})^{-1} e^{-t \Sigma_i^{-1}}  ~,~ * \in \{\flat,\sharp \} ,
	\]
	and, recall, we interpret $((A_i^*)^{-1} + \Sigma_{i,t})^{-1}$ as $A_i^* - A_i^* (A_i^* + \Sigma_{i,t}^{-1})^{-1} A_i^* \geq 0$ if $A_i^*$ is degenerate (which is well-defined since $\Sigma_{i,t} > 0$). In particular, $A_{i,t}^\flat = 0$ whenever $A_{i}^\flat = 0$. Clearly $A^\flat_{i,t} \leq A^\sharp_{i,t} < \infty$ and $\lim_{t \to 0} A^\flat_{i,t} = A^\flat_i$ since $\Sigma_{i,t} \rightarrow 0$ as $t \rightarrow 0$. This establishes claim (\ref{it:step4-1}). 
	
	Similarly, Lemma \ref{lem:silly} implies that $v_{j,t} \in \F_{B_{j,t}^\flat,B_{j,t}^\sharp}$, where
	\[
	B_{j,t}^* = \brac{(B_j^*)^{-1} + \frac{d_j (J+1)}{c_{\min}} L_j \Sigma_t L_j^*}^{-1} ~,~ * \in \{\flat,\sharp \} .
	\]
	Since $L_j$ are surjective, $L_j \Sigma_t L_j^* > 0$ is non-degenerate, and so the same comments as above apply, 
	establishing claim (\ref{it:step4-2}).

	To verify claim (\ref{it:step4-3}),  we recall \eqref{e:OUHeat} and apply Jensen's inequality for each of the probability measures $p_t^{\Sigma_i}(x_i,\hat{x}_i)d\hat{x}_i$ (with $x_i$ fixed) as follows:
	\begin{align*}
	\prod_{i=1}^I u_{i,t}(x_i)^{c_i}
	&= 
	\brac{\prod_{i=1}^I u_{i,t}(x_i)^{\frac{c_i}{c_{\min}}}}^{c_{min}} \\
	&= 
	\bigg(
	\prod_{i=1}^I 
	\big( 
	\int_{\R^{n_i}} f_i(\hat{x}_i) p_t^{\Sigma_i}(x_i, \hat{x}_i)\, d\hat{x}_i 
	\big)^{\frac{c_i}{c_{\min}} }
	\bigg)^{c_{\min}} \\
	&\le 
	\bigg(	\prod_{i=1}^I 
	\int_{\R^{n_i}} f_i(\hat{x}_i)^{ \frac{c_i}{c_{\min}} } p_t^{\Sigma_i}(x_i, \hat{x}_i)\, d\hat{x}_i 
	\bigg)^{c_{\min}} \\
	& = \bigg( \int_{E} 
	\big(\prod_{i=1}^I 
	 f_i(\hat{x}_i)^{ c_i}\big)^{\frac{1}{c_{\min}} } p_t^{\Sigma}(x, \hat{x})\, d\hat{x}
	\bigg)^{c_{\min}}. 
	\end{align*}
	We then make use of the pointwise assumption (\ref{eq:step1-inq}) to estimate this quantity by 
	\begin{align*}
	&\le 
	\bigg( \int_{E} 
	 e^{-\frac12 \langle \frac{1}{c_{\min}} \Q \hat{x}, \hat{x}\rangle } \prod_{j=1}^J h_j(L_j\hat{x})^{\frac{d_j}{c_{\min}}}  p_t^{\Sigma}(x, \hat{x})\, d\hat{x}
	\bigg)^{c_{\min}}\\
	&\le 
	\bigg( \int_{E} 
	 e^{-\frac12 \langle \frac{J+1}{c_{\min}} \Q \hat{x}, \hat{x}\rangle } p_t^{\Sigma}(x, \hat{x})\, d\hat{x}
	 \bigg)^{\frac{c_{\min}}{J+1}}
	 \prod_{j=1}^J 
	 \bigg( \int_E h_j(L_j\hat{x})^{\frac{d_j(J+1)}{c_{\min}}}  p_t^{\Sigma}(x, \hat{x})\, d\hat{x}
	\bigg)^{\frac{c_{\min}}{J+1}}, 
	\end{align*}	
	where we used H\"{o}lder's inequality in the last step. 
	By choosing $t > 0$ small enough (so that $\frac{J+1}{c_{\min}} \Q + \Sigma_t^{-1} > 0$),  the first integral is finite for all $x$ and moreover
	$$
	\bigg( \int_{E} 
	 e^{-\frac12 \langle \frac{J+1}{c_{\min}} \Q \hat{x}, \hat{x}\rangle } p_t^{\Sigma}(x, \hat{x})\, d\hat{x}
	 \bigg)^{\frac{c_{\min}}{J+1}}
	 = 
	 \mathfrak{c}_t^{\frac{c_{\min}}{J+1}} e^{-\frac12 \Q_t \langle x, x\rangle} 
	$$
	 for some $\mathfrak{c}_t > 0$ and $\mathcal{Q}_t \in {\rm Sym}(E)$ such that $\lim_{t\to0} \mathfrak{c}_t = 1$ and $\lim_{t\to0} \mathcal{Q}_t = \mathcal{Q}$ (as follows by a direction Gaussian computation). 	 	 	 On the other hand, recalling the expression \eqref{e:KernelBigspace},  
	  \begin{align*}
	  &\int_E h_j(L_j\hat{x})^{\frac{d_j(J+1)}{c_{\min}}}  p_t^{\Sigma}(x, \hat{x})\, d\hat{x} \\
	  &= 
	  \int_E h_j(L_j\hat{x})^{\frac{d_j(J+1)}{c_{\min}}}  \gamma_{\Sigma_t} ( e^{-t\Sigma^{-1}}x - \hat{x} ) \, d\hat{x}\\
	  &= 
	  \int_{\R^{m_j}} h_j(\hat{y}_j)^{\frac{d_j(J+1)}{c_{\min}}} (L_j)_\# \big[ \gamma_{\Sigma_t} ( e^{-t\Sigma^{-1}}x- \hat{x}) d\hat{x} \big](d\hat{y}_j) \\
	  & =
	   \int_{\R^{m_j}} h_j(\hat{y}_j)^{\frac{d_j(J+1)}{c_{\min}}} \gamma_{L_j \Sigma_t L_j^*}( L_j e^{-t\Sigma^{-1}}x - \hat{y}_j) d\hat{y}_j . 	   
	  \end{align*}
	  Combining everything and recalling the definitions of $v_{j,t}$ in (\ref{eq:u-and-v}) and $L_{j,t}$ in (\ref{e:DefL(j,t)}), this confirms (\ref{e:PWAssump15Dec}) and hence claim (\ref{it:step4-3}).

	Having confirmed claims (\ref{it:step4-1})--(\ref{it:step4-3}) for small enough $t>0$,  we may apply the consequence of Step \ref{it:step3} with the regularized Brascamp--Lieb datum  
	$$
	\mathbf{c}, \; \mathbf{d},\; \mathbf{L}_t = (L_{j,t})_{j=1,\ldots, J},\; \mathcal{Q}_t,\; \boldsymbol{\Sigma},\; \boldsymbol{\Gamma}, \; 0 \leq A_{i,t}^\flat < A_{i,t}^\sharp <\infty ,\; 0 \leq B_{j,t}^\flat < B_{j,t}^\sharp < \infty,  
	$$
	to conclude that 
	\begin{equation}\label{e:ApplyStep3}
	\prod_{i=1}^I \big( \int_{\R^{n_i}} u_{i,t}\, d\gamma_{\Sigma_i} \big)^{c_i} 
	\le 
	{\rm FR}^{(\mathcal{G})}(t) 
	\prod_{j=1}^J \big( \int_{\R^{m_j}} v_{j,t} \, d\gamma_{\Gamma_j} \big)^{d_j}. 
	\end{equation}
	Here, 
	\begin{equation}\label{e:FR(t)}
	{\rm FR}^{(\mathcal{G})}(t) 
	:= 
	\sup \frac{\prod_{j=1}^J {\rm det}\, ( {\rm Id}_{m_j} + \Gamma^{1/2}_j B_{j,t} \Gamma^{1/2}_j  )^{d_j/2}}{ \prod_{i=1}^I {\rm det}\, ({\rm Id}_{n_i} + \Sigma^{1/2}_i A_{i,t} \Sigma^{1/2}_i )^{c_i/2} }, 
	\end{equation}
	where the supremum is taken over all 
	\begin{equation}\label{e:AcceptedInputs}
	\begin{split}
	& A^{\flat}_{i,t} \le A_{i,t} \le A^{\sharp}_{i,t},\; B^{\flat}_{j,t} \leq B_{j,t} \le B^{\sharp}_{j,t} \; : \\
	& {\rm diag}\, (c_1A_{1,t},\ldots, c_I A_{I,t}) \ge \mathcal{Q}_t + \sum_{j=1}^J d_j L_{j,t}^* B_{j,t} L_{j,t} .
	\end{split}
	\end{equation}
	Recall that $\int_{\R^{n_i}} u_{i,t}\, d\gamma_{\Sigma_i} = \int_{\R^{n_i}} f_i\, d\gamma_{\Sigma_i}$. Hence, we will conclude the proof of $\FR^{(o)}_{\A,\B} = \FR^{(\G)}_{\A,\B}$ by showing 
	\begin{align}
	 \limsup_{t\to0} {\rm FR}^{(\mathcal{G})}(t) & \le {\rm FR}^{(\mathcal{G})}_{\A,\B},\label{e:LimitGaussconst} \\
	\lim_{t\to 0} \int_{\R^{m_j}} v_{j,t} \, d\gamma_{\Gamma_j}
	& = 
	\int_{\R^{m_j}} h_j\, d\gamma_{\Gamma_j}. \label{e:LimitMass-v(j,t)}
	\end{align}

	To show \eqref{e:LimitMass-v(j,t)}, recall the definition (\ref{eq:u-and-v}) of $v_{j,t}$:
	\[
	v_{j,t}(y_j) = \bigg( \mathfrak{c}_t^{\frac{1}{J}}
		\int_{\R^{m_j}}h_j (\hat{y}_j)^{ \frac{d_j(J+1)}{c_{\min}} } \gamma_{ L_j \Sigma_t L_j^* }(y_j - \hat{y}_j )\, d\hat{y}_j
		\bigg)^{ \frac{c_{\min}}{d_j(J+1)}} .
	\]
	Since $\lim_{t \to 0} \mathfrak{c}_t = 1$, and since $h_j^{ \frac{d_j(J+1)}{c_{\min}} }$ is log-concave and $\gamma_{L_j \Sigma_t L_j^*}$ is a Gaussian approximation of identity, 	it follows by Lemma \ref{lem:approximation-of-identity} that $\lim_{t \to 0} v_{j,t}(y_j) = h_j(y_j)$ for almost-every $y_j \in \R^{m_j}$, and that moreover,
	\[
	v_{j,t}(y_j) \leq C e^{\scalar{a,y_j}} \;\;\; \forall y_j \in \R^{m_j} ,
	\]
	for some $C > 0$, $a \in \R^{m_j}$ and all small-enough $t > 0$. Note that the right-hand-side is in $L^1(\gamma_{\Gamma_j})$, whether $\Gamma_j \geq 0$ is degenerate or not. Therefore (\ref{e:LimitMass-v(j,t)}) follows by Lebesgue's dominated convergent theorem.

	To show \eqref{e:LimitGaussconst}, take arbitrary $A_{i,t},B_{j,t}$ satisfying \eqref{e:AcceptedInputs}. Since $\lim_{t \to 0} B_{j,t}^\flat = B_j^\flat$, there exist $B^t_j \in \Sym_{\geq 0}(m_j)$, $B_j^\flat \leq B^t_j < \infty$, so that $\snorm{B_j^t - B_{j,t}}_{op} \leq R_t$ for some $R_t \rightarrow 0$ as $t \rightarrow 0$ for all $j$. Similarly, since $\lim_{t \to 0} e^{t \Sigma_i^{-1}} A_{i,t}^\flat e^{t \Sigma_i^{-1}} = A_i^\flat$,  there exist $A^t_i \in \Sym_{\geq 0}(n_i)$, $A_i^\flat \leq A^t_i < \infty$, so that $\snorm{A_i^t - e^{t \Sigma_i^{-1}} A_{i,t} e^{t \Sigma_i^{-1}} }_{op} \leq R_t$ for all $i$ (we may assume a common $R_t$ for all $i$, $j$).
	
	Conjugating \eqref{e:AcceptedInputs} with $e^{t \Sigma^{-1}}$, recalling (\ref{e:DefL(j,t)}), and using $B_{j,t} \geq B_j^t - R_t \Id_{m_j}$, we obtain
	\[
	e^{t \Sigma^{-1}} \Q_t e^{t \Sigma^{-1}}  - \sum_{j=1}^J d_j L_j^* L_j R_t +  \sum_{j=1}^J d_j L_{j}^* B^t_j L_{j} \leq  e^{t\Sigma^{-1}}{\rm diag}\, (c_1A_{1,t},\ldots, c_I A_{I,t}) e^{t\Sigma^{-1}} .
	\]
	Denoting $\lambda_t = \norm{\mathcal{Q} - e^{t\Sigma^{-1}}\mathcal{Q}_t e^{t\Sigma^{-1}} + \sum_{j=1}^J d_j L_j^* L_j R_t}_{op}$, we have $\lim_{t \to 0} \lambda_t = 0$ and
	\[
	\Q + \sum_{j=1}^J d_j L_{j}^* B^t_j L_{j} \leq \lambda_t \Id_E + e^{t\Sigma^{-1}}{\rm diag}\, (c_1A_{1,t},\ldots, c_I A_{I,t}) e^{t\Sigma^{-1}} .
	\]
	Recalling that $e^{t \Sigma_i^{-1}} A_{i,t} e^{t \Sigma_i^{-1}} \leq A_i^t + R_t \Id_{n_i}$, if we define
\[
\bar{A}_i^t  :=  A_i^t + \brac{\frac{\lambda_t}{c_i} + R_t}{\rm Id}_{n_i} , 
\]
we see that $A_i^\flat \leq \bar{A}_i^t < A_i^\sharp = \infty$ and $B_j^\flat \leq B^t_j < B_j^\sharp = \infty$ satisfy
\[
\Q + \sum_{j=1}^J d_j L_{j}^* B_j^t L_{j} \le  {\rm diag}\, (c_1\bar A_1^t,\ldots, c_I \bar A_I^t) .
\]
Consequently, by definition of $\FR^{(\G)}_{\A,\B} = \FR^{(\G)}(\mathbf{c},\mathbf{d},\mathbf{L},\boldsymbol{\Sigma},\boldsymbol{\Gamma},\mathcal{Q},\A,\B)$, we have
\[
\prod_{j=1}^J {\rm det}\, ( {\rm Id}_{m_j} + \Gamma_j^{1/2} B_j^t \Gamma_j^{1/2} )^{d_j/2}  \leq \FR^{(\G)}_{\A,\B} \prod_{i=1}^I {\rm det}\, ({\rm Id}_{n_i} + \Sigma_i^{1/2} \bar A_i^t \Sigma_i^{1/2})^{c_i/2} .
\]
On the other hand, applying Lemma \ref{lem:det-stability},
\begin{align*}
\det (\Id_{m_j} + \Gamma_j^{1/2} B_{j,t} \Gamma_j^{1/2}) & \leq \det(\Id_{m_j} + \Gamma_j^{1/2} B_j^t \Gamma_j^{1/2}) (1 + \snorm{\Gamma_j^{1/2} (B_{j,t} - B_j^t) \Gamma_j^{1/2}}_{op})^{m_j} \\
& \leq \det(\Id_{m_j} + \Gamma_j^{1/2} B_j^t \Gamma_j^{1/2}) (1 + \snorm{\Gamma_j}_{op} R_t)^{m_j} ,
\end{align*}
and similarly, 
\begin{align*}
\det (\Id_{n_i} + \Sigma_i^{1/2} A_{i,t} \Sigma_i^{1/2}) & = \det(\Id_{n_i} + \Sigma_i^{1/2} e^{-t \Sigma_i^{-1}} e^{t \Sigma_i^{-1}} A_{i,t} e^{t \Sigma_i^{-1}} e^{-t \Sigma_i^{-1}} \Sigma_i^{1/2}) \\
& \geq  \frac{\det(\Id_{n_i} + \Sigma_i^{1/2} e^{-t \Sigma_i^{-1}} \bar A_i^t e^{-t \Sigma_i^{-1}} \Sigma_i^{1/2}) }{( 1+ \snorm{\Sigma_i e^{-2t \Sigma_i^{-1}}}_{op} \snorm{e^{t \Sigma_i^{-1}} A_{i,t} e^{t \Sigma_i^{-1}} - \bar A_i^t}_{op})^{n_i}} \\
& \geq \frac{\det(e^{-2t \Sigma_i^{-1}} + e^{-t \Sigma_i^{-1}} \Sigma_i^{1/2} \bar A_i^t  \Sigma_i^{1/2} e^{-t \Sigma_i^{-1}}) }{( 1+ \snorm{\Sigma_i e^{-2t \Sigma_i^{-1}}}_{op} (2 R_t + \frac{\lambda_t}{c_i}))^{n_i}} , \\
& = \frac{\det(\Id_{n_i} + \Sigma_i^{1/2} \bar A_i^t  \Sigma_i^{1/2})}{\det(e^{2 t \Sigma_i^{-1}}) ( 1+ \snorm{\Sigma_i e^{-2t \Sigma_i^{-1}}}_{op} (2 R_t + \frac{\lambda_t}{c_i}))^{n_i}} .
\end{align*}
Combining everything, we deduce
\begin{align*}
& \frac{\prod_{j=1}^J {\rm det}\, ( {\rm Id}_{m_j} + \Gamma_j^{1/2} B_{j,t}\Gamma_j^{1/2} )^{d_j/2}}{\prod_{i=1}^I {\rm det}\, ({\rm Id}_{n_i} + \Sigma_i^{1/2}  A_{i,t} \Sigma_i^{1/2})^{c_i/2}} \leq \FR^{(\G)}_{\A,\B} \cdot M_t , \\
& M_t := \Pi_{j=1}^J (1 + \snorm{\Gamma_j}_{op} R_t)^{m_j} \Pi_{i = 1}^I \det(e^{2 t \Sigma_i^{-1}})  ( 1+ \snorm{\Sigma_i e^{-2t \Sigma_i^{-1}}}_{op} (2 R_t + \frac{\lambda_t}{c_i}))^{n_i} .
\end{align*}
Hence ${\rm FR}^{(\mathcal{G})}(t) \le {\rm FR}^{(\mathcal{G})}_{\A,\B} \cdot M_t$, and 
since $\lim_{t\to0} \lambda_t = \lim_{t\to0} R_t = 0$, $\lim_{t \to 0} \det(e^{2 t \Sigma_i^{-1}}) = 1$ and $\snorm{\Sigma_i e^{-2t \Sigma_i^{-1}}}_{op} \leq \snorm{\Sigma_i}_{op}$ remains bounded, we see that $\lim_{t \to 0} M_t =1$ and so \eqref{e:LimitGaussconst} is established. This concludes the proof.
\end{proof}

\section{Gaussian saturation in the GCRSI} \label{sec:GCRSI}

We now return to the main application of our Gaussian Forward-Reverse Brascamp--Lieb Theorem \ref{thm:intro-FRBL} in this work --- the Gaussian conjugate Rogers--Shephard inequality and its variants. 

Throughout all applications in this work, we will use the Brascamp--Lieb datum given by:
\begin{equation} \label{eq:our-BL-datum}
\begin{cases}
& I = J = 2 ~,~ c_1 = c_2 = d_1 = d_2 = 1 ~,~ n_1=n_2 = m_2 = n, \;\; m_1 = 2n , \\
& L_1(x) = (\alpha x_1, \beta x_2) ~,~ L_2(x) = a x_1 + b x_2 ~,~ x = (x_1,x_2) \in \R^{n} \times \R^n, \\
&  \Q = 0 ~,~ \Sigma_1 = \Sigma_2 = \Gamma_2 = \Id_n \text{ and the degenerate } \Gamma_1 = \begin{pmatrix} \Id_n & \Id_n \\ \Id_n & \Id_n \end{pmatrix} ,
\end{cases}
\end{equation}
for some constants $\alpha,\beta, a,b  \in \R \setminus \{0\}$. In this section we analyze the case when $\alpha = \beta = 1$, namely that $L_1(x) = x$ is the identity map. 

\medskip

Let us start by formulating a generalized version of (the inequality statement in) Theorem \ref{thm:intro-our-FRBL}, which immediately also implies (the inequality statement in) Theorem \ref{thm:intro-ab}. The analysis of equality is deferred to Section \ref{sec:equality}. 

\begin{thm} \label{thm:GCRSI-FRBL}
Let $f_1,f_2 \in \F_{LC}^{(o)}(\gamma^n)$, and let $h_1 : \R^{2n} \rightarrow \R_+$ and $h_2 : \R^{n} \rightarrow \R_+$ denote two log-concave functions. 
Let $a,b \in \R$ with $\abs{a},\abs{b}, \abs{a+b} \geq 1$. If
\begin{equation} \label{eq:GCRSI-FRBL-assumption}
f_1(x_1) f_2(x_2) \leq h_1(x_1,x_2) h_2(a x_1 + b x_2) \;\;\; \forall x_1,x_2 \in \R^n ,
\end{equation}
then
\begin{equation} \label{eq:GCRSI-FRBL-conclusion}
\int_{\R^n} f_1 d\gamma \int_{\R^n} f_2 d\gamma \leq \int_{\R^n} h_1(y,y) d\gamma(y) \int_{\R^n} h_2 d\gamma . 
\end{equation}
\end{thm}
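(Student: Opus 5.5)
The plan is to apply Theorem \ref{thm:FRBL} with the Brascamp--Lieb datum (\ref{eq:our-BL-datum}), taking $\alpha=\beta=1$ so that $L_1=\Id_{\R^{2n}}$ and $L_2(x)=ax_1+bx_2$, and $\Q=0$. The assumption (\ref{eq:GCRSI-FRBL-assumption}) is exactly (\ref{eq:FRBL-assumption}) for this datum. We have $f_i\in\F^{(o)}_{LC}(\gamma^n)$, and $h_j$ is integrable against $\gamma_{\Gamma_j}$ by Corollary \ref{cor:log-concave-integrable}, even though $\Gamma_1$ is degenerate.

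The degenerate covariance $\Gamma_1$ is the push-forward covariance of $\gamma^n$ under $V:\R^n\to\R^{2n}$, $Vy=(y,y)$; that is, $\Gamma_1=VV^*$. Hence
\[
\int_{\R^{2n}} h_1\,d\gamma_{\Gamma_1}=\int_{\R^n}h_1(y,y)\,d\gamma(y).
\]
Therefore (\ref{eq:GCRSI-FRBL-conclusion}) follows from (\ref{eq:FRBL-conclusion}) once we show $\FR^{(\G)}_{LC}\le 1$. We must check that
\[
\det(\Id_{2n}+\Gamma_1B_1)\,\det(\Id_n+B_2)\le\det(\Id_n+A_1)\det(\Id_n+A_2)
\]
whenever $A_1,A_2,B_2\ge0$ on $\R^n$ and $B_1\ge0$ on $\R^{2n}$ satisfy
\[
\mathrm{diag}(A_1,A_2)\ge B_1+M_{a,b}^*B_2M_{a,b},\qquad M_{a,b}=(a\Id_n,\ b\Id_n).
\]

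\emph{Eliminating $B_1$.} First, $\det(\Id_{2n}+VV^*B_1)=\det(\Id_n+V^*B_1V)$. Conjugating the constraint by $V$ gives
\[
0\le V^*B_1V\le A_1+A_2-(a+b)^2B_2,
\]
so the right-hand side is positive semi-definite. Since $(a+b)^2\ge1$, monotonicity of the determinant gives
\[
\det(\Id_n+V^*B_1V)\le\det(\Id_n+A_1+A_2-(a+b)^2B_2)\le\det(\Id_n+A_1+A_2-B_2).
\]
Next, test the constraint on vectors $(x_1,0)$ and $(0,x_2)$, and use $B_1\ge0$. This yields $A_1\ge a^2B_2\ge B_2$ and $A_2\ge b^2B_2\ge B_2$, because $|a|,|b|\ge1$. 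So, with $C:=B_2$, it remains to prove
\[
\det(\Id+C+D_1+D_2)\det(\Id+C)\le\det(\Id+C+D_1)\det(\Id+C+D_2),\qquad D_i:=A_i-C\ge0 .
\]

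\emph{Submodularity of $\log\det$.} Put $P=\Id+C>0$ and $\phi(X)=\log\det(P+X)$ for $X\ge0$. Then
\[
\phi(D_1+D_2)-\phi(D_2)=\int_0^1\tr\big((P+D_2+sD_1)^{-1}D_1\big)\,ds\le\int_0^1\tr\big((P+sD_1)^{-1}D_1\big)\,ds=\phi(D_1)-\phi(0).
\]
The inequality holds because $(P+D_2+sD_1)^{-1}\le(P+sD_1)^{-1}$ (operator monotonicity of inversion) and $D_1\ge0$, so the traces compare. This is exactly the required determinant inequality, hence $\FR^{(\G)}_{LC}\le1$ and the theorem follows.

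The main obstacle I expected was the non-commutative matrix inequality: the constraint couples $A_1,A_2,B_1,B_2$ jointly. The resolution above avoids the full joint constraint. It only uses its restriction to the diagonal $V$, which kills $B_1$, and to the coordinate axes, which gives $A_i\ge B_2$. After that, submodularity of $\log\det$ finishes the argument. The hypotheses are used as follows: $|a|,|b|\ge1$ gives $A_i\ge B_2$, and $|a+b|\ge1$ allows replacing $(a+b)^2B_2$ by $B_2$.
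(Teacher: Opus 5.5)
Your proposal is correct and follows the paper's proof: the same datum (\ref{eq:our-BL-datum}) with $\alpha=\beta=1$, and the same reduction to $\FR^{(\G)}_{LC}\le 1$. It also uses the same two consequences of the constraint, namely $A_i\ge C$ from the coordinate axes and $A_1+A_2\ge \bar B+(a+b)^2C$ from the diagonal, followed by the Gaussian GCI determinant inequality (Corollary \ref{cor:GaussianGCI} with $\mathcal{C}=\Id_n+C$), exactly as in Proposition \ref{prop:GaussianGCRSI}. The only difference is cosmetic: you prove that determinant inequality via submodularity of $\log\det$ (integrating the trace derivative and using operator monotonicity of inversion), whereas the paper derives it from Fischer's inequality applied to a $2n\times 2n$ block matrix.
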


Note that all log-concave functions are automatically $\gamma_{\Sigma}$-integrable for any $\Sigma < \infty$ by Corollary \ref{cor:log-concave-integrable}.
Applying Theorem \ref{thm:intro-FRBL} with the above Brascamp--Lieb datum,
 we see that the optimal constant on the right-hand-side of (\ref{eq:GCRSI-FRBL-conclusion}) is saturated by centered Gaussians satisfying (\ref{eq:GCRSI-FRBL-assumption}). Therefore, to establish (\ref{eq:GCRSI-FRBL-conclusion}), the remaining task is to show that for this datum, $\FR^{(\G)}_{LC} \leq 1$ (and hence, by testing constant functions in (\ref{eq:GCRSI-FRBL-assumption}), in fact $\FR^{(\G)}_{LC} = 1$). 

In other words, for all $A_1,A_2,C \in \Sym_{\geq 0}(n)$ and $B = \begin{pmatrix} B_1 & B_2 \\ B^*_2 & B_4 \end{pmatrix} \in \Sym_{\geq 0}(2n)$ such that
\[
g_{A_1}(x_1) g_{A_2}(x_2) \leq g_{B}(x_1,x_2) g_C(a x_1+b x_2) \;\;\; \forall x_1,x_2 \in \R^n ,
\]
we need to show (assuming $\abs{a},\abs{b}, \abs{a+b} \geq 1$) that
\[
\int_{\R^n} g_{A_1} d\gamma \int_{\R^n} g_{A_2} d\gamma \leq \int_{\R^n} g_{\bar B} d\gamma \int_{\R^n} g_{C} d\gamma ,
\]
where:
\[
\bar B := B_1+B_2+B_2^*+B_4 . 
\]
Equivalently, whenever
\begin{equation} \label{eq:matrix-assumption}
\begin{pmatrix} A_1 & 0 \\ 0 & A_2 \end{pmatrix} \geq \begin{pmatrix} B_1 & B_2 \\ B^*_2 & B_4 \end{pmatrix} + \begin{pmatrix} a^2 C & ab C \\ ab C & b^2 C \end{pmatrix} ,
\end{equation}
we would like to show that:
\begin{equation} \label{eq:det-conclusion}
\det(\Id_n + A_1) \det(\Id_n + A_2) \geq \det(\Id_n + \bar B) \det(\Id_n + C) .
\end{equation}

\subsection{Some failed attempts}

Before establishing (\ref{eq:det-conclusion}), we first demonstrate that a naive argument is bound to fail, even in the simplest case when $a=b=1$. Indeed, 
adding $\Id_{2n}$ to both sides of (\ref{eq:matrix-assumption}) and taking determinant, (\ref{eq:det-conclusion}) would follow if we could show that:
\[
\det\brac{ \begin{pmatrix} \Id_n & 0 \\ 0 & \Id_n \end{pmatrix} + \begin{pmatrix} B_1 & B_2 \\ B^*_2 & B_4 \end{pmatrix} + \begin{pmatrix} C & C \\ C & C \end{pmatrix}}\geq \det(\Id_n + \bar B) \det(\Id_n + C) . 
\]
Changing coordinates with respect to the orthonormal basis $\{ \frac{1}{\sqrt{2}} (e_i, -e_i)\}_{i=1,\ldots,n} \cup \{\frac{1}{\sqrt{2}} (e_i, e_i)\}_{i=1,\ldots,n}$, our goal becomes showing, if $\begin{pmatrix} D_1 & D_2 \\ D^*_2 & D_4 \end{pmatrix} \in \Sym_{\geq 0}(2n)$, that
\[
\det \brac{ \begin{pmatrix} \Id_n & 0 \\ 0 & \Id_n \end{pmatrix} + \begin{pmatrix} D_1 & D_2 \\ D^*_2 & D_4 \end{pmatrix} + \begin{pmatrix} 0 & 0 \\ 0 & 2C \end{pmatrix}} \geq \det(\Id_n + 2 D_4) \det(\Id_n + C) .
\]
However, even for $n=1$, simple examples (e.g. $D_1=D_2 = C = 0$ and $D_4 = 1$) show that this is in general false. 

As another alternative, which makes no difference for even functions, we might want to try using $a=-b=1$ and assume that
\[
g_{A_1}(x_1) g_{A_2}(x_2) \leq g_{B}(x_1,x_2) g_C(x_1-x_2) \;\;\; \forall x_1,x_2 \in \R^n ,
\]
so
\[
\begin{pmatrix} A_1 & 0 \\ 0 & A_2 \end{pmatrix} \geq \begin{pmatrix} B_1 & B_2 \\ B^*_2 & B_4 \end{pmatrix} + \begin{pmatrix} C & -C \\ -C & C \end{pmatrix} .
\]
To show (\ref{eq:det-conclusion}), the same argument as above reduces the task to showing that
\[
\det \brac{ \begin{pmatrix} \Id_n & 0 \\ 0 & \Id_n \end{pmatrix} + \begin{pmatrix} D_1 & D_2 \\ D^*_2 & D_4 \end{pmatrix} + \begin{pmatrix} 2C & 0 \\ 0 & 0 \end{pmatrix}} \geq \det(\Id_n + 2 D_4) \det(\Id_n + C) .
\]
This is again false in general, but by using Schur's complement, it is not hard to show that the left-hand-side is lower-bounded by
\[
\geq \det(\Id_n + D_4) \det(\Id_n + 2 C) .
\]
Applying Theorem \ref{thm:intro-FRBL}, it is easy to check that this recovers the Milman--Pajor inequality (\ref{eq:intro-MilmanPajor}) with $a = b = \frac{1}{\sqrt{2}}$ (for convex sets $K,L$ having Gaussian barycenter at the origin):
\[
\gamma(K) \gamma(L) \leq \gamma(\sqrt{2} (K\cap L)) \gamma(\frac{1}{\sqrt{2}} (K-L)) ,
\]
but this is not what we were aiming for. 

Moreover, contrary to the case $a=b=1$, it is simply false that (\ref{eq:matrix-assumption}) implies (\ref{eq:det-conclusion}) when $a=-b=1$, even for $n=1$. 
For example, choosing $A_1 = A_2 = 1$ and $B_1 = B_2 = B_2^* = B_4 = C = 1/2$ satisfies (\ref{eq:matrix-assumption}) but not (\ref{eq:det-conclusion}) since $2 \cdot 2 < 3 \cdot \frac{3}{2}$. Further counterexamples in the same spirit are provided in Subsection \ref{subsec:counterexamples}. 

\smallskip

The above failed attempts illustrate that, surprisingly (to us), there is more information in the assumption (\ref{eq:matrix-assumption}) than what have used thus far.

\subsection{GCI for Gaussians}

To establish (\ref{eq:det-conclusion}), we first need the following. 

\begin{lemma}[GCI for Gaussians] \label{lem:GaussianGCI}
For any $A,B \in \Sym_{\geq 0}(n)$, we have:
\[
\det(\Id_n + A +B) \leq \det(\Id_n +A) \det(\Id_n + B). 
\]
\end{lemma}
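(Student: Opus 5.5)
The plan is to reduce the inequality to a statement about a single positive semi-definite matrix by factoring out $\Id_n + A$. Since $A \geq 0$, the matrix $\Id_n + A$ is positive-definite and invertible. Set $M := (\Id_n + A)^{-1/2}$. Then
\[
\det(\Id_n + A + B) = \det(\Id_n + A)\,\det(\Id_n + M B M).
\]
It therefore suffices to show
\[
\det(\Id_n + M B M) \leq \det(\Id_n + B).
\]

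The key observation is that $0 < M \leq \Id_n$, because $\Id_n + A \geq \Id_n$. By the remark $\det(\Id + CD) = \det(\Id + DC)$, we may rewrite both sides using $B^{1/2}$:
\[
\det(\Id_n + MBM) = \det(\Id_n + B^{1/2} M^2 B^{1/2}).
\]
Since $M^2 \leq \Id_n$, conjugation gives $B^{1/2} M^2 B^{1/2} \leq B^{1/2} B^{1/2} = B$. Hence
\[
\Id_n + B^{1/2} M^2 B^{1/2} \leq \Id_n + B.
\]
Both of these matrices are positive-definite. The determinant is monotone on positive-definite matrices with respect to the Loewner order, since $X \leq Y$ implies $\det X \leq \det Y$ (apply $Y^{-1/2} X Y^{-1/2} \leq \Id_n$). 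This concludes the argument.

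Alternatively, one could invoke Lemma \ref{lem:preparatory-det} with $D = M^{-1} \geq \Id_n$ applied to $MBM$, which gives $\det(\Id_n + B) = \det(\Id_n + D(MBM)D) \geq \det(\Id_n + MBM)$ directly. This is perhaps the most economical route given the preparatory lemmas.

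There is no real obstacle here. The only point requiring care is the monotonicity of the determinant, or equivalently the use of Lemma \ref{lem:preparatory-det}, together with the factorization step, which uses invertibility of $\Id_n + A$.
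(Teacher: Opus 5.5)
Your argument is correct, but it takes a different route from the paper.

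\textbf{The paper's route.} The paper lifts to $\R^{2n}$. With $U=\begin{pmatrix} A^{1/2} \\ B^{1/2}\end{pmatrix}$ one has $U^*U=A+B$, so the identity $\det(\Id+CD)=\det(\Id+DC)$ gives $\det(\Id_n+A+B)=\det(\Id_{2n}+UU^*)$. The right-hand side is the determinant of the block matrix with diagonal blocks $\Id_n+A$, $\Id_n+B$ and off-diagonal blocks $A^{1/2}B^{1/2}$, $B^{1/2}A^{1/2}$. Fischer's inequality (the block form of Hadamard's inequality) then bounds this by the product of the diagonal-block determinants.

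\textbf{Your route.} You stay in $\R^n$ and factor out $\Id_n+A$ by congruence. This reduces the claim to $\det(\Id_n+MBM)\le\det(\Id_n+B)$ with $M=(\Id_n+A)^{-1/2}\le \Id_n$. That step needs only Loewner monotonicity of the determinant, or Lemma \ref{lem:preparatory-det} applied with $D=(\Id_n+A)^{1/2}\ge\Id_n$.

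\textbf{Comparison.} Your argument is self-contained within the paper's preparatory material and avoids citing Fischer's inequality. It is also the same mechanism the paper itself uses later in the proof of Lemma \ref{lem:GCMPI2}. The paper's argument is a one-liner once Fischer is granted, and it presents the inequality as a decoupling statement: discarding the off-diagonal coupling of a positive-definite block matrix can only increase its determinant. That matches the correlation-inequality reading of the lemma. If you track strictness, both routes give the same equality case, $A^{1/2}B^{1/2}=0$ (equivalently $AB=0$), though the lemma as stated does not require it.
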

\begin{rem}
Note that this is actually the functional version of the GCI (\ref{eq:intro-functional-GCI}) applied to Gaussian inputs: $\int g_{A} g_{B} d\gamma \geq \int g_{A} d\gamma \int g_{B} d\gamma$. For completeness, we present a short independent proof. 
\end{rem}
\begin{proof}
Denote $U = \begin{pmatrix} A^{1/2} \\ B^{1/2} \end{pmatrix}$ a $2n \times n$ rectangular matrix. Note that $U^* U = A + B$ , but
\[
U U^* = \begin{pmatrix} A  & A^{1/2} B^{1/2} \\ B^{1/2} A^{1/2} & B \end{pmatrix} . 
\]
Since $\det(\Id_n + C D) = \det(\Id_m + D C)$ for any $n \times m$ matrix $C$ and $m \times n$ matrix $D$, 
it follows that:
\[
\det(\Id_n + A + B) = \det \begin{pmatrix} \Id_n + A  & A^{1/2} B^{1/2} \\ B^{1/2} A^{1/2} & \Id_n + B \end{pmatrix} \leq \det(\Id_n +A) \det(\Id_n + B) ,
\]
where we used Fischer's inequality for positive-definite matrices in the last inequality \cite[Theorem 7.8.5]{HornJohnson-MatrixAnalysis} (an equivalent form of Hadamard's inequality). 
\end{proof}
\begin{corollary}[GCI for Gaussians] \label{cor:GaussianGCI}
For any $\mathcal{A},\mathcal{B},\mathcal{C} \in \Sym_{\geq 0}(n)$ we have:
\[
\det(\mathcal{C}) \det(\mathcal{C} + \mathcal{A}+\mathcal{B}) \leq \det(\mathcal{C} + \mathcal{A}) \det(\mathcal{C}+\mathcal{B}) . 
\]
\end{corollary}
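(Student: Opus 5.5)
The plan is to reduce Corollary \ref{cor:GaussianGCI} to Lemma \ref{lem:GaussianGCI} by factoring out $\mathcal{C}$, using a density argument when $\mathcal{C}$ is degenerate.

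\emph{Nondegenerate case.} Assume first $\mathcal{C} > 0$. Set $A := \mathcal{C}^{-1/2}\mathcal{A}\,\mathcal{C}^{-1/2}$ and $B := \mathcal{C}^{-1/2}\mathcal{B}\,\mathcal{C}^{-1/2}$, both in $\Sym_{\geq 0}(n)$. Then
\[
\det(\mathcal{C}+\mathcal{A}) = \det(\mathcal{C})\det(\Id_n + A), \quad \det(\mathcal{C}+\mathcal{B}) = \det(\mathcal{C})\det(\Id_n + B),
\]
and
\[
\det(\mathcal{C}+\mathcal{A}+\mathcal{B}) = \det(\mathcal{C})\det(\Id_n + A + B).
\]
The desired inequality becomes
\[
\det(\mathcal{C})^2 \det(\Id_n+A+B) \leq \det(\mathcal{C})^2 \det(\Id_n+A)\det(\Id_n+B).
\]
After dividing by $\det(\mathcal{C})^2 > 0$, this is exactly Lemma \ref{lem:GaussianGCI}.

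\emph{Degenerate case.} For general $\mathcal{C} \geq 0$, apply the nondegenerate case to $\mathcal{C} + \eps \Id_n > 0$ and let $\eps \to 0^+$. Both sides are polynomials in the entries, hence continuous, so the inequality passes to the limit. If $\mathcal{C}$ is singular the left-hand side is $0$ while the right-hand side is $\geq 0$, so this case is in fact trivial anyway.

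The argument is short and routine. The only point needing care is the degenerate $\mathcal{C}$, and the continuity argument handles it.
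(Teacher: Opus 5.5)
Your proof is correct and follows the paper's argument: the paper likewise disposes of the case $\det(\mathcal{C})=0$ as trivial (your second observation, which makes the $\eps\to 0^+$ limit unnecessary). For $\mathcal{C}>0$ it conjugates by $\mathcal{C}^{-1/2}$ to reduce to $\mathcal{C}=\Id_n$ and applies Lemma \ref{lem:GaussianGCI}, exactly as you do.
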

\begin{proof}
There is nothing to prove if $\det(\mathcal{C}) = 0$, so we may assume $\mathcal{C} > 0$. 
Now reduce to the case $\mathcal{C}=\Id_n$ by writing $A = \mathcal{C}^{-1/2} \mathcal{A} \mathcal{C}^{-1/2}$ and $B = \mathcal{C}^{-1/2} \mathcal{B} \mathcal{C}^{-1/2}$, and apply Lemma \ref{lem:GaussianGCI}. 
\end{proof}

\subsection{GCRSI for Gaussians}

\begin{proposition} \label{prop:GaussianGCRSI}
Let $A_1,A_2,C \in \Sym_{\geq 0}(n)$ and $B = \begin{pmatrix} B_1 & B_2 \\ B^*_2 & B_4 \end{pmatrix} \in \Sym_{\geq 0}(2n)$, and assume that
\begin{equation} \label{eq:GaussianGCRSI-assumption}
\begin{pmatrix} A_1 & 0 \\ 0 & A_2 \end{pmatrix} \geq \begin{pmatrix} B_1 & B_2 \\ B_2^* & B_4 \end{pmatrix}  + \begin{pmatrix} a^2 C & ab C \\ ab C & b^2 C \end{pmatrix} ,
\end{equation}
for some $a,b \in \R$ such that $\abs{a},\abs{b} , \abs{a+b}\geq 1$. Then
\[
\det(\Id_n + A_1) \det(\Id_n + A_2) \geq \det(\Id_n + C) \det(\Id_n + \bar B + ((a+b)^2 -1) C) ,
\]
where recall, $\bar B = B_1 + B_2 + B_2^* + B_4$. In particular,
\begin{equation} \label{eq:GaussianGCRSI-conclusion}
\det(\Id_n + A_1) \det(\Id_n + A_2) \geq \det(\Id_n + C) \det(\Id_n + \bar B) ,
\end{equation}
and if $\abs{a+b} > 1$ and equality occurs in (\ref{eq:GaussianGCRSI-conclusion}) then necessarily $C = 0$. 
\end{proposition}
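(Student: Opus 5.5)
The plan is to extract from the block inequality (\ref{eq:GaussianGCRSI-assumption}) three scalar-block inequalities by testing it on well-chosen vectors of $\R^{2n}$. Then I would combine them using the GCI for Gaussians (Corollary \ref{cor:GaussianGCI}) together with the monotonicity of $\det$ on $\Sym_{\geq 0}(n)$.

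\textbf{Step 1: three block inequalities.}
\begin{itemize}
\item Testing (\ref{eq:GaussianGCRSI-assumption}) on $(y,0)$ gives $A_1 \geq B_1 + a^2 C$. Since $B \geq 0$ implies $B_1 \geq 0$, and $a^2 \geq 1$, this yields $A_1 \geq C$.
\item Testing on $(0,y)$ gives, in the same way, $A_2 \geq B_4 + b^2 C \geq C$, using $b^2 \geq 1$.
\item Testing on the diagonal vectors $(y,y)$ gives
\[
A_1 + A_2 \geq B_1 + B_2 + B_2^* + B_4 + (a^2+2ab+b^2) C = \bar B + (a+b)^2 C .
\]
\end{itemize}

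\textbf{Step 2: apply the GCI for Gaussians.} Take $\mathcal{C} = \Id_n + C > 0$, $\mathcal{A} = A_1 - C \geq 0$ and $\mathcal{B} = A_2 - C \geq 0$ in Corollary \ref{cor:GaussianGCI}. This gives
\[
\det(\Id_n + C)\det(\Id_n + A_1 + A_2 - C) \leq \det(\Id_n + A_1)\det(\Id_n + A_2).
\]

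\textbf{Step 3: monotonicity.} By the diagonal test in Step 1,
\[
A_1 + A_2 - C \geq \bar B + ((a+b)^2 - 1) C .
\]
Since $\det(\Id_n + \cdot)$ is monotone on $\Sym_{\geq 0}(n)$, this gives the first claimed inequality. Since $(a+b)^2 - 1 \geq 0$, a further application of monotonicity drops the $C$ term and yields (\ref{eq:GaussianGCRSI-conclusion}).

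\textbf{Step 4: equality case.} If equality holds in (\ref{eq:GaussianGCRSI-conclusion}), then every inequality in the chain is an equality. In particular
\[
\det(\Id_n + \bar B + ((a+b)^2-1)C) = \det(\Id_n + \bar B),
\]
with $\Id_n + \bar B > 0$. I would use strict monotonicity of the determinant: if $X > 0$ and $P \geq 0$, then $\det(X+P) = \det(X)\det(\Id_n + X^{-1/2} P X^{-1/2})$, and the second factor exceeds $1$ unless $P = 0$. This forces $((a+b)^2 - 1) C = 0$, and hence $C = 0$ when $\abs{a+b} > 1$.

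The main obstacle is conceptual rather than technical. As the failed attempts in the paper show, taking the determinant of the full $2n \times 2n$ assumption loses information. The key point is to keep only the three projected inequalities $A_1 \geq C$, $A_2 \geq C$ and $A_1 + A_2 \geq \bar B + (a+b)^2 C$, and to recognise that Corollary \ref{cor:GaussianGCI} with base point $\Id_n + C$ bridges them. The hypotheses $\abs{a}, \abs{b} \geq 1$ are used exactly to ensure $A_i - C \geq 0$, and $\abs{a+b} \geq 1$ to ensure the leftover coefficient $(a+b)^2 - 1$ is nonnegative.
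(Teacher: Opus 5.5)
Your proposal is correct and follows essentially the same route as the paper. The paper likewise extracts $A_1 - C, A_2 - C \geq 0$ and the diagonal bound $A_1 + A_2 \geq \bar B + (a+b)^2 C$ from the block assumption, applies Corollary \ref{cor:GaussianGCI} with $\mathcal{C} = \Id_n + C$, and concludes with monotonicity of the determinant, using strict monotonicity for the equality case $C = 0$.
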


\begin{rem}
We shall see from the proof that the requirement that $B \geq 0$ is not really needed, and that it is enough to assume that $B_1,B_4 \geq 0$ and that $\Id_n + \bar B + ((a+b)^2 -1) C \geq 0$.  
\end{rem}

\begin{proof}
Note that $\mathcal{A} := A_1 - C \geq 0$ and $\mathcal{B} := A_2 - C \geq 0$ by (\ref{eq:GaussianGCRSI-assumption}) since $B_1,B_4 \geq 0$ and $a^2,b^2 \geq 1$. 
Applying Corollary \ref{cor:GaussianGCI} with $\mathcal{C} = \Id_n + C$, we obtain:
\begin{equation} \label{eq:GaussianGCRSI-temp}
\det(\Id_n + A_1) \det(\Id_n + A_2) \geq \det(\Id_n + C) \det(\Id_n + A_1 + A_2 - C) .
\end{equation}
 Evaluating the inequality (\ref{eq:GaussianGCRSI-assumption}) between quadratic forms on the diagonal $(y,y)$, it remains to note that
\[
A_1 + A_2 \geq \bar B + (a+b)^2 C ,
\]
and hence $A_1 + A_2 - C \geq \bar B + ((a+b)^2 -1) C$. Since $(a+b)^2 \geq 1$, the latter is positive semi-definite, and hence plugging this into (\ref{eq:GaussianGCRSI-temp}), the direction of the inequality is preserved and the assertion follows. 

If equality occurs when $(a+b)^2 > 1$, then the strict monotonicity of the determinant on $\Sym_{>0}(n)$ with respect to the positive-semi-definite order implies that necessarily $C = 0$. 
\end{proof}

Together with Theorem \ref{thm:intro-FRBL}, this concludes the proof of Theorem \ref{thm:GCRSI-FRBL}. 

\subsection{A more general version}

To properly analyze the equality case in Theorem \ref{thm:GCRSI-FRBL} (and hence Theorems \ref{thm:intro-GCRSI}, \ref{thm:intro-ab} and \ref{thm:intro-our-FRBL} from the Introduction), we will need a more general version of Proposition \ref{prop:GaussianGCRSI}. 

\begin{proposition} \label{prop:GenGaussianGCRSI}
Let $A_1,A_2,\bar B , C \in \Sym_{\geq 0}(n)$, and assume that: \begin{equation} \label{eq:GenGaussianGCRSI-assumption}
g_{A_1}(x_1) g_{A_2}(x_2) \leq H(x_1,x_2) g_C(a x_1+ bx_2) \;\;\; \forall x_1,x_2 \in \R^n , 
\end{equation}
for some $\abs{a},\abs{b},\abs{a+b} \geq 1$ and a log-concave function $H : \R^{2n} \rightarrow \R_+$ such that 
\[
H(y,y) = p_{\bar B} g_{\bar B}(y) \;\;\; \forall y \in \R^n ,  
\]
 for some constant $p_{\bar B} > 0$.  
Then
\[
\int g_{A_1} d\gamma \int g_{A_2} d\gamma  \leq \int  g_{\bar B} d\gamma \int g_C d\gamma ,
\]
with equality when $\abs{a+b} > 1$ implying that necessarily $C = 0$. 
\end{proposition}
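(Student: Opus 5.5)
The plan is to reduce the statement to the matrix argument of Proposition \ref{prop:GaussianGCRSI}. Since $\int g_A \, d\gamma = \det(\Id_n + A)^{-1/2}$ for $A \in \Sym_{\geq 0}(n)$, the goal is equivalent to
\[
\det(\Id_n + A_1)\det(\Id_n + A_2) \geq \det(\Id_n + C)\det(\Id_n + \bar B).
\]
In Proposition \ref{prop:GaussianGCRSI} the matrix $B$ was used in exactly two places: through $B_1, B_4 \geq 0$, to get $A_1 \geq C$ and $A_2 \geq C$, and through the diagonal restriction, to get $A_1 + A_2 \geq \bar B + (a+b)^2 C$. So I will extract these two quadratic-form inequalities directly from the pointwise assumption (\ref{eq:GenGaussianGCRSI-assumption}). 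Throughout, the tool is a scaling argument: an inequality of the form $-\tfrac12 \scalar{Px,x} \leq \kappa + \scalar{w,x} - \tfrac12\scalar{Rx,x}$ for all $x$ forces $P \geq R$, by replacing $x$ with $tx$, dividing by $t^2$ and letting $t \to \infty$.

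For the diagonal inequality, set $x_1 = x_2 = y$ in (\ref{eq:GenGaussianGCRSI-assumption}) and use $H(y,y) = p_{\bar B}\, g_{\bar B}(y)$. Taking logarithms gives
\[
-\tfrac12 \scalar{(A_1+A_2)y,y} \leq \log p_{\bar B} - \tfrac12 \scalar{(\bar B + (a+b)^2 C)y,y} \quad \forall y,
\]
and scaling yields $A_1 + A_2 \geq \bar B + (a+b)^2 C$.

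For the individual bounds, this is the step I expect to be the main obstacle, since $H$ is only assumed log-concave and no nonnegative block $B_1$ is available. I will apply Lemma \ref{lem:log-concave} to $H$ to get $H(x_1,x_2) \leq e^{\scalar{v,(x_1,x_2)} + \kappa}$. Note that $H \not\equiv 0$ because $p_{\bar B} > 0$, though the lemma needs only log-concavity. Setting $x_2 = 0$ in (\ref{eq:GenGaussianGCRSI-assumption}) gives
\[
-\tfrac12\scalar{A_1 x_1,x_1} \leq \kappa + \scalar{v_1,x_1} - \tfrac12 a^2 \scalar{C x_1,x_1},
\]
and scaling yields $A_1 \geq a^2 C \geq C$, using $\abs{a} \geq 1$. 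Symmetrically, setting $x_1 = 0$ gives $A_2 \geq b^2 C \geq C$. The linear term from the log-concave upper bound is harmless precisely because it is of lower order under scaling.

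With these in hand, I set $\mathcal{A} = A_1 - C \geq 0$ and $\mathcal{B} = A_2 - C \geq 0$, and apply Corollary \ref{cor:GaussianGCI} with $\mathcal{C} = \Id_n + C$. This gives
\[
\det(\Id_n+A_1)\det(\Id_n+A_2) \geq \det(\Id_n+C)\det(\Id_n + A_1 + A_2 - C).
\]
By the diagonal inequality, $A_1 + A_2 - C \geq \bar B + ((a+b)^2 - 1)C \geq \bar B$, since $(a+b)^2 \geq 1$. Monotonicity of the determinant on $\Sym_{>0}(n)$ then gives the claim.

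For the equality case with $\abs{a+b} > 1$: equality forces every inequality in the chain to be an equality. In particular it forces $\det(\Id_n + \bar B + ((a+b)^2-1)C) = \det(\Id_n + \bar B)$. By strict monotonicity of the determinant with respect to the positive semi-definite order on positive-definite matrices, this implies $((a+b)^2-1)C = 0$, hence $C = 0$.
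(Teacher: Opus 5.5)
Your proof is correct and follows the paper's argument essentially step by step. You extract $A_1 \geq a^2 C \geq C$ and $A_2 \geq b^2 C \geq C$ from the affine-exponential upper bound on $H$ along the coordinate axes, and $A_1+A_2 \geq \bar B + (a+b)^2 C$ from the diagonal. You then apply Corollary \ref{cor:GaussianGCI} with $\mathcal{C} = \Id_n + C$ and (strict) monotonicity of the determinant, exactly as the paper does. The only cosmetic difference is that you apply Lemma \ref{lem:log-concave} to $H$ on $\R^{2n}$ and then restrict, whereas the paper applies it directly to $H(\cdot,0)$ and $H(0,\cdot)$.
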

\begin{proof}
Since $x_1 \mapsto H(x_1,0)$ and $x_2 \mapsto H(0,x_2)$ are log-concave, Lemma \ref{lem:log-concave} and (\ref{eq:GenGaussianGCRSI-assumption}) imply that
\[
\scalar{(A_i - c_i^2 C) x_i,x_i} \geq \scalar{v_i,x_i} + w_i \;\;\; \forall x_i \in \R^n , 
\]
for some vector $v_i \in \R^n$, scalar $w_i \in \R$ and $c_1 = a$, $c_2=b$. Since $|c_i| \geq 1$ by assumption, by letting $x_i$ tend to infinity, we have $A_i - C \geq 0$. Inspecting (\ref{eq:GenGaussianGCRSI-assumption}) on the diagonal $(y,y)$, a similar argument verifies that $A_1 + A_2 \geq \bar B + (a+b)^2 C$. 
The argument from the proof of Proposition \ref{prop:GaussianGCRSI} and the bound $A_1 + A_2 - C \geq \bar B + ((a+b)^2-1) C \geq 0$ then yield
\begin{align*}
\det(\Id_n + A_1) \det(\Id_n + A_2) & \geq \det(\Id_n + C) \det(\Id_n + A_1 + A_2 - C) \\
& \geq \det(\Id_n + C) \det(\Id_n + \bar B + ((a+b)^2-1) C) . 
\end{align*}
It follows that
\[
\int g_{A_1} d\gamma \int g_{A_2} d\gamma  \leq \int  g_{((a+b)^2-1) C} g_{\bar B} d\gamma \int g_C d\gamma ,
\]
concluding the proof, including the equality case. \end{proof}

Since the value of $p_{\bar B}$ above is totally immaterial, we can rewrite this as follows. 
\begin{cor} \label{cor:GenGaussian}
With the same assumptions as in the previous proposition, assume instead of (\ref{eq:GenGaussianGCRSI-assumption}) that
\[
p_{A_1} g_{A_1}(x_1) \cdot p_{A_2} g_{A_2}(x_2) \leq H(x_1,x_2)  \cdot p_C g_C(a x_1+ b x_2) \;\;\; \forall x_1,x_2 \in \R^n , 
\]
for some constants $p_{A_1}, p_{A_2}, p_C > 0$. Then
\[
\int p_{A_1} g_{A_1} d\gamma \int p_{A_2} g_{A_2} d\gamma  \leq  \int  p_{\bar B} g_{\bar B} d\gamma \int p_C g_C d\gamma ,
\]
with equality implying when $\abs{a+b}> 1$ that necessarily $C = 0$ and $p_{A_1} p_{A_2} = p_{\bar B} p_C$. 
\end{cor}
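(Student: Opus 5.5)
The plan is to reduce to Proposition \ref{prop:GenGaussianGCRSI} by absorbing the constants into $H$. Set $\tilde H(x_1,x_2) := \frac{p_C}{p_{A_1} p_{A_2}} H(x_1,x_2)$. This is again log-concave, and the assumption becomes exactly (\ref{eq:GenGaussianGCRSI-assumption}) with $\tilde H$ in place of $H$. On the diagonal we get $\tilde H(y,y) = \tilde p_{\bar B}\, g_{\bar B}(y)$ with $\tilde p_{\bar B} := \frac{p_C\, p_{\bar B}}{p_{A_1}p_{A_2}}$. Since the value of this constant plays no role in Proposition \ref{prop:GenGaussianGCRSI}, that proposition applies and gives
\[
\int g_{A_1} d\gamma \int g_{A_2} d\gamma \leq \int g_{\bar B} d\gamma \int g_C d\gamma .
\]
Its proof in fact gives the sharper bound in which $g_{\bar B}$ is replaced by $g_{\bar B} g_{((a+b)^2-1)C}$.

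Next, the constants have to be recovered. From the assumption at $x_1=x_2=0$ we get $p_{A_1}p_{A_2} \le H(0,0)\, p_C = p_{\bar B}\, p_C$, since $H(0,0) = p_{\bar B} g_{\bar B}(0) = p_{\bar B}$. Multiplying the Gaussian inequality above by $p_{A_1}p_{A_2} \le p_{\bar B}p_C$ yields the claimed inequality.

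For the equality case, note that equality forces equality in both steps. The first step is the scalar inequality $p_{A_1}p_{A_2}\le p_{\bar B}p_C$, so $p_{A_1}p_{A_2} = p_{\bar B}p_C$. The second step is the Gaussian inequality, and its equality case in Proposition \ref{prop:GenGaussianGCRSI} (when $\abs{a+b}>1$) gives $C=0$. Both factors are strictly positive, because Gaussian integrals of $g_A$ with $A\ge0$ are positive and finite, so equality of the product forces equality of each factor. The only point needing care is that evaluation at the origin requires no regularity of $H$ beyond the stated diagonal identity, which holds pointwise for every $y$.
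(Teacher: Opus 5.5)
Your proof is correct and is essentially the paper's argument. You absorb the constants into $H$, which the paper leaves implicit, and apply Proposition \ref{prop:GenGaussianGCRSI}; you then evaluate the assumption at $x_1=x_2=0$ to get $p_{A_1}p_{A_2}\le p_{\bar B}p_C$, and your equality analysis (forcing equality in both steps) matches the paper's as well.
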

\begin{proof}
By Proposition \ref{prop:GenGaussianGCRSI}, we know that:
\[
\int p_{A_1} g_{A_1} d\gamma \int p_{A_2} g_{A_2} d\gamma  \leq \frac{p_{A_1} p_{A_2}}{p_{\bar B} p_{C}} \int  p_{\bar B} g_{\bar B} d\gamma \int p_C g_C d\gamma ,
\]
with equality implying when $\abs{a+b}> 1$ that necessarily $C = 0$. Since $p_{A_1} p_{A_2} \leq p_{\bar B} p_{C}$ (by testing our assumption at $x_1 = x_2 = 0$), the assertion immediately follows. 
\end{proof}

\section{Gaussian saturation in the GCMPI} \label{sec:GCMPI}

In this section we apply our machinery to derive the generalized conjugate Milman--Pajor inequality from Theorem \ref{thm:intro-GCMPI}.
We obtain the following forward-reverse functional formulation.

\begin{thm} \label{thm:GCMPI-FRBL}
Let $f_1,f_2 \in \F_{LC}^{(o)}(\gamma^n)$, and let $h_1 : \R^{2n} \rightarrow \R_+$ and $h_2 : \R^{n} \rightarrow \R_+$ denote two log-concave functions. 
Let $a,b \in \R \setminus \{0\}$ with
\begin{equation} \label{eq:GCMPI-ab}
 a^2,b^2 \leq 1 ~,~ 3 \min(a^2,b^2) + \max(a^2,b^2) \geq 1 . 
 \end{equation}
 If
\begin{equation} \label{eq:GCMPI-FRBL-assumption}
f_1(x_1) f_2(x_2) \leq h_1\brac{\frac{1}{b} x_1, \frac{1}{a} x_2} h_2(a x_1 + b x_2) \;\;\; \forall x_1,x_2 \in \R^n ,
\end{equation}
then
\begin{equation} \label{eq:GCMPI-FRBL-conclusion}
\int_{\R^n} f_1 d\gamma \int_{\R^n} f_2 d\gamma \leq \int_{\R^n} h_1(y,y) d\gamma(y) \int_{\R^n} h_2 d\gamma . 
\end{equation}
\end{thm}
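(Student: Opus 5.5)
The plan is to follow the scheme of Section~\ref{sec:GCRSI}: reduce to a determinant inequality via Theorem~\ref{thm:intro-FRBL}, then prove that inequality. I would use the datum (\ref{eq:our-BL-datum}) with $\alpha = 1/b$ and $\beta = 1/a$. Thus $L_1(x) = (x_1/b, x_2/a)$ and $L_2(x) = a x_1 + b x_2$, with the degenerate $\Gamma_1$ supported on the diagonal. Then $\int h_1 \, d\gamma_{\Gamma_1} = \int h_1(y,y)\, d\gamma(y)$, and Theorem~\ref{thm:intro-FRBL} reduces (\ref{eq:GCMPI-FRBL-conclusion}) to showing $\FR^{(\G)}_{LC} \le 1$. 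Testing constant functions then shows this constant equals $1$. Concretely, we must show that whenever $A_1, A_2, C \in \Sym_{\ge 0}(n)$ and $B = \begin{pmatrix} B_1 & B_2 \\ B_2^* & B_4\end{pmatrix} \ge 0$ satisfy
\[
\begin{pmatrix} A_1 & 0 \\ 0 & A_2 \end{pmatrix} \ge \begin{pmatrix} b^{-2} B_1 & (ab)^{-1} B_2 \\ (ab)^{-1} B_2^* & a^{-2} B_4 \end{pmatrix} + \begin{pmatrix} a^2 C & ab C \\ ab C & b^2 C \end{pmatrix},
\]
we have $\det(\Id_n + A_1)\det(\Id_n + A_2) \ge \det(\Id_n + C)\det(\Id_n + \bar B)$, where $\bar B = B_1 + B_2 + B_2^* + B_4$.

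Next, I would extract two pieces of information from this matrix inequality, as in Proposition~\ref{prop:GaussianGCRSI}. First, the diagonal blocks give $A_1 \ge a^2 C$ and $A_2 \ge b^2 C$, using $B_1, B_4 \ge 0$. Second, evaluating the quadratic forms at the vector $(b y, a y)$ eliminates the $B$-mixing and gives
\[
b^2 A_1 + a^2 A_2 \ge \bar B + (a^2+b^2)^2 C .
\]
Since $\bar B \ge 0$, the right-hand side is positive semi-definite, and monotonicity of the determinant reduces everything to the following $B$-free inequality. For all $A_1 \ge a^2 C$ and $A_2 \ge b^2 C$, we want
\[
\det(\Id_n + A_1)\det(\Id_n + A_2) \ge \det(\Id_n + C)\,\det\bigl(\Id_n + b^2 A_1 + a^2 A_2 - (a^2+b^2)^2 C\bigr).
\]
I expect this reduced inequality to be the main obstacle, and the conditions (\ref{eq:GCMPI-ab}) should enter here. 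If the extra information used so far turns out to be insufficient, I would also exploit the full Schur-complement content of the block inequality.

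To attack the reduced inequality, I would write $A_1 = a^2 C + \mathcal{A}$ and $A_2 = b^2 C + \mathcal{B}$ with $\mathcal{A}, \mathcal{B} \ge 0$. I would first check the commuting (scalar) case, where it becomes an explicit two-variable inequality in $x = \mathcal{A}$ and $y = \mathcal{B}$ for fixed $c = C$. The conditions $a^2, b^2 \le 1$ should handle the dependence on $\mathcal{A}, \mathcal{B}$, since increasing $x$ or $y$ raises the left side by factors at least as large as those on the right. The condition $3\min(a^2,b^2) + \max(a^2,b^2) \ge 1$ should be exactly what is needed at the extreme point $\mathcal{A} = \mathcal{B} = 0$, i.e.\ for $(1+a^2 c)(1+b^2 c) \ge (1+c)\bigl(1 + (2a^2b^2 - (a^2+b^2)^2 + \dots) c\bigr)$, to first and second order in $c$.

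For the general non-commuting case, I would combine the following tools:
\begin{enumerate}
\item Corollary~\ref{cor:GaussianGCI} with base $\mathcal{C} = \Id_n + \lambda C$ for a suitable $\lambda$ (e.g.\ $\lambda = \min(a^2,b^2)$), to peel off the $\mathcal{A}, \mathcal{B}$ parts.
\item Lemma~\ref{lem:preparatory-det} and monotonicity, to absorb the factors $a^2, b^2 \le 1$.
\item Concavity of $\log\det$, to compare $\det(\Id_n + b^2\mathcal{A} + a^2\mathcal{B} + \cdots)$ with $\det(\Id_n + \mathcal{A} + \cdots)\det(\Id_n + \mathcal{B} + \cdots)$.
\end{enumerate}
This should reduce matters to an inequality involving only $C$, which is simultaneously diagonalizable and hence reduces to the scalar check.
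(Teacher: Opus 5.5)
There is a genuine gap. Your reduction to $\FR^{(\G)}_{LC}\le 1$ and the resulting matrix inequality match the paper, and $A_1\ge a^2C$, $A_2\ge b^2C$ are correct. The problems start with the diagonal evaluation, and they continue through the determinant inequality, which is the heart of the proof.

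\textbf{The reduced inequality is miscomputed and, as stated, false.} Since $(a,b)\cdot(b,a)=2ab$, the $C$-block evaluated at $(by,ay)$ contributes $4a^2b^2\langle Cy,y\rangle$, not $(a^2+b^2)^2\langle Cy,y\rangle$. So the hypothesis gives $b^2A_1+a^2A_2\ge \bar B+4a^2b^2C$. Your version does not follow when $a^2\neq b^2$: take $n=1$, $B=0$, $C=1$, $A_1=2a^2$, $A_2=2b^2$, which is admissible but has $b^2A_1+a^2A_2=4a^2b^2<(a^2+b^2)^2$.

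More seriously, your $B$-free inequality drops the constraint $Z:=A_1/a^2+A_2/b^2-4C\ge 0$, i.e.\ $b^2A_1+a^2A_2-4a^2b^2C\ge\bar B\ge 0$. Without it the inequality is false. Take $n=1$, $a^2=b^2=\frac14$ (allowed by (\ref{eq:GCMPI-ab})), $C=1$, $A_1=A_2=\frac14$: the left side is $\frac{25}{16}$, while the right side is $2\cdot\frac78=\frac74$.

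This also means your heuristic for where $3\min(a^2,b^2)+\max(a^2,b^2)\ge 1$ enters is aimed at the wrong point, since $\mathcal A=\mathcal B=0$ is infeasible once $C\neq 0$. Your monotonicity intuition is right: in the scalar case the difference of the two sides is nondecreasing in $\mathcal A,\mathcal B$ because $a^2,b^2\le 1$. So the minimum lies on the segment $\{Z=0\}$. There the right side equals $1+c$ and the left side is concave, so the extremes are $A_1=3a^2C,\ A_2=b^2C$ and $A_1=a^2C,\ A_2=3b^2C$. Nonnegativity at these two points is exactly $3a^2+b^2\ge 1$ and $a^2+3b^2\ge 1$.

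\textbf{The non-commuting case is not reduced to a scalar check.} You only list tools, and the last step cannot become a $C$-only, simultaneously diagonalizable inequality, because $Z$ and $C$ need not commute. The paper, assuming $a^2\le b^2$, uses two specific lemmas.
\begin{enumerate}
\item Lemma~\ref{lem:GCMPI1} combines the Gaussian GCI with base $\Id_n+a^2C$ (so your choice $\lambda=\min(a^2,b^2)$ is the right one) with monotonicity of $t\mapsto\det(\Id_n+tB)/\det(\Id_n+tC)$ for $B\ge C$. The monotonicity handles the asymmetric scales $a^2\le b^2$. The result is
\[
\det(\Id_n+A_1)\det(\Id_n+A_2)\ge\det(\Id_n+b^2C)\det(\Id_n+a^2(3C+Z)).
\]
\item The condition $3a^2\ge 1-b^2$ lets you replace $a^2(3C+Z)$ by $(1-b^2)C+a^2Z$. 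Then Lemma~\ref{lem:GCMPI2} with $\alpha=b^2$, $\beta=1-b^2$ gives the lower bound $\det(\Id_n+C)\det(\Id_n+a^2b^2Z)$. This lemma is proved from $(\Id_n+\alpha C)(\Id_n+\beta C)\ge \Id_n+C$ together with Lemma~\ref{lem:preparatory-det} for a suitable $D\ge\Id_n$.
\end{enumerate}
The final bound $\det(\Id_n+a^2b^2Z)\ge\det(\Id_n+\bar B)$ then follows from $a^2b^2Z\ge\bar B$. Without these or equivalent arguments, the key determinant inequality remains unproved.
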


Applying Theorem \ref{thm:GCMPI-FRBL} to $f_1 = \1_{K}$, $f_2 = \1_L$, $h_1 = \1_{\frac{1}{b} K \times \frac{1}{a} L}$ and $h_2 = \1_{a K + b L}$, the inequality statement of Theorem \ref{thm:intro-GCMPI} immediately follows (when $a,b \neq 0$; the case when $ab=0$ is trivially true). The analysis of equality is deferred to Section \ref{sec:equality}. 

\medskip

To establish Theorem \ref{thm:GCMPI-FRBL}, we apply Theorem \ref{thm:intro-FRBL} for our Brascamp--Lieb datum (\ref{eq:our-BL-datum}) with $\alpha = \frac{1}{b}$ and $\beta = \frac{1}{a}$, namely with $L_1(x) = (\frac{1}{b} x_1,\frac{1}{a} x_2)$ for $x = (x_1,x_2) \in \R^{n} \times \R^n$. 
It follows that the optimal constant on the right-hand-side of (\ref{eq:GCMPI-FRBL-conclusion}) is saturated by centered Gaussians satisfying (\ref{eq:GCMPI-FRBL-assumption}). Therefore, to establish (\ref{eq:GCMPI-FRBL-conclusion}), the remaining task is to show that for this datum, $\FR^{(\G)}_{LC} \leq 1$ (and hence, by testing constant functions in (\ref{eq:GCMPI-FRBL-assumption}), in fact $\FR^{(\G)}_{LC} = 1$). 

In other words, for all $A_1,A_2,C \in \Sym_{\geq 0}(n)$ and $B = \begin{pmatrix} B_1 & B_2 \\ B^*_2 & B_4 \end{pmatrix} \in \Sym_{\geq 0}(2n)$ such that
\[
g_{A_1}(x_1) g_{A_2}(x_2) \leq g_{B}\brac{\frac{1}{b} x_1,\frac{1}{a} x_2} g_C(a x_1+b x_2) \;\;\; \forall x_1,x_2 \in \R^n ,
\]
we need to show (assuming $a,b$ satisfy (\ref{eq:GCMPI-ab})) that
\[
\int_{\R^n} g_{A_1} d\gamma \int_{\R^n} g_{A_2} d\gamma \leq \int_{\R^n} g_{\bar B} d\gamma \int_{\R^n} g_{C} d\gamma ,
\]
where as usual $\bar B := B_1+B_2+B_2^*+B_4$. 
Equivalently, whenever
\begin{equation} \label{eq:GCMPI-matrix-assumption}
\begin{pmatrix} A_1 & 0 \\ 0 & A_2 \end{pmatrix} \geq \begin{pmatrix} \frac{1}{b^2} B_1 & \frac{1}{ab} B_2 \\ \frac{1}{ab} B^*_2 & \frac{1}{a^2} B_4 \end{pmatrix} + \begin{pmatrix} a^2 C & ab C \\ ab C & b^2 C \end{pmatrix} ,
\end{equation}
we would like to show that:
\begin{equation} \label{eq:GCMPI-det-conclusion}
\det(\Id_n + A_1) \det(\Id_n + A_2) \geq \det(\Id_n + \bar B) \det(\Id_n + C) .
\end{equation}

\subsection{Two Lemmas} 

\begin{lem} \label{lem:GCMPI1} 
Let $A,B,C \in \Sym_{\geq 0}(n)$ with $A,B \geq C$. Then for any $t_2 \geq t_1 \geq 0$,
\[
\det(\Id_n + t_1 A) \det(\Id_n + t_2 B) \geq \det(\Id_n + t_2 C) \det(\Id_n + t_1(A+B-C)) . 
\]
\end{lem}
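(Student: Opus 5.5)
The plan is to reduce to Corollary \ref{cor:GaussianGCI} after splitting off the excess $t_2 - t_1$ from the $C$-term. Write $\mathcal{A} := A - C \geq 0$ and $\mathcal{B} := B - C \geq 0$. The target then reads
\[
\det(\Id_n + t_1 C + t_1\mathcal{A}) \det(\Id_n + t_2 C + t_2\mathcal{B}) \geq \det(\Id_n + t_2 C)\det(\Id_n + t_1 C + t_1\mathcal{A} + t_1\mathcal{B}) .
\]

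The first step is to apply Corollary \ref{cor:GaussianGCI} with base $\mathcal{C} := \Id_n + t_2 C$, which is positive definite. The perturbations are chosen so that one factor on the right-hand side comes out correctly; a natural first attempt is $\mathcal{A}' := t_2 \mathcal{B}$ and $\mathcal{B}' := t_1(C + \mathcal{A}) - t_1 C$. This fails because the base $t_2 C$ does not match $t_1 C$.

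A cleaner route proceeds in two moves. First, apply Corollary \ref{cor:GaussianGCI} with base $\mathcal{C} = \Id_n + t_1 C$ and increments $t_1\mathcal{A}$ and $t_1 \mathcal{B} + (t_2-t_1)(C+\mathcal{B}) = t_2\mathcal{B} + (t_2-t_1)C$, both positive semi-definite. This yields
\[
\det(\Id_n + t_1 A)\det(\Id_n + t_2 B) \geq \det(\Id_n + t_1 C)\det(\Id_n + t_1 A + t_2 B - t_1 C).
\]
Second, compare this with the desired right-hand side $\det(\Id_n + t_2 C)\det(\Id_n + t_1(A+B-C))$. Set $M := \Id_n + t_1(A+B-C)$, so that $\Id_n + t_1A + t_2B - t_1C = M + (t_2-t_1)B$. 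It then suffices to show
\[
\det(\Id_n + t_1 C)\det\bigl(M + (t_2-t_1)B\bigr) \geq \det\bigl(\Id_n + t_1 C + (t_2-t_1)C\bigr)\det(M).
\]
This is a reverse-GCI-type statement, and it follows from concavity: the map $X \mapsto \log\det(X + sD)$ has increments in $s$ that decrease as $X$ grows when $D \geq 0$. Concretely, the map $\Phi(X) := \log\det(X + sD) - \log\det X$ is non-increasing in $X$ with respect to the Loewner order (its derivative is $\tr\bigl((X+sD)^{-1} - X^{-1}\bigr)\cdot$ direction $\leq 0$). Since $M \geq \Id_n + t_1 C$ (because $A + B - C \geq C$, as $A \geq C$ and $B \geq 0$) and $B \geq C$, we get
\[
\log\det\bigl(M + (t_2-t_1)B\bigr) - \log\det M \geq \log\det\bigl(M + (t_2-t_1)C\bigr) - \log\det M \geq \log\det\bigl(\Id_n + t_1 C + (t_2-t_1)C\bigr) - \log\det(\Id_n + t_1 C),
\]
using monotonicity of $\det$ in the first inequality and monotonicity of $\Phi$ with $D = C$, $s = t_2 - t_1 \geq 0$ in the second.

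Combining the two moves gives the lemma. The main point to verify is the Loewner monotonicity of $\Phi$, which reduces via the integral formula $\Phi(X) = \int_0^s \tr\bigl((X+rD)^{-1}D\bigr)\,dr$ and the operator-monotone decrease of $X \mapsto (X+rD)^{-1}$. The remaining care is checking that every increment fed to Corollary \ref{cor:GaussianGCI} is positive semi-definite, which is where $A, B \geq C$ and $t_2 \geq t_1$ are used.
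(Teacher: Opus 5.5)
There is a genuine gap in your second move. Your first move is fine: the increments $t_1(A-C)$ and $t_2B-t_1C=t_2(B-C)+(t_2-t_1)C$ are positive semi-definite, and Corollary \ref{cor:GaussianGCI} with base $\Id_n+t_1C$ gives exactly the bound you state. The problem is that the inequality the second move needs,
\[
\det(\Id_n + t_1 C)\det\bigl(M + (t_2-t_1)B\bigr) \geq \det(\Id_n + t_2 C)\det(M),
\]
is false in general.

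Take $n=1$, $t_1=1$, $t_2=2$, $A=2$, $B=C=1$. Then $M=3$, the left-hand side is $2\cdot 4=8$, and the right-hand side is $3\cdot 3=9$. More generally, for $n=1$ and $t_1=1$, $s=t_2-1$, the difference of the two sides is $s\,[(B-C)-C(A-C)]$, which has no sign.

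The error is the direction of the monotonicity of $\Phi$. As you yourself compute, $\Phi(X)=\log\det(X+sC)-\log\det X$ has derivative $\tr\bigl(((X+sC)^{-1}-X^{-1})H\bigr)\le 0$ in every direction $H\ge 0$. So $M\ge \Id_n+t_1C$ gives $\Phi(M)\le\Phi(\Id_n+t_1C)$, the reverse of what your chain uses. The real loss already happens in the first move. In the example, the lemma holds with equality ($3\cdot 3=9=3\cdot 3$), while your first move only yields the lower bound $2\cdot 4=8$. Keeping the base at $\Id_n+t_1C$ and pushing all the excess $t_2-t_1$ into the $B$-increment discards too much, and no second step can recover it.

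The fix, which is what the paper does, is to separate the $t_2$-dependence completely. Apply Corollary \ref{cor:GaussianGCI} only at equal times: base $\Id_n+t_1C$, increments $t_1(A-C)$ and $t_1(B-C)$. This proves the lemma for $t_2=t_1$. After that, $t_2$ enters only through $\det(\Id_n+t_2B)$ on the left and $\det(\Id_n+t_2C)$ on the right. So it suffices to show that $t\mapsto \det(\Id_n+tB)/\det(\Id_n+tC)$ is non-decreasing when $B\ge C$. Its logarithmic derivative is
\[
\tr\bigl((\Id_n+tB)^{-1}B\bigr)-\tr\bigl((\Id_n+tC)^{-1}C\bigr)\ge 0 .
\]
This holds because $X\mapsto(\Id_n+tX)^{-1}X=(X^{-1}+t\Id_n)^{-1}$ is operator monotone on positive-definite matrices (approximate to reduce to $B\ge C>0$).
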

\begin{proof}
The claim is obvious if $t_1 = 0$, so we may assume that $t_1 > 0$. 
By redefining $A,B,C$, we may in fact assume that $t_1 = 1$. Note that for $t_2 = 1$, the asserted inequality
\[
\det(\Id_n + A) \det(\Id_n + B) \geq \det(\Id_n + C) \det(\Id_n + A + B - C) 
\]
follows from Corollary \ref{cor:GaussianGCI} with $\mathcal{A} = A-C \geq 0$, $\mathcal{B} = B-C \geq 0$ and $\mathcal{C} = \Id_n + C$ (this 
was used several times in the previous section). It is therefore enough to show that $t_2 \mapsto \det(\Id_n + t_2 B) / \det(\Id_n + t_2 C)$ is non-decreasing in $t_2$. 

To see this, we may assume by approximation that $B \geq C > 0$. Therefore 
\begin{align*}
& B^{-1} + t_2 \Id_n \leq C^{-1} + t_2 \Id_n \\
& \Leftrightarrow B^{-1/2} (\Id_n + t_2 B) B^{-1/2} \leq C^{-1/2} (\Id_n + t_2 C) C^{-1/2} \\
& \Leftrightarrow B^{1/2} (\Id_n + t_2 B)^{-1} B^{1/2} \geq C^{1/2} (\Id_n + t_2 C)^{-1} C^{1/2} \\
& \Rightarrow \tr((\Id_n + t_2 B)^{-1} B) \geq \tr((\Id_n + t_2 C)^{-1} C) \\
& \Leftrightarrow \frac{d}{dt_2} \brac{\log \det(\Id_n + t_2 B) - \log \det(\Id_n + t_2 C)} \geq 0 ,
\end{align*}
because $\frac{d}{dt} \log \det(A_t) = \tr((A_t)^{-1} \frac{d}{dt} A_t)$. 
This concludes the proof. 
\end{proof}

\begin{lem} \label{lem:GCMPI2} 
Let $C,Z \in \Sym_{\geq 0}(n)$, and let $\alpha,\beta \geq 0$ with $\alpha + \beta = 1$. Then:
\begin{equation} \label{eq:GCMPI2}
\det(\Id_n + \alpha C) \det(\Id_n + \beta C + Z) \geq \det(\Id_n + C) \det(\Id_n + \alpha Z) ,
\end{equation}
with equality when $\alpha=1$. When $\alpha = 0$, equality occurs if and only if $Z =0$, and when $\alpha \in (0,1)$, equality occurs if and only if $C = Z = 0$. 
\end{lem}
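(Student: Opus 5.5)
The plan is to run the Fischer-inequality argument from the proof of Lemma \ref{lem:GaussianGCI} again, this time with a $2n\times 2n$ factor. The factor is chosen so that Fischer's inequality produces exactly the left-hand side of (\ref{eq:GCMPI2}), and a Schur complement estimate produces the right-hand side.

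\textbf{The factorization.} Define the $2n\times 2n$ matrix
\[
U := \begin{pmatrix} \sqrt{\alpha}\, C^{1/2} & 0 \\ \sqrt{\beta}\, C^{1/2} & Z^{1/2} \end{pmatrix}.
\]
Then
\[
UU^* = \begin{pmatrix} \alpha C & \sqrt{\alpha\beta}\, C \\ \sqrt{\alpha\beta}\, C & \beta C + Z \end{pmatrix}.
\]
Fischer's inequality applied to the positive-definite matrix $\Id_{2n}+UU^*$ gives
\[
\det(\Id_n+\alpha C)\det(\Id_n+\beta C+Z) \geq \det(\Id_{2n}+UU^*).
\]

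\textbf{The other product.} Next I use $\det(\Id_{2n}+UU^*)=\det(\Id_{2n}+U^*U)$. Since $\alpha+\beta=1$,
\[
U^*U = \begin{pmatrix} C & \sqrt{\beta}\, C^{1/2}Z^{1/2} \\ \sqrt{\beta}\, Z^{1/2}C^{1/2} & Z \end{pmatrix}.
\]
Taking the Schur complement with respect to the upper-left block gives
\[
\det(\Id_{2n}+U^*U) = \det(\Id_n+C)\,\det\big(\Id_n + Z - \beta\, Z^{1/2} C^{1/2}(\Id_n+C)^{-1}C^{1/2} Z^{1/2}\big).
\]
Because $C^{1/2}(\Id_n+C)^{-1}C^{1/2} = C(\Id_n+C)^{-1} \leq \Id_n$, the inner matrix is at least $\Id_n + Z - \beta Z = \Id_n+\alpha Z$. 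Monotonicity of $\det$ on $\Sym_{>0}(n)$ then yields $\det(\Id_{2n}+U^*U)\geq \det(\Id_n+C)\det(\Id_n+\alpha Z)$. Chaining this with the Fischer bound proves (\ref{eq:GCMPI2}).

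\textbf{Equality cases.} These come from tracking the two inequalities separately.
\begin{itemize}
\item If $\alpha=1$, then $\beta=0$. The off-diagonal block $\sqrt{\alpha\beta}\,C$ vanishes, so Fischer is an equality. The Schur correction carries the factor $\beta=0$, so the second step is also an equality.
\item If $\alpha=0$, Fischer is again an equality. The second step becomes $\det(\Id_n+Z-Z^{1/2}C(\Id_n+C)^{-1}Z^{1/2})$ compared with $\det(\Id_n)$, and here $Z-Z^{1/2}C(\Id_n+C)^{-1}Z^{1/2}=Z^{1/2}(\Id_n+C)^{-1}Z^{1/2}$. By strict monotonicity of the determinant, equality forces $Z^{1/2}(\Id_n+C)^{-1}Z^{1/2}=0$, hence $Z=0$. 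Conversely, $Z=0$ clearly gives equality.
\item If $\alpha\in(0,1)$, equality in Fischer requires the off-diagonal block $\sqrt{\alpha\beta}\,C$ to vanish, so $C=0$. With $C=0$ the inequality reads $\det(\Id_n+Z)\ge\det(\Id_n+\alpha Z)$, which is strict unless $Z=0$.
\end{itemize}

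The only step needing care is the equality case in Fischer's inequality: for positive-definite block matrices, equality holds if and only if the off-diagonal block is zero. This is standard and can be cited from \cite{HornJohnson-MatrixAnalysis}. The main conceptual point is choosing $U$ so that both products appear at once.
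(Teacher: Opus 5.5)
Your proof is correct. It argues differently from the paper, although both chains pass through the same intermediate quantity.

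The paper writes the left-hand side as $\det\big((\Id_n+\alpha C)(\Id_n+\beta C)+\sqrt{\Id_n+\alpha C}\,Z\sqrt{\Id_n+\alpha C}\big)$. It discards the term $\alpha\beta C^2$ to reach $\det(\Id_n+C)\det(\Id_n+DZD)$, where $D=\sqrt{(\Id_n+\alpha C)(\Id_n+C)^{-1}}$. It then applies Lemma \ref{lem:preparatory-det} (the principal-minor expansion) to $D/\sqrt{\alpha}\geq\Id_n$.

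In your argument, $\Id_n-\beta C(\Id_n+C)^{-1}=(\Id_n+\alpha C)(\Id_n+C)^{-1}=D^2$. So your Schur complement is exactly $\Id_n+Z^{1/2}D^2Z^{1/2}$, whose determinant equals $\det(\Id_n+DZD)$. Thus $\det(\Id_{2n}+UU^*)$ is the paper's middle term. Fischer's inequality does the job of dropping $\alpha\beta C^2$, and your Loewner bound $Z^{1/2}D^2Z^{1/2}\geq\alpha Z$ replaces Lemma \ref{lem:preparatory-det}.

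What your route buys:
\begin{itemize}
\item The second step becomes a genuine positive-semidefinite inequality. The form $DZD$ hides this, since $DZD\geq Z$ can fail for $D\geq\Id_n$; that is why the paper needs the minor expansion.
\item All $\alpha\in[0,1]$ are handled uniformly, with no rescaling by $\sqrt{\alpha}$ and no separate $\alpha=0$ case.
\item The lemma appears as a sibling of the Fischer argument in Lemma \ref{lem:GaussianGCI}.
\end{itemize}

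What the paper's route buys: it stays with $n\times n$ matrices and reads off the equality constraint $\alpha\beta C^2=0$ directly from strict monotonicity. You instead rely on the equality case of Fischer's inequality. That case is elementary anyway. For a positive-definite block matrix with diagonal blocks $P,R$ and off-diagonal block $X$, the determinant is $\det P\,\det(R-X^*P^{-1}X)$, and strict monotonicity forces $X^*P^{-1}X=0$, i.e.\ $X=0$.

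Your equality analysis in all three regimes is sound.
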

\begin{proof}
By choosing an appropriate orthonormal basis, we may assume that $C$ is diagonal. By monotonicity of the determinant on $\Sym_{\geq 0}(n)$, note that
\begin{align*}
& \det(\Id_n + \alpha C) \det(\Id_n + \beta C + Z) \\
&= \det((\Id_n + \alpha C) (\Id_n + \beta C) + \sqrt{\Id_n + \alpha C} Z \sqrt{\Id_n + \alpha C}) \\
& \geq \det(\Id_n + C + \sqrt{\Id_n + \alpha C} Z \sqrt{\Id_n + \alpha C}) \\
& = \det(\Id_n + C) \det(\Id_n + D  Z D ) , 
\end{align*}
where $D = \sqrt{(\Id_n + \alpha C)(\Id_n + C)^{-1}}$. Consequently, it is enough to show that
\[
\det(\Id_n + D Z D) \geq \det(\Id_n + \alpha Z) .
\]
If $\alpha = 0$ there is nothing to prove, so we may assume $\alpha \in (0,1]$. 
Renaming $Z$ to be $\alpha Z$ and $D$ to be  $D / \sqrt{\alpha} = \sqrt{(\Id_n + \alpha C)(\alpha \Id_n + \alpha C)^{-1}}$, note that $D \geq \Id_n$. 
The inequality $\det(\Id_n + D Z D) \geq \det(\Id_n +Z)$ then follows from Lemma \ref{lem:preparatory-det}. 

If equality occurs in (\ref{eq:GCMPI2}) then all of the inequalities we have used above must be equalities. In particular, the very first inequality we used implies that $\alpha \beta C^2 = 0$, and so when $\alpha \in (0,1)$ then necessarily $C = 0$; plugging this into (\ref{eq:GCMPI2}), it follows that $Z = 0$ as well. When $\alpha = 0$, (\ref{eq:GCMPI2}) directly implies that equality occurs if and only if $Z = 0$, whereas when $\alpha=1$, (\ref{eq:GCMPI2}) holds with equality identically. \end{proof}

\subsection{GCMPI for Gaussians}

\begin{proposition} \label{prop:GaussianGCMPI}
Let $A_1,A_2,C \in \Sym_{\geq 0}(n)$ and $B \in \Sym_{\geq 0}(2n)$, and let  $a,b \in \R \setminus \{0\}$ satisfy (\ref{eq:GCMPI-ab}). Then (\ref{eq:GCMPI-matrix-assumption}) implies (\ref{eq:GCMPI-det-conclusion}). Assuming (\ref{eq:GCMPI-matrix-assumption}), if equality holds in  (\ref{eq:GCMPI-det-conclusion}) then necessarily $C =0$, $\bar B = A_1 + A_2$, and in addition:
\begin{enumerate}
\item If $a^2 < 1$ then $A_2 = 0$. \item If $b^2 < 1$ then $A_1 = 0$. \item If $\max(a^2,b^2) < 1$ then equality holds  in (\ref{eq:GCMPI-det-conclusion}) if and only if $A_1 = A_2 = C = 0$ and $B = 0$.  
\end{enumerate}
\end{proposition}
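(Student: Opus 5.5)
By the symmetry of the assumption (\ref{eq:GCMPI-matrix-assumption}) under swapping $(x_1,A_1,a)\leftrightarrow(x_2,A_2,b)$, I will assume throughout that $a^2\le b^2\le 1$, so that (\ref{eq:GCMPI-ab}) reads $3a^2+b^2\ge 1$. The plan is to extract three scalar consequences of (\ref{eq:GCMPI-matrix-assumption}) by testing the quadratic forms on suitable vectors, and then to chain Lemma \ref{lem:GCMPI1} with Lemma \ref{lem:GCMPI2}.

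\textbf{Step 1: three matrix inequalities.} Testing (\ref{eq:GCMPI-matrix-assumption}) on $(x,0)$ and on $(0,x)$, and using $B_1,B_4\ge 0$, gives
\[
A_1\ge \tfrac{1}{b^2}B_1+a^2C\ge a^2C, \qquad A_2\ge \tfrac{1}{a^2}B_4+b^2C\ge b^2C .
\]
Testing on the vector $(by,ay)$ makes the $B$-block contribute $\scalar{\bar B y,y}$ and the $C$-block contribute $(2ab)^2\scalar{Cy,y}$. Hence
\[
b^2A_1+a^2A_2\ge \bar B+4a^2b^2C .
\]

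\textbf{Step 2: apply Lemma \ref{lem:GCMPI1}.} Take $A=b^2A_1$, $B=a^2A_2$ and $C'=a^2b^2C$. By Step 1 both $A\ge C'$ and $B\ge C'$ hold. Take $t_1=1/b^2\le t_2=1/a^2$. The lemma gives
\[
\det(\Id_n+A_1)\det(\Id_n+A_2)\ge \det(\Id_n+b^2C)\det\Bigl(\Id_n+\tfrac{1}{b^2}(b^2A_1+a^2A_2-a^2b^2C)\Bigr).
\]
By Step 1 and monotonicity of the determinant, the second factor is at least $\det(\Id_n+\tfrac{1}{b^2}\bar B+3a^2C)$.

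\textbf{Step 3: apply Lemma \ref{lem:GCMPI2}.} Use $\alpha=b^2$, $\beta=1-b^2$ and
\[
Z:=\tfrac{1}{b^2}\bar B+(3a^2+b^2-1)C .
\]
Then $Z\ge 0$ precisely because $3a^2+b^2\ge1$, and $\beta C+Z=\tfrac1{b^2}\bar B+3a^2C$. The lemma yields
\[
\det(\Id_n+b^2C)\det(\Id_n+\beta C+Z)\ge\det(\Id_n+C)\det(\Id_n+b^2Z).
\]
Since $b^2Z=\bar B+b^2(3a^2+b^2-1)C\ge\bar B$, we get (\ref{eq:GCMPI-det-conclusion}). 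It is essential here not to weaken $\tfrac1{b^2}\bar B$ to $\bar B$ in Step 2; otherwise one loses the factor $b^2$ at this point.

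\textbf{Step 4: equality cases.} This is the delicate part. If equality holds in (\ref{eq:GCMPI-det-conclusion}), every inequality in the chain is an equality, and I will read off the conditions as follows.
\begin{itemize}
\item From the last monotonicity step, strict monotonicity of $\det$ on $\Sym_{>0}$ gives $b^2(3a^2+b^2-1)C=0$. When $3a^2+b^2>1$ this forces $C=0$.
\item In the boundary case $3a^2+b^2=1$ (so $b^2<1$ unless $a=0$), I instead use the equality clause of Lemma \ref{lem:GCMPI2}. For $\alpha=b^2\in(0,1)$ it forces $C=Z=0$, hence $\bar B=0$ as well. For $\alpha=1$ we have $a^2\ge \tfrac13(1-b^2)$ strictly positive, so we are back in the previous case.
\item With $C=0$, equality in the step where Step 1 was inserted gives $b^2A_1+a^2A_2=\bar B$.
\item Equality in Lemma \ref{lem:GCMPI1} with $C'=0$ means $\det(\Id+t_1A)\det(\Id+t_2B)=\det(\Id+t_1(A+B))$. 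Since $\det(\Id+t_2B)\ge\det(\Id+t_1B)$, this forces the Fischer/GCI step to be tight and $t_2B$ to agree with $t_1B$ whenever $t_1<t_2$.
\end{itemize}

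What remains is to convert these conditions into the itemized conclusions. Concretely, I expect: $a^2<1$ to force $A_2=0$; $b^2<1$ to force $A_1=0$; and then $\bar B=A_1+A_2$ to follow after rescaling. The extra condition $B=0$ in the case $\max(a^2,b^2)<1$ should come from $B\ge0$ with $B_1\le b^2A_1=0$ and $B_4\le a^2A_2=0$.

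The main obstacle I anticipate is this equality bookkeeping: tracking strictness through both lemmas while keeping track of which of $a^2,b^2$ equals $1$. For this I would rely on the equality statement of Lemma \ref{lem:GCMPI2}, together with strict monotonicity of $t\mapsto\det(\Id+tB)$ whenever $B\ne0$.
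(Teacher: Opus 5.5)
Your derivation of (\ref{eq:GCMPI-det-conclusion}) is correct and is the paper's argument up to rescaling. Lemma \ref{lem:GCMPI1} applied to $b^2A_1,\,a^2A_2,\,a^2b^2C$ with $t_1=1/b^2\le t_2=1/a^2$ gives exactly the bound the paper gets from $A_1/a^2,\,A_2/b^2,\,C$ with $t_1=a^2\le t_2=b^2$. The only difference is bookkeeping. You substitute $\bar B$ immediately and keep the slack $(3a^2+b^2-1)C$ inside $Z$ until after Lemma \ref{lem:GCMPI2}. The paper instead keeps $Z=\frac{A_1}{a^2}+\frac{A_2}{b^2}-4C$ and spends the slack via $3a^2\ge 1-b^2$ beforehand.

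The equality part, however, is left unfinished, and as written it does not reach items 1--3. You use the equality clause of Lemma \ref{lem:GCMPI2} only on the boundary $3a^2+b^2=1$. In the generic range $0<b^2<1<3a^2+b^2$ you have therefore recorded only three facts: $C=0$, $\bar B=b^2A_1+a^2A_2$, and $A_2=0$ when $a^2<b^2$. From these you never derive $A_1=0$ (item 2), nor $A_2=0$ when $a^2=b^2<1$ (item 1), nor $B=0$. Given $\bar B=b^2A_1+a^2A_2$, the claim $\bar B=A_1+A_2$ is equivalent to $(1-b^2)A_1+(1-a^2)A_2=0$. So everything hinges on the items you only ``expect''.

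Two short fixes are available.

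\emph{First fix.} The equality clause of Lemma \ref{lem:GCMPI2} holds for every $\alpha=b^2\in(0,1)$, not only on the boundary. It therefore gives $C=Z=0$, hence $\bar B=0$, hence $b^2A_1+a^2A_2=0$ and $A_1=A_2=0$.

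\emph{Second fix.} Once $C=0$ is known (which you do establish in all cases), you have $\bar B=b^2A_1+a^2A_2\le A_1+A_2$. Together with Lemma \ref{lem:GaussianGCI} this gives
\[
\det(\Id_n+A_1)\det(\Id_n+A_2)=\det(\Id_n+\bar B)\le\det(\Id_n+A_1+A_2)\le\det(\Id_n+A_1)\det(\Id_n+A_2).
\]
This forces $\bar B=A_1+A_2$, i.e.\ $(1-b^2)A_1+(1-a^2)A_2=0$, which yields items 1 and 2 at once. This is exactly the paper's argument in the case $b^2=1$, applied uniformly.

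Your alternative for $b^2=1>a^2$ is also valid: strict monotonicity of $t\mapsto\det(\Id_n+tB)$ inside Lemma \ref{lem:GCMPI1} gives $A_2=0$. But it needs $t_1<t_2$, so it cannot handle $a^2=b^2<1$.

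Finally, for item 3: plugging $A_1=A_2=C=0$ into (\ref{eq:GCMPI-matrix-assumption}) gives $B\le 0$, hence $B=0$. The converse direction is trivial.
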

\begin{remark}
When equality holds in (\ref{eq:GCMPI-det-conclusion}) then in addition we have $B_4 = B_2 = 0$ if $a^2 < 1$, $B_1 = B_2 = 0$ if $b^2 < 1$, and $A_1 A_2 = 0$ if $a^2=b^2 = 1$, 
but we will not require this here. 
\end{remark}
\begin{proof}
Assume without loss of generality that $a^2 \leq b^2 (\leq 1$). We may rewrite (\ref{eq:GCMPI-matrix-assumption})  as
\begin{equation} \label{eq:GCMPI-again}
\begin{pmatrix} \frac{1}{a^2} A_1 & 0 \\ 0 & \frac{1}{b^2} A_2 \end{pmatrix} \geq \frac{1}{a^2 b^2} \begin{pmatrix} B_1 &  B_2 \\  B^*_2 & B_4 \end{pmatrix} + \begin{pmatrix} C & C \\ C & C \end{pmatrix} .
\end{equation}
Since $B_1,B_4 \geq 0$, we see that $\frac{A_1}{a^2} , \frac{A_2}{b^2} \geq C$. 
Applying Lemma \ref{lem:GCMPI1} with these three matrices and $t_1 = a^2 \leq b^2 = t_2$, we deduce:
\[
\det(\Id_n + A_1) \det(\Id_n + A_2) \geq \det(\Id_n + b^2 C) \det\brac{\Id_n + a^2 \brac{\frac{A_1}{a^2} + \frac{A_2}{b^2} -C}} . 
\]
Denote $Z = \frac{A_1}{a^2} + \frac{A_2}{b^2} - 4 C$, and observe by inspecting (\ref{eq:GCMPI-again}) on diagonal entries $(y,y)$ that $Z \geq \frac{1}{a^2 b^2} \bar B \geq 0$. Therefore, to conclude (\ref{eq:GCMPI-det-conclusion}) it is enough to show that:
\[
\det(\Id_n + b^2 C) \det(\Id_n + a^2 (3C + Z)) \geq \det(\Id_n + C) \det(\Id_n + a^2 b^2 Z) . 
\]
Since $3 a^2 \geq 1-b^2$, we have 
\begin{equation} \label{eq:GCMPI-temp1}
\det(\Id_n + a^2 ( 3C + Z)) \geq \det(\Id_n + (1-b^2) C + a^2 Z),
\end{equation}
 and hence it is enough to show
\begin{equation} \label{eq:GCMPI-temp2}
\det(\Id_n + b^2 C) \det(\Id_n + (1-b^2) C + a^2 Z) \geq \det(\Id_n + C) \det(\Id_n + a^2 b^2 Z) .
\end{equation}
But this follows from Lemma \ref{lem:GCMPI2} with $\alpha = b^2,\beta = 1-b^2 \geq 0$ (and $a^2 Z$). 

If equality holds in (\ref{eq:GCMPI-det-conclusion}) then all of the inequalities we have used above must be equalities. In particular, by Lemma \ref{lem:GCMPI2}, equality holds in (\ref{eq:GCMPI-temp2}) when $b^2 \in (0,1)$ if and only if $C = Z = 0$. Since $Z = \frac{A_1}{a^2} + \frac{A_2}{b^2}$ with $A_1,A_2 \geq 0$, it follows that $A_1=A_2 = 0$. Plugging $A_1 = A_2 = C = 0$ into (\ref{eq:GCMPI-again}) we deduce that $B \leq 0$, but since $B \geq 0$ it follows that $B = 0$. 

When $b^2 = 1$ then $3 a^2 > 0 = 1-b^2$, and so equality in (\ref{eq:GCMPI-temp1}) implies that $C = 0$. Inspecting (\ref{eq:GCMPI-again}) on diagonal entries $(y,y)$ shows that $A_1 + A_2 \geq A_1 + a^2 A_2 \geq \bar B$, and together with the equality in (\ref{eq:GCMPI-det-conclusion}) and Lemma \ref{lem:GaussianGCI}, this gives:
\begin{align*}
& \det(\Id_n + A_1) \det(\Id_n + A_2) = \det(\Id_n + \bar B) \\
& \leq \det(\Id_n + A_1 +A_2) \leq \det(\Id_n + A_1) \det(\Id_n + A_2) .
\end{align*}
This means that we must have equality everywhere above, so in particular $\bar B = A_1 + A_2$. If we assume that $a^2 < 1$, then equality in $A_1 + A_2 \geq A_1 + a^2 A_2$ implies that $A_2 = 0$. Exchanging the roles of $a,b$, the proof is complete. 
\end{proof}

\subsection{A more general version}

To properly analyze the equality case in Theorem \ref{thm:GCMPI-FRBL} (and hence Theorem \ref{thm:intro-GCMPI} from the Introduction), we will need a more general version of Proposition \ref{prop:GaussianGCMPI}. 

\begin{proposition} \label{prop:GenGaussianGCMPI}
Let $A_1,A_2,\bar B , C \in \Sym_{\geq 0}(n)$, and assume that: 
\begin{equation} \label{eq:GenGaussianGCMPI-assumption}
g_{A_1}(x_1) g_{A_2}(x_2) \leq H\brac{\frac{1}{b} x_1,\frac{1}{a} x_2} g_C(a x_1+ bx_2) \;\;\; \forall x_1,x_2 \in \R^n , 
\end{equation}
for some $a,b \in \R \setminus \{0\}$ satisfying (\ref{eq:GCMPI-ab}) and a log-concave function $H : \R^{2n} \rightarrow \R_+$ such that 
\[
H(y,y) = p_{\bar B} g_{\bar B}(y) \;\;\; \forall y \in \R^n ,  
\]
for some constant $p_{\bar B} > 0$. Then
\begin{equation} \label{eq:GenGaussianGCMPI-conclusion}
\int g_{A_1} d\gamma \int g_{A_2} d\gamma  \leq \int  g_{\bar B} d\gamma \int g_C d\gamma .
\end{equation}
If equality holds in (\ref{eq:GenGaussianGCMPI-conclusion}) then the same conclusion holds as in Proposition \ref{prop:GaussianGCMPI} (with the statement that $B=0$ replaced by $\bar B=0$). In particular, if $\max(a^2,b^2) < 1$ then equality holds in (\ref{eq:GenGaussianGCMPI-conclusion}) if and only if $A_1 = A_2 = \bar B = C = 0$. 
\end{proposition}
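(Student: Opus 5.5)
The plan is to mimic the passage from Proposition \ref{prop:GaussianGCRSI} to Proposition \ref{prop:GenGaussianGCRSI}. We extract from the pointwise assumption (\ref{eq:GenGaussianGCMPI-assumption}) exactly the matrix inequalities used in the proof of Proposition \ref{prop:GaussianGCMPI}, and then rerun that determinant argument verbatim. Assume without loss of generality $a^2 \le b^2 \le 1$.

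\textbf{Step 1: the two bounds $\frac{A_1}{a^2}\ge C$ and $\frac{A_2}{b^2}\ge C$.} Set $x_2=0$ in (\ref{eq:GenGaussianGCMPI-assumption}). Since $x_1\mapsto H(\frac1b x_1,0)$ is log-concave, Lemma \ref{lem:log-concave} bounds it by $e^{\scalar{v,x_1}+w}$. Taking logarithms gives
\[
\tfrac12\scalar{(A_1-a^2C)x_1,x_1}\ge -\scalar{v,x_1}-w \quad \text{for all } x_1 .
\]
Letting $|x_1|\to\infty$ along rays, a quadratic form bounded below by an affine function must be positive semi-definite, so $A_1\ge a^2 C$. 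Symmetrically, setting $x_1=0$ gives $A_2\ge b^2C$.

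\textbf{Step 2: the diagonal bound.} Evaluate (\ref{eq:GenGaussianGCMPI-assumption}) at $(x_1,x_2)=(by,ay)$, so that $H(\frac1b x_1,\frac1a x_2)=H(y,y)=p_{\bar B}g_{\bar B}(y)$ and $ax_1+bx_2=2ab\,y$. This yields
\[
g_{b^2A_1+a^2A_2}(y)\le p_{\bar B}\,g_{\bar B+4a^2b^2C}(y).
\]
Taking logs and letting $|y|\to\infty$ gives $b^2A_1+a^2A_2\ge \bar B+4a^2b^2C$. Dividing by $a^2b^2$, this says $Z:=\frac{A_1}{a^2}+\frac{A_2}{b^2}-4C\ge \frac{1}{a^2b^2}\bar B\ge0$. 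These are precisely the facts from (\ref{eq:GCMPI-again}) that the proof of Proposition \ref{prop:GaussianGCMPI} used. The only exception is the final deduction $B=0$, which is why that conclusion is weakened to $\bar B=0$.

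\textbf{Step 3: the determinant chain.} Apply Lemma \ref{lem:GCMPI1} with $t_1=a^2\le t_2=b^2$, then (\ref{eq:GCMPI-temp1}) using $3a^2\ge1-b^2$, then Lemma \ref{lem:GCMPI2}, exactly as before. This gives
\[
\det(\Id_n+A_1)\det(\Id_n+A_2)\ge\det(\Id_n+C)\det(\Id_n+a^2b^2Z)\ge\det(\Id_n+C)\det(\Id_n+\bar B),
\]
which is (\ref{eq:GenGaussianGCMPI-conclusion}) since $\int g_A\,d\gamma=\det(\Id_n+A)^{-1/2}$.

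\textbf{Step 4: equality cases.} Under equality every inequality in the chain is an equality, and I would repeat the earlier case analysis.

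If $b^2<1$, Lemma \ref{lem:GCMPI2} forces $C=Z=0$, hence $A_1=A_2=0$. Then $0\le\bar B\le a^2b^2Z=0$ gives $\bar B=0$.

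If $b^2=1$, equality in (\ref{eq:GCMPI-temp1}) forces $C=0$. The diagonal bound becomes $A_1+a^2A_2\ge\bar B$. Combined with Lemma \ref{lem:GaussianGCI}, this yields $\bar B=A_1+A_2$, and then $A_2=0$ if $a^2<1$.

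For the converse when $\max(a^2,b^2)<1$, take $A_1=A_2=\bar B=C=0$; then both sides of (\ref{eq:GenGaussianGCMPI-conclusion}) equal $1$, so equality holds.

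The only point needing care beyond routine bookkeeping is Step 2: choosing the evaluation point so that $H$ is only ever evaluated on the diagonal, where its form is known. Otherwise the argument reduces to the existing matrix proof.
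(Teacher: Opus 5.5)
Your proposal is correct and follows the paper's proof essentially verbatim. You extract $A_1\ge a^2C$ and $A_2\ge b^2C$ via Lemma \ref{lem:log-concave}, obtain the diagonal bound $a^2b^2Z\ge \bar B$ by evaluating at $(x_1,x_2)=(by,ay)$, and then rerun the determinant chain and equality analysis of Proposition \ref{prop:GaussianGCMPI}. The only cosmetic difference is that the paper derives the full $2n\times 2n$ block inequality from log-concavity of $(x_1,x_2)\mapsto H(\frac{1}{b}x_1,\frac{1}{a}x_2)$, whereas you restrict to the coordinate axes, which is all the subsequent argument uses.
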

\begin{proof}
Since $\R^n \times \R^n \ni (x_1,x_2) \mapsto H(\frac{1}{b} x_1,\frac{1}{a} x_2)$ remains log-concave, Lemma \ref{lem:log-concave} and (\ref{eq:GenGaussianGCMPI-assumption}) imply that
\[
\scalar{\left [ \begin{pmatrix} A_1 & 0 \\ 0 & A_2 \end{pmatrix} - \begin{pmatrix} a^2 C & ab C \\ ab C & b^2 C \end{pmatrix} \right ] x , x} \geq \scalar{v,x} + w \;\;\; \forall x \in \R^{2n} ,
\]
for some vector $v \in \R^{2n}$ and scalar $w \in \R$.  Letting $x$ tend to infinity, we deduce that
\[
\begin{pmatrix} A_1 & 0 \\ 0 & A_2 \end{pmatrix} \geq \begin{pmatrix} a^2 C & ab C \\ ab C & b^2 C \end{pmatrix} .
\]

On the other hand, inspecting (\ref{eq:GenGaussianGCMPI-assumption}) for $x_1 = b y$ and $x_2 = a y$, we have
\[
\scalar{ (b^2 A_1 + a^2 A_2 - \bar B - 4 a^2 b^2 C) y, y} \geq -2 \log p_{\bar B} \;\;\; \forall y \in \R^n . 
\]
Letting $y$ tend to infinity and denoting $Z = \frac{A_1}{a^2} + \frac{A_2}{b^2} - 4 C$, we deduce
\begin{equation} \label{eq:GenGaussianGCMPI-temp}
a^2 b^2 Z = b^2 A_1 + a^2 A_2  - 4 a^2 b^2 C \geq \bar B . 
\end{equation}
We can now conclude (\ref{eq:GenGaussianGCMPI-conclusion}) and its equality case exactly as in Proposition \ref{prop:GaussianGCMPI}. The only difference is that once it is established that $A_1 = A_2 = C =0$, then (\ref{eq:GenGaussianGCMPI-temp}) implies that $\bar B = 0$. 
\end{proof}

As with Corollary \ref{cor:GenGaussian}, since the value of $p_{\bar B}$ above is totally immaterial, we can rewrite this as follows. 
\begin{cor} \label{cor:GenGaussianGCMPI}
With the same assumptions as in the previous proposition, assume instead of (\ref{eq:GenGaussianGCMPI-assumption}) that
\[
p_{A_1} g_{A_1}(x_1) \cdot p_{A_2} g_{A_2}(x_2) \leq H\brac{\frac{1}{b} x_1,\frac{1}{a} x_2}  \cdot p_C g_C(a x_1+ b x_2) \;\;\; \forall x_1,x_2 \in \R^n , 
\]
for some constants $p_{A_1}, p_{A_2}, p_C > 0$. Then
\[
\int p_{A_1} g_{A_1} d\gamma \int p_{A_2} g_{A_2} d\gamma  \leq  \int  p_{\bar B} g_{\bar B} d\gamma \int p_C g_C d\gamma ,
\]
with equality implying that $p_{A_1} p_{A_2} = p_{\bar B} p_C$ and that the same conclusions as in Proposition \ref{prop:GenGaussianGCMPI} hold. In particular, 
equality holds when $\max(a^2,b^2) < 1$ if and only if $A_1 = A_2 = \bar B = C = 0$ and $p_{A_1} p_{A_2} = p_{\bar B} p_C$. 
\end{cor}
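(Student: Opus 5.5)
The plan follows Corollary~\ref{cor:GenGaussian} verbatim. First, apply Proposition~\ref{prop:GenGaussianGCMPI} to the normalized Gaussians $g_{A_1},g_{A_2},g_C$. Divide the assumed pointwise inequality by $p_{A_1}p_{A_2}$ and absorb the constant into $H$. That is, set
\[
\tilde H(z_1,z_2) := \frac{p_C}{p_{A_1}p_{A_2}} H(z_1,z_2).
\]
This is still log-concave, and on the diagonal it equals $\tilde p_{\bar B}\, g_{\bar B}(y)$ with $\tilde p_{\bar B} = p_{\bar B}p_C/(p_{A_1}p_{A_2})$. Hence the hypothesis (\ref{eq:GenGaussianGCMPI-assumption}) of Proposition~\ref{prop:GenGaussianGCMPI} holds with $\tilde H$ in place of $H$. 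Since the value of the diagonal constant played no role there, we obtain
\[
\int g_{A_1}d\gamma\int g_{A_2}d\gamma\le\int g_{\bar B}d\gamma\int g_C d\gamma .
\]
The equality case of this inequality is characterized exactly as in Proposition~\ref{prop:GenGaussianGCMPI}.

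Second, compare constants by evaluating the assumed inequality at $x_1=x_2=0$. All Gaussians equal $1$ at the origin, so this gives $p_{A_1}p_{A_2}\le H(0,0)\,p_C = p_{\bar B}p_C$, using $H(0,0)=p_{\bar B}g_{\bar B}(0)=p_{\bar B}$. Multiplying the inequality from the first step by $p_{A_1}p_{A_2}$ and bounding that by $p_{\bar B}p_C$ then yields the asserted inequality.

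For equality, note that the final inequality is the product of two inequalities: the Gaussian one from the first step and $p_{A_1}p_{A_2}\le p_{\bar B}p_C$. Both sides of the Gaussian inequality are strictly positive, so equality in the product forces equality in both factors. Equality in the second factor gives $p_{A_1}p_{A_2}=p_{\bar B}p_C$. Equality in the first factor gives all the conclusions of Proposition~\ref{prop:GenGaussianGCMPI}; in particular, when $\max(a^2,b^2)<1$ it gives $A_1=A_2=\bar B=C=0$.

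For the converse when $\max(a^2,b^2)<1$, suppose $A_1=A_2=\bar B=C=0$ and $p_{A_1}p_{A_2}=p_{\bar B}p_C$. Then both sides of the inequality equal $p_{A_1}p_{A_2}=p_{\bar B}p_C$, so equality holds. I expect no real obstacle. The only point needing care is that rescaling $H$ by a positive constant preserves every hypothesis of Proposition~\ref{prop:GenGaussianGCMPI}: log-concavity and the Gaussian form on the diagonal, with only the constant changed.
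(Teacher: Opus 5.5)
Your proposal is correct and is essentially the paper's argument. The paper handles this exactly as in Corollary \ref{cor:GenGaussian}: it applies Proposition \ref{prop:GenGaussianGCMPI}, whose conclusion does not depend on the diagonal constant $p_{\bar B}$, to get the inequality with the factor $p_{A_1}p_{A_2}/(p_{\bar B}p_C)$, and then bounds that factor by $1$ by evaluating the hypothesis at the origin. Your explicit rescaling $\tilde H$ and your check of the trivial converse direction spell out details the paper leaves implicit.
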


\section{Analysis of equality} \label{sec:equality}

We now turn to analyze the equality in Theorems \ref{thm:GCRSI-FRBL} and \ref{thm:GCMPI-FRBL}. Since our proof of the Gaussian FRBL Theorem \ref{thm:intro-FRBL} involved several approximation arguments, tracking the cases of equality in that generality seems genuinely intractable. Nevertheless, for our particular applications, we are able to show the following. 

\begin{thm} \label{thm:GCRSI-equality}
Let $f_1,f_2 \in \F_{LC}^{(o)}(\gamma^n)$, and let $h_1 : \R^{2n} \rightarrow \R_+$ and $h_2 : \R^{n} \rightarrow \R_+$ denote two log-concave functions. 
Let $a,b \in \R$ with $\abs{a},\abs{b} \geq 1$ and $\abs{a+b} > 1$. Assume that
\begin{equation} \label{eq:GCRSI-FRBL-equality-assumption}
f_1(x_1) f_2(x_2) \leq h_1(x_1, x_2) h_2(a x_1 + b x_2) \;\;\; \forall x_1,x_2 \in \R^n ,
\end{equation}
and that we have equality
\begin{equation} \label{eq:GCRSI-FRBL-equality}
\int_{\R^n} f_1 d\gamma \int_{\R^n} f_2 d\gamma  = \int_{\R^n} h_1(y,y) d\gamma(y) \int_{\R^n} h_2 d\gamma  > 0 . 
\end{equation}
Then $h_2 \equiv c > 0$ is a constant function, $c \cdot h_1(y,y) = f_1(y) f_2(y)$ for almost-every $y \in \R^n$,  
and $f_1(x) = \bar f_1(P_E x)$ and $f_2(x) = \bar f_2(P_{E^{\perp}} x)$ for some linear subspace $E \subset \R^n$ and almost every $x \in \R^n$. 
\end{thm}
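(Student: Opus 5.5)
The plan is to transport the equality through the self-convolution scheme of Proposition \ref{p:BLTrick}, pass to a Gaussian limit, and then read off the structure from the Gaussian equality analysis in Corollary \ref{cor:GenGaussian}.

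\textbf{Step 1: iterate the self-convolution.} Theorem \ref{thm:GCRSI-FRBL} shows that $\FR^{(e)}_{LC}=\FR^{(o)}_{LC}=1$ for the datum (\ref{eq:our-BL-datum}) with $\alpha=\beta=1$. Applying Proposition \ref{p:BLTrick} with $D=1$, the functions $f_i^{(1)},h_j^{(1)}$ again satisfy (\ref{eq:GCRSI-FRBL-equality-assumption}). All integrals get squared, and the $\gamma$-barycenters of $f_i^{(1)}$ stay at the origin. One point needs care: $h_1^{(1)}$ is formed with the degenerate covariance $\Gamma_1$, so I must check that $h_1^{(1)}(y,y)$ is exactly the self-convolution of $y\mapsto h_1(y,y)$ with respect to $\gamma^n$; this holds because $\gamma_{\Gamma_1}$ is the push-forward of $\gamma^n$ under $y\mapsto(y,y)$. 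Consequently equality (\ref{eq:GCRSI-FRBL-equality}) persists for every iterate $f_i^{(N)},h_j^{(N)}$. By part (\ref{it:BLTrick3}) the covariances $\Cov(f_i^{(N)}\gamma)$, $\Cov(h_2^{(N)}\gamma)$ and $\Cov(h_1^{(N)}(y,y)\gamma(y))$ are unchanged along the iteration, while the barycenter of $h_2\gamma$ is multiplied by $\sqrt2$ at each step.

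\textbf{Step 2: the barycenters of $h_1$ and $h_2$ vanish.} Suppose $\bary(h_2\gamma)=\vec b\neq0$. Then $h_2^{(N)}\gamma$ has barycenter $2^{N/2}\vec b$ but bounded covariance. Using the pointwise Gaussian-type bounds for log-concave functions (as in the proof of Lemma \ref{l:Finiteness}, via Lemma \ref{lem:log-concave-cov} in place of the regularization), I would show that the ratio $\int f_1^{(N)}d\gamma\int f_2^{(N)}d\gamma\,/\,\int h_1^{(N)}(y,y)d\gamma\int h_2^{(N)}d\gamma$ carries a factor decaying like $e^{-c2^N|\vec b|^2}$. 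This contradicts the persisting equality, so $\vec b=0$; the same argument applies to $h_1(y,y)$.

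\textbf{Step 3: Gaussian limit and the structure of $h_2$ and $h_1$.} After rescaling by the Gaussian factors, the local CLT (as in Step \ref{it:step0}) gives $f_i^{(N)}\gamma\to\gamma_{\Cov(f_i\gamma)}$, $h_2^{(N)}\gamma\to\gamma_{\Cov(h_2\gamma)}$ and $h_1^{(N)}(y,y)\gamma\to\gamma_{\Cov}$, pointwise after normalization. Hence in the limit $f_i=p_{A_i}g_{A_i}$, $h_2=p_Cg_C$, and $H=\lim h_1^{(N)}$ is log-concave with $H(y,y)=p_{\bar B}g_{\bar B}(y)$. The limiting pointwise inequality, together with equality of integrals (which passes to the limit by dominated convergence, using Lemma \ref{lem:approximation-of-identity}-type exponential bounds), puts us in the partial-saturation setting of Corollary \ref{cor:GenGaussian}. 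Since $|a+b|>1$, it forces $C=0$, i.e.\ $\Cov(h_2\gamma)=\Id_n$. By Proposition \ref{prop:cov}, $h_2\equiv c$ is constant. Then (\ref{eq:GCRSI-FRBL-equality-assumption}) restricted to the diagonal gives $f_1(y)f_2(y)\le c\,h_1(y,y)$, while (\ref{eq:GCRSI-FRBL-equality}) combined with the functional GCI (\ref{eq:intro-functional-GCI}) yields
\[
\int f_1f_2\,d\gamma\le\int c\,h_1(y,y)\,d\gamma=\int f_1d\gamma\int f_2 d\gamma\le\int f_1f_2\,d\gamma .
\]
So all these are equalities: $c\,h_1(y,y)=f_1f_2$ a.e., and equality holds in the GCI for centered log-concave $f_1,f_2$. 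The equality characterization of Nakamura--Tsuji \cite{NakamuraTsuji-GCIForCentered}, in its functional form, then gives the product structure $f_1=\bar f_1\circ P_E$, $f_2=\bar f_2\circ P_{E^\perp}$.

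\textbf{Main obstacle.} I expect the hardest part to be Steps 2--3. Concretely, one must justify that equality survives the limit $N\to\infty$ with no regularization and with the degenerate $\Gamma_1$, and that the limit of $h_1^{(N)}$ off the diagonal is still a genuine log-concave $H$ dominating the Gaussians. If that limit is not available directly, the fallback is to use only the diagonal information plus the asymptotic bounds $A_i-C\ge0$ and $A_1+A_2\ge\bar B+(a+b)^2C$, which are all that Proposition \ref{prop:GenGaussianGCRSI} actually uses.
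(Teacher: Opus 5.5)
Your proposal follows the paper's proof. Steps 1--3 are Proposition~\ref{prop:partial-Gaussian-saturation}: iterated self-convolutions with persisting equality, vanishing barycenters, and the local CLT. After that, Corollary~\ref{cor:GenGaussian} forces $C=0$, Proposition~\ref{prop:cov} makes $h_2$ constant, and the diagonal/GCI sandwich with the Nakamura--Tsuji equality case finishes, exactly as in the paper.

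The one genuinely different ingredient is your Step~2. The paper (Lemma~\ref{lem:equality-centered}) integrates the diagonal inequality over the half-space $H_{\xi_j}$, where Chebyshev gives $\mathfrak{h}_j^{(N)}$ mass $O(2^{-N})$. It bounds the other factor by the $L^\infty$ estimate of Lemma~\ref{lem:log-concave-cov}, and reaches a contradiction via Fatou and the pointwise CLT limit of $f_i^{(N)}$.

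Your evaluation at $x=0$ in the style of Lemma~\ref{l:Finiteness} also works. The only real use of the log-convexity regularization in that lemma is the lower bound on $\int h_j\,d\gamma$. Lemma~\ref{lem:log-concave-cov} replaces it by $\int h\,d\gamma\geq C_n^{-1}\det^{1/2}\Cov(h\gamma)\,h(b)\gamma(b)$, with $b$ the barycenter of $h\gamma$. The upper bounds $\int f_i\,d\gamma\lesssim f_i(0)\gamma(0)$ and $h(0)\gamma(0)\lesssim h(b)\gamma(b)e^{-c|b|^2}$ follow from the strong log-concavity that $f\gamma,h\gamma$ inherit from $\gamma$, together with Fradelizi's inequality $\rho(\bary(\rho))\geq e^{-n}\norm{\rho}_{L^\infty}$. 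All constants are $N$-independent because covariances are preserved. This gives the factor $e^{-c2^N(|\xi_1|^2+|\xi_2|^2)}$ you predicted: a more quantitative argument that avoids the CLT, at the price of importing Fradelizi's inequality.

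Two points need fixing.

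First, the off-diagonal limit of $h_1^{(N)}$ need not exist, so define $H:=\liminf_N h_1^{(N)}$ as the paper does. It is log-concave, positive everywhere by the limiting inequality, and finite on the diagonal, hence finite everywhere by concavity; so your ``main obstacle'' disappears. Your proposed fallback would not have sufficed. In Proposition~\ref{prop:GenGaussianGCRSI} the bounds $A_i\geq C$ come from the log-concavity of $H(\cdot,0)$ and $H(0,\cdot)$, which is off-diagonal information.

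Second, (\ref{eq:intro-functional-GCI}) is stated for quasi-concave functions all of whose super-level sets are centered, and $f_i\in\F^{(o)}_{LC}(\gamma)$ need not satisfy this. What you need is the Nakamura--Tsuji GCI for $\gamma$-centered log-concave functions. It also follows from Theorem~\ref{thm:FRBL} with $J=1$, $h_1=f_1\otimes f_2$ and Lemma~\ref{lem:GaussianGCI}. Its equality case is the one you then invoke.
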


Applying Theorem \ref{thm:GCRSI-equality} with $f_1 = \1_{K}$, $f_2 = \1_L$, $h_1 = \1_{K \times L}$ and $h_2 = \1_{a K + b L}$, 
the equality case of Theorem \ref{thm:intro-ab} from the Introduction immediately follows.

\begin{thm} \label{thm:GCMPI-equality}
Let $f_1,f_2 \in \F_{LC}^{(o)}(\gamma^n)$, and let $h_1 : \R^{2n} \rightarrow \R_+$ and $h_2 : \R^{n} \rightarrow \R_+$ denote two log-concave functions. 
Let $a,b \in \R \setminus \{0\}$ such that $3 \min(a^2,b^2) + \max(a^2,b^2) \geq 1$ and $\max(a^2,b^2) \leq 1$. Assume that
\begin{equation} \label{eq:GCMPI-FRBL-equality-assumption}
f_1(x_1) f_2(x_2) \leq h_1\brac{\frac{1}{b} x_1, \frac{1}{a} x_2} h_2(a x_1 + b x_2) \;\;\; \forall x_1,x_2 \in \R^n ,
\end{equation}
and that we have equality
\begin{equation} \label{eq:GCMPI-FRBL-equality}
\int_{\R^n} f_1 d\gamma \int_{\R^n} f_2 d\gamma  = \int_{\R^n} h_1(y,y) d\gamma(y) \int_{\R^n} h_2 d\gamma  > 0 . 
\end{equation}
Then $h_2$ is constant, and in addition:
\begin{enumerate}
\item If $a^2 < b^2=1$ then $f_2$ is constant and $h_1(y,y) = c \cdot f_1(y)$ for a.e.~$y \in \R^n$. 
\item If $b^2 < a^2=1$ then $f_1$ is constant and $h_1(y,y) = c \cdot f_2(y)$ for a.e.~$y \in \R^n$. 
\item If $\max(a^2,b^2) < 1$ then $f_1$, $f_2$, $h_1$ and $h_2$ are all constant.
\end{enumerate}
\end{thm}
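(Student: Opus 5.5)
Since every approximation in the proof of Theorem \ref{thm:intro-FRBL} destroys equality, I will not try to trace equality through Steps 1--5. Instead I plan to exploit equality in (\ref{eq:GCMPI-FRBL-equality}) directly, using the self-convolution operation of Proposition \ref{p:BLTrick} together with the Central Limit Theorem, and so reduce to the partial Gaussian saturation handled by Corollary \ref{cor:GenGaussianGCMPI}. The plan should also serve Theorem \ref{thm:GCRSI-equality}, with Corollary \ref{cor:GenGaussian} in place of Corollary \ref{cor:GenGaussianGCMPI}.

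\textbf{Step 1.} Normalize so that $\int f_i\,d\gamma = \int h_1(y,y)\,d\gamma = \int h_2\,d\gamma = 1$. Put $H(x_1,x_2) := h_1(\frac1b x_1,\frac1a x_2)$, so that $H(y,y)$ corresponds to $h_1$ restricted to the diagonal after the change of variables. Apply the operation $(\cdot)^{(1)}$ of Proposition \ref{p:BLTrick} to $f_1,f_2,h_1,h_2$. Because $\FR^{(e)}_{LC}=1$, part (\ref{it:BLTrick4}) with $D=1$ keeps the pointwise hypothesis (\ref{eq:GCMPI-FRBL-equality-assumption}), part (\ref{it:BLTrick2}) squares the integrals, and part (\ref{it:BLTrick3}) keeps $f_i$ centered. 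Equality is therefore preserved, and it persists under every iterate $(\cdot)^{(N)}$. By the covariance statement in (\ref{it:BLTrick3}), $\Cov(f_i^{(N)}\gamma)=\Cov(f_i\gamma)$, and the same holds for $h_2$ and for the diagonal marginal of $h_1$ under $\gamma_{\Gamma_1}$.

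\textbf{Step 2.} Let $N\to\infty$ and use the local CLT as in Step 1 of Section \ref{sec:FRBL-proof}. This gives $f_i^{(N)}\gamma \to \gamma_{\Cov(f_i\gamma)}$, i.e.\ $f_i^{(N)}\to p_{A_i}g_{A_i}$ with $(\Id+A_i)^{-1}=\Cov(f_i\gamma)$, and similarly $h_2^{(N)}\to p_Cg_C$, and $h_1^{(N)}(y,y)\to p_{\bar B}g_{\bar B}$. The functions $h_1^{(N)}$ themselves need not converge, so I will take a pointwise limit $H_\infty$ of log-concave functions along a subsequence (using the log-concave bounds of Lemma \ref{lem:log-concave} and a compactness argument) whose diagonal is $p_{\bar B}g_{\bar B}$. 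The limiting tuple satisfies the hypothesis of Corollary \ref{cor:GenGaussianGCMPI}, and the integrals still match, giving equality there. This step is the main obstacle. The difficulty is justifying that the limits exist, that no mass escapes (so that integrals pass to the limit), and that the degenerate diagonal integration commutes with the limit. My first attempt would be to use the domination of Lemma \ref{lem:approximation-of-identity}-type exponential bounds together with the fact that the normalized integrals are identically $1$.

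\textbf{Step 3.} Read off the conclusions. Corollary \ref{cor:GenGaussianGCMPI} forces $C=0$, so $\Cov(h_2\gamma)=\Id$, and Proposition \ref{prop:cov} then gives that $h_2$ is constant. If $a^2<1$ we get $A_2=0$, so $\Cov(f_2\gamma)=\Id$ and $f_2$ is constant; symmetrically, $b^2<1$ makes $f_1$ constant. With $h_2\equiv c$ and, say, $f_2$ constant, the hypothesis reads $f_1(x_1)\le c'h_1(\frac1bx_1,\frac1a x_2)$. Restricting to $x_1=by$, $x_2=ay$ gives $f_1(by)\le c'h_1(y,y)$, and equality of integrals then has to be turned into a.e.\ equality. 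I expect this to need care when $b^2=1$, where $f_1(\pm y)$ appears; in the case $a^2<b^2=1$ I would use the equality $\bar B=A_1$ from Proposition \ref{prop:GaussianGCMPI}, or centering and evenness arguments, to get $h_1(y,y)=cf_1(y)$. When $\max(a^2,b^2)<1$, both $f_i$ are constant, so $\bar B=0$, and then $\Cov$ of the diagonal of $h_1$ equals $\Id$. Applying Proposition \ref{prop:cov} on the diagonal shows $h_1(y,y)$ is constant, and log-concavity of $h_1$ together with the hypothesis should upgrade this to $h_1$ constant.
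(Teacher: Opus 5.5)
Your overall route matches the paper's. Proposition \ref{prop:partial-Gaussian-saturation} is exactly your Steps 1--2: iterate the operation of Proposition \ref{p:BLTrick} with $D=1/\FR_{LC}=1$, pass to the limit by the local CLT, and land in Corollary \ref{cor:GenGaussianGCMPI}. Your Step 3 is then read off via Proposition \ref{prop:cov}.

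\textbf{The gap in Step 2.} It sits precisely at the ``mass escaping'' you flag, and your suggested remedy does not address it. The functions $h_1,h_2$ are \emph{not} assumed centered. By Proposition \ref{p:BLTrick} (\ref{it:BLTrick3}), each iteration multiplies barycenters by $\sqrt2$. Put $\xi_2:=\bary(h_2\gamma)$ and $\xi_1:=\bary(y\mapsto h_1(y,y)\gamma(y))$. The normalized $N$-th iterates then have barycenter $2^{N/2}\xi_j$ and fixed covariance. If $\xi_j\neq0$, they behave like Gaussians translated to $2^{N/2}\xi_j$ and tend to $0$ pointwise. You then do not obtain $p_C g_C$ or $p_{\bar B} g_{\bar B}$ with the right integrals. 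Both the reduction to Corollary \ref{cor:GenGaussianGCMPI} and the identification $C=\Cov(h_2\gamma)^{-1}-\Id_n$ used in your Step 3 collapse. No exponential domination bound as in Lemma \ref{lem:approximation-of-identity} can prevent this, because the drift occurs for every non-centered input.

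\textbf{The missing idea.} The drift must be excluded using the equality together with the centering of $f_1,f_2$; this is the paper's Lemma \ref{lem:equality-centered}. Argue by contradiction:
\begin{itemize}
\item Covariances are preserved, so Lemma \ref{lem:log-concave-cov} bounds $\norm{\mathfrak h_j^{(N)}}_{L^\infty}$ uniformly in $N$.
\item By Chebyshev, $\mathfrak h_j^{(N)}$ puts mass $O(2^{-N})$ on the half-space $\{\scalar{y,\xi_j}\le 0\}$.
\item Restrict the iterated pointwise inequality to $(x_1,x_2)=(by,ay)$, bound the \emph{other} $h$-factor by its uniform sup, and integrate over the corresponding half-space. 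The right side tends to $0$.
\item By Fatou, the left side has positive $\liminf$, because $f_1^{(N)},f_2^{(N)}$ converge to centered Gaussians.
\end{itemize}
Once $\xi_1=\xi_2=0$, nothing is left to justify. All normalized limits are probability densities, so (\ref{eq:partial-equality}) is automatic. No compactness is needed for $h_1^{(N)}$: $H:=\liminf_N h_1^{(N)}$ is log-concave, and its diagonal is the CLT limit, since the diagonal restriction itself obeys the self-convolution recursion.

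\textbf{Step 3.} Your worry about $b^2=1$ is well founded, but your proposed fix cannot work. For $b=-1$ the argument gives $h_1(y,y)=c\,f_1(-y)$, i.e.\ in general $h_1(y,y)=c\,f_1(by)$, and this is sharp. Take $f_2=h_2\equiv1$ and $h_1(u,v):=f_1(-u)$ with $f_1$ centered but not even. All hypotheses hold with equality, yet $h_1(y,y)\neq c\,f_1(y)$. Neither $\bar B=A_1$ nor centering can help, since both are invariant under $y\mapsto -y$. The paper's proof substitutes $(x_1,x_2)=(y,ay)$, which tacitly takes $b=1$; the same remark applies to item 2 with $a=-1$.

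The upgrade you leave vague when $\max(a^2,b^2)<1$ is immediate. We have $f_i\equiv p_i$, $h_2\equiv q_2$, $h_1(y,y)\equiv q_1$ and $p_1p_2=q_1q_2$. The hypothesis then gives $h_1\ge q_1>0$ on all of $\R^{2n}$, and a concave function ($\log h_1$) bounded below on $\R^{2n}$ is constant.
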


Applying Theorem \ref{thm:GCMPI-equality} with $f_1 = \1_{K}$, $f_2 = \1_L$, $h_1 = \1_{\frac{1}{b}K \times \frac{1}{a} L}$ and $h_2 = \1_{a K + b L}$, 
the equality case of Theorems \ref{thm:intro-CMPI} and \ref{thm:intro-GCMPI} from the Introduction immediately follows. 

\subsection{Partial Gaussian saturation}

Our proof of Theorems \ref{thm:GCRSI-equality} and Theorems \ref{thm:GCMPI-equality} is based on the following. 

\begin{proposition}[Partial Gaussian saturation of equality] \label{prop:partial-Gaussian-saturation}
Let $f_1,f_2 \in \F_{LC}^{(o)}(\gamma^n)$, and let $h_1 : \R^{2n} \rightarrow \R_+$ and $h_2 : \R^{n} \rightarrow \R_+$ denote two log-concave functions. 
Let $\alpha,\beta, a,b \in \R \setminus \{0\}$. 
Assume that
\begin{equation} \label{eq:FRBL-equality-assumption}
f_1(x_1) f_2(x_2) \leq \frac{1}{\FR_{LC}} h_1(\alpha x_1, \beta x_2) h_2(a x_1 + b x_2) \;\;\; \forall x_1,x_2 \in \R^n ,
\end{equation}
where $\FR_{LC} \in (0,\infty)$ is the forward-reverse constant corresponding to our Brascamp--Lieb datum (\ref{eq:our-BL-datum}), 
and that we have equality
\begin{equation} \label{eq:FRBL-equality}
\int_{\R^n} f_1 d\gamma \int_{\R^n} f_2 d\gamma  = \int_{\R^n} h_1(y,y) d\gamma(y) \int_{\R^n} h_2 d\gamma  > 0 . 
\end{equation}
Denote
\[
\mathcal{A}_i := \Cov(f_i \gamma),\quad \mathcal{\bar B} :=   {\rm Cov} ( y \mapsto h_1(y,y) \gamma(y)) , \quad
\mathcal{C}:= \Cov( h_2 \gamma ) ,
\]
and
\[
A_i:= \mathcal{A}_i^{-1} - \Id_n,\quad \bar B := \mathcal{\bar B}^{-1} - \Id_n , \quad C:= \mathcal{C}^{-1} - \Id_n  . 
\]
Then $A_1,A_2 , \bar B , C \in \Sym_{\geq 0}(n)$, and there exist constants $p_{A_1},p_{A_2}, p_{\bar B}, p_C > 0$ and a log-concave function $H : \R^{2n} \to \R_+$ such that
\begin{align*}
 p_{A_1} g_{A_1}(x_1) \cdot p_{A_2} g_{A_2}(x_2) &\leq \frac{1}{\FR_{LC}} H(\alpha x_1, \beta x_2) \cdot p_C g_C(a x_1 + b x_2) \;\;\; \forall (x_1,x_2) \in \R^n \times \R^n , \\
 H(y,y) & = p_{\bar B} g_{\bar B}(y) \;\;\; \forall y \in \R^n ,
\end{align*}
and
\begin{equation} \label{eq:partial-equality}
\int_{\R^n} p_{A_1} g_{A_1} d\gamma \int_{\R^n} p_{A_2} g_{A_2} d\gamma = \int_{\R^n} p_{\bar B} g_{\bar B} d\gamma \int_{\R^n} p_C g_C d\gamma . 
\end{equation}
\end{proposition}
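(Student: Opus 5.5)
The plan is to use the self-convolution map of Proposition \ref{p:BLTrick} together with the Central Limit Theorem, as in Step \ref{it:step0}, but now starting from the given extremal tuple rather than from an abstract maximizer. The key point is that equality is preserved under the map $f\mapsto f^{(1)}$.

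Set $f_i^{(0)}=f_i$ and $h_j^{(0)}=h_j$, and define $f_i^{(N)},h_j^{(N)}$ iteratively by the formulas of Proposition \ref{p:BLTrick}, with respect to $\gamma$ for $f_i,h_2$ and the degenerate $\gamma_{\Gamma_1}$ for $h_1$. We note that $\int h_1^{(N)}\,d\gamma_{\Gamma_1}=\int h_1^{(N)}(y,y)\,d\gamma(y)$, and that $h_1^{(N)}(y,y)$ is exactly the one-dimensional self-convolution of $y\mapsto h_1(y,y)$, because $\Gamma_1$ is supported on the diagonal. By part (\ref{it:BLTrick4}) with $D=1/\FR_{LC}$, and using $\FR^{(e)}_{LC}=\FR_{LC}$ from Theorem \ref{thm:FRBL}, the pointwise hypothesis (\ref{eq:FRBL-equality-assumption}) persists for every $N$ with the same constant $1/\FR_{LC}$. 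By parts (\ref{it:BLTrick2}) and (\ref{it:BLTrick3}), both sides of (\ref{eq:FRBL-equality}) get squared, so equality persists. The $f_i^{(N)}$ stay centered and their covariances with respect to $\gamma$ are unchanged. The barycenters of $h_1(y,y)\gamma$ and $h_2\gamma$ are multiplied by $\sqrt2$, which will be dealt with below.

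Next I normalize and pass to the limit. Write $f_i^{(N)}\gamma = m_i^{2^N}\rho_i^{(N)}$, where $\rho_i^{(N)}$ is a probability density. Under the rescaling implicit in the definition, $\rho_i^{(N)}$ is the law of $(X_1+\dots+X_{2^N})/2^{N/2}$ with $X_k$ i.i.d.\ of law $f_i\gamma/m_i$. The local CLT gives $\rho_i^{(N)}\to\gamma_{\mathcal A_i}$ uniformly. By Proposition \ref{prop:cov}, $\mathcal A_i\le \Id$, so $A_i=\mathcal A_i^{-1}-\Id\ge0$, and similarly for $\bar B,C$. Hence $f_i^{(N)}/m_i^{2^N}\to c_i\, g_{A_i}$ with $c_i=\det(2\pi\mathcal A_i)^{-1/2}(2\pi)^{n/2}$, and the same holds for $h_2$ and for the diagonal of $h_1$, provided the barycenters vanish.

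Showing that the $h_j$'s are centered is the main obstacle. My approach is to exploit equality together with finiteness. If the barycenter $b_2$ of $h_2\gamma$ were nonzero, the $N$-th iterate would behave like a Gaussian centered at $2^{N/2}b_2$. I would then want the pointwise inequality to force an impossible growth: after normalization, the right-hand side evaluated at the origin carries a factor $e^{-c2^N|b_2|^2}$ while the left-hand side does not. This mirrors the bound $e^{-\frac{\varepsilon}{2}\sum d_j|b_j|^2}$ in Lemma \ref{l:Finiteness}, and it contradicts the preserved equality.

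To make this rigorous I would first regularize by the OU semigroup as in Step \ref{it:step4}, or apply Lemma \ref{l:Finiteness}'s pointwise estimates directly to the iterates, which are uniformly log-concave after one step. Alternatively, I would pass to a Gaussian limit in which the mass ratio equals $1$ while the constraint carries an extra shift. A Gaussian tuple with a shifted $h_j$ achieving ratio $\FR_{LC}$ would contradict the strict inequality obtained by recentering, and that inequality I would verify via the quadratic constraint.

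Once all barycenters vanish, I define $H$ as the pointwise limit of $h_1^{(N)}/\kappa^{2^N}$, or as a subsequential limit obtained by the Arzelà--Ascoli argument. Log-concavity and the pointwise inequality survive the limit, and on the diagonal $H(y,y)=p_{\bar B}g_{\bar B}(y)$ by the CLT for the one-dimensional marginal. The constants $p$ are fixed by the normalizations, the masses of the limits equal those of the iterates, and equality (\ref{eq:partial-equality}) follows. Domination for passing masses to the limit follows from Lemma \ref{lem:approximation-of-identity}-type exponential bounds.
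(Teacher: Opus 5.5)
\textbf{Gap: the centering step is not proved.} Your architecture matches the paper's. You normalize all four integrals to $1$; this is legitimate exactly because of equality, and it keeps the constraint constant at $1/\FR_{LC}$ for every $N$ rather than letting it grow like a ratio to the power $2^N$. You then iterate Proposition \ref{p:BLTrick} with $D=1/\FR_{LC}$, use that covariances are preserved while the $h$-barycenters scale by $2^{N/2}$, and apply the local CLT. The genuine gap is the centering of $y\mapsto h_1(y,y)\gamma(y)$ and $h_2\gamma$, which is the paper's Lemma \ref{lem:equality-centered}. You correctly identify it as the crux, but you only give a heuristic, and none of your three routes closes it as stated:
\begin{itemize}
\item Regularizing by the OU semigroup as in Step \ref{it:step4} changes the datum ($\Q_t$, $L_{j,t}$) and the masses of the $h_j$'s. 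The extremality you need to contradict is then lost, which is precisely why the paper says its approximation argument cannot track equality.
\item Lemma \ref{l:Finiteness} assumes $A_i^\sharp,B_j^\sharp<\infty$. Iterates of general log-concave inputs, e.g.\ indicators of bounded convex sets (which stay compactly supported), are never more log-convex than any Gaussian. Uniform log-concavity gives only the lower Hessian bound, and that already holds at $N=0$ since $h\gamma$ is more log-concave than $g_{\Id_n}$.
\item The Gaussian-limit route is not well defined. The barycenters $2^{N/2}\xi_j$ diverge, so there is no limiting Gaussian tuple with a fixed shift.
\end{itemize}

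\textbf{How to close it.} Only one-sided bounds are needed. Let $\xi_1,\xi_2$ be the initial barycenters. Since covariances are preserved, Lemma \ref{lem:log-concave-cov} bounds the suprema of $h_2^{(N)}\gamma$ and of the diagonal density $h_1^{(N)}(y,y)\gamma(y)$ uniformly in $N$ by some $M$.

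The paper's argument runs as follows. Restrict the constraint to the line $x=(y/\alpha,y/\beta)$, bound one $h$ by $M$, and integrate over the half-space $\{\langle y,\xi_j\rangle\le 0\}$. Chebyshev bounds the mass there by $\langle \mathcal C\xi_2,\xi_2\rangle/(2^N|\xi_2|^4)$, and likewise with $\bar{\mathcal B}$ for $\xi_1$. Meanwhile, by Fatou and the CLT for the centered $f_i$, the left side stays bounded below.

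Your pointwise version at the origin also works, but it needs one more input. The density $\rho:=h_2^{(N)}\gamma$ is more log-concave than $g_{\Id_n}$, so with $m=2^{N/2}\xi_2$ one has $\rho(m/2)\ge\sqrt{\rho(0)\rho(m)}\,e^{|m|^2/8}$. A log-concave density at its barycenter is at least $e^{-n}$ times its maximum. Hence $\rho(0)\le e^{n}M e^{-2^N|\xi_2|^2/4}\to 0$, while $f_1^{(N)}(0)f_2^{(N)}(0)\to\det(\mathcal A_1\mathcal A_2)^{-1/2}>0$. The case of $\xi_1$ is symmetric.

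\textbf{A smaller point: the definition of $H$.} The pointwise limit of $h_1^{(N)}$ off the diagonal need not exist. Arzel\`a--Ascoli is also unavailable, because the degenerate $\Gamma_1$ provides no smoothing in the anti-diagonal direction. Take $H:=\liminf_N h_1^{(N)}$ as the paper does. It is log-concave and inherits the constraint because the left side converges.
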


\begin{proof}
Note that $\mathcal{A}_i, \mathcal{\bar B}, \mathcal{C} \leq \Id_n$ by Proposition \ref{prop:cov}, and hence $A_i, \bar B, C \in \Sym_{\geq 0}(n)$. 
Since we have equality in (\ref{eq:FRBL-equality}) and all four integrals are positive, we may normalize $f_1,f_2,h_1,h_2$ and assume that all four integrals are equal to $1$ without altering (\ref{eq:FRBL-equality-assumption}). 

As in the proofs of Steps \ref{it:step0} and \ref{it:step1} in Section \ref{sec:FRBL-proof}, let $f^{(0)} = f$ for $f \in \{f_1,f_2,h_1,h_2\}$, and 
inductively define $f^{(N+1)} : \R^n \rightarrow \R_+$ for $f \in \{f_1,f_2,h_2\}$ and $h_1^{(N+1)} : \R^{2n} \rightarrow \R_+$ by
\begin{align*}
    f^{(N+1)}(x) &:= \int_{\R^{n}} f^{(N)}\brac{ \frac{x+y}{\sqrt2} } f^{(N)} \brac{ \frac{x-y}{\sqrt2} }\, d\gamma_{\Id_n}(y) ~,~ x \in \R^{n} ,
    \\
    h_1^{(N+1)}(x) & := 
    \int_{\R^{2n}} h_1^{(N)} \brac{\frac{x+y}{\sqrt{2}}} h_1^{(N)}\brac{\frac{x-y}{\sqrt{2}}}\, d\gamma_{\Gamma_1}(y) ~,~ x = (x_1,x_2) \in \R^{n} \times \R^n  \\
    & = \int_{\R^n} h_1^{(N)} \brac{ \frac{(x_1,x_2)}{\sqrt2} + \frac{(y,y)}{\sqrt2} } h_1^{(N)}\brac{\frac{(x_1,x_2)}{\sqrt2} - \frac{(y,y)}{\sqrt2} }\, d\gamma_{\Id_n}(y)  . 
\end{align*}
Denoting for $f \in  \{f_1,f_2,h_2\}$ and $\mathfrak{f} \in \{\mathfrak{f}_1,\mathfrak{f}_2,\mathfrak{h}_2\}$ (respectively),
\[
\mathfrak{f}^{(N)} := f^{(N)} \gamma_{\Id_n} = f^{(N)} \frac{g_{\Id_n}}{(2 \pi)^{n/2}} ~,~ \mathfrak{h}_1^{(N)} := h_1^{(N)} \frac{g_{\frac12 \Id_{2n}}}{(2\pi)^{n/2}}  ,
\]
one readily checks using (\ref{eq:parallel}) and $\scalar{\frac12 \Id_{2n} (y,y), (y,y)} = \scalar{\Id_n y, y}$ that these satisfy:
\begin{align*}
& \mathfrak{f}^{(N+1)}(x) = \int_{\R^n} \mathfrak{f}^{(N)}\brac{\frac{x+y}{\sqrt2}}\mathfrak{f}^{(N)}\brac{\frac{x-y}{\sqrt2} } \, dy = 2^{n/2}  (\mathfrak{f}^{(N)} \ast \mathfrak{f}^{(N)}) (\sqrt{2} x) ,\quad x\in \R^n , \\
& \mathfrak{h}_1^{(N+1)}(x_1,x_2) = \int_{\R^n} \mathfrak{h}_1^{(N)}\brac{ \frac{(x_1,x_2)}{\sqrt2} + \frac{(y,y)}{\sqrt2} } \mathfrak{h}_1^{(N)}\brac{ \frac{(x_1,x_2)}{\sqrt2} - \frac{(y,y)}{\sqrt2} }\, dy,\quad (x_1,x_2)\in\R^{n} \times \R^n. 
\end{align*}
We will further denote the restrictions of $h^{(N)}_1$ and $\mathfrak{h}^{(N)}_1$ on the diagonal by
\[
\bar h^{(N)}_1(y) := h^{(N)}_1(y,y) ~,~  \mathfrak{\bar h}^{(N)}_1(y) := \mathfrak{h}^{(N)}_1(y,y) = \bar h^{(N)}_1(y) \frac{g_{\Id_n}(y)}{(2 \pi)^{n/2}}  ~,~ y \in \R^n ,
\]
and note that
\[
\mathfrak{\bar h}_1^{(N+1)}(y) = 2^{n/2}  (\mathfrak{\bar h}_1^{(N)} \ast \mathfrak{\bar h}_1^{(N)}) (\sqrt{2} y) ,\quad y\in \R^n  .
\]

Recall that $\FR_{LC} = \FR_{LC}^{(e)}= \FR^{(\G)}_{LC}$ by Theorem \ref{thm:FRBL}. It follows by Proposition \ref{p:BLTrick} (\ref{it:BLTrick1}) and (\ref{it:BLTrick4}) applied with $D = \frac{1}{\FR_{LC}}$ that $f^{(N)} : \R^n \rightarrow \R_+$ and $h_1^{(N)} : \R^{2n} \rightarrow \R_+$ remain log-concave for all $N \geq 1$, and that
\begin{equation}  \label{eq:equality-proof-inq}
f^{(N)}_1(x_1) f^{(N)}_2(x_2) \leq \frac{1}{\FR_{LC}} h^{(N)}_1(\alpha x_1,\beta x_2) h^{(N)}_2(a x_1 + b x_2) \;\;\; \forall x_1,x_2 \in \R^n . 
\end{equation}
Furthermore, Proposition \ref{p:BLTrick} verifies for $\mathfrak{f} \in \{\mathfrak{f}_1,\mathfrak{f}_2,\mathfrak{h}_2 , \mathfrak{\bar h}_1 \}$ that for all $N \geq 1$,
\[
\int_{\R^n} \mathfrak{f}^{(N)}(x) dx = 1  ~,~ \bary(\mathfrak{f}^{(N)}_1) = \bary(\mathfrak{f}^{(N)}_2) = 0 ,
\]
and
\begin{equation} \label{eq:center-increasing}
 \bary(\mathfrak{h}^{(N)}_2) = 2^{N/2}  \bary(\mathfrak{h}^{(0)}_2) ~,~
 \bary(\mathfrak{\bar h}^{(N)}_1) = 2^{N/2} \bary(\mathfrak{\bar h}^{(0)}_1) .
\end{equation}
We will see in Lemma \ref{lem:equality-centered} below that the above is enough to imply that necessarily the barycenters in (\ref{eq:center-increasing}) are all equal to $0$; let us proceed under this assumption. 

Since $\mathfrak{f}^{(0)}_1,\mathfrak{f}^{(0)}_2, \mathfrak{\bar h}^{(0)}_1, \mathfrak{h}^{(0)}_2$ are log-concave (in fact, more log-concave than $g_{\Id_n}$) and integrable, they are all bounded (e.g.~by Lemma \ref{lem:log-concave-cov}). 
Applying a local version of the Central Limit Theorem as in Step \ref{it:step0} in Section \ref{sec:FRBL-proof}, we have the following pointwise convergence on $\R^n$:
\[
\lim_{N\to\infty}\mathfrak{f}_i^{(N)} = \gamma_{\Cov(\mathfrak{f}^{(0)}_i)} = \gamma_{\mathcal{A}_i} ,\;\; 
\lim_{N\to\infty} \mathfrak{\bar h}_1^{(N)} = \gamma_{\Cov(\mathfrak{\bar h}^{(0)}_1)} = \gamma_{\mathcal{\bar B}} , \;\;
\lim_{N\to\infty} \mathfrak{h}_2^{(N)} = \gamma_{\Cov(\mathfrak{h}^{(0)}_2)} = \gamma_{\mathcal{C}} .
\]	
Consequently, we have the pointwise convergence:
\[
\lim_{N\to \infty} f_i^{(N)} = \frac{\gamma_{\mathcal{A}_i}}{\gamma_{\Id_n}} = p_{A_i} g_{A_i} , \;\; \lim_{N\to\infty} \bar h_1^{(N)} = \frac{\gamma_{\mathcal{\bar B}}}{\gamma_{\Id_n}} = p_{\bar B} g_{\bar B} , \;\; \lim_{N\to \infty} h_2^{(N)} = \frac{\gamma_{\mathcal{C}}}{\gamma_{\Id_n}} = p_{C} g_{C} ,
\]
for appropriate constants $p_{A_i}, p_{\bar B}, p_{C} > 0$. Therefore, 
defining
\[
H(x_1,x_2) := \liminf_{N \rightarrow \infty} h^{(N)}_1(x_1,x_2) ~,~  x_1,x_2 \in \R^n , 
\]
and taking the pointwise limit inferior as $N \rightarrow \infty$ in (\ref{eq:equality-proof-inq}), we deduce: 
\[
p_{A_1} g_{A_1}(x_1) \cdot p_{A_2} g_{A_2}(x_2) \leq \frac{1}{\FR_{LC}} H(\alpha x_1,\beta x_2) \cdot p_{C} g_{C}(a x_1 + b x_2) \;\; \forall (x_1,x_2) \in \R^n \times \R^n . 
\]
It remains to note that $H(x_1,x_2)$ is log-concave as the limit inferior of log-concave functions, that $H(y,y) = p_{\bar B} g_{\bar B}(y)$, and that all 4 integrals in (\ref{eq:partial-equality}) are equal to $1$ and hence (\ref{eq:partial-equality}) holds, because e.g.
\[
\int_{\R^n} p_{C} g_{C} d\gamma = \int_{\R^n} \frac{\gamma_{\mathcal{C}}}{\gamma_{\Id_n}} d\gamma = \int_{\R^n} d\gamma_{\mathcal{C}} = 1.
\]
This concludes the proof, modulo Lemma \ref{lem:equality-centered} below. 
\end{proof}

\begin{lem} \label{lem:equality-centered}
With the same assumptions and notation as in the proof of Proposition \ref{prop:partial-Gaussian-saturation}, we have
\[
\bary(\mathfrak{\bar h}^{(0)}_1) = 0 ~,~ \bary(\mathfrak{h}^{(0)}_2) = 0 .
\]
\end{lem}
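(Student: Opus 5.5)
We plan to derive a contradiction from the growth $\bary(\mathfrak{h}_2^{(N)}) = 2^{N/2}\bary(\mathfrak{h}_2^{(0)})$ and $\bary(\mathfrak{\bar h}_1^{(N)}) = 2^{N/2}\bary(\mathfrak{\bar h}_1^{(0)})$ in (\ref{eq:center-increasing}). We do this by evaluating the iterated inequality (\ref{eq:equality-proof-inq}) at the single point $x_1=x_2=0$:
\[
f_1^{(N)}(0)\, f_2^{(N)}(0) \leq \frac{1}{\FR_{LC}}\, \bar h_1^{(N)}(0)\, h_2^{(N)}(0) .
\]
Multiplying by $(2\pi)^{-n}$, this is the same inequality for the densities $\mathfrak{f}_1^{(N)},\mathfrak{f}_2^{(N)},\mathfrak{\bar h}_1^{(N)},\mathfrak{h}_2^{(N)}$ at the origin. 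All four are log-concave probability densities on $\R^n$. By Proposition \ref{p:BLTrick} (\ref{it:BLTrick3}) their covariances do not depend on $N$; they equal $\mathcal{A}_1,\mathcal{A}_2,\mathcal{\bar B},\mathcal{C}$, which are non-degenerate because the densities are log-concave and integrable with non-empty support. Moreover, each is more log-concave than $g_{\Id_n}$, being a log-concave function times $\gamma_{\Id_n}$. For $\mathfrak{\bar h}_1^{(N)}$ this uses $\scalar{\frac12\Id_{2n}(y,y),(y,y)} = \abs{y}^2$, and by Proposition \ref{p:BLTrick} (\ref{it:BLTrick1}) the property is preserved along the iteration.

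\emph{Lower bound on the left-hand side.} The densities $\mathfrak{f}_i^{(N)}$ are centered log-concave. By Fradelizi's inequality the value at the barycenter is at least $e^{-n}\snorm{\mathfrak{f}_i^{(N)}}_{L^\infty}$. Since the Gaussian maximizes entropy for a fixed covariance, $-\log\snorm{\mathfrak{f}_i^{(N)}}_{L^\infty} \le \mathrm{Ent} \le \frac12\log\det(2\pi e\,\mathcal{A}_i)$. Hence $\mathfrak{f}_1^{(N)}(0)\,\mathfrak{f}_2^{(N)}(0)\ge c>0$ uniformly in $N$.

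\emph{Upper bound on the right-hand side.} Let $\mathfrak{h}$ denote either $\mathfrak{h}_2^{(N)}$ or $\mathfrak{\bar h}_1^{(N)}$, write $m$ for a maximum point, and set $M = \snorm{\mathfrak{h}}_{L^\infty}$. Lemma \ref{lem:log-concave-cov} gives $M\le C_n\det^{-1/2}(\mathrm{Cov})$, which is bounded independently of $N$. Strong log-concavity ($\mathfrak{h} = e^{-V}$ with $V - \frac12\abs{\cdot}^2$ convex, and $0$ a subgradient of $V$ at $m$) gives $\mathfrak{h}(x)\le M e^{-\abs{x-m}^2/2}$. Integrating $\abs{x-m}$ against $\mathfrak{h}$ then yields $\abs{\bary(\mathfrak{h})-m}\le M\int\abs{z}e^{-\abs{z}^2/2}dz =: R$, with $R$ independent of $N$. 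Consequently
\[
\mathfrak{h}(0)\le M\, e^{-\frac12(\abs{\bary(\mathfrak{h})}-R)_+^2}.
\]

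\emph{Conclusion.} If $\bary(\mathfrak{h}_2^{(0)})\ne0$ or $\bary(\mathfrak{\bar h}_1^{(0)})\ne0$, the corresponding barycenter grows like $2^{N/2}$. The estimate above then forces that factor of $\mathfrak{\bar h}_1^{(N)}(0)\,\mathfrak{h}_2^{(N)}(0)$ to tend to $0$, while the other factor stays bounded by its $L^\infty$ bound. The right-hand side therefore tends to $0$, contradicting the uniform lower bound $c>0$. Hence both barycenters vanish.

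The main obstacle I expect is securing the uniform lower bound at the origin for $\mathfrak{f}_i^{(N)}$, since we have no log-convexity regularization available as in Lemma \ref{l:Finiteness}. Combining Fradelizi's inequality with the fixed covariance should suffice; if it does not, the fallback is the local CLT convergence $\mathfrak{f}_i^{(N)}(0)\to\gamma_{\mathcal{A}_i}(0)>0$ used in Step \ref{it:step0}, which does not depend on the barycenters of the $h$'s.
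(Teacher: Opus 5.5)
Your argument is correct, and it takes a different route from the paper's.

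\textbf{The paper's route.} It restricts (\ref{eq:equality-proof-inq}) to the line $(x_1,x_2)=(y/\alpha,y/\beta)$ and integrates over the half-space $H_{\xi_j}$ on the side opposite to the drifting barycenter. The mass of $\mathfrak{h}^{(N)}_j$ there is bounded by Markov--Chebyshev, using the $N$-independent covariance, and the other factor is bounded by the uniform $L^\infty$ bound of Lemma \ref{lem:log-concave-cov}. The positive lower bound on the left-hand side comes from Fatou's lemma together with the local CLT limit $f_i^{(N)}\to p_{A_i}g_{A_i}$.

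\textbf{Your route.} You evaluate only at $x_1=x_2=0$, and you replace the integrated Chebyshev estimate by a pointwise decay estimate. Strong log-concavity (being more log-concave than $g_{\Id_n}$) gives Gaussian decay around the mode. The uniform $L^\infty$ bound keeps the mode within a fixed distance $R$ of the barycenter, so $\mathfrak{h}^{(N)}_j(0)\to0$ once the barycenter escapes like $2^{N/2}$. Fradelizi's inequality plus the maximum-entropy bound (or the local CLT at the origin) then gives a uniform lower bound on $\mathfrak{f}_1^{(N)}(0)\,\mathfrak{f}_2^{(N)}(0)$.

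\textbf{What each buys.} Your single-point version needs no change of variables in $\mathfrak{h}_2^{(N)}((a/\alpha+b/\beta)y)$. It therefore avoids the sign and Jacobian bookkeeping, and also the implicit requirement $a/\alpha+b/\beta\neq0$ in the paper's $\xi_2$-case. That requirement holds in the applications but is not among the hypotheses of Proposition \ref{prop:partial-Gaussian-saturation}. With the Fradelizi route your argument is also independent of the CLT. The price is that you use strong log-concavity pointwise and import Fradelizi's theorem, whereas the paper needs only second moments and an $L^\infty$ bound.

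\textbf{One small point to tidy.} You should justify that the maximizer $m$ exists. For $N\geq1$ it does: $\mathfrak{h}^{(N)}$ is then a rescaled self-convolution of bounded integrable functions, hence continuous, and it tends to $0$ at infinity. Alternatively, pass to the upper semicontinuous hull. This only increases the value at $0$ and leaves the barycenter and the supremum unchanged.
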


\begin{proof}
For ease of notation, we abbreviate $\mathfrak{h}^{(N)}_1 = \mathfrak{\bar h}^{(N)}_1$ (there will not be any confusion with the notation $\mathfrak{h}^{(N)}_1$ used in the proof of Proposition \ref{prop:partial-Gaussian-saturation}).

Denote $\xi_j := \bary(\mathfrak{h}^{(0)}_j)$, and recall that  $\xi^{(N)}_j:= \bary(\mathfrak{h}^{(N)}_j) = 2^{N/2} \xi_j$, $j=1,2$. 
Evaluating (\ref{eq:equality-proof-inq}) at $(x_1,x_2) = (\frac{1}{\alpha} y,\frac{1}{\beta} y)$, we have for all $N \geq 1$, 
\[
f^{(N)}_1(y/\alpha) f^{(N)}_2(y/\beta) \leq \frac{(2\pi)^n}{\FR_{LC}} e^{\frac{(a/\alpha+b/\beta)^2+1}{2} |y|^2} \mathfrak{h}^{(N)}_1(y) \mathfrak{h}^{(N)}_2((a/\alpha+b/\beta)y) \;\;\; \forall y \in \R^n .
\]
Also recall that $\int \mathfrak{h}^{(N)}_j(y) dy = 1$, and that $\Cov(\mathfrak{h}^{(N)}_j) = \Cov(\mathfrak{h}^{(0)}_j)$ by Proposition \ref{p:BLTrick}. 

Now, assume in the contrapositive that $\xi_j \neq 0$. 
Then, denoting the half-space $H_\theta := \{ y \in \R^n : \scalar{y,\theta} \leq 0 \}$, we have by the Markov-Chebyshev inequality applied to the projection of $\mathfrak{h}^{(N)}_j$ onto the linear span of $u_j = \xi_j / |\xi_j| = \xi^{(N)}_j / |\xi^{(N)}_j|$:
\[
\int_{H_{\xi_j}} \mathfrak{h}^{(N)}_j(y) dy \leq \frac{\sscalar{\Cov(\mathfrak{h}^{(N)}_j) u_j, u_j}}{|\xi^{(N)}_j|^2} = \frac{\sscalar{\Cov(\mathfrak{h}^{(0)}_j) \xi_j, \xi_j}}{2^N |\xi_j|^4} .
\]

Recall that $\mathfrak{h}^{(N)}_j$ remains more log-concave that $g_{\Id_n}$, and in particular log-concave, for all $N \geq 1$. Hence, by Lemma \ref{lem:log-concave-cov}, 
\[
\norm{\mathfrak{h}^{(N)}_j}_{L^\infty} \leq \frac{C_n}{\det^{\frac{1}{2}} \Cov(\mathfrak{h}^{(N)}_j)} = \frac{C_n}{\det^{\frac{1}{2}} \Cov(\mathfrak{h}^{(0)}_j)} := M_j < \infty \;\;\; \forall N \geq 1 . 
\]

Now, if $\xi_1 \neq 0$ then:
\[
 \frac{\FR_{LC}}{(2\pi)^n} e^{-\frac{(a/\alpha+b/\beta)^2+1}{2} |y|^2} f^{(N)}_1(y/\alpha) f^{(N)}_2(y/\beta) \leq M_2  \mathfrak{h}^{(N)}_1(y) , 
\]
and integrating over $H_{\xi_1}$ we obtain:
\[
 \frac{\FR_{LC}}{(2\pi)^n} \int_{H_{\xi_1}} e^{-\frac{(a/\alpha+b/\beta)^2+1}{2} |y|^2} f^{(N)}_1(y/\alpha) f^{(N)}_2(y/\beta)  dy \leq  
\frac{M_2 \sscalar{\Cov(\mathfrak{h}^{(0)}_1) \xi_1, \xi_1}}{2^N |\xi_1|^4} .
\]
Recall that $\lim_{N\to \infty} f_i^{(N)} =  p_{A_i} g_{A_i}$ (pointwise), and so Fatou's lemma implies that the limit inferior as $N \rightarrow \infty$ of the left-hand-side is bounded below by an 
integral of centered Gaussians over a half-plane, and hence is strictly positive; on the other hand, the right-hand-side tends to $0$, a contradiction. 

Similarly, if $\xi_2 \neq 0$, then:
\[
 \frac{\FR_{LC}}{(2\pi)^n} e^{-\frac{(a/\alpha+b/\beta)^2+1}{2} |y|^2} f^{(N)}_1(y/\alpha) f^{(N)}_2(y/\beta)  \leq M_1  \mathfrak{h}^{(N)}_2((a/\alpha+b/\beta) y) , 
\]
and so integrating over $H_{\xi_2}$ we obtain
\[
 \frac{\FR_{LC}}{(2\pi)^n} \int_{H_{\xi_2}} e^{-\frac{(a/\alpha+b/\beta)^2+1}{2} |y|^2} f^{(N)}_1(y/\alpha) f^{(N)}_2(y/\beta)  dy \leq  
\frac{M_1 \sscalar{\Cov(\mathfrak{h}^{(0)}_2) \xi_2, \xi_2}}{2^N (a/\alpha+b/\beta)^n|\xi_2|^4} ,
\]
and again we get a contradiction as $N \rightarrow \infty$. It follows that $\xi_1=\xi_2 = 0$, concluding the proof. 
\end{proof}

\begin{remark}
It is clear from the proof that Proposition \ref{prop:partial-Gaussian-saturation} applies in much greater generality than for our particular Brascamp--Lieb datum (\ref{eq:our-BL-datum}), but we do not insist on this here. 
\end{remark}

\subsection{Conclusions}

\begin{proof}[Proof of Theorem \ref{thm:GCRSI-equality}]
Recall that in this case $\FR_{LC} = \FR_{LC}^{(\G)} = 1$ by Theorem \ref{thm:GCRSI-FRBL}. 
Combining Proposition \ref{prop:partial-Gaussian-saturation} and Corollary \ref{cor:GenGaussian}, we deduce that $C = 0$, and hence $\Cov(h_2 \gamma) =\mathcal{C} = \Id_n$. By Proposition \ref{prop:cov} this means that $h_2$ must be identically equal to some constant $c > 0$. 
Therefore
\[
f_1(x_1) f_2(x_2) \leq c \cdot h(x_1,x_2)  \;\;\; \forall (x_1,x_2) \in \R^n \times \R^n ,
\]
and integrating this on the diagonal $(x_1,x_2) = (y,y)$ with respect to $\gamma$, we obtain
\[
\int_{\R^n} f_1 f_2 d\gamma \leq c \int_{\R^n} h(y,y) d\gamma = \int_{\R^n} f_1 d\gamma \int_{\R^n} f_2 d\gamma ,
\]
where we used (\ref{eq:GCRSI-FRBL-equality}) again in the last transition.  On the other hand, by the GCI for $f_1,f_2 \in \F_{LC}^{(o)}(\gamma)$ established in \cite{NakamuraTsuji-GCIForCentered}, we have
\[
\int_{\R^n} f_1 f_2 d\gamma \geq \int_{\R^n} f_1 d\gamma \int_{\R^n} f_2 d\gamma ,
\]
and so we must have equality. By the characterization of the equality case in \cite[Theorem 5.5]{NakamuraTsuji-GCIForCentered}, we must have
$f_1(x) = \bar f_1(P_E x)$ and $f_2(x) = \bar f_2(P_{E^{\perp}} x)$ for some linear subspace $E \subset \R^n$ and almost every $x \in \R^n$. 
Finally, since $f_1(y) f_2(y) \leq c \cdot h(y,y)$ and their integrals with respect to $\gamma$ coincide, it follows that these expressions must coincide for almost-every $y \in \R^n$. 
This concludes the proof. 
\end{proof}

\begin{proof}[Proof of Theorem \ref{thm:GCMPI-equality}] 
Recall that in this case $\FR_{LC} = \FR_{LC}^{(\G)} = 1$ by Theorem \ref{thm:GCMPI-FRBL}. 
Combining Proposition \ref{prop:partial-Gaussian-saturation} and the equality cases of Corollary \ref{cor:GenGaussianGCMPI}, we first handle the case when $\max(a^2,b^2) < 1$. In that case, Corollary \ref{cor:GenGaussianGCMPI} implies that $A_1 = A_2 = \bar B = C = 0$, and hence 
$\Cov(f_i \gamma) = \mathcal{A}_i = \Id_n$, $\Cov(y \mapsto h_1(y,y) \gamma(y)) = \Id_n$ and $\Cov(h_2 \gamma) =\mathcal{C} = \Id_n$. By Proposition \ref{prop:cov} this means that $f_1$, $f_2$, $y \mapsto h_1(y,y)$ and $h_2$ must be identically equal to some positive constants $p_1$, $p_2$, $q_1$ and $q_2$, respectively. By (\ref{eq:GCMPI-FRBL-equality}) we have $p_1 p_2 = q_1 q_2$, and (\ref{eq:GCMPI-FRBL-equality-assumption}) implies that $h_1(x_1,x_2) \geq \frac{p_1 p_2}{q_2} = q_1 > 0$ for all $(x_1,x_2) \in \R^n \times \R^n$. Since $h_1$ is log-concave, it follows that if must be constant (equal to $q_1$). 

When $a^2 < 1 = b^2$, Corollary \ref{cor:GenGaussianGCMPI} implies that $A_2 = C = 0$. Therefore $\Cov(f_2 \gamma) = \mathcal{A}_2 = \Id_n$ and $\Cov(h_2 \gamma) =\mathcal{C} = \Id_n$, and so Proposition \ref{prop:cov} implies that $f_2$ and $h_2$ must be identically equal to some positive constants $p_2$ and $q_2$, respectively. Inspecting (\ref{eq:GCMPI-FRBL-equality-assumption}) on $(x_1,x_2) = (y, a y)$, we know that $f_1(y) \leq \frac{q_2}{p_2} h_1(y,y)$ for all $y \in \R^n$. But since we have equality in (\ref{eq:GCMPI-FRBL-equality}), their integrals with respect to $\gamma$ are equal, and so they must coincide for almost every $y \in \R^n$. Exchanging the roles of $a,b$, the proof is complete. 
\end{proof}

\section{The general problem} \label{sec:unify}

The results of the previous sections raise the following natural question: for which $\alpha,\beta, a, b \in \R \setminus \{0\}$ does it hold that for all convex sets $K,L \subset \R^n$ with non-empty interior and Gaussian barycenters at the origin, one has
\[
\gamma(K) \gamma(L) \leq \gamma(\alpha K \cap \beta L) \gamma(a K + b L) \; ? 
\]
Since $\gamma$ is invariant under reflection, by switching between $K,-K$ and $L,-L$ if necessary, this is equivalent to requiring
\begin{equation} \label{eq:unify-preface}
\gamma(K) \gamma(L) \leq \gamma(\abs{\alpha} K \cap \abs{\beta} L) \gamma(\abs{a} K + \sigma \abs{b} L) ~,~ \sigma = \sgn(\alpha \beta a b) . 
\end{equation}
This reduces the question to the case that $\alpha,\beta,a,b > 0$ and $\sigma \in \{-1,+1\}$, so we proceed under these assumptions. 

By using $K$ or $L$ equal to $\R^n$ (or an approximation thereof), it is clear that necessarily $\alpha,\beta \geq 1$. 
By applying (\ref{eq:unify-preface}) to $\lambda K,\lambda L$ and taking the limit as $\lambda \rightarrow 0$, we must have
\[
\abs{K} \abs{L} \leq \abs{ \alpha K \cap \beta L } \abs{ a K + \sigma b L } ,
\]
for all bounded convex $K,L \subset \R^n$ with non-empty interior and Lebesgue barycenters at the origin. Equivalently, replacing $K$ and $L$ by $\beta K$ and $\alpha L$, we must have
\[
\abs{K} \abs{L} \leq \abs{K \cap L} \abs{ a \beta K + \sigma b \alpha L} ,
\]
and so by taking $L$ to be a small ball centered at the origin, we see that necessarily $a \beta, b \alpha \geq 1$. Consequently, the best we could hope for is to have $\alpha = \max(1/b,1)$ and $\beta = \max(1/a,1)$. This leads to the following:

\begin{problem} \label{prob:problem}
For each $\sigma \in \{-1,+1\}$, characterize those $a,b > 0$ for which
\begin{equation} \label{eq:problem}
\gamma^n(K) \gamma^n(L) \leq \gamma^n(\max(\frac{1}{b},1) K \cap \max(\frac{1}{a},1) L) \gamma^n(a K + \sigma b L) ,
\end{equation}
for all $n \geq 1$ and all convex $K,L \subset \R^n$ with non-empty interior and Gaussian barycenters at the origin. 
\end{problem}

\begin{lemma} \label{lem:a+b}
If (\ref{eq:problem}) holds for all $K,L$ as above then necessarily $a+b \geq 1$. 
\end{lemma}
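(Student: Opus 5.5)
We test (\ref{eq:problem}) on the simplest centered configuration, $K = L = S_\eps := \{ x \in \R^n : |x_1| \leq \eps \}$, a thin origin-symmetric slab, which has Gaussian barycenter at the origin; by approximation (or by intersecting with a huge centered ball) this is admissible. Write $\alpha = \max(1/b,1) \geq 1$ and $\beta = \max(1/a,1) \geq 1$. Then $\alpha K \cap \beta L = \min(\alpha,\beta) S_\eps$. By origin-symmetry, $a K + \sigma b L = (a+b) S_\eps$ regardless of the sign $\sigma$. The inequality reduces to a one-dimensional statement for $\gamma^1([-\eps,\eps])$:
\[
\gamma^1([-\eps,\eps])^2 \leq \gamma^1([-m\eps,m\eps]) \, \gamma^1([-(a+b)\eps,(a+b)\eps]) , \qquad m = \min(\alpha,\beta) .
\]

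Next we let $\eps \to 0$. Since $\gamma^1([-t,t]) = \frac{2t}{\sqrt{2\pi}}(1+o(1))$, dividing by $\eps^2$ gives $1 \leq m (a+b) \cdot \frac{2}{\sqrt{2\pi}}\cdot\frac{\sqrt{2\pi}}{2}$, i.e.\ $1 \leq \min(\alpha,\beta)(a+b)$.

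This alone does not yield $a+b\geq 1$ when $\alpha,\beta>1$, for instance when $a,b<1$, where $m=\min(1/a,1/b)$. So the thin slab is not the right test in that regime. We would instead use a \emph{thick} set: take $K = L = S_R$ with $R \to \infty$, or better $K=L=\R^n$ approximately. Then both sides tend to $1$, which is not informative either.

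The cleaner test is $K = L = B$, a centered Euclidean ball of radius $r$ in $\R^n$ with $n \to \infty$, exploiting concentration. By scale invariance in the form of thresholds, $\gamma^n(tB_{\sqrt n})$ tends to $1$ if $t>1$ and to $0$ if $t<1$. Take $r = \sqrt{n}\,s$ with $s>1$ chosen so that $(a+b)s<1$, which is possible if $a+b<1$. Then the left side tends to $1$, while $\gamma^n((a+b) r B) \to 0$ and the other factor is at most $1$, giving a contradiction. Hence $a+b\geq 1$, which is exactly why the problem is posed for all $n\geq1$.

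The main point to check is the concentration statement: $|G|/\sqrt n \to 1$ in probability for standard Gaussian $G$, by the law of large numbers applied to $|G|^2/n$. We also use that balls are centered and that $aB+\sigma bB=(a+b)B$ by symmetry and convexity.
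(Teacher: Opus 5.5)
Your argument is correct, but it takes a different route from the paper.

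\textbf{The paper's argument.} The paper works in a fixed dimension. Testing cylinders reduces it to $n=1$. It then takes $K=L=[-R,R]$, bounds the intersection factor by $1$, and obtains $\gamma^1([-R,R])^2\le\gamma^1((a+b)[-R,R])$. Writing $\Phi_0(R)=\gamma^1([R,\infty))$, this says $\Phi_0((a+b)R)\le 2\Phi_0(R)$. If $a+b<1$, the standard Mills-ratio tail bounds show the left-hand side exceeds the right by a factor growing like $e^{(1-(a+b)^2)R^2/2}$. This is a contradiction as $R\to\infty$.

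\textbf{The thick slab is informative.} The ``thick slab'' test you dismissed as uninformative is exactly the paper's test. Both sides do tend to $1$, but the information lies in the rates $1-\gamma(\cdot)$, that is, in a comparison of Gaussian tails.

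\textbf{What each approach buys.} Your high-dimensional ball argument replaces these tail asymptotics with the concentration of $|G|/\sqrt n$ around $1$, via the weak law of large numbers. This makes the contradiction macroscopic (left side $\to1$, right side $\to0$) and is arguably more transparent. Otherwise both arguments use the same two ingredients: $aK+\sigma bK=(a+b)K$ for origin-symmetric convex $K$, and the trivial bound of the intersection factor by $1$.

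The price is that you genuinely need the quantifier ``for all $n\ge1$'' in Problem \ref{prob:problem}. The paper's proof establishes something stronger: $a+b\ge1$ is already forced if (\ref{eq:problem}) holds in a single fixed dimension, in particular $n=1$.

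Finally, your thin-slab computation is not needed and can be dropped.
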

\begin{proof}
By considering cylinders we may reduce to the case $n=1$. Taking $K = L = [-R,R]$, (\ref{eq:problem}) implies
\[
\gamma^1([-R,R])^2 \leq \gamma^1((a+b)[-R,R]) . 
\]
Denoting $\Phi_0(R) = \gamma^1([R,\infty))$, this implies
\[
\Phi_0((a+b) R) \leq 2 \Phi_0(R) . 
\]
Letting $R \rightarrow \infty$ and using the standard Gaussian tail estimates
\[
\frac{R}{R^2+1} \gamma^1(R) \leq \Phi_0(R) \leq \frac{1}{R} \gamma^1(R) ,
\]
the assertion easily follows. 
\end{proof}

Note that the necessary conditions $\alpha,\beta, a+b \geq 1$ already imply that (\ref{eq:unify-preface}) cannot hold
when $a=b = 1/\alpha = 1/\beta = \lambda$ for $\lambda < 1/2$ or $\lambda > 1$. On the other hand, Theorem \ref{thm:intro-GCMPI} verifies that (\ref{eq:unify-preface}) holds for $\sigma = +1$ and $\lambda \in [1/2,1]$. Together, this verifies Corollary \ref{cor:intro-lambda}. 

\subsection{Limitations of the forward-reverse Gaussian saturation} \label{subsec:counterexamples}

In view of Lemma \ref{lem:a+b}, it may be reasonable to expect that (\ref{eq:problem}) should hold for all $a+b \geq 1$. 
What is clear is that the Gaussian saturation fails for parts of this domain. In this subsection, we provide some counterexamples to that effect. 

\begin{itemize} 
\item $\sigma=+1$, $a,b \in (0,1)$, $a+b=1$. In this case, the Gaussian saturation cannot yield (\ref{eq:problem}) unless $a=b=1/2$ (the latter case was verified in Proposition \ref{prop:GaussianGCMPI}). Indeed, we have
    \[
    \gamma(K)\gamma(L) \le \FR_{LC}^{(\G)}(a,b) \cdot \gamma \brac{\frac{1}{b} K\cap \frac{1}{a} L} \gamma(a K + b L) ,
    \]
    where the Gaussian saturation constant $\FR_{LC}^{(\G)}(a,b) = \FR_{LC}^{(\G)}$ is given by
    \begin{equation} \label{eq:unify-temp}
   (\FR^{(\G)}_{LC})^2 := \sup     \frac{{\rm det}\, ( {\rm Id}_n + \bar B ) {\rm det}\, ( {\rm Id}_n + C ) }{ {\rm det}\, ({\rm Id}_n +A_1){\rm det}\, ({\rm Id}_n +A_2) } ,    \end{equation}
    and the supremum is over $A_1,A_2, C \in \Sym_{\geq 0}(n)$ and $B \in \Sym_{\geq 0}(2n)$ such that
    \[ 
    \begin{pmatrix}
        A_1 & 0 \\ 0 & A_2
    \end{pmatrix}
    \ge 
    \begin{pmatrix} \frac{1}{b^2} B_1 & \frac{1}{ab} B_2 \\ \frac{1}{ab} B_2^* & \frac{1}{a^2} B_4 \end{pmatrix} + \begin{pmatrix}
        a^2 C & abC \\ ab C & b^2 C 
    \end{pmatrix} .
    \]
    By testing cylindrical sets $K,L$, it is enough to check what happens in dimension $n=1$. For any $r,s \geq 0$, the following selection satisfies the requirement:
\[
A_1= \frac{1}{b^2} s r ,\;  A_2 = \frac{1}{a^2} s , \; C = \frac{1}{a^2b^2} s \frac{r}{r+1} ,\; 
B = s \begin{pmatrix} \frac{r^2}{r+1} & -\frac{r}{r+1}\\ -\frac{r}{r+1} & \frac{1}{r+1} \end{pmatrix} \geq 0 .
\]
Plugging this into (\ref{eq:unify-temp}), a long computation verifies that
\[
     (\FR^{(\G)}_{LC})^2 - 1 \geq
    \frac{ 
    s
    \bigg((r+1)\phi(r)
    -4s r^2\bigg) }{(b^2 +sr)  (a^2  +s) (r+1)^2} ,
\]
where 
$$
\phi(r):= a^2(b^2-1)r^2 + (1-2a^2b^2 -(a^2+b^2))r + (a^2-1)b^2. $$
If $r > 0$ is such that $\phi(r)\ge0$ then we can choose 
$$
s(r) = \frac{\phi(r)(r+1)}{8r^2} \geq 0 ,
$$
to have
\begin{equation} \label{eq:complicated}
    (\FR^{(\G)}_{LC})^2 - 1 \geq
    \frac{ 
    4 r^2 s(r)^2 }{(b^2 +s(r) r)  (a^2  +s(r)) (r+1)^2}.
\end{equation}
Selecting
$$
r_a := \frac{(1-a)(1-a+a^2)}{a^2(2-a)} > 0 , 
$$
another long computation shows that
\[
\phi(r_a) = \frac{(1-a)^2(2a-1)^2}{a(2-a)} \geq 0. 
\]
Thus, for $a \in (0,1)$, $\phi(r_a)=0$ if and only if $a=1/2$. 
Consequently, if $a\neq 1/2$ then $\phi(r_a)>0$ and hence $s(r_a) > 0$, and so (\ref{eq:complicated}) verifies that $(\FR^{(\G)}_{LC})^2 > 1$. 
\item $\sigma=-1$, $a,b \in (0,1]$. In this case, the Gaussian saturation cannot yield (\ref{eq:problem}) unless $a^2 + b^2 = 1$ as in the Milman--Pajor inequality (\ref{eq:intro-MilmanPajor}). Indeed, we have
    \[
    \gamma(K)\gamma(L) \le \FR_{LC}^{(\G)}(a,b) \cdot \gamma \brac{\frac{1}{b} K\cap \frac{1}{a} L} \gamma(a K- b L) ,
    \]
    where the Gaussian saturation constant $\FR_{LC}^{(\G)}(a,b) = \FR_{LC}^{(\G)}$ is given by (\ref{eq:unify-temp}) and the supremum is over $A_1,A_2, C \in \Sym_{\geq 0}(n)$ and $B \in \Sym_{\geq 0}(2n)$ such that
    \[ 
    \begin{pmatrix}
        A_1 & 0 \\ 0 & A_2
    \end{pmatrix}
    \ge 
    \begin{pmatrix} \frac{1}{b^2} B_1 & \frac{1}{ab} B_2 \\ \frac{1}{ab} B_2^* & \frac{1}{a^2} B_4 \end{pmatrix} + \begin{pmatrix}
        a^2 C & - abC \\ -ab C & b^2 C 
    \end{pmatrix} .
    \]
    By testing cylindrical sets $K,L$, it is enough to check what happens in dimension $n=1$. For any $z > 0$, the following selection satisfies the requirement:
\[
A_1=A_2 = z , \; C = \frac{z}{a^2 + b^2} ,\; 
B = \frac{z}{a^2+b^2} \begin{pmatrix} b^4 & a^2 b^2 \\ a^2 b^2 & a^4  \end{pmatrix} \geq 0 .
\]
Since $\bar B = (a^2+b^2) z$, we see that
\[
(\FR^{(\G)}_{LC})^2(a,b)  \geq  \sup_{z>0} \frac{(1 + (a^2+b^2) z)(1 + \frac{1}{a^2+b^2} z)}{(1+z)^2}=  \sup_{z>0} \frac{ 1+z^2 + (a^2+b^2 + \frac{1}{a^2 + b^2}) z }{ 1+z^2 + 2z }.
\]
In particular, by the arithmetic-geometric means inequality,
\[
1 \geq \FR^{(\G)}_{LC}(a,b) \quad \Rightarrow \quad a^2+b^2 + \frac{1}{a^2 + b^2} \leq 2 \quad \Rightarrow \quad  a^2+b^2 = 1 . 
\]
It is not hard to check that the reverse implication is also true, but as this only recover the original Milman--Pajor inequality (\ref{eq:intro-MilmanPajor}), we refrain from doing this here. 
\end{itemize}

\subsection{A unified formulation}

Since the ``classical" case when $\sigma=-1$  fails to satisfy the Gaussian saturation sufficient condition required by our forward-reverse Brascamp--Lieb reduction, besides in the known (Milman--Pajor) case $a^2+b^2=1$, we only consider the ``conjugate" case $\sigma = +1$ in this work. 
As we've already seen in the previous sections, the reduction to the Gaussian saturation question yields a sharp constant for a wide range of $(a,b)$ in the conjugate case. Below, we present a unified formulation which partially resolves Problem \ref{prob:problem} for $\sigma=+1$.

\begin{thm} \label{thm:unify}
Let $a,b \geq 0$. 
\begin{enumerate}
\item
If 
\[
3 \min(a^2,b^2) + \max(a^2,b^2) \geq 1 ,
\]
then for any convex $K,L \subset \R^n$ with non-empty interior and Gaussian barycenters at the origin, 
it holds that
\begin{equation} \label{eq:unified}
\gamma(K) \gamma(L) \leq \gamma(\max(\frac{1}{b},1) K \cap \max(\frac{1}{a},1) L) \gamma(a K + b L) ,
\end{equation}
with equality as follows:
\begin{enumerate}
\item When in addition $0 < a < 1 \leq b$ if and only if $L = \R^n$.
\item When in addition $0 < b < 1 \leq a$ if and only if $K = \R^n$. 
\item When in addition $\max(a,b) < 1$ if and only if $K = L = \R^n$. 
\item When in addition $\min(a,b) \geq 1$ if and only if (\ref{eq:intro-equality}) holds. 
\end{enumerate}
\item
When
\[
 a+b < 1,
 \]
 then (\ref{eq:unified}) is false for some $K,L$ as above. 
\end{enumerate}
\end{thm}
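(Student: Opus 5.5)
The proof of part 1 is a case analysis in $(a,b)$ that reduces to Theorems \ref{thm:intro-ab} and \ref{thm:intro-GCMPI}, using monotonicity of $\gamma$ under inclusion and of convex sets under dilation (for origin-containing convex $K$, $s \ge t>0$ gives $tK\subset sK$). Note that the Gaussian barycenter at the origin forces $0 \in \mathrm{int}$-closure of $K,L$ up to null sets, which is enough for these inclusions up to null sets. Assume without loss of generality $a\le b$. The cases are:
\begin{itemize}
\item[(i)] $\min(a,b)\ge 1$. Then $\max(1/b,1)=\max(1/a,1)=1$, so (\ref{eq:unified}) reads $\gamma(K)\gamma(L)\le\gamma(K\cap L)\gamma(aK+bL)$. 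This is exactly Theorem \ref{thm:intro-ab} with $a+b\ge 2>1$, and the equality characterization (\ref{eq:intro-equality}) transfers directly.
\item[(ii)] $\max(a,b)\le 1$ with $3a^2+b^2\ge 1$. Here (\ref{eq:unified}) coincides with (\ref{eq:intro-CMPI}), so Theorem \ref{thm:intro-GCMPI} applies verbatim, including its equality cases. When $a<1=b$ this gives equality iff $L=\R^n$, which is item (a). When $\max<1$ it gives item (c). The case $a=b=1$ is already in (i).
\item[(iii)] $a<1<b$. Set $b'=1$. Then $3a^2+1\ge1$, so Theorem \ref{thm:intro-GCMPI} with $(a,1)$ gives
\[
\gamma(K)\gamma(L)\le\gamma(K\cap \tfrac1a L)\,\gamma(aK+L).
\]
The intersection term already matches (\ref{eq:unified}), since $\max(1/b,1)=1$. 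For the sum term, $aK+L\subset aK+bL$ because $L\subset bL$, so $\gamma(aK+L)\le\gamma(aK+bL)$.
\end{itemize}

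For equality in (iii), we get equality in the GCMPI step, so $L=\R^n$ by Theorem \ref{thm:intro-GCMPI}(1). Conversely, if $L=\R^n$ both sides equal $\gamma(K)$. The symmetric case $b<1\le a$ gives (b).

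One check is needed in case (ii): if $a\le b\le 1$, the hypothesis $3\min+\max\ge1$ is exactly the GCMPI range. In case (iii) the application to $(a,1)$ is always legitimate since $a>0$. The degenerate case $a=0$ should be excluded, or handled separately: then $3\min+\max = b^2 \ge 1$, and for $b\ge1$ we would need $\gamma(K)\gamma(L)\le \gamma(K\cap L)\gamma(bL)$ with $aK=\{0\}$, i.e.\ $\gamma(K)\le\gamma(K\cap L)$ when $L=\R^n$-type behaviour fails. I would exclude this case with the convention that $0\cdot K+bL=bL$, and verify it through the GCI and $\gamma(L)\le\gamma(bL)$, which gives $\gamma(K)\gamma(L)\le\gamma(K\cap L)\le\gamma(K\cap L)\gamma(bL)$. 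This needs a slight care to get the correct inclusion of dilations.

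Part 2 is Lemma \ref{lem:a+b}, read contrapositively: if $a+b<1$, the slabs/intervals $K=L=[-R,R]\times\R^{n-1}$ with large $R$ violate (\ref{eq:unified}).

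The main obstacle is bookkeeping in the equality statements when passing through the monotone inclusions in case (iii): equality in the final chain forces equality in the GCMPI step, which is what pins down $L$. We must also check that case (iii) equality does not accidentally require $\gamma(aK+L)=\gamma(aK+bL)$ to be informative beyond this.
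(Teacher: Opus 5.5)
Your argument for $a,b>0$ is the paper's. You use the same split: $\min(a,b)\ge 1$ via Theorem~\ref{thm:intro-ab}, $\max(a,b)\le 1$ via Theorem~\ref{thm:intro-GCMPI}, and the mixed case $0<a<1<b$ reduced to $b=1$ through $L\subset bL$, with equality pinned down by the GCMPI step. Part 2 is exactly Lemma~\ref{lem:a+b} applied to cylinders. Your hedge about ``up to null sets'' is unnecessary. A convex set with non-empty interior has its Gaussian barycenter in its interior, so $0\in\mathrm{int}\,L$ and $L\subset bL$ hold exactly.

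The one genuine error is your treatment of the degenerate case $a=0$, which the statement includes since $a,b\ge 0$.
\begin{itemize}
\item You read the intersection term as $K\cap L$. But for $a=0$ we have $\max(1/a,1)=+\infty$, and since $0\in\mathrm{int}\,L$, the set $\max(1/a,1)L$ is all of $\R^n$.
\item Under your reading the inequality is actually false. For $b=1$ it says $\gamma(K)\gamma(L)\le\gamma(K\cap L)\gamma(L)$, i.e.\ $\gamma(K)\le\gamma(K\cap L)$. This fails when $L$ is a small centered ball and $K$ a larger centered ball.
\item Your proposed verification also breaks at its last step. The inequality $\gamma(K\cap L)\le\gamma(K\cap L)\gamma(bL)$ goes the wrong way, because $\gamma(bL)\le 1$.
\end{itemize}
The correct treatment is trivial, which is how the paper describes it. With $a=0$ the hypothesis $3\min(a^2,b^2)+\max(a^2,b^2)\ge 1$ forces $b\ge 1$. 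Hence the intersection term is $\max(1/b,1)K\cap\R^n=K$, and $aK+bL=bL\supset L$. So (\ref{eq:unified}) reduces to $\gamma(K)\gamma(L)\le\gamma(K)\gamma(bL)$. None of the equality clauses (a)--(d) apply in this case.
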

\begin{proof}
The case when $\min(a,b) = 0$ is trivial, the case when $a,b \geq 1$ (and in particular $a+b > 1$)  has already been treated in Theorem \ref{thm:intro-ab}, and the case when $0 < a,b \leq 1$ has already been treated in Theorem \ref{thm:intro-GCMPI}. As for the remaining case $0 < a \leq 1 \leq b$ (and similarly when the roles of $a$ and $b$ are reversed), the claim is that
\[
\gamma(K) \gamma(L) \leq \gamma( K \cap \frac{1}{a} L) \gamma(a K + b L) .
\]
But since $L$ contains the origin, $b L \supset L$ and so this inequality and its equality conditions follow from the case when $b=1$ which has already been treated. This concludes the proof.
\end{proof}

This leaves a small range of values of $(a,b)$, namely
\[
 \{a,b > 0, ~ a+b \geq 1 , ~ 3 \min(a^2,b^2) + \max(a^2,b^2) < 1 \},
\]
 for which the validity of (\ref{eq:unified}) remains undecided; see Figure \ref{fig:diagram}. 

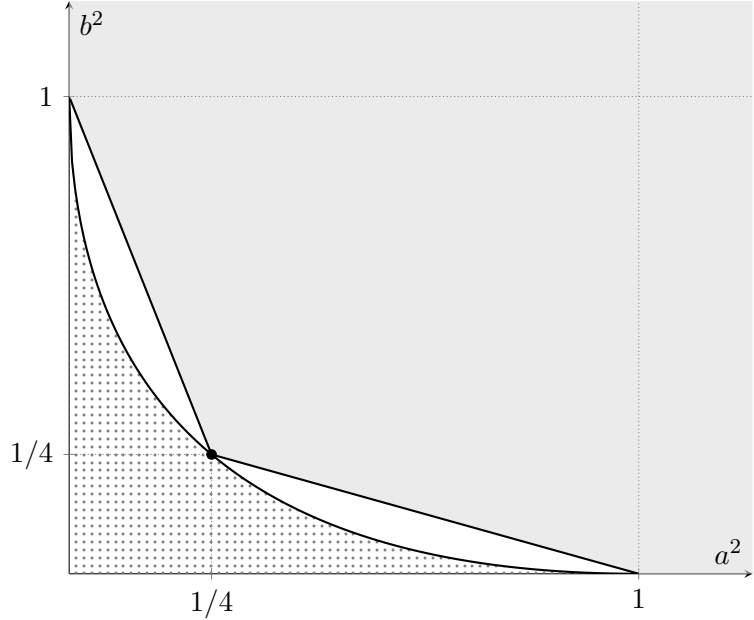
\begin{figure}[H] 
  \centering
  \begin{tikzpicture}
    \begin{axis}[
      width=0.7\linewidth,
      axis lines=middle,
      xmin=0, xmax=1.2,
      ymin=0, ymax=1.2,
      xlabel={$a^2$}, ylabel={$b^2$},
      xtick={0,0.25,1}, ytick={0,0.25,1},
      xticklabels={$0$,$1/4$,$1$},
      yticklabels={$0$,$1/4$,$1$},
      samples=200,
      domain=0:1,
      clip=true,
    ]

        \addplot[
      draw=none,
      fill=gray!25,
      fill opacity=0.6,
    ] coordinates {
      (0,1) (0,1.2) (1.2,1.2) (1.2,0) (1,0) (0.25,0.25) (0,1)
    } \closedcycle;

        \path[name path=xaxis] (axis cs:0,0) -- (axis cs:1,0);
    \addplot[draw=none,name path=rootsum,domain=0:1] {(1 - sqrt(x))^2};
    \addplot[
      draw=none,
      pattern=dots,
      pattern color=gray,
    ] fill between[of=rootsum and xaxis];

        \addplot[thick,black,domain=0:0.333333] {((1 - 3*x) > (1 - sqrt(x))^2) ? (1 - 3*x) : nan};

        \addplot[thick,black,domain=0:1] {(((1 - x)/3) > (1 - sqrt(x))^2) ? ((1 - x)/3) : nan};

        \addplot[thick,black,domain=0:1] {(1 - sqrt(x))^2};

        \addplot[only marks,mark=*,mark size=1.8pt] coordinates {(0.25,0.25)};

        \addplot[gray,densely dotted] coordinates {(0.25,0) (0.25,0.25)};
    \addplot[gray,densely dotted] coordinates {(0,0.25) (0.25,0.25)};

        \addplot[gray,densely dotted] coordinates {(1,0) (1,1.2)};
    \addplot[gray,densely dotted] coordinates {(0,1) (1.2,1)};

    \end{axis}
  \end{tikzpicture}
  \caption{\label{fig:diagram}The various regions of $(a^2,b^2)$ covered by Theorem \ref{thm:unify}: (\ref{eq:unified}) holds inside the grey region, is violated inside the dotted region, and remains open in the white region. }
\end{figure}

\section{Functional formulation of geometric inequalities} \label{sec:functional}

In this section, we derive an equivalent functional formulation of the Gaussian (conjugate) Rogers--Shephard inequality for convex sets, establishing Theorem
\ref{thm:intro-functional}, as well as several other equivalent functional formulations of the geometric inequalities we obtain in this work.

Recall that we define
\[
\maxo(p,q) := \begin{cases} \max(p,q) & p,q > 0 \\ 0 & pq = 0 \end{cases} ,
\]
and that for two functions $u,v : \R^n \rightarrow \R_+$, we denote
\[
u \square v (z) := \sup_{z = x + y} \min(u(x),v(y)) .
\]
We also denote for $c \in \R \setminus \{0\}$:
\[
u_c := u(\cdot / c) . 
\]
\begin{rem}
Whenever $u,v$ are Borel measurable, $u \square v$ is analytic and hence Lebesgue measurable. 
\end{rem}

\begin{proposition} \label{prop:functional}
Let $\C$ denote a family of Borel subsets of $\R^n$, and let $\mu_i$, $i=1,\ldots,4$, denote four measures on $\R^n$ equipped with the Lebesgue $\sigma$-algebra. Then the following statements are equivalent for any fixed $\alpha,\beta,a,b \in \R \setminus \{0\}$: 
\begin{enumerate}
\item \label{it:functional1}
For all $K,L \in \C$:
\[
\mu_1(K) \mu_2(L) \leq \mu_3(\alpha K \cap \beta L) \mu_4(a K+ b L) . 
\]
\item \label{it:functional2}
For all functions $f,h : \R^n \rightarrow \R_+$ whose super level sets are in $\mathcal{C}$, we have:
\[
\int f d\mu_1 \int h  d\mu_2 \leq \int \maxo(f_\alpha ,h_\beta) d\mu_3 \int f_a \square h_b  d\mu_4 .
\]
\end{enumerate}
\end{proposition}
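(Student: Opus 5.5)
The direction (\ref{it:functional2}) $\Rightarrow$ (\ref{it:functional1}) is immediate. Apply (\ref{it:functional2}) to $f = \1_K$ and $h = \1_L$, whose super level sets are $\R^n$, $K$ or $L$, or $\emptyset$. One should only check that these trivial sets can be ignored, or assume as the paper implicitly does that they lie in $\C$ or contribute nothing. Then $(\1_K)_\alpha = \1_{\alpha K}$ and $(\1_L)_\beta = \1_{\beta L}$, so $\maxo(\1_{\alpha K},\1_{\beta L}) = \1_{\alpha K \cap \beta L}$. Likewise $(\1_K)_a \square (\1_L)_b = \1_{aK + bL}$. The integrals then reduce exactly to (\ref{it:functional1}).

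For (\ref{it:functional1}) $\Rightarrow$ (\ref{it:functional2}), I would use the layer-cake (tail) formula together with the four functions theorem, as the introduction suggests. Write $K_s = \{f \geq s\}$ and $L_t = \{h \geq t\}$, both in $\C$. Then
\[
\int f\, d\mu_1 \int h\, d\mu_2 = \int_0^\infty \int_0^\infty \mu_1(K_s)\mu_2(L_t)\, ds\, dt .
\]
Next I identify the level sets of the right-hand side functions. For $u = \maxo(f_\alpha,h_\beta)$ and $r > 0$ we have $\{u \geq r\} = (\alpha K_r \cap \beta L_{0^+}) \cup (\alpha K_{0^+} \cap \beta L_r)$, where $K_{0^+} = \{f>0\}$. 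In particular $\{u \geq \min(s,t)\} \supseteq \alpha K_s \cap \beta L_t$, and more precisely $\{u \geq \max(s,t)\} \supseteq \alpha K_s\cap \beta L_t$ fails in general, while $\{u\ge\max(s,t)\}\supseteq \alpha K_{\max}\cap\beta L_{0^+}$-type sets do hold. For $v = f_a \square h_b$ we have $\{v \geq r\} \supseteq aK_r + bL_r$, and so $\{v \geq \min(s,t)\} \supseteq aK_s + bL_t$.

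Set $F(s,t) = \mu_1(K_s)\mu_2(L_t)$, $G_3(r) = \mu_3(\{u\ge r\})$ and $G_4(r) = \mu_4(\{v \ge r\})$. The four functions theorem on the lattice $(\R_+,\max,\min)$, or more simply a direct rearrangement argument, then says the following. If $F(s,t) \leq G_3(\max(s,t))\, G_4(\min(s,t))$ pointwise, then $\iint F \leq \int G_3 \int G_4$.

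The required pointwise bound is where the choice of $\maxo$ matters. Assume without loss of generality $s \geq t$, and apply (\ref{it:functional1}) to the pair $(K_s, L_t)$. This gives
\[
F(s,t) \le \mu_3(\alpha K_s \cap \beta L_t)\,\mu_4(aK_s + bL_t).
\]
Since $L_t \subseteq L_{0^+}$, we get $\alpha K_s \cap \beta L_t \subseteq \{u \geq s\} = \{u \ge \max(s,t)\}$. Since $K_s \subseteq K_t$, we get $aK_s + bL_t \subseteq aK_t + bL_t \subseteq \{v \geq t\} = \{v\ge\min(s,t)\}$. The case $s<t$ is symmetric. Hence $F(s,t) \le G_3(\max(s,t))\,G_4(\min(s,t))$.

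To conclude, I apply the Ahlswede--Daykin inequality on $\R_+$ with the product measure $ds$, in the form
\[
\iint F(s,t)\,ds\,dt \le \int G_3 \int G_4 .
\]
This can also be checked directly. The region $\{s \geq t\}$ contributes at most $\int_0^\infty G_3(s) \int_0^s G_4(t)\, dt\, ds$, and the region $\{s<t\}$ contributes at most $\int_0^\infty G_3(t)\int_0^t G_4(s)\,ds\,dt$. Together these are bounded by $\int G_3\int G_4$, using that $G_3$ and $G_4$ are nonincreasing and applying Chebyshev's rearrangement to split the doubled triangle into a full square. Getting this last step exactly right, and not off by a factor of two, is the main obstacle.

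A fallback I would try if that step fails: the four functions theorem requires the stronger hypothesis $F(s,t)F(s',t') \le G_3(s\vee s')G_4(t\wedge t')$ on product lattices. I would instead encode $(s,t)$ on $\R_+^2$ with the coordinatewise lattice, taking $\alpha$-functions $\mu_1(K_s)$ and $\mu_2(L_t)$ as separate one-variable functions. The measurability of $v$ is guaranteed by the preceding remark, and $\sigma$-finiteness issues are handled by monotone convergence.
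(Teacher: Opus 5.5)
Your reduction is the paper's proof. It uses the tail formula and the inclusions $\alpha K_s\cap\beta L_t\subseteq\{\maxo(f_\alpha,h_\beta)\ge s\vee t\}$ and $aK_s+bL_t\subseteq\{f_a\square h_b\ge s\wedge t\}$. These give $a(s)b(t)\le c(s\vee t)\,d(s\wedge t)$ for all $s,t>0$, with $a(s)=\mu_1(K_s)$, $b(t)=\mu_2(L_t)$, $c=G_3$, $d=G_4$. The paper then applies the one-dimensional four functions theorem (Lemma~\ref{lem:four-funcs}); if you simply cite it, you are done. Your aside that $\{u\ge s\vee t\}\supseteq\alpha K_s\cap\beta L_t$ ``fails in general'' is wrong. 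On that set both $f_\alpha\ge s>0$ and $h_\beta\ge t>0$ hold, so $\maxo(f_\alpha,h_\beta)\ge s\vee t$. You in effect prove this two paragraphs later.

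The genuine gap is the last step as you actually argue it: the ``direct'' check is false. Bounding the two triangles separately gives $2\iint_{q<r}G_3(r)G_4(q)\,dq\,dr$, and monotonicity cannot bring this below $\int G_3\int G_4$. For $G_3=\1_{(0,1]}$, $G_4=\1_{(0,\eps]}$ with $0<\eps<1$, it equals $2\eps-\eps^2>\eps$. In fact no argument using only the pointwise bound $F(s,t)\le G_3(s\vee t)G_4(s\wedge t)$ can succeed, because $F=G_3(s\vee t)G_4(s\wedge t)$ itself satisfies it. What you need is the product structure $F(s,t)F(t,s)=F(s,s)F(t,t)$, combined with the diagonal case $s=t$ of your bound. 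For $s>t$ set $X=c(s)d(t)$, $p=a(s)b(t)$, $q=a(t)b(s)$. Then $p,q\le X$ and $pq=a(s)b(s)\,a(t)b(t)\le X\cdot c(t)d(s)$, so $(X-p)(X-q)\ge 0$ yields
\[
a(s)b(t)+a(t)b(s)\le X+\frac{pq}{X}\le c(s)d(t)+c(t)d(s)
\]
(trivially if $X=0$). Integrating over $\{s>t\}$, the diagonal being null, gives $\int a\int b\le\int c\int d$. This is exactly the Ahlswede--Daykin theorem on the chain $(0,\infty)$, and its hypothesis is precisely the inequality you verified. So your fallback worry, that a stronger condition like $F(s,t)F(s',t')\le\cdots$ is required or that you must pass to $\R_+^2$, is unfounded and should be dropped.
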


For the proof, we will require the following elementary version of the four functions theorem, originally proved by Ahlswede and Daykin  on a discrete lattice \cite{AhlswedeDaykin-FourFunctionsThm}, and extended to the continuous setting in \cite{BattyBollmann-FourFunctionsThm} (see also \cite{HKS-FourFunctionsThm} for remarkable extensions).  

\begin{lemma} \label{lem:four-funcs}
Let $a,b,c,d : (0,\infty) \rightarrow \R_+$ denote four measurable functions satisfying:
\[
a(t) b(s) \leq c(t \vee s) d(t \wedge s) \;\;\; \forall t, s  > 0 . 
\]
Then:
\[
\int_{0}^\infty a(t) dt \int_{0}^\infty b(s) ds \leq \int_{0}^\infty c(p) dp \int_{0}^\infty d(q) dq .
\]
\end{lemma}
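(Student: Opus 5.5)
The plan is to reduce the statement to a comparison of level sets. Writing $A = \{(t,s) : a(t) b(s) > 0\}$ shows only a one-dimensional rearrangement structure is used, so I would argue via monotone rearrangement and a discretization.

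First I would reduce to the case where $a,b,c,d$ are bounded, compactly supported in $(0,\infty)$, and have finite integrals. Truncate $a,b$ by $\min(a,M)\1_{[1/M,M]}$, and likewise for $b$; the hypothesis is preserved because the left-hand side only decreases. Monotone convergence then recovers the general case. If $\int c = \infty$ or $\int d = \infty$ there is nothing to prove unless the left-hand side vanishes; $0\cdot\infty$ conventions need a quick separate check.

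Next I would replace $c,d$ by their smallest admissible choices. Define
\[
c^*(p) := \sup\{ a(t)b(s) / d(q) \}
\]
over $t\vee s = p$; this becomes awkward, so I would instead discretize. Approximate $a,b$ from below by step functions on a fine dyadic grid of $(0,\infty)$ with mesh $\delta$. For grid points $i,j$, set $a_i = \operatorname{ess\,inf}$ of $a$ on cell $i$, and similarly $b_j$. Set $c_k = \operatorname{ess\,sup}$ of $c$ on cell $k$, and similarly $d_l$. Then $a_i b_j \le c_{i\vee j} d_{i\wedge j}$ holds on the lattice $\{1,\dots,N\}$, which is a distributive lattice with $\max$ and $\min$.

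The discrete Ahlswede--Daykin four functions theorem on a chain (or on $\{1,\ldots,N\}$ viewed as a lattice) then gives
\[
\sum_i a_i \sum_j b_j \le \sum_k c_k \sum_l d_l .
\]
On a chain this can also be proved directly by induction on $N$: peel off the top element $N$ and use $a_N b_j \le c_N d_j$, $a_i b_N\le c_N d_i$, and $a_N b_N \le c_N d_N$. Multiplying by $\delta^2$ gives lower Riemann-type sums on the left and upper ones on the right.

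The main obstacle is the passage to the limit $\delta\to0$ for merely measurable functions. Essential infima and suprema over cells do not converge to the functions, and the pointwise hypothesis is only available everywhere, not cellwise. To handle this I would avoid cells and prove the continuous statement directly by the same peeling induction in continuous form. Alternatively, I would use the standard route of Batty--Bollmann: prove it for simple functions measurable with respect to interval partitions via the discrete theorem, then use Lusin's theorem to approximate. The key point to check there is that the pointwise inequality survives when $c,d$ are replaced by their upper semicontinuous envelopes, which only increase their integrals slightly. If this fails, I would fall back on citing \cite{BattyBollmann-FourFunctionsThm} directly, as the paper does.
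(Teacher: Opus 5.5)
Your final fallback, citing the continuous four functions theorem \cite{BattyBollmann-FourFunctionsThm}, is exactly the paper's proof. The paper first substitutes $r=e^{r_0}$ to pass to $\R$, though simply extending $a,b,c,d$ by zero would also work.

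The self-contained route you actually develop has a genuine gap at the limit $\delta\to0$, and semicontinuity cannot repair it. Cellwise essential infima and suprema lose too much for merely measurable data:
\begin{itemize}
\item If $a=b=c=d=\1_K$ with $K$ a fat Cantor set, the hypothesis holds with equality. But every cell contains an open interval missing $K$, so all $a_i=b_j=0$ for every $\delta$. The discretized left-hand side is $0$, although $\int a\int b=|K|^2>0$.
\item If $c=\1_U$ with $U$ a dense open set of small measure, every $c_k=1$.
\item An integrable $c$ can be essentially unbounded on every interval, making every $c_k=\infty$.
\end{itemize}
You cannot truncate $c,d$ as you did $a,b$, since that weakens the right-hand side of the hypothesis.

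The envelope patch fails for the same reason. The upper semicontinuous envelope of $\1_U$ is $\1_{\overline U}$, whose integral is nowhere near $|U|$. On the $a,b$ side you would need lower semicontinuous minorants with almost the same integral, but every such minorant of $\1_K$ is $\le 0$.

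The peeling induction on a chain also does not close by summing $a_Nb_j\le c_Nd_j$ and $a_ib_N\le c_Nd_i$. With $C'=\sum_{k<N}c_k$ and $D'=\sum_{l<N}d_l$, summing bounds $a_N\sum_{j<N}b_j+b_N\sum_{i<N}a_i$ by $2c_ND'$. Expanding $(C'+c_N)(D'+d_N)$ shows you need $c_ND'+C'd_N$ instead. The diagonal bounds must enter through the two-point Ahlswede--Daykin inequality.

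That two-point inequality, applied pointwise rather than to discretized data, proves the lemma directly. Fix $t>s>0$ and set $x=a(t)b(s)$, $y=a(s)b(t)$, $z=c(t)d(s)$. The hypothesis at $(t,s)$ and $(s,t)$ gives $x,y\le z$. At $(s,s)$ and $(t,t)$ it gives $xy\le c(s)d(s)\,c(t)d(t)=z\,c(s)d(t)$. If $z=0$ then $x=y=0$. Otherwise
\[
a(t)b(s)+a(s)b(t)=x+y\le z+\frac{xy}{z}\le c(t)d(s)+c(s)d(t),
\]
where the first inequality is $(z-x)(z-y)\ge 0$. Integrate over $\{t>s>0\}$ using Tonelli, and note that the diagonal is null in $(0,\infty)^2$. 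The left-hand side becomes $\int a\int b$ and the right-hand side becomes $\int c\int d$. Infinite values and the convention $0\cdot\infty=0$ are handled automatically. No discretization, truncation or approximation is needed.
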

\begin{proof}
 Define $\tilde h(r_0) := h(e^{r_0}) e^{r_0}$ for all $h \in \{ a,b,c,d \}$ and $r_0 \in \R$. Since $t_0 + s_0 = t_0 \wedge s_0 + t_0 \vee s_0$ we have:
\[
\tilde a(t_0) \tilde b(s_0) \leq \tilde c(t_0 \wedge s_0) \tilde d(t_0 \vee s_0) \;\;\; \forall t_0,s_0 \in \R ,
\]
and hence by the four functions theorem on $\R$ \cite{BattyBollmann-FourFunctionsThm,HKS-FourFunctionsThm} we obtain:
\[
\int_{-\infty}^{\infty} \tilde a(t_0) dt_0 \int_{-\infty}^{\infty} \tilde b(s_0) ds_0 \leq \int_{-\infty}^\infty \tilde c(p_0) dp_0 \int_{-\infty}^\infty \tilde d(q_0) dq_0 .
\]
This is precisely the asserted inequality after applying the change of variables $r = e^{r_0}$ in all four integrals.  
\end{proof}

\begin{proof}[Proof of Proposition \ref{prop:functional}]
Clearly (\ref{it:functional2}) implies (\ref{it:functional1}) by applying it to $f = \1_K$ and $h = \1_L$, since $\maxo((\1_K)_\alpha , (\1_L)_\beta) = \1_{\alpha K \cap \beta L}$ and $(\1_K)_a \square (\1_L)_b = \1_{aK + b L}$. To see the other direction, define the following four functions on $(0,\infty)$:
\[
a(t) := \mu_1 \{ x : f(x) \geq t \} ~,~ b(s) := \mu_2 \{ y : h(y) \geq s \} ,
\]
and
\[
c(p) := \mu_3 \{ x : \maxo(f_\alpha(x),h_\beta(x)) \geq p \} ~,~ d(q) := \mu_4 \{ z : f_a \square  h_b(z) \geq q \} .
\]
If $K := \{ x : f(x) \geq t \}$ and $L := \{ y : h(y) \geq s \} $ for $t,s > 0$, note that:
\begin{align*}
\alpha K \cap \beta L & \subset \{ x : \maxo(f_\alpha(x),h_\beta(x)) \geq t \vee s \} ~,~ \\
 aK + bL & \subset \{ x + y : \min(f_a(x), h_b(y)) \geq t \wedge s \} \subset \{ z : f_a\square h_b (z) \geq t \wedge s \} . 
\end{align*}
Consequently, our assumption exactly implies that:
\[
a(t) b(s) \leq c(t \vee s) d(t \wedge s) \;\;\; \forall t, s  > 0 . 
\]
By the four functions Lemma \ref{lem:four-funcs}, it follows that:
\[
\int_{0}^\infty a(t) dt \int_{0}^\infty b(s) ds \leq \int_{0}^\infty c(p) dp \int_{0}^\infty d(q) dq ,
\]
concluding the proof by the tail formula. 
\end{proof}

Applying Proposition \ref{prop:functional} to the GCRSI (\ref{eq:intro-GCRSI}), Theorem \ref{thm:intro-functional} immediately follows. More generally, applying it to Theorems \ref{thm:intro-ab} and \ref{thm:unify}, we have:
\begin{cor} \label{cor:functional-Gaussian} 
Let $\alpha,\beta,a,b \in \R \setminus \{0\}$. 
For all quasi-concave Borel functions $f,h : \R^n \rightarrow \R_+$ whose super level sets have Gaussian barycenters at the origin, we have
\[
\int f d\gamma  \int h d\gamma\leq \int \maxo(f_\alpha,h_\beta) d\gamma \int f_a \square h_b d\gamma ,
\]
in any of the cases below:
\begin{enumerate}
\item $\alpha = \beta = 1$ and $\abs{a},\abs{b},\abs{a+b} \geq 1$.
\item $\alpha = \max(1/b,1)$, $\beta = \max(1/a,1)$, $a,b > 0$ with $3 \min(a^2,b^2) + \max(a^2,b^2) \geq 1$.  
\end{enumerate}
\end{cor}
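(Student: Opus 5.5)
This corollary follows by combining Proposition \ref{prop:functional} with the geometric inequalities already established. I will take $\C$ to be the family of convex Borel sets with Gaussian barycenter at the origin, and $\mu_1=\mu_2=\mu_3=\mu_4=\gamma$. Proposition \ref{prop:functional} says that statement (\ref{it:functional2}) for functions follows from statement (\ref{it:functional1}) for sets. So it suffices to check that, for every $K,L\in\C$,
\[
\gamma(K)\gamma(L)\le \gamma(\alpha K\cap\beta L)\,\gamma(aK+bL)
\]
holds in each of the two listed cases.

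\emph{Case 1} ($\alpha=\beta=1$, $\abs{a},\abs{b},\abs{a+b}\ge 1$). This is exactly Theorem \ref{thm:intro-ab}.

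\emph{Case 2} ($\alpha=\max(1/b,1)$, $\beta=\max(1/a,1)$, $a,b>0$, $3\min(a^2,b^2)+\max(a^2,b^2)\ge1$). This is exactly (\ref{eq:unified}) in Theorem \ref{thm:unify}.

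One point needs care. A super level set $\{f\ge t\}$ of a quasi-concave Borel function is convex, but it might have empty interior or be null, whereas those theorems assume non-empty interior. I would handle this as follows.

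First, if $\gamma(K)=0$ or $\gamma(L)=0$, the left-hand side vanishes and there is nothing to prove. Second, a convex set of positive Gaussian measure automatically has non-empty interior, since a convex set with empty interior lies in an affine hyperplane and is therefore $\gamma$-null. So restricting $\C$ to sets with positive measure loses nothing.

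The remaining subtlety is that Proposition \ref{prop:functional} asks for super level sets in $\C$, and these include possibly empty sets or sets for which the barycenter condition is vacuous. Empty sets and null sets contribute zero on the left-hand side, so they cause no trouble. I would also check that when $\gamma(K)>0$, the phrase ``Gaussian barycenter at the origin'' means the same thing in the corollary's hypothesis as in the theorems; this is automatic.

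With these observations, the corollary is an immediate combination of the cited results, and I expect no real obstacle. The only places to be attentive are the degenerate super level sets and the measurability of $f_a\square h_b$, which is already covered by the preceding remark that it is analytic.
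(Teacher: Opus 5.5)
Your proposal is correct and is essentially the paper's argument. The paper obtains the corollary in one line by applying Proposition \ref{prop:functional} with $\mu_1=\mu_2=\mu_3=\mu_4=\gamma$ to Theorems \ref{thm:intro-ab} and \ref{thm:unify}. Your extra handling of degenerate super level sets, namely letting $\C$ also contain $\gamma$-null convex sets (for which the set inequality holds trivially) and noting that convex sets of positive measure have non-empty interior, is a sound precision that the paper leaves implicit.
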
 

In view of Remark \ref{rem:intro-scaling}, when passing from $\gamma$ to the Lebesgue measure in the scaling limit, the only two interesting cases are
obtained by applying Proposition \ref{prop:functional} to the RSSI (\ref{eq:intro-RSSI}) and the CRSSI (\ref{eq:intro-CRSSI}) inequalities. 

\begin{cor} \label{cor:functional-Lebesgue} 
For all quasi-concave Borel functions $f,h : \R^n \rightarrow \R_+$ whose super level sets have Lebesgue barycenters at the origin, we have for both choices of $\pm \in \{+,-\}$:
\[
\int f dx  \int h dx \leq \int \maxo(f,h) dx \int f \square h_\pm dx ,
\]
where $h_{\pm} : \R^n \rightarrow \R_+$ is defined as $h_{\pm}(x) = h(\pm x)$. 
\end{cor}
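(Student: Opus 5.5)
The plan is to deduce Corollary \ref{cor:functional-Lebesgue} from Proposition \ref{prop:functional}, applied with $\mu_1=\mu_2=\mu_3=\mu_4$ equal to Lebesgue measure, $\alpha=\beta=a=1$ and $b=\pm 1$. The family $\C$ will be the convex sets with non-empty interior and Lebesgue barycenter at the origin, suitably completed at the degenerate ends (see below). Since $f_1\square h_{\pm1}=f\square h_\pm$, the functional statement (\ref{it:functional2}) is exactly the claimed inequality. It therefore suffices to verify the geometric statement (\ref{it:functional1}), namely $|K||L|\le |K\cap L||K\pm L|$ for all $K,L\in\C$.

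For the sign $+$ this is the CRSSI (Corollary \ref{cor:intro-CRSSI}). For the sign $-$ it is the RSSI (\ref{eq:intro-RSSI}), which is attributed to Spingarn and Milman--Pajor. Alternatively, the sign $-$ case can be recovered as the scaling limit of the Milman--Pajor inequality (\ref{eq:intro-MilmanPajor}), exactly as described in the introduction.

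For completeness I will also indicate how Corollary \ref{cor:intro-CRSSI} follows from Theorem \ref{thm:intro-GCRSI} via Remark \ref{rem:intro-scaling}. Given bounded convex $K,L$ with Lebesgue barycenter at the origin, I will pass to $\lambda K,\lambda L$ with $\lambda\to0$. The catch is that the Gaussian barycenter of $\lambda K$ is not exactly the origin. I would fix this by translating $\lambda K$ by a vector $v_\lambda=o(\lambda)$ that recenters its Gaussian barycenter; such a vector exists because the Gaussian barycenter map is a small perturbation of the identity at this scale, so a degree or contraction argument applies. I would then apply Theorem \ref{thm:intro-GCRSI} to the recentered sets. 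Dividing by $\lambda^{2n}(2\pi)^{-n}$ and using continuity of volume under $o(\lambda)$ translations, together with the fact that $\gamma$ has density $\approx(2\pi)^{-n/2}$ near the origin, gives the result in the limit.

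The main obstacle is matching the hypotheses of Proposition \ref{prop:functional} to the corollary. Super level sets of a quasi-concave Borel function may be unbounded, may have empty interior, or may be all of $\R^n$ (for $t$ small). Each needs its own treatment:
\begin{itemize}
\item If a super level set has infinite volume, the relevant factor $|K|$ or $|L|$ is infinite. In that case the inequality is either trivial (when the right-hand side is also infinite, since $K\pm L\supset K$) or requires a separate argument.
\item Sets of zero volume contribute nothing to the left-hand side, so they can be discarded.
\item For unbounded sets with finite volume, I would extend the CRSSI and the RSSI by approximating with bounded truncations $K\cap RB$ recentered by small translations, and then let $R\to\infty$ using monotone convergence.
\end{itemize}
The hypothesis that the super level sets have barycenters at the origin already presupposes that they have finite first moment, which is what makes these approximations workable. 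Given this, enlarging $\C$ to include these degenerate cases completes the proof.
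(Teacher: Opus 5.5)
Your proposal is correct and follows the paper's argument: the corollary is obtained by applying Proposition \ref{prop:functional} with all $\mu_i$ equal to Lebesgue measure, $\alpha=\beta=a=1$ and $b=\pm1$, to the RSSI (\ref{eq:intro-RSSI}) and the CRSSI (\ref{eq:intro-CRSSI}). Your extra bookkeeping is harmless but largely unnecessary: a convex super level set of positive volume with a well-defined Lebesgue barycenter has finite volume and nonempty interior, hence is bounded, so your truncation step and the infinite-volume case never actually arise (and your recentering-by-$o(\lambda)$-translation argument for deducing the CRSSI from Theorem \ref{thm:intro-GCRSI} is a valid way to make the paper's scaling remark rigorous).
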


\bibliographystyle{plain}
\bibliography{../../../ConvexBib}

\def\cprime{$'$} \def\textasciitilde{$\sim$}
\begin{thebibliography}{10}

\bibitem{AhlswedeDaykin-FourFunctionsThm}
R.~Ahlswede and D.~E. Daykin.
\newblock An inequality for the weights of two families of sets, their unions
  and intersections.
\newblock {\em Z. Wahrsch. Verw. Gebiete}, 43(3):183--185, 1978.

\bibitem{ACS-RefinedKhatriSidak}
R.~Assouline, A.~Chor, and S.~Sadovsky.
\newblock A refinement of the \v{S}id\'ak-{K}hatri inequality and a strong
  {G}aussian correlation conjecture.
\newblock arXiv:2407.15684, 2024.

\bibitem{BGL-Book}
D.~Bakry, I.~Gentil, and M.~Ledoux.
\newblock {\em Analysis and geometry of {M}arkov diffusion operators}, volume
  348 of {\em Grundlehren der Mathematischen Wissenschaften [Fundamental
  Principles of Mathematical Sciences]}.
\newblock Springer, Cham, 2014.

\bibitem{Barthe-ReverseBL-CRAS}
F.~Barthe.
\newblock In\'egalit\'es de {B}rascamp-{L}ieb et convexit\'e.
\newblock {\em C. R. Acad. Sci. Paris S\'er. I Math.}, 324(8):885--888, 1997.

\bibitem{Barthe-ReverseBL}
F.~Barthe.
\newblock On a reverse form of the {B}rascamp-{L}ieb inequality.
\newblock {\em Invent. Math.}, 134(2):335--361, 1998.

\bibitem{BartheCordero-InverseBLviaSemiGroup}
F.~Barthe and D.~Cordero-Erausquin.
\newblock Inverse {B}rascamp-{L}ieb inequalities along the heat equation.
\newblock In {\em Geometric aspects of functional analysis}, volume 1850 of
  {\em Lecture Notes in Math.}, pages 65--71. Springer, Berlin, 2004.

\bibitem{BartheHuet}
F.~Barthe and N.~Huet.
\newblock On {G}aussian {B}runn-{M}inkowski inequalities.
\newblock {\em Studia Math.}, 191(3):283--304, 2009.

\bibitem{BartheWolff-InverseBrascampLieb}
F.~Barthe and P.~Wolff.
\newblock Positive {G}aussian kernels also have {G}aussian minimizers.
\newblock {\em Mem. Amer. Math. Soc.}, 276(1359):v+90, 2022.

\bibitem{BattyBollmann-FourFunctionsThm}
C.~J.~K. Batty and H.~W. Bollmann.
\newblock Generalised {H}olley-{P}reston inequalities on measure spaces and
  their products.
\newblock {\em Z. Wahrsch. Verw. Gebiete}, 53(2):157--173, 1980.

\bibitem{BCCT-BrascampLieb}
J.~Bennett, A.~Carbery, M.~Christ, and T.~Tao.
\newblock The {B}rascamp-{L}ieb inequalities: finiteness, structure and
  extremals.
\newblock {\em Geom. Funct. Anal.}, 17(5):1343--1415, 2008.

\bibitem{BhattacharyaRao-Book2010}
R.~N. Bhattacharya and R.~R. Rao.
\newblock {\em Normal approximation and asymptotic expansions}, volume~64 of
  {\em Classics in Applied Mathematics}.
\newblock Society for Industrial and Applied Mathematics (SIAM), Philadelphia,
  PA, corrected edition, 2010.

\bibitem{BillingsleyConvergenceBook-2ndEd}
P.~Billingsley.
\newblock {\em Convergence of probability measures}.
\newblock Wiley Series in Probability and Statistics: Probability and
  Statistics. John Wiley \& Sons, Inc., New York, second edition, 1999.
\newblock A Wiley-Interscience Publication.

\bibitem{Bobkov-LocalLimitTheoremsInOrliczSpace}
S.~G. Bobkov.
\newblock Local limit theorems for densities in {O}rlicz spaces.
\newblock {\em J. Math. Sci. (N.Y.)}, 242(1):52--68, 2019.

\bibitem{BrascampLieb-YoungInq}
H.~J. Brascamp and E.~H. Lieb.
\newblock Best constants in {Y}oung's inequality, its converse, and its
  generalization to more than three functions.
\newblock {\em Advances in Math.}, 20(2):151--173, 1976.

\bibitem{BrascampLiebPLandLambda1}
H.~J. Brascamp and E.~H. Lieb.
\newblock On extensions of the {B}runn-{M}inkowski and {P}r\'ekopa-{L}eindler
  theorems, including inequalities for log concave functions, and with an
  application to the diffusion equation.
\newblock {\em J. Func. Anal.}, 22(4):366--389, 1976.

\bibitem{GreekBook}
S.~Brazitikos, A.~Giannopoulos, P.~Valettas, and B.-H. Vritsiou.
\newblock {\em Geometry of Isotropic Convex Bodies}, volume 196 of {\em
  Mathematical Surveys and Monographs}.
\newblock Amer. Math. Soc., 2014.

\bibitem{CarlenLiebLoss-EntropyOnSn}
E.~A. Carlen, E.~H. Lieb, and M.~Loss.
\newblock A sharp analog of {Y}oung's inequality on {$S^N$} and related entropy
  inequalities.
\newblock {\em J. Geom. Anal.}, 14(3):487--520, 2004.

\bibitem{ChenLou-CharacterizationOfGaussianViaPoincare}
L.~H.~Y. Chen and J.~H. Lou.
\newblock Characterization of probability distributions by {P}oincar\'e-type
  inequalities.
\newblock {\em Ann. Inst. H. Poincar\'e{} Probab. Statist.}, 23(1):91--110,
  1987.

\bibitem{CourtadeLiu-BrascampLieb}
T.~A. Courtade and J.~Liu.
\newblock Euclidean forward-reverse {B}rascamp-{L}ieb inequalities: finiteness,
  structure, and extremals.
\newblock {\em J. Geom. Anal.}, 31(4):3300--3350, 2021.

\bibitem{CourtadeWang-BlaschkeSantalo}
T.~A. Courtade and E.~Wang.
\newblock Generalized {B}laschke--{S}antal\'o-type inequalities, without
  symmetry restrictions.
\newblock Manuscript, arXiv:2509.08998, 2025.

\bibitem{HKS-FourFunctionsThm}
D.~Halikias, B.~Klartag, and B.~A. Slomka.
\newblock Discrete variants of {B}runn-{M}inkowski type inequalities.
\newblock {\em Ann. Fac. Sci. Toulouse Math. (6)}, 30(2):267--279, 2021.

\bibitem{HJS-ExtremalPropertyOfGaussians}
E.~Hillion, O.~Johnson, and A.~Saumard.
\newblock An extremal property of the normal distribution, with a discrete
  analog.
\newblock {\em Statist. Probab. Lett.}, 145:181--186, 2019.

\bibitem{HornJohnson-MatrixAnalysis}
R.~A. Horn and C.~R. Johnson.
\newblock {\em Matrix analysis}.
\newblock Cambridge University Press, Cambridge, second edition, 2013.

\bibitem{Khatri}
C.~G. Khatri.
\newblock On certain inequalities for normal distributions and their
  applications to simultaneous confidence bounds.
\newblock {\em Ann. Math. Statist.}, 38:1853--1867, 1967.

\bibitem{KlartagLehec-SlicingSolved}
B.~Klartag and J.~Lehec.
\newblock Affirmative resolution of {B}ourgain's slicing problem using {G}uan's
  bound.
\newblock {\em Geom. Funct. Anal.}, 35(4):1147--1168, 2025.

\bibitem{LatalaMatlak-GaussianCorrelation}
R.~Lata{\l}a and D.~Matlak.
\newblock Royen's proof of the {G}aussian correlation inequality.
\newblock In {\em Geometric aspects of functional analysis}, volume 2169 of
  {\em Lecture Notes in Math.}, pages 265--275. Springer, Cham, 2017.

\bibitem{Lieb-MultiDimBL}
E.~H. Lieb.
\newblock Gaussian kernels have only {G}aussian maximizers.
\newblock {\em Invent. Math.}, 102(1):179--208, 1990.

\bibitem{LCCV-BrascampLieb}
J.~Liu, T.~A. Courtade, P.~W. Cuff, and S.~Verd\'u.
\newblock A forward-reverse {B}rascamp-{L}ieb inequality: entropic duality and
  {G}aussian optimality.
\newblock {\em Entropy}, 20(6):Paper No. 418, 32, 2018.

\bibitem{EMilman-GCI}
E.~Milman.
\newblock Gaussian correlation via inverse {B}rascamp--{L}ieb.
\newblock Probab. Theory Relat. Fields, 2025.
  https://doi.org/10.1007/s00440-025-01445-x.

\bibitem{MilmanPajor-NonSymmetric}
V.~D. Milman and A.~Pajor.
\newblock Entropy and asymptotic geometry of non-symmetric convex bodies.
\newblock {\em Adv. Math.}, 152(2):314--335, 2000.

\bibitem{NakamuraTsuji-GCIForCentered}
S.~Nakamura and H.~Tsuji.
\newblock The {G}aussian correlation inequality for centered convex sets and
  the case of equality.
\newblock arXiv:2504.04337, 2025.

\bibitem{NakamuraTsuji-InverseBrascampLieb}
S.~Nakamura and H.~Tsuji.
\newblock A generalized {L}egendre duality relation and {G}aussian saturation.
\newblock {\em Invent. Math.}, 243:607--655, 2026.

\bibitem{Pitt-GaussianCorrelationInPlane}
L.~D. Pitt.
\newblock A {G}aussian correlation inequality for symmetric convex sets.
\newblock {\em Ann. Probability}, 5(3):470--474, 1977.

\bibitem{RogersShephard-ConvexBodiesAssociated}
C.~A. Rogers and G.~C. Shephard.
\newblock Convex bodies associated with a given convex body.
\newblock {\em J. London Math. Soc.}, 33:270--281, 1958.

\bibitem{Royen-GaussianCorrelation}
T.~Royen.
\newblock A simple proof of the {G}aussian correlation conjecture extended to
  some multivariate gamma distributions.
\newblock {\em Far East J. Theor. Stat.}, 48(2):139--145, 2014.

\bibitem{SSZ-GaussianCorrelationConjecture}
G.~Schechtman, Th. Schlumprecht, and J.~Zinn.
\newblock On the {G}aussian measure of the intersection.
\newblock {\em Ann. Probab.}, 26(1):346--357, 1998.

\bibitem{Spingarn-RogersShephard}
J.~E. Spingarn.
\newblock An inequality for sections and projections of a convex set.
\newblock {\em Proc. Amer. Math. Soc.}, 118(4):1219--1224, 1993.

\bibitem{Tehranchi-RefinedGaussianCorrelation}
M.~R. Tehranchi.
\newblock Inequalities for the {G}aussian measure of convex sets.
\newblock {\em Electron. Commun. Probab.}, 22:Paper No. 51, 7, 2017.

\bibitem{Valdimarsson-GenCaffarelli}
S.~I. Valdimarsson.
\newblock On the {H}essian of the optimal transport potential.
\newblock {\em Ann. Sc. Norm. Super. Pisa Cl. Sci. (5)}, 6(3):441--456, 2007.

\bibitem{Sidak}
Z.~\v{S}id\'ak.
\newblock Rectangular confidence regions for the means of multivariate normal
  distributions.
\newblock {\em J. Amer. Statist. Assoc.}, 62:626--633, 1967.

\end{thebibliography}

\end{document}